\pgfplotsset{compat=1.10}
\apptocmd{\sloppy}{\hbadness 10000\relax}{}{}
\newcommand{\Var}[1]{
    \mathrm{Var}
}
\newcommand{\vecbf}[1]{
    \mathbf{#1}
}
\newcommand{\vecbfm}[1]{
    \bm{#1}
}
\newcommand{\RN}[1]{%
  (\textup{\uppercase\expandafter{\romannumeral#1}})%
}
\newcolumntype{H}{>{\setbox0=\hbox\bgroup}c<{\egroup}@{}}
\theoremstyle{plain}
\newtheorem{theorem}{Theorem}
\newtheorem{lemma}[theorem]{Lemma}
\newtheorem{proposition}[theorem]{Proposition}
\theoremstyle{definition}
\newtheorem{definition}{Definition}
\newtheorem{remark}{Remark}
\DeclareMathOperator*{\argmax}{arg\,max}
\newcommand*\diff{\mathop{}\!\mathrm{d}}
\newcommand{\beamevar}{1}
\newcommand{\beame}[3]{%
    \if1\beamevar{#1}\fi
    \if2\beamevar{#2}\fi
    \if3\beamevar{#3}\fi
}
\title{Locally Differentially Private Two-Sample Testing}
\author{Alexander Kent}
\author{Thomas B.~Berrett}
\author{Yi Yu}
\affil{Department of Statistics, University of Warwick}
\date{\today}
\begin{document}

\maketitle

\begin{abstract}
    We consider the problem of two-sample testing under a local differential privacy constraint where a permutation procedure is used to calibrate the tests. We develop testing procedures which are optimal up to logarithmic factors, for general discrete distributions and continuous distributions subject to a smoothness constraint. Both non-interactive and interactive tests are considered, and we show allowing interactivity results in an improvement in the minimax separation rates. Our results show that permutation procedures remain feasible in practice under local privacy constraints, despite the inability to perturb the non-private data directly and only the private views. Further, through a refined theoretical analysis of the permutation procedure, we are able to avoid an equal sample size assumption which has been made in the permutation testing literature regardless of the presence of the privacy constraint. Lastly, we conduct numerical experiments which demonstrate the performance of our proposed test and verify the theoretical findings, especially the improved performance enabled by allowing interactivity.

    \medskip
    \textbf{Keywords}: Local differential privacy; Permutation tests; Minimax optimality.
\end{abstract}

\section{Introduction}\label{sec1}
    \subsection{Background}
        As large volumes of data become increasingly easy to store and access, the rapid development of techniques for data analysis is followed by growing concerns regarding the data owners' privacy. This is motivated by both the personal concerns of individuals and legal considerations. In particular, earlier \emph{ad-hoc} methods of data privacy such an \beame{anonymisation}{anonymization}{anonymization} can often be defeated, even in settings that might seem challenging, for example recovering individual surnames from genome data \citep[e.g.][]{Gymrek:2013:Genomes} or recovering training data from a trained neural network \citep[e.g.][]{Haim:2022:Reconstructing}.
        
        The issue of reidentification from \beame{anonymised}{anonymized}{anonymized} data has led to an interest in methods of \beame{anonymisation}{anonymization}{anonymization} and \beame{privatisation}{privatization}{privatization} which have provable guarantees. One framework enjoying considerable interest from both applied and theoretical standpoints is that of \emph{differential privacy}. Here, the aim is to obscure the value of an individual's data by injecting carefully calibrated noise, so that the true value of any one data point cannot be deduced with a high degree of confidence, but also so that the influence of the noise is diminished in the aggregate, enabling approximate recovery of quantities of interest. Whilst first \beame{formalised}{formalized}{formalized} as differential privacy in \cite{Dwork:2006:CalibratingNoise}, the idea of adding random noise to provide privacy, and hence avoid evasive responses in surveys of sensitive questions, dates back to \cite{Warner:1965:RR}.
    
        Since these early developments, differential privacy has evolved into many branches, including \emph{central} differential privacy, where a private output statistic is calculated by a trusted data aggregator who has access to the original data, and \emph{local} differential privacy, where data are \beame{privatised}{privatized}{privatized} at the individual level before being collected for analysis. These frameworks have also since been \beame{generalised}{generalized}{generalized} \citep[e.g.][]{Levy:2021:UserLevel} and similar but distinct definitions have also been proposed \citep[e.g.][]{Mironov:2017:Renyi, Dong:2022:GaussianDP}, allowing differential privacy both to be applied to broader cases than initially possible, and take advantage of problem-specific structures.

        One topic of particular interest in the setting of local differential privacy is that of non-interactive and interactive privacy. In the non-interactive setting, each individual transmits their data through a given noisy channel, without any communication between individuals, whereas in the interactive setting, an individual may use the \beame{privatised}{privatized}{privatized} output of previous individuals, in some ordering, to inform their method of \beame{privatisation}{privatization}{privatization}. It has been shown in some problems such as quadratic functional estimation \citep{Butucea:2023:Interactive}, goodness-of-fit testing \citep{Berrett:2020:Faster} and spectral density estimation \citep{Butucea:2025:InterativeNonParametric} that allowing interactivity can enable improved performance, such as reducing the severity of the curse of dimensionality. As such, \beame{characterising}{characterizing}{characterizing} problems where there is a gap between the performance of non-interactive and interactive procedures is of interest.

        The statistics community's interest in differential privacy has led to a growing understanding of how the framework of differential privacy affects classical statistical procedures, such as regarding convergence properties and finite-sample minimax guarantees, in both the central and local models \citep{Wasserman:2010:Statistical, Duchi:2018:DJW}. There is also a growing understanding of how differential privacy interacts with both a range of estimation problems such as linear models \citep[e.g.][]{Duchi:2018:DJW, Arora:2022:PrivateGLM}; functional data analysis \citep[e.g.][]{Hall:2013:PrivateFunctions, Xue:2024:PrivateFDA}; dynamic pricing \citep[e.g.][]{Tang:2020:PrivateDynamicPricing, Zhao:2024:PrivateDynamicPricing}; and also broader statistical frameworks such as robustness \citep[e.g.][]{Dwork:2009:Robust, Liu:2022:RobustCentralPrivacy, Li:2023:Robust}. 
        
        However, whilst estimation problems have been studied in great depth, hypothesis testing is relatively less explored, particularly in the local model. Though some literature successfully identifying the minimax sample complexities of various testing problems exists \citep[e.g.][]{Acharya:2019:TestWithoutTrust,  Acharya:2019:Hadamard, Acharya:2020:I, Dubois:2019:GOFtest, Berrett:2020:Faster}, there is a lack of practical methodology, with the developed tests usually depending on conservative constants for determining the rejection region, potentially resulting in unnecessarily low power in practice.

        We in this paper consider the problem of two-sample testing. To calibrate our tests, we \beame{utilise}{utilize}{utilize} a permutation procedure. Such procedures date back to works of Fisher and Pitman \citep{Fisher:1935:Book, Pitman:1938:Significance}. Permutation tests are desirable as they do not rely on exact knowledge of the distribution of the test statistic under the null, which can be difficult to deduce. They also do not employ asymptotic arguments which only hold in the limit, and which are prone to failure under even slight deviations from the setting they were established in (see, for example, Appendix A(a)~in \citealt{Berrett:2021:USP} for a demonstration). Instead, permutation tests rely on typically milder assumptions, under which the tests are guaranteed to have the desired level. Further, classical asymptotic analyses also suggest the strong power properties of permutation methods \citep[e.g.][Chapter~15]{Lehmann:2005:Testing}.

        Despite their positive characteristics, theoretical understanding of permutation tests in terms of finite-sample guarantees is sparse, owing to the difficulty of the analysis. The first works to prove finite-sample guarantees for permutation tests \citep{Berrett:2021:IndepPerm, Kim:2022:MinimaxPermutationTests} develop general methods to prove finite-sample bounds on the performance of the permutation procedures. In particular, these works show that, for two-sample-testing and independence-testing, permutation tests are minimax optimal for multinomial distributions and continuous distributions under non-parametric smoothness assumptions. This has since been extended to the central model of privacy \citep{Kim:2023:PrivatePermutationTests} with kernel-based test statistics including maximum mean discrepancy and the Hilbert--Schmidt independence criterion.

        The application of permutation testing procedures under non-interactive local differential privacy constraints has been considered in the recent paper of \cite{Mun:2025:LocalPerm}. We share some results with theirs, discuss comparisons between our methods and results for the non-interactive setting in the relevant sections, and compare the results in the non-interactive case to our interactive rates.

    \subsection{Contributions}  

    In this paper, we aim to investigate two-sample testing under local privacy constraints using permutation test procedures in both the non-interactive and interactive settings. We obtain positive results in both these cases for different testing settings, and \beame{summarise}{summarize}{summarize} our contributions as follows:
        
        \begin{enumerate}
            \item Broadly, we demonstrate the feasibility of permutation test procedures under local privacy constraints in the non-interactive and interactive settings. In particular, these positive results in the interactive setting are, to the best of our knowledge, new to the literature.
        
            \item Considering general multinomial distributions on $d \in \mathbb{N}$ outcomes, we construct non-interactive and interactive tests and observe the minimax separation rate with respect to $L_p$-separation for $p \in [1,2]$. In particular, we observe an improvement in the dependence on $d$ with the interactive tests. We obtain a further novel contribution in that the upper bounds have only poly-logarithmic dependence on the type-I and type-II errors $\alpha, \beta$ without requiring the assumption that $n_1 \asymp n_2$ which prior literature on permutation testing required. These results are contained in \Cref{sec4}. Another important contribution key to obtaining these results is demonstrating the negative association \citep{Joag:1983:NA} of the popular unary-encoding method for privatising discrete data. In particular, though the entries of the vector obtained via unary encoding are not independent, the negative association property shows that their sum concentrates at least as well as if they were independent, allowing us to obtain stronger concentration inequalities which are vital for the improved rates we obtain. More details may be found in \Cref{app:misc}.

            \item For continuous distributions satisfying a Sobolev-type regularity assumption, we construct non-interactive and interactive tests and observe the minimax separation rate with respect to $L_p$-separation for $p \in [1,2]$. To the best of our knowledge, the use of a Sobolev regularity assumption in testing under LDP is a first for problems of this type, which instead often use a H\"{o}lder or Besov condition \citep[e.g.][]{Dubois:2019:GOFtest, Mun:2025:LocalPerm}. The Sobolev regularity assumption allows us to consider a testing procedure via a basis expansion, as opposed to binning procedures considered in prior literature, and this novel private testing methodology may be of independent interest beyond the permutation testing setting. Further, whilst in some cases Sobolev spaces are a special case of Besov spaces, related literature consider Besov spaces defined via an ellipsoid condition on a Haar wavelet expansion, which prevents the transferring of results for Besov spaces to Sobolev spaces. These results, a further discussion on comparisons to results for Besov spaces, and extensions to adaptive tests, can be found in \Cref{sec5}.

            \item We implement our constructed test procedures and carry out simulations in \Cref{sec6}. We explore the sensitivity of the testing procedures to tuning parameters, empirically verify the theoretical minimax separation rates, and investigate the performance. In particular, we observe the improved performance with the interactive procedures in practice in addition to their theoretical improvements.
        \end{enumerate}

        We \beame{summarise}{summarize}{summarize} non-private separation rates from prior literature and our results in the following table.
        \begin{table}[H]
        \centering
            \begin{tabular}{lccc}
            \hline\hline
            Setting & Non-private & Non-interactive & Interactive\\ \hline
            Multinomial($d$), $L_1$  & $\frac{d^{1/2}}{n_1^{1/2}n_2^{1/4}} \vee \frac{d^{1/4}}{n_1^{1/2}}$ & $\frac{d^{3/4}}{(n_1\varepsilon^2)^{1/2}}$ & $\frac{d^{1/2}}{(n_1\varepsilon^2)^{1/2}}$ \\
            Multinomial($d$), $L_2$  & $\frac{1}{n_1^{1/2}}$ & \begin{tabular}{@{}l@{}}$\lesssim \frac{d^{1/4}}{(n_1\varepsilon^2)^{1/2}}$\\$\gtrsim \frac{d^{1/4}}{(n_1\varepsilon^2)^{1/2}} \wedge \frac{1}{d^{1/2}}$\end{tabular} & $\frac{1}{(n_1\varepsilon^2)^{1/2}}$ \\
            Continuous (Sobolev)  & $(n_1\varepsilon^2)^{-2s/(4s+d)}$ & $(n_1\varepsilon^2)^{-2s/(4s+3d)}$ & $(n_1\varepsilon^2)^{-2s/(4s+2d)}$ \\ \hline
            \end{tabular}
            \caption{Summary of separation radii from two-sample testing with sample sizes $n_1 \leq n_2$ and privacy parameter $\varepsilon \in (0, 1]$ with logarithmic factors suppressed. Non-private rates can be found in, for example, \cite{Bhattacharya:2015:l1TwoSample} for Multinomial($d$) under~$L_1$-separation, \cite{Kim:2022:MinimaxPermutationTests} for Multinomial($d$) under~$L_2$-separation, and \cite{Li:2024:NonParametricTests} for $d$-dimensional continuous distributions under a Sobolev regularity assumption of soothness parameter $s > 0$.} \label{tab:summary}
        \end{table}

        \subsection{Notation}
            For $n \in \mathbb{N}$, let $[n] = \{1, \hdots, n\}$. For $a, b \in \mathbb{R}$, let $a \wedge b = \min(a, b)$ and $a \vee b = \max(a, b)$. For non-negative real sequences $\{a_n\}_{n \in \mathbb{N}}$, $\{b_n\}_{n \in \mathbb{N}}$, we write $a_n \lesssim b_n$ when there exists a constant $C > 0$ such that $\limsup_{n \rightarrow \infty} a_n/b_n \leq C$, and $a_n \gtrsim b_n$ denotes $b_n \lesssim a_n$. We write $a_n \asymp b_n$ if $a_n \lesssim b_n$ and $b_n \lesssim a_n$. For $a, b \in \mathbb{R}$ with $a \leq b$, we write $\Pi_{[a, b]}$ for the projection onto the interval $[a, b]$. A random variable $X$ is $\sigma^2$-sub-Gaussian if it satisfies $\mathbb{E}[\exp\{\lambda (X - \mathbb{E}[X])\}] \leq \exp(\lambda^2\sigma^2/2)$ for all $\lambda \in \mathbb{R}$. A random variable $X$ is $\sigma$-sub-exponential if it satisfies $\mathbb{E}[\exp\{\lambda (X - \mathbb{E}[X])\}] \leq \exp(\lambda^2\sigma^2)$ for all $\lambda$ such that $|\lambda| \leq 1/\sigma$. We denote these as $\mathrm{SG}(\sigma^2)$ and $\mathrm{SE}(\sigma)$ respectively. We say a random variable $X$ has the standard Laplace distribution if it has, with respect to the Lebesgue measure on $\mathbb{R}$, the density $f_X(x) = \exp(-|x|)/2$. Throughout this work, for simplicity we omit the taking of the floor or ceiling of quantities for which an integer value is expected. For $n \geq 2$, let $\mathcal{I}_2^{n} = \{(i,j) : i,j \in [n],\; i \neq j\}$. For a vector $\vecbf{x} \in \mathbb{R}^d$ for some $d \in \mathbb{N}$, denote for $p \in \{1, 2\}$ the norm $\|x\|_p = (\sum_{j=1}^d |x_j|^p)^{1/p}$, which we refer to as the $L_p$-norm. We similarly denote the $L_p$-norm for functions, for a function $f:[0,1] \rightarrow \mathbb{R}$, as the norms $\|f\|_p = (\int_0^1 |f(x)|^p\diff{x})^{1/p}$. We denote the type-I error and type-II error of a test by $\alpha$ and $\beta$ respectively, and require $\alpha + \beta \leq 1/2$ throughout. Note $\alpha$ and $\beta$ are not treated as universal constants, and so dependence on them will not be suppressed along with other constants. 
        
    \section{Problem Setup}\label{sec2}
        \subsection{Local Differential Privacy}
            We first introduce the framework of local differential privacy (LDP). Given data $\{X_i\}_{i = 1}^n \subset \mathcal{X}^n$ and some output space $\mathcal{Z}$, consider a family of conditional distributions $\{Q_i\}_{i = 1}^n$ where $Q_1 : \sigma(\mathcal{Z}) \times \mathcal{X} \rightarrow [0,1]$ and $Q_i : \sigma(\mathcal{Z}) \times \mathcal{X} \times \mathcal{Z}^{i - 1} \rightarrow [0,1]$ for $i > 1$, where $\sigma(\mathcal{Z})$ denotes a sigma algebra on $\mathcal{Z}$. Here, the $i$-th individual \beame{privatises}{privatizes}{privatizes} their datum $X_i$ into a private view $Z_i$ via some random mapping from $\mathcal{X} \times \mathcal{Z}^{i - 1}$ to $\mathcal{Z}$, inducing the conditional distribution $Q_i$.

            For $\varepsilon \geq 0$, the collection $\{Q_i\}_{i = 1}^n$ is called $\varepsilon$-LDP if, for all $i \in [n]$,
            \begin{equation} \label{sec2:eq:LDPdef}
                \begin{aligned}
                    &\sup_{S \in \sigma(\mathcal{Z})} \frac{Q_i(Z_i \in S \mid X_i = x_i, Z_1 = z_1, \hdots, Z_{i -1} = z_{i-1})}{Q_i(Z_i \in S \mid X_i = x_i', Z_1 = z_1, \hdots, Z_{i -1} = z_{i-1})} \leq \exp(\varepsilon), \\
                    &\hspace{8cm} \forall x_i, x_i' \in \mathcal{X} \mbox{ and } \forall z_1, \hdots, z_{i-1} \in \mathcal{Z}. 
                \end{aligned}
            \end{equation}
            More specifically, such a collection is termed \emph{sequentially interactive}. If instead each $Q_i$ depends only on the value of $X_i$, and is independent of the collection $\{Z_j\}_{j \neq i}$, then the collection is referred to as \emph{non-interactive} and the conditional distributions simplify to take the form $Q_i(Z_i \in \cdot\; \mid X_i = x_i)$. We denote the collection $Q = \{Q_i\}_{i = 1}^n$ and refer to $Q$ as an $\varepsilon$-LDP mechanism when the collection $\{Q_i\}_{i = 1}^n$ satisfies $\varepsilon$-LDP, and denote by $\mathcal{Q}_\varepsilon$ the collection of all $\varepsilon$-LDP mechanisms.

            An important result in differential privacy is that of \emph{post-processing}, where the output of an arbitrary data-independent function applied to $\varepsilon$-LDP data remains $\varepsilon$-LDP.
            \begin{proposition}[\citealt{Dwork:2006:CalibratingNoise}] \label{sec2:prop:postprocessing}
                Fix $n \in \mathbb{N}$ and let $\{Z_1, \hdots, Z_n\} \subset \mathcal{Z}^n$ be $\varepsilon$-LDP views of data $\{X_1, \hdots, X_n\} \subset \mathcal{X}^n$. Let $f : \mathcal{Z}^{n} \rightarrow \mathcal{A}$ for any output space $\mathcal{A}$ be a data-independent function. The random variable $f(Z_1, \hdots, Z_n)$ satisfies $\varepsilon$-LDP.
            \end{proposition}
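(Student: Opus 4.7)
The plan is to derive this from the chain-rule factorisation of the joint law of $(Z_1, \ldots, Z_n)$ together with the data-independence of $f$. I first interpret the conclusion in the natural way: writing $W := f(Z_1, \ldots, Z_n)$, we need to show that for every $i \in [n]$, all $x_i, x_i' \in \mathcal{X}$, all fixed $\{x_j\}_{j \neq i} \subset \mathcal{X}$, and all measurable $A \subset \mathcal{A}$,
\[
\frac{\mP(W \in A \mid X_1 = x_1, \ldots, X_i = x_i, \ldots, X_n = x_n)}{\mP(W \in A \mid X_1 = x_1, \ldots, X_i = x_i', \ldots, X_n = x_n)} \leq \exp(\varepsilon),
\]
which is the direct analogue of \eqref{sec2:eq:LDPdef} for the composite mechanism that outputs $W$.

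Next, fixing such an $A$ and setting $B = f^{-1}(A) \in \sigma(\mathcal{Z}^n)$ (well-defined since $f$ is data-independent), I would write
\[
\mP(W \in A \mid X_1 = x_1, \ldots, X_n = x_n) = \int_{\mathcal{Z}^n} \mathbf{1}_B(z_1, \ldots, z_n) \prod_{j=1}^n Q_j(\mathrm{d}z_j \mid x_j, z_1, \ldots, z_{j-1}),
\]
using the sequentially-interactive factorisation of the joint conditional law. Fixing the index $i$, the factors with $j \neq i$ agree under $x_i$ and $x_i'$: for $j < i$ this is because $Q_j$ simply does not involve $X_i$, and for $j > i$ it is because, viewed as a kernel in $z_j$ given $(z_1, \ldots, z_{j-1})$, it depends on the $i$th coordinate only through the conditioning variables (which themselves do not depend on $X_i$ after we peel off the $j = i$ factor). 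The single factor at $j = i$ satisfies the LDP ratio bound \eqref{sec2:eq:LDPdef} pointwise in $(z_1, \ldots, z_{i-1})$, so the ratio of the two integrands is bounded by $\exp(\varepsilon)$ uniformly on $\mathcal{Z}^n$, and I can pull this constant outside the integral.

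The argument is essentially bookkeeping rather than hard analysis; the only point deserving care is that the interactive LDP definition demands the factor-wise ratio bound to hold \emph{uniformly} over the conditioning history $(z_1, \ldots, z_{i-1})$, which is exactly what permits the pointwise bound on integrands to yield a bound on the integrals. The non-interactive case is a special case in which only $Q_i$ depends on $x_i$ and no conditioning on earlier $Z_j$'s is involved, making the argument even more immediate.
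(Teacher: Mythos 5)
Your proof is correct in substance. The paper does not prove this proposition---it cites it to Dwork et al.\ (2006)---so there is no in-paper argument to compare against; your chain-rule factorisation of the joint conditional law, combined with the observation that only the $i$th kernel differs when $x_i$ is changed, is the standard route and works as you describe. The one place I would tighten the write-up is the phrase ``the ratio of the two integrands is bounded by $\exp(\varepsilon)$ uniformly'': the $Q_j(\mathrm{d}z_j \mid \cdot)$ are kernels (measures), not pointwise densities, so a literal integrand ratio is not what you have. The cleanest fix is to note that the LDP constraint \eqref{sec2:eq:LDPdef} is exactly the measure dominance
\begin{equation*}
Q_i(\cdot \mid x_i, z_{1:i-1}) \leq e^{\varepsilon}\, Q_i(\cdot \mid x_i', z_{1:i-1}),
\end{equation*}
which is equivalent to
\begin{equation*}
\int g(z_i)\, Q_i(\mathrm{d}z_i \mid x_i, z_{1:i-1}) \leq e^{\varepsilon}\int g(z_i)\, Q_i(\mathrm{d}z_i \mid x_i', z_{1:i-1})
\end{equation*}
for every non-negative measurable $g$ and every fixed history $z_{1:i-1}$. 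Writing your integral as an iterated integral, applying this at the $i$th level with
\begin{equation*}
g(z_i) = \int \cdots \int \mathbf{1}_B(z_1,\hdots,z_n) \prod_{j>i} Q_j(\mathrm{d}z_j \mid x_j, z_{1:j-1})
\end{equation*}
(which, as you correctly observe, does not depend on $x_i$), and then integrating out $z_{1:i-1}$, gives the bound without ever invoking densities. Your ``ratio of integrands'' phrasing can be justified via Radon--Nikodym derivatives---the two-sided LDP bound forces mutual absolute continuity---but the dominance formulation is cleaner and avoids this detour. The remainder of your bookkeeping, including the treatment of the $j<i$ and $j>i$ factors and the remark that the non-interactive case is a degenerate special case, is correct.
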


        \subsection{Local Differential Privacy with Two Samples} 
            In the setting of two-sample testing, to be introduced later, the data of interest are the combined collection of two distinct samples.
            To extend sequential interactivity to the case of two distinct samples, denote two samples $\{X_i\}_{i = 1}^{n_1} \subset \mathcal{X}^{n_1}$ and $\{Y_i\}_{i = 1}^{n_2} \subset \mathcal{Y}^{n_2}$ for $n_1, n_2 \in \mathbb{N}$ the sample sizes and $\mathcal{X}, \mathcal{Y}$ some (possibly distinct) state spaces. We illustrate graphically the model of interactivity in \Cref{fig:interactive}. In words, for each $i$, we have that each pair $Z_i$ and $W_i$ may depend on all previous observations $\{(Z_j, W_j)\}_{j < i}$. We now \beame{formalise}{formalize}{formalize} as follows. We assume without loss of generality that $n_1 \leq n_2$. We consider the family of conditional distributions $\{Q_{X, i}\}_{i = 1}^{n_1} \cup \{Q_{Y, i}\}_{i = 1}^{n_2}$ where 
            \begin{equation*}
                \begin{aligned}
                    &Q_{X, 1} : \sigma(\mathcal{Z}) \times \mathcal{X} \rightarrow [0,1];\quad
                    Q_{Y, 1} : \sigma(\mathcal{W}) \times \mathcal{Y}\rightarrow [0,1]; \\
                    &Q_{X, i} : \sigma(\mathcal{Z}) \times \mathcal{X} \times (\mathcal{Z} \times \mathcal{W})^{i - 1}, \mbox{ for } 1 < i \leq n_1; \\
                    &Q_{Y, i} : \sigma(\mathcal{W}) \times \mathcal{Y} \times (\mathcal{Z} \times \mathcal{W})^{i - 1}, \mbox{ for } 1 < i \leq n_1; \mbox{ and}\\
                    & Q_{Y, j} : \sigma(\mathcal{W}) \times \mathcal{Y} \times \mathcal{Z}^{n_1} \times \mathcal{W}^{j - 1}, \mbox{ for } n_1 < j \leq n_2.
                \end{aligned}
            \end{equation*}
            As before, these conditional distributions are induced by a random mappings from $\mathcal{X}$ and $\mathcal{Y}$ to $\mathcal{Z}$ and $\mathcal{W}$ respectively, resulting in the collections of private views $\{Z_i\}_{i = 1}^{n_1}$, and $\{W_i\}_{i = 1}^{n_2}$. We describe such a collection of conditional distributions as \emph{two-sample sequentially interactive} $\varepsilon$-LDP if, for all $i \in [n_1]$,
            \begin{equation} \label{sec2:eq:LDPdefTwo}
                \begin{aligned}
                    &\sup_{S \in \sigma(\mathcal{Z})} \frac{Q_{X, i}(Z_i \in S \mid X_i = x_i, Z_1 = z_1, \hdots, Z_{i -1} = z_{i-1}, W_1 = w_1, \hdots, W_{i-1} = w_{i - 1})}{Q_{X, i}(Z_i \in S \mid X_i = x_i', Z_1 = z_1, \hdots, Z_{i -1} = z_{i-1}, W_1 = w_1, \hdots, W_{i-1} = w_{i - 1})} \leq \exp(\varepsilon), \\
                    &\hspace{6.5cm}\forall x_i, x_i' \in \mathcal{X},\; \forall z_1, \hdots, z_{i-1} \in \mathcal{Z}, \mbox{ and } \forall w_1, \hdots w_{i - 1} \in \mathcal{W},
                \end{aligned}
            \end{equation}
            and likewise for $Q_{Y, j}$ for all $j \in [n_2]$, with the natural adjustment when $j > n_1$. If instead each $Q_{X, i}$ and $Q_{Y, j}$ depends only on the value of $X_i$ and $Y_j$ respectively, then the collection is referred to as \emph{two-sample non-interactive}. As in this paper we exclusively focus on the setting where there are two samples, we will henceforth suppress the term \emph{two-sample} when referring to privacy mechanisms for brevity, and only make the distinction when necessary.
            \begin{figure}
                \centering
                \includegraphics[width=0.3\linewidth]{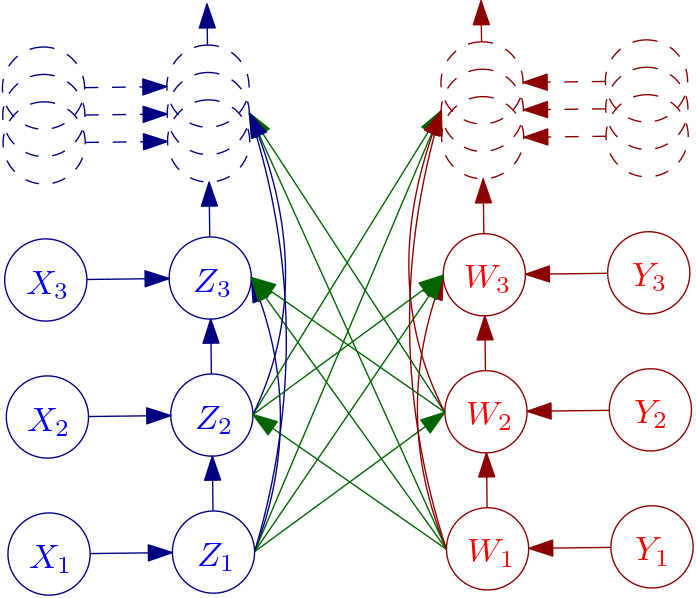}
                \caption{Illustration of the interactive model in the two-sample setting.}
                \label{fig:interactive}
            \end{figure}
            
        \subsection{Permutation Tests} \label{sec2:sec:permutationtest}
            We briefly introduce and review relevant properties of permutation tests; more detailed treatments can be found in, for example, \cite{Lehmann:2005:Testing}, \cite{Pesarin:2010:PermutationTests} and \cite{Hemerik:2018:ExactTestingPermutations}.
    
            Suppose we have a sample of $n$ observations denoted $\mathcal{D}_n = \{X_1, \hdots, X_n\} \subset \mathcal{X}^n$. Given a permutation $\pi \in S_n$, the symmetric group on $[n]$, we denote a permuted version\footnote{More generally, permutation tests can be formulated in terms of any group action on the sample space, enabling a broader range of possible tests. See e.g.~\cite{Hemerik:2018:ExactTestingPermutations} for such a formulation.} of the sample $\mathcal{D}_n$ by $\mathcal{D}_n^\pi = \{X_{\pi(1)}, \hdots, X_{\pi(n)}\}$. We write $T_n = T_n(\mathcal{D}_n)$ for a test statistic computed on the original sample, and write $T_n^\pi = T_n(\mathcal{D}_n^\pi)$ for the same test statistic computed on the sample permuted by $\pi$. We say that the distribution of $\mathcal{D}_n$ is permutation invariant if, under the null hypothesis, $\mathcal{D}_n$ and $\mathcal{D}_n^\pi$ are equal in distribution for all $\pi \in S_n$.
    
            We can then define the $p$-value of the permutation test with test statistic $T_n$ as
            \begin{equation*}
                p = p(T_n, S_n) = \frac{\sum_{\pi \in S_n} \mathbbm{1}\{T_n \leq T_n^\pi\}}{|S_n|}.
            \end{equation*}
            When invariance holds and the probability of a tie $\{T_n = T_n^\pi\}$ is zero, excepting the identity permutation, we have that $p$ is uniformly distributed over the set $\{1/|S_n|, 2/|S_n|, \hdots, 1\}$, and hence that a test that rejects the null hypothesis when $p \leq \alpha$ for some $\alpha \in (0, 1)$ has level $\alpha$. If $\alpha \notin \{1/|S_n|, 2/|S_n|, \hdots, 1\}$ or the probability of a tie $\{T_n = T_n^\pi\}$ is non-zero, then the test will be conservative, but a suitable \beame{randomisation}{randomization}{randomization} of the test can be employed to recover exact size $\alpha$ if desired.
    
            For all but small values of $n$, the number of permutations $|S_n|$ can grow prohibitively large, and so in practice a constant number $B$ of permutations $\Pi_B = \{\pi_1, \hdots, \pi_B\} \subset S_n$ are sampled uniformly at random, giving the $p$-value
            \begin{equation} \label{sec2:def:permpval}
                p_B = p(T_n, \Pi_B) = \frac{1 + \sum_{b=1}^B \mathbbm{1}\{T_n \leq T_n^{\pi_b}\}}{1 + B},
            \end{equation}
            and a test rejecting when $p_B \leq \alpha$ similarly has level $\alpha$.

    \subsection{Locally Differentially Private Two-Sample Testing} \label{sec2:sec:problemdef}

        Let random variables $X \in \mathcal{X}$ and $Y \in \mathcal{Y}$ be generated from distributions $P_X$ and $P_Y$.  Given two samples $\mathcal{D}_{X, n_1} = \big\{X_1, \hdots, X_{n_1}\big\} \overset{\mathrm{i.i.d.}}{\sim} P_X$ and $\mathcal{D}_{Y, n_2} = \big\{Y_{1}, \hdots, Y_{n_2}\big\} \overset{\mathrm{i.i.d.}}{\sim} P_Y$, we are to test
        \begin{equation} \label{sec2:eq:Testdef}
            \mathrm{H}_0: P_X = P_Y \quad \mbox{vs} \quad \mathrm{H}_1: D(P_X, P_Y) \geq \rho,
        \end{equation}
        for some metric $D$ on the space of distributions and some level of separation $\rho > 0$.
    
        Rather than having direct access to the raw samples $\mathcal{D}_{X, n_1} \cup \mathcal{D}_{Y, n_2}$, we instead construct an $\varepsilon$-LDP privacy mechanism $Q$, and observe the induced samples $\tilde{\mathcal{D}}_{X, n_1} = \{ Z_i \}_{i \in [n_1]}$ and $\tilde{\mathcal{D}}_{Y, n_2} = \{ W_{i'} \}_{i' \in [n_2]}$ where, for each $i \in [n_1]$ and $i' \in [n_2]$, $Z_i \in \mathcal{Z}$ and $W_{i'} \in \mathcal{W}$ are $\varepsilon$-LDP views satisfying \eqref{sec2:eq:LDPdefTwo}, for some output spaces $\mathcal{Z}$ and $\mathcal{W}$, of $X_i$ and $Y_{i'}$ respectively.
    
        To measure the performance of a testing procedure, we introduce the following minimax framework. Denote the combined samples by $\tilde{\mathcal{D}}_{n_1, n_2} = \tilde{\mathcal{D}}_{X, n_1} \cup \tilde{\mathcal{D}}_{Y, n_2}$, and consider a test $\phi : \tilde{\mathcal{D}}_{n_1, n_2} \rightarrow \{0, 1\}$ where a value of $\phi = 1$ indicates that we reject the null hypothesis of \eqref{sec2:eq:Testdef}. We then define the space of pairs of distributions satisfying the null and alternative as
        \begin{equation*}
            \mathcal{P}_0 = \big\{(P_X, P_Y) \in \mathcal{P} \times \mathcal{P} : P_X = P_Y \big\} \quad \mbox{and} \quad \mathcal{P}_1(D, \rho) = \big\{(P_X, P_Y) \in \mathcal{P} \times \mathcal{P} : D(P_X, P_Y) \geq \rho \big\}.
        \end{equation*}
        For a fixed level $\alpha \in (0, 1)$, privacy level $\varepsilon \in (0, 1]$ and separation $\rho > 0$, we define the private minimax testing risk as
        \begin{align*}
             \mathcal{R}_{n_1, n_2, \varepsilon, \alpha, D, \rho}
             = \alpha + \inf_{Q \in \mathcal{Q}_\varepsilon}\inf_{\phi \in \Phi_{n_1, n_2, Q}(\alpha)} \sup_{(P_X, P_Y) \in \mathcal{P}_1(D, \rho)} \mathbb{P}_{P_X, P_Y, Q}(\phi = 0),
        \end{align*}
        where
        \begin{equation*}
            \Phi_{n_1, n_2, Q}(\alpha) = \bigg\{\phi: \sup_{(P_X, P_Y) \in \mathcal{P}_0} \mathbb{P}_{P_X, P_Y, Q} (\phi = 1) \leq \alpha \bigg\},
        \end{equation*}
        denotes the set of level-$\alpha$ tests for a fixed $\varepsilon$-LDP mechanism $Q$, and $\mathcal{Q}_\varepsilon$ denotes the class of all non-interactive or interactive mechanisms. Finally, fixing a desired type-II error probability $\beta \in (0, 1 - \alpha)$, we define the private minimax separation radius
        \begin{equation} \label{sec2:eq:minimaxsep}
            \rho^\ast(n_1, n_2, \varepsilon, \alpha, \beta, D) = \inf\{\rho > 0 : \mathcal{R}_{n_1, n_2, \varepsilon, \alpha, D, \rho} \leq \alpha + \beta\}.
        \end{equation}
        To distinguish between the non-interactive and interactive cases we use the subscript $\rho_{\mathrm{NI}}^\ast$ and $\rho_{\mathrm{I}}^\ast$ respectively.

    \section{Bounds on Separation Radii} \label{sec3}
        \subsection{Upper Bounds on Separation Radii} \label{sec3:sec:separationcriteria}
            In this section, we outline results which provide convenient criteria for establishing upper bounds on the minimax separation radii of different testing problems. We bound the type-I and type-II errors of the permutation tests we construct, and as the tests by the permutation construction will have exact type-I error control, our goal is to show that the tests are sufficiently powerful so as to have suitable type-II error control.
    
            We first start with a general result, which controls the type-II error through controlling the deviations from the means of test statistics via a sub-Gaussian/exponential property. In particular, we obtain two conditions in the differing cases, with an improved dependence on the type-I error $\alpha$ in the sub-Gaussian case.
            \begin{proposition} \label{sec3:prop:permtestcontrol}
                Consider a test statistic $T$ and its permuted version $T^\pi$ for $\pi$ a permutation sampled uniformly at random. Assume that $\mathbb{E}[T^\pi] = 0$, $T$ is $\mathrm{SE}(\Sigma)$ for some $\Sigma > 0$, and the number of sampled permutations $B \geq 4/(\alpha \beta) - 1$.
    
                (i) If $T^\pi$ is $\mathrm{SE}(\widetilde{\Sigma})$ for some $\widetilde{\Sigma} > 0$, and
                \begin{equation} \label{sec3:eq:permtestcontrolSE}
                    \mathbb{E}[T] \geq C\widetilde{\Sigma}\log\{1/(\alpha\beta)\} + C\Sigma\log(1/\beta),
                \end{equation}
                for $C > 0$ some sufficiently large absolute constant, then the permutation $p$-value \eqref{sec2:def:permpval} satisfies $\mathbb{P}(p_B \geq \alpha) \leq \beta$.
    
                (ii) If $T^\pi$ is $\mathrm{SG}(\widetilde{\Sigma}'^2)$ for some $\widetilde{\Sigma}' > 0$, the condition
                \begin{equation} \label{sec2:eq:permtestcontrolSG}
                    \mathbb{E}[T] \geq C'\widetilde{\Sigma}'[\log\{1/(\alpha\beta)\}]^{1/2} + C'\Sigma\log(1/\beta),
                \end{equation}
                for $C' > 0$ some sufficiently large absolute constant, then the permutation $p$-value \eqref{sec2:def:permpval} satisfies $\mathbb{P}(p_B \geq \alpha) \leq \beta$.
            \end{proposition}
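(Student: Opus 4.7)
My approach is to split the event $\{p_B\ge\alpha\}$ via a deterministic threshold $\tau$ that separates the bulk of the distribution of $T$ from the bulk of the distribution of $T^\pi$, then control the two resulting events with concentration bounds. Concretely, rewrite the event as
\[
\{p_B\ge\alpha\}=\Bigl\{\,N:=\sum_{b=1}^B\mathbbm{1}\{T\le T^{\pi_b}\}\ge k_0\Bigr\},\qquad k_0:=\alpha(1+B)-1,
\]
and decompose, for any $\tau\in\mathbb{R}$,
\[
\mathbb{P}(p_B\ge\alpha)\le\mathbb{P}(T<\tau)+\mathbb{P}\bigl(N\ge k_0,\,T\ge\tau\bigr).
\]
On the event $\{T\ge\tau\}$, each indicator $\mathbbm{1}\{T\le T^{\pi_b}\}$ is dominated by $\mathbbm{1}\{T^{\pi_b}\ge\tau\}$, so the second term is at most $\mathbb{P}\bigl(\sum_{b=1}^B\mathbbm{1}\{T^{\pi_b}\ge\tau\}\ge k_0\bigr)$. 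The two pieces are then handled by (a) a lower-tail bound for $T$ around its mean, using the $\mathrm{SE}(\Sigma)$ hypothesis, and (b) an upper-tail bound for $T^\pi$ around $0$, using either the $\mathrm{SE}(\widetilde\Sigma)$ or $\mathrm{SG}(\widetilde\Sigma'^2)$ hypothesis, combined with Markov's inequality.

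For step (a), standard sub-exponential concentration yields $\mathbb{P}(T<\tau)\le\exp\bigl(-c(\mathbb{E}[T]-\tau)/\Sigma\bigr)$ whenever $\mathbb{E}[T]-\tau\gtrsim\Sigma$, so this probability is at most $\beta/2$ provided $\mathbb{E}[T]-\tau\ge C_1\Sigma\log(1/\beta)$. For step (b), the $T^{\pi_b}$ are identically distributed with mean zero, so Markov's inequality gives
\[
\mathbb{P}\Bigl(\textstyle\sum_{b=1}^B\mathbbm{1}\{T^{\pi_b}\ge\tau\}\ge k_0\Bigr)\le \frac{B\,\mathbb{P}(T^\pi\ge\tau)}{k_0}.
\]
Using $B\ge 4/(\alpha\beta)-1$, one checks that $k_0\ge(\alpha B)/2 \vee (4/\beta-1)$ for $\alpha+\beta\le 1/2$, so it suffices to ensure $\mathbb{P}(T^\pi\ge\tau)\le\alpha\beta/4$. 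In case~(i), the $\mathrm{SE}(\widetilde\Sigma)$ tail $\mathbb{P}(T^\pi\ge\tau)\le\exp(-c'\tau/\widetilde\Sigma)$ gives the choice $\tau=C_2\widetilde\Sigma\log\{1/(\alpha\beta)\}$. In case~(ii), the sub-Gaussian tail $\mathbb{P}(T^\pi\ge\tau)\le\exp(-\tau^2/(2\widetilde\Sigma'^2))$ gives $\tau=C_2'\widetilde\Sigma'[\log\{1/(\alpha\beta)\}]^{1/2}$. Substituting back into the requirement $\mathbb{E}[T]\ge\tau+C_1\Sigma\log(1/\beta)$ yields precisely the separation conditions \eqref{sec3:eq:permtestcontrolSE} and \eqref{sec2:eq:permtestcontrolSG} for suitably chosen absolute constants $C,C'$.

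The main obstacle, and the only place care is genuinely needed, is the Markov step: one must verify that the lower bound on $B$ is sufficient both to make $k_0$ comparable to $\alpha B$ (so that $B\mathbb{P}(T^\pi\ge\tau)/k_0$ scales as $\mathbb{P}(T^\pi\ge\tau)/\alpha$) and to absorb the $-1$ in $k_0=\alpha(1+B)-1$. I would verify this inequality once cleanly and then the rest is a matter of plugging in the standard sub-Gaussian/sub-exponential tail bounds and choosing the constants $C,C'$ large enough to absorb the numerical factors. No independence between the $T^{\pi_b}$ is needed since we only invoke linearity of expectation via Markov.
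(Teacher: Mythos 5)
Your proof is correct and follows essentially the same route as the paper: both decompose $\mathbb{P}(p_B\ge\alpha)$ on a threshold $\tau$ separating the bulk of $T$ from the bulk of $T^\pi$, control the permuted side via Markov's inequality on the sum of indicators (which requires no independence among the $T^{\pi_b}$), and control the unpermuted side via the sub-exponential lower-tail bound, with $B\ge 4/(\alpha\beta)-1$ ensuring the Markov ratio closes. The only cosmetic difference is that the paper applies Markov directly to $1+\sum_b\mathbbm{1}\{\cdot\}$ against the threshold $(1+B)\alpha$, whereas you subtract 1 first to define $k_0$ and then check $k_0\ge\alpha B/2$; both yield the same bound $\beta/2 + \beta/2 \le \beta$.
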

            \noindent
            The proof can be found in \Cref{Sec:appA}.
        
            In multiple settings we consider, the test statistic employed takes the form of the $U$-statistic
            \begin{equation} \label{sec3:eq:Ustatistic}
                U_{n_1, n_2}(\widetilde{\mathcal{D}}_{X, n_1}, \widetilde{\mathcal{D}}_{Y, n_2}) = \frac{1}{n_1(n_1 - 1)} \frac{1}{n_2(n_2 - 1)} \sum_{(i,j) \in \mathcal{I}_2^{n_1}} \sum_{(k,l) \in \mathcal{I}_2^{n_2}} (\vecbf{Z}_i - \vecbf{W}_k)^T(\vecbf{Z}_j - \vecbf{W}_l),
            \end{equation}
            where we denote the samples $\widetilde{\mathcal{D}}_{X, n_1} = \{\vecbf{Z}_1, \hdots, \vecbf{Z}_{n_1} \} \subset \mathbb{R}^d$ and $\widetilde{\mathcal{D}}_{Y, n_2} = \{\vecbf{W}_1, \hdots, \vecbf{W}_{n_2} \} \subset \mathbb{R}^d$ respectively. For the permutation, write the joint sample $\widetilde{\mathcal{D}}_{n_1, n_2} = \{\widetilde{\vecbf{D}}_1, \hdots, \widetilde{\vecbf{D}}_{n_1 + n_2}\}$ where, for $i \in [n_1]$ and $i' \in [n_2]$, we denote $\widetilde{\vecbf{D}}_{i} = \vecbf{Z}_i$ and $\widetilde{\vecbf{D}}_{n_1 + i'} = \vecbf{W}_{i'}$. Then, for a permutation $\pi \in S_{n_1 + n_2}$, we calculate the $U$-statistic on the permuted data
            \begin{equation} \label{sec3:eq:permutedUstatistic}
                U_{n_1, n_2}^\pi(\mathcal{D}_{n_1, n_2}) = \frac{1}{n_1(n_1 - 1)} \frac{1}{n_2(n_2 - 1)} \sum_{(i,j) \in \mathcal{I}_2^{n_1}} \sum_{(k,l) \in \mathcal{I}_2^{n_2}} (\widetilde{\vecbf{D}}_{\pi(i)} - \widetilde{\vecbf{D}}_{\pi(n_1 + k)})^T(\widetilde{\vecbf{D}}_{\pi(j)} - \widetilde{\vecbf{D}}_{\pi(n_1 + l)}).
            \end{equation}
    
            Applying the permutation test procedure as in \Cref{sec2:sec:permutationtest}, we obtain the following theorem, the proof of which is given in \Cref{Sec:appA}.
            \begin{theorem} \label{sec3:thm:sepcondU}
                Take $n_1 \leq n_2$ without loss of generality. Fix type-I and type-II errors $\alpha, \beta \in (0, 1)$ satisfying $\alpha + \beta < 1/2$. Let the test statistic $U_{n_1, n_2}$ as defined in \Cref{sec3:eq:Ustatistic}. Suppose that $\{\vecbf{Z}_i\}_{i \in [n_1]} \cup \{\vecbf{W}_{i'}\}_{i' \in [n_2]}$ are mutually independent and $\vecbf{Z}_i, \vecbf{W}_{i'}$ are $\mathrm{SG}(\sigma^2)$, for $i \in [n_1]$ and $i' \in [n_2]$.
    
                Carrying out the permutation test procedure as in \eqref{sec2:def:permpval} with the permuted test-statistic \eqref{sec3:eq:permutedUstatistic}, if the number of sampled permutations $B \geq 4/(\alpha\beta) - 1$, and $U_{n_1, n_2}$ satisfies
                \begin{equation} \label{sec2:eq:sepcriteria}
                    \mathbb{E}[U_{n_1, n_2}] \geq \frac{Cd^{1/2}\sigma^2}{n_1}[\log\{1/(\alpha\beta)\}]^2 \quad \mbox{and} \quad
                    n_1 \geq C\log\{1/(\alpha\beta)\},
                \end{equation}
                for $C > 0$ some absolute constant, then the $p$-value satisfies $\mathbb{P}(p_{B} \geq \alpha) \leq \beta$.  
            \end{theorem}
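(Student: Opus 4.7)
The plan is to apply \Cref{sec3:prop:permtestcontrol}(ii) with $T = U_{n_1,n_2}$ and $T^\pi = U_{n_1,n_2}^\pi$, which requires verifying three things: (a) $\mathbb{E}[U_{n_1,n_2}^\pi] = 0$; (b) $U_{n_1,n_2}$ is $\mathrm{SE}(\Sigma)$ for a suitable $\Sigma$; and (c) $U_{n_1,n_2}^\pi$ is $\mathrm{SG}(\widetilde{\Sigma}'^2)$ for a suitable $\widetilde{\Sigma}'$; and then matching the resulting separation criterion to \eqref{sec2:eq:sepcriteria}.

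Step (a) follows from exchangeability: conditional on the pooled data, each summand of $U_{n_1,n_2}^\pi$ depends on $\pi$ only through the ordered 4-tuple of distinct indices $(\pi(i),\pi(j),\pi(n_1+k),\pi(n_1+l))$, which under a uniformly random $\pi$ is itself uniformly distributed on ordered 4-tuples of distinct indices in $[n_1+n_2]$. Expanding $(\widetilde{\vecbf{D}}_a-\widetilde{\vecbf{D}}_c)^T(\widetilde{\vecbf{D}}_b-\widetilde{\vecbf{D}}_d)$ into its four cross-terms, each has the same conditional expectation under this exchangeable measure, so the signed combination is zero. For step (b), decompose $\vecbf{Z}_i = \mu_Z + \widetilde{\vecbf{Z}}_i$, $\vecbf{W}_{i'} = \mu_W + \widetilde{\vecbf{W}}_{i'}$; then $U_{n_1,n_2}-\mathbb{E}[U_{n_1,n_2}]$ splits into a linear part that is $\mathrm{SG}(C\sigma^2\|\mu_Z-\mu_W\|^2/n_1)$ as a sum of independent sub-Gaussians, plus a degenerate two-sample quadratic $U$-statistic in the centered variables to which a Hanson--Wright-type bound applies, giving sub-exponential concentration with parameter $\lesssim d^{1/2}\sigma^2/n_1$. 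The linear contribution can be absorbed into $\mathbb{E}[U_{n_1,n_2}] = \|\mu_Z-\mu_W\|^2$ via AM--GM, leaving $\Sigma \lesssim d^{1/2}\sigma^2/n_1$.

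Step (c) is the main challenge. Condition on the pooled data and work on the high-probability event $\mathcal{E} = \{\max_{i}\|\widetilde{\vecbf{D}}_i\|_2^2 \lesssim \sigma^2(d + \log(n_1+n_2))\}$, whose complement has probability $\leq \beta/3$ by a sub-Gaussian maximal inequality. A direct second-moment computation over the permutation measure yields $\mathrm{Var}_\pi(U_{n_1,n_2}^\pi\mid \mathrm{data}) \lesssim d\sigma^4/n_1^2$ on $\mathcal{E}$. To upgrade variance to a sub-Gaussian tail, decompose $\pi$ into a sequence of independent adjacent transpositions and apply a bounded-differences/martingale argument in the spirit of \citet{Kim:2022:MinimaxPermutationTests}, yielding $U_{n_1,n_2}^\pi\mid \mathcal{E} \in \mathrm{SG}(Cd\sigma^4/n_1^2)$. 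The condition $n_1 \gtrsim \log\{1/(\alpha\beta)\}$ is used to absorb the $\log(n_1+n_2)$ factor from $\mathcal{E}$ into a constant, and to transfer the conditional sub-Gaussian statement into the unconditional form required by \Cref{sec3:prop:permtestcontrol} (by a union bound with $\mathbb{P}(\mathcal{E}^c)$). Substituting $\Sigma \lesssim d^{1/2}\sigma^2/n_1$ and $\widetilde{\Sigma}' \lesssim d^{1/2}\sigma^2/n_1$ into \eqref{sec2:eq:permtestcontrolSG}, the first term contributes $d^{1/2}\sigma^2 n_1^{-1}[\log\{1/(\alpha\beta)\}]^{1/2}$ and the second contributes $d^{1/2}\sigma^2 n_1^{-1}\log(1/\beta)$, both dominated by the stated $d^{1/2}\sigma^2 n_1^{-1}[\log\{1/(\alpha\beta)\}]^{2}$.

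The hard part will be step (c): obtaining the sub-Gaussian tail for $U_{n_1,n_2}^\pi$ with the sharp $\sqrt{d}$ dependence. A naive Efron--Stein calculation delivers the correct variance proxy but not a sub-Gaussian tail, so one must carefully chain a martingale over a sequence of random swaps while keeping the per-step difference small using only the data event $\mathcal{E}$, and then couple the data-randomness and permutation-randomness arguments so that the unconditional hypotheses of \Cref{sec3:prop:permtestcontrol} are met without losing the $\sqrt{d}$ rate.
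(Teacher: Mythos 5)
Your Step~(a) matches the paper (they use a transposition-invariance argument, you use an equivalent exchangeability argument on ordered $4$-tuples), and Step~(b) is in the right spirit — the paper also decouples the centred degenerate part (controlled by a Hanson--Wright-type bound, via Exercise~6.2.7 in \citealt{Vershynin:2018:HDPBook}) from the linear part involving $\vecbfm{\mu} = \mathbb{E}[\vecbf{Z}] - \mathbb{E}[\vecbf{W}]$; the paper's extra wrinkle is a reduction to equal sample sizes via a random tuple $\vecbf{L}$, which you would also need, since your sub-Gaussian-sum argument for the linear part otherwise only gives scaling in $n_2^{-1/2}$, not $n_1^{-1/2}$. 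But Step~(c) has a genuine gap, and it is exactly at the point you flag as ``the hard part.''

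The claim that, conditional on the data, $U_{n_1,n_2}^\pi$ is $\mathrm{SG}(Cd\sigma^4/n_1^2)$ on the event $\mathcal{E}$ is false under the alternative. Conditioning on the data (however favourable) does not remove the mean shift $\vecbfm{\mu}$; it merely bakes it into the $\widetilde{\vecbf{D}}_i$. Writing $\psi^{\pi}$ for the signed imbalance of group labels after permuting (the quantity the paper calls $\psi_{n_1}^{\vecbf{L},\pi}$), the permuted mean-difference $\bar{\vecbf{Z}}^\pi - \bar{\vecbf{W}}^\pi$ carries a deterministic component $\asymp (\psi^\pi/n_1)\vecbfm{\mu}$, and hence $U_{n_1,n_2}^\pi$ contains a term $\asymp (\psi^\pi)^2\|\vecbfm{\mu}\|_2^2/n_1^2$. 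A degenerate example shows the problem cleanly: take $d=1$, $\sigma \to 0$ (deterministic data), $Z_i \equiv a$, $W_{i'} \equiv b$ with $a \neq b$; then $\mathrm{Var}_\pi(U^\pi \mid \mathrm{data}) \asymp (a-b)^4/n_1^2 = \mathbb{E}[U]^2/n_1^2 > 0$, while your claimed proxy $d\sigma^4/n_1^2 = 0$. No data event $\mathcal{E}$ controls this, because $\psi^\pi$ is a function of $\pi$ alone. Worse for your plan, this contribution has $\chi^2$-type tails in $\pi$: $\psi^\pi$ is $\mathrm{SG}(O(n_1))$, so $(\psi^\pi)^2\|\vecbfm{\mu}\|_2^2/n_1^2$ is sub-exponential with scale $\|\vecbfm{\mu}\|_2^2/n_1 = \mathbb{E}[U]/n_1$, \emph{not} sub-Gaussian. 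Hence $U^\pi$ cannot be established as $\mathrm{SG}$, and the attempted application of \Cref{sec3:prop:permtestcontrol}(ii) collapses; if your Step~(c) were correct, the theorem would in fact be improvable to $[\log\{1/(\alpha\beta)\}]^{1/2}$ in place of $[\log\{1/(\alpha\beta)\}]^2$ and the side-condition $n_1 \gtrsim \log\{1/(\alpha\beta)\}$ would be unnecessary — both of which should have been a warning sign. The paper resolves this by proving \Cref{app:prop:Ustatpermconc}: $U^\pi$ is $\mathrm{SE}(\widetilde{\Sigma})$ with $\widetilde{\Sigma}$ containing the extra term $\mathbb{E}[U]/n_1$, via a direct moment-generating-function analysis using the combinatorial \Cref{app:lem:permcontrol} to show $\psi^\pi$ is $\mathrm{SG}(2n_1)$ (this is the role your ``martingale over adjacent transpositions'' would need to play, but aimed at $\psi^\pi$ rather than directly at $U^\pi$). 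They then invoke \Cref{sec3:prop:permtestcontrol}(i), and the $n_1 \gtrsim \log\{1/(\alpha\beta)\}$ condition falls out of the inequality $\mathbb{E}[U] \geq C(\mathbb{E}[U]/n_1)\log\{1/(\alpha\beta)\}$ rather than from a union bound over a data event as you suggest.
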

            
            In \Cref{sec3:thm:sepcondU}, we see that the sufficient conditions in \eqref{sec2:eq:sepcriteria} for the test to have the desired power depend only logarithmically on the specified type-I and type-II errors without imposing an additional assumption that $n_1 \asymp n_2$. This is an improvement on prior work on permutation tests which we discuss in more detail shortly. Later applications of this theorem will demonstrate it allows us to attain tight minimax rates. We also remark that although we apply the above result only in the setting of privacy, it holds more generally for any samples satisfying the requirements of the theorem, and so can be applied in non-private settings as well, enabling this improvement in a broad range of statistical settings.
            
            Our results of this section build on earlier work of \cite{Berrett:2021:IndepPerm} and \cite{Kim:2022:MinimaxPermutationTests}, who establish the first general techniques of \beame{analysing}{analysing}{analyzing} the optimality of permutation tests. Both \cite{Berrett:2021:IndepPerm} and \cite{Kim:2022:MinimaxPermutationTests} obtain separation conditions on $D(P_X, P_Y)$ which depend polynomially on $\alpha^{-1}$ and $\beta^{-1}$ via applications of Markov's inequality. However, in some cases this polynomial dependence may be undesirable when, for example, carrying out multiple tests and applying a correction as in the case of adaptive testing which we consider in \Cref{sec5:UpperBoundNonIntAdapt}. \cite{Kim:2023:PrivatePermutationTests} show that for two-sample testing, under the assumption $n_1 \asymp n_2$, it is possible to improve these dependencies from polynomial to logarithmic. Our main result of this section instead provides control of the type-II error whilst incurring only logarithmic dependencies, without requiring the assumption that $n_1 \asymp n_2$. This is achieved by a combinatorial argument which provides a finite-sample high probability bound on the difference of the mean of the two groups after permutation, providing stronger concentration results for the permuted test statistic.
    
        \subsection{Lower Bounds on Separation Radii}      
            In the non-private setting, as shown in \cite{Arias:2018:GoodnessofFitDimensionality}, lower bounds on the minimax testing rate of the two-sample testing problem follow immediately from the goodness-of-fit testing problem with one sample. This can be understood from the perspective that the goodness-of-fit problem can be viewed as a limiting case of the two-sample problem where one sample is arbitrarily large. We prove such a correspondence also holds in the setting of local differential privacy. Interestingly, the result holds even in the interactive setting, where the \beame{privatised}{privatized}{privatized} data may no longer be independent or identically distributed, in contrast to the non-private setting.

            In what follows, we denote the minimax separation radii of the goodness-of-fit testing problem, defined formally in \Cref{app:sec:lower}, as $\rho_{\mathrm{NI}}^\ast(n_1, P_0, \varepsilon, D, \alpha, \beta)$ and $\rho_{\mathrm{I}}^\ast(n_1, P_0, \varepsilon, D, \alpha, \beta)$ in the non-interactive and interactive settings respectively.
    
            \begin{lemma} \label{sec3:lem:lowerboundlem}
                Suppose, without loss of generality, that $n_1 \leq n_2$ and let $P_0 \in \mathcal{P}$ be any distribution. Then, for any space of distributions $\mathcal{P}$, metric $D$ on $\mathcal{P}$, $\varepsilon \in (0,1]$ and $\alpha \in (0,1), \beta \in (0, 1 - \alpha)$, the minimax separation radii of the goodness-of-fit and two-sample testing problems satisfy that
                \begin{equation*}
                    \rho_{\mathrm{NI}}^\ast(n_1, P_0, \varepsilon, D, \alpha, \beta)
                    \leq \rho_{\mathrm{NI}}^\ast(n_1, n_2, \varepsilon, D, \alpha, \beta) \mbox{ and }
                    \rho_{\mathrm{I}}^\ast(n_1, P_0, \varepsilon, D, \alpha, \beta)
                    \leq \rho_{\mathrm{I}}^\ast(n_1, n_2, \varepsilon, D, \alpha, \beta).
                \end{equation*}
            \end{lemma}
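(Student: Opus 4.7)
The plan is to reduce the goodness-of-fit problem to the two-sample problem by exploiting the fact that the reference distribution $P_0$ is known and can therefore be simulated by the tester. Fix any $\rho > \rho_{\mathrm{NI}}^\ast(n_1, n_2, \varepsilon, D, \alpha, \beta)$ and choose a non-interactive mechanism $Q \in \mathcal{Q}_\varepsilon$ together with a test $\phi \in \Phi_{n_1, n_2, Q}(\alpha)$ whose worst-case type-II error on $\mathcal{P}_1(D, \rho)$ is at most $\beta$. Given a goodness-of-fit instance with sample $\mathcal{D}_{X, n_1}$ and null $P_0$, construct a goodness-of-fit procedure $(Q', \phi')$ by drawing synthetic samples $Y_1, \ldots, Y_{n_2} \overset{\mathrm{i.i.d.}}{\sim} P_0$, independently of $\mathcal{D}_{X, n_1}$, privatising the combined sample $(X_1, \ldots, X_{n_1}, Y_1, \ldots, Y_{n_2})$ according to $Q$, and applying $\phi$ to the resulting private views.

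I would then verify three properties of this reduction. For \emph{privacy}, the marginal mechanism producing $Z_1, \ldots, Z_{n_1}$ from $X_1, \ldots, X_{n_1}$ alone must remain $\varepsilon$-LDP in the sense of \eqref{sec2:eq:LDPdef}; in the non-interactive case this is immediate since each $Q_{X, i}$ depends only on $X_i$. For \emph{level}, under the goodness-of-fit null $P_X = P_0$ the joint distribution of $(Z_i)_{i \in [n_1]}$ coincides with that obtained under the two-sample null $(P_0, P_0)$, so the resulting rejection probability is at most $\alpha$. For \emph{power}, under an alternative with $D(P_X, P_0) \geq \rho$, the pair $(P_X, P_0)$ belongs to $\mathcal{P}_1(D, \rho)$, so $\mathbb{P}(\phi' = 0) \leq \beta$. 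Thus $\rho_{\mathrm{NI}}^\ast(n_1, P_0, \varepsilon, D, \alpha, \beta) \leq \rho$, and letting $\rho \downarrow \rho_{\mathrm{NI}}^\ast(n_1, n_2, \varepsilon, D, \alpha, \beta)$ delivers the first inequality.

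The interactive claim uses the same construction, but the privacy step is the main obstacle and requires more care. Under \eqref{sec2:eq:LDPdefTwo} the privatisation of $X_i$ is conditioned not only on the past $Z$'s but also on the past $W$'s, and I must check that marginalising over the $W$'s preserves the single-sample $\varepsilon$-LDP ratio bound in \eqref{sec2:eq:LDPdef}. The key observation is that since $Y_1, \ldots, Y_{n_2}$ are generated independently of $X_1, \ldots, X_{n_1}$, a short induction in $i$ shows that the conditional law of $W_1, \ldots, W_{i-1}$ given $Z_1, \ldots, Z_{i-1}$ does not depend on $X_i$. Writing the marginal conditional law of $Z_i$ given $(X_i, Z_{<i})$ as an integral of the full conditional law against this $X_i$-free distribution, and applying \eqref{sec2:eq:LDPdefTwo} pointwise in $(z_{<i}, w_{<i})$, transfers the $\exp(\varepsilon)$ ratio bound to the marginal. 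The level and power arguments are unchanged, and the same limiting argument gives $\rho_{\mathrm{I}}^\ast(n_1, P_0, \varepsilon, D, \alpha, \beta) \leq \rho_{\mathrm{I}}^\ast(n_1, n_2, \varepsilon, D, \alpha, \beta)$.
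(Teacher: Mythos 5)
Your reduction matches the paper's: simulate surrogates $Y_1, \ldots, Y_{n_2} \sim P_0$, push the combined sample through $Q$, and apply $\phi$; the level and power checks are the same.

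The place that needs repair is the interactive privacy argument. You verify the ratio bound for the marginal channel obtained by integrating $Q_{X,i}(\cdot \mid x_i, z_{<i}, w_{<i})$ against the conditional law of $W_{<i}$ given $Z_{<i}$, using the (correct) observation that $W_{<i}$ is conditionally independent of $X_i$ given $Z_{<i}$. But this mixing measure is a posterior that integrates out $X_{<i}$, and hence depends on the unknown $P_X$ once $i \geq 3$; the marginalised kernel is therefore not a $P_X$-free object that can be committed to as a channel. Your opening paragraph already contains the clean fix: have the induced one-sample mechanism release the full tuple of private views, emitting $(Z_i, W_i)$ at round $i$ from input $(X_i, Z_{<i}, W_{<i})$, with the synthetic $Y_i$ and the $W$-channel noise acting as internal randomness. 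Since $W_i$ carries no information about $X_i$ beyond what is in the past output, the ratio bound for the pair follows directly from \eqref{sec2:eq:LDPdefTwo} with no marginalisation, and $W_{n_1+1}, \ldots, W_{n_2}$ are produced by post-processing. This is also what the paper means by an induced mechanism ``obtained by drawing surrogate raw values from $P_0$ and applying the two-sample privacy mechanism $Q$.''

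A minor point: the infimum over $(Q, \phi)$ defining the two-sample risk need not be attained, so for $\rho > \rho_{\mathrm{NI}}^\ast(n_1, n_2, \varepsilon, D, \alpha, \beta)$ you cannot literally choose a pair with worst-case type-II error exactly at most $\beta$. The paper avoids this by bounding the goodness-of-fit risk by the two-sample risk at radius $\tilde\rho$ for every fixed $(Q, \phi)$ and then taking infima, from which the radius inequality follows directly; your version works after inserting a slack $\delta$ and letting $\delta \to 0$.
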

            \noindent
            The proof of the lemma can be found in \Cref{app:sec:lower}.
            \begin{remark}
                Inspecting the proof of \Cref{sec3:lem:lowerboundlem}, we note that no specific property of the LDP setting is used except that the private observations are generated by passing data from an underlying ``true'' distribution through a channel $Q$. Hence, \Cref{sec3:lem:lowerboundlem} extends to others notions of privacy, such as user-level privacy \citep[e.g.][]{Levy:2021:UserLevel} and Gaussian Differential Privacy \citep{Dong:2022:GaussianDP}. More generally, this can apply to other information constraint frameworks, such as communication constraints. Indeed, there is a growing appreciation of unified results across differing families of information constraints \citep[e.g.][]{Acharya:2023:UnifiedLB}, and general results such as these may be of interest and motivate further study.
            \end{remark}
    
            We use this result to inherit existing lower bounds for goodness-of-fit testing under local privacy. However, we note that most existing lower bounds for goodness-of-fit testing under local privacy constraints only ask for the minimum separation such that $\inf\{\rho' > 0 : \mathcal{R}_{n_1, P_0, \varepsilon, D, \rho'}' \leq \gamma\}$ for some fixed constant $\gamma \in (0,1)$. If we restrict $\alpha$ and $\beta$ such that $\alpha + \beta \leq 1/2$, generally a very mild condition, then we have that
            \begin{align*}
                \rho^\ast(n_1, P_0, \varepsilon, D, \alpha, \beta)
                = \inf\{\rho' > 0 : \mathcal{R}_{n_1, P_0, \varepsilon, D, \rho'}' \leq \alpha + \beta\}
                \geq \inf\{\rho' > 0 : \mathcal{R}_{n_1, P_0, \varepsilon, D, \rho'}' \leq 1/2\}.
            \end{align*}
            Hence, we assume throughout this work that $\alpha + \beta \leq 1/2$.
            
    \section{Discrete Case} \label{sec4}
        In this section, we consider the case of discrete data. For some $d \in \mathbb{N}$, we let $\mathcal{X} = \mathcal{Y} = [d]$. For probability vectors $\vecbf{p}_X, \vecbf{p}_Y \in [0,1]^d$, we have that $P_X = \mathrm{Multinom}(\vecbf{p}_X)$ and $P_Y = \mathrm{Multinom}(\vecbf{p}_Y)$. Hence, the testing problem is
        \begin{equation} \label{sec4:eq:Testdef}
            \mathrm{H}_0: \vecbf{p}_X = \vecbf{p}_Y \quad \mbox{vs} \quad \mathrm{H}_1: \|\vecbf{p}_X - \vecbf{p}_Y\|_p \geq \rho,
        \end{equation}
        for $p \in [1,2]$.
        
        We have the following result on the minimax separation radii.
        \begin{theorem}\label{sec4:thm:main}
            Take $n_1 \leq n_2$ without loss of generality and let $p \in [1, 2]$.
            
            (i) Assume $n_1 \geq C\log\{1/(\alpha\beta)\}$ for $C > 0$ some absolute constant. The non-interactive minimax separation radii as defined in \eqref{sec2:eq:minimaxsep} for the testing problem \eqref{sec4:eq:Testdef} satisfy
            \begin{align*}
                \frac{d^{1/p - 1/4}}{(n_1 \varepsilon^2)^{1/2}} \wedge \frac{1}{d^{1 - 1/p}\{\log(d)\}^{1/2}}
                \lesssim
                \rho_{\mathrm{NI}}^\ast(n_1, n_2, \varepsilon, L_p , \alpha, \beta) &\lesssim \frac{d^{1/p - 1/4}}{(n_1 \varepsilon^2)^{1/2}} [\log\{1/(\alpha\beta)\}]^2.
            \end{align*}
            
            (ii)
            Assume $n_1\varepsilon^2 \geq C'\log\{1/(\alpha\beta)\}$ for $C' > 0$ some absolute constant. The interactive minimax separation radii as defined in \eqref{sec2:eq:minimaxsep} for the testing problem \eqref{sec4:eq:Testdef} satisfy
            \begin{align*}
                \frac{d^{1/p - 1/2}}{(n_1 \varepsilon^2)^{1/2}}
                \wedge(d^{1/p - 1}&\mathbbm{1}\{p \notin \{1, 2\}\} + \mathbbm{1}\{p \in \{1, 2\}\}) \\
                &\lesssim \rho_{\mathrm{I}}^\ast(n_1, n_2, \varepsilon, L_p , \alpha, \beta) \lesssim \frac{d^{1/p - 1/2}}{(n_1 \varepsilon^2)^{1/2}} (\{\log(1/\beta)\}^2 + [\log\{1/(\alpha\beta)\}]^{1/2}).
            \end{align*}
        \end{theorem}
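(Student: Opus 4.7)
The plan splits into lower bounds and upper bounds for each of the non-interactive (i) and interactive (ii) settings. The lower bounds are immediate: by Lemma \ref{sec3:lem:lowerboundlem}, the two-sample minimax separation radii are lower bounded by the corresponding goodness-of-fit radii with sample size $n_1$, so I would invoke the existing minimax lower bounds for locally private multinomial goodness-of-fit testing in both the non-interactive and sequentially interactive models, which match the rates stated in the theorem in both $L_1$ and $L_2$ norms (the extra $1/\sqrt{d}$ cap in the non-interactive $L_2$ rate is just the diameter of the probability simplex).

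For the non-interactive upper bound I would construct a bounded, $\varepsilon$-LDP privatisation of the one-hot encoding of each $X_i$ (respectively $Y_{i'}$) that produces a $d$-dimensional view $\vecbf{Z}_i$ (respectively $\vecbf{W}_{i'}$) satisfying: (a) bounded support, hence the vector sub-Gaussianity $\mathrm{SG}(\sigma^2)$ with $\sigma^2 \asymp 1/\varepsilon^2$ required by Theorem \ref{sec3:thm:sepcondU}; and (b) $\mathbb{E}[\vecbf{Z}_i \mid X_i] = c\, \vecbf{e}_{X_i}$ for a known constant $c$. Plugging these views into the $U$-statistic \eqref{sec3:eq:Ustatistic} gives $\mathbb{E}[U_{n_1, n_2}] = c^2 \|\vecbf{p}_X - \vecbf{p}_Y\|_2^2$, and Theorem \ref{sec3:thm:sepcondU} then yields a permutation test with separation condition $\|\vecbf{p}_X - \vecbf{p}_Y\|_2^2 \gtrsim d^{1/2}/(n_1\varepsilon^2) \cdot [\log\{1/(\alpha\beta)\}]^2$, matching the stated $L_2$ rate. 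The $L_1$ upper bound follows from the same test by Cauchy--Schwarz, $\|\vecbf{p}_X - \vecbf{p}_Y\|_1 \leq \sqrt{d}\,\|\vecbf{p}_X - \vecbf{p}_Y\|_2$.

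For the interactive upper bound I would use sample splitting. First, privatise half of each sample non-interactively using the mechanism of the previous paragraph to produce a private estimate $\hat{\vecbf{v}}$ of the rescaled difference $(\vecbf{p}_X - \vecbf{p}_Y)/\|\vecbf{p}_X - \vecbf{p}_Y\|_p$: for the $L_2$ problem take $\hat{\vecbf{v}}$ to be the normalised empirical difference, and for the $L_1$ problem take $\hat{\vecbf{v}}$ to be its coordinatewise sign in $\{-1,0,+1\}^d$. For each individual in the second half the mechanism interactively releases a bounded, scalar randomised-response view $\tilde{Z}_i$ of $\hat{\vecbf{v}}^T \vecbf{e}_{X_i}$, with $\mathbb{E}[\tilde{Z}_i \mid X_i, \hat{\vecbf{v}}] \propto \hat{\vecbf{v}}^T \vecbf{e}_{X_i}$ and $\mathrm{SG}(1/\varepsilon^2)$ tails. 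Conditional on $\hat{\vecbf{v}}$, the resulting scalar $U$-statistic has mean proportional to $\{\hat{\vecbf{v}}^T(\vecbf{p}_X - \vecbf{p}_Y)\}^2$, and Theorem \ref{sec3:thm:sepcondU} applied with $d=1$ delivers the dimension-free rate $1/(n_1\varepsilon^2)^{1/2}$ up to logarithmic factors, which by the sign-encoding choice of $\hat{\vecbf{v}}$ in the $L_1$ case translates into $d^{1/2}/(n_1\varepsilon^2)^{1/2}$.

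The principal obstacle will be the interactive construction. One must show that on an event of probability at least $1 - \beta/2$ the first-stage estimate is sufficiently well-aligned with the true difference, i.e.\ $|\hat{\vecbf{v}}^T(\vecbf{p}_X - \vecbf{p}_Y)| \gtrsim \|\vecbf{p}_X - \vecbf{p}_Y\|_p$, whenever the $L_p$ separation exceeds the target threshold; and one must verify that the permutation test applied to the second-stage views retains its exact type-I control. The former is a quantitative concentration argument for a private sign/direction estimate built from the first-stage non-interactive views. The latter relies on the fact that, conditional on $\hat{\vecbf{v}}$, the second-stage views are independent and identically distributed within each group under the null, so that permutation invariance holds conditionally and Theorem \ref{sec3:thm:sepcondU} applies after conditioning; integrating over the first-stage randomness and taking a union bound over the two failure events of probability at most $\beta/2$ each then completes the interactive bound.
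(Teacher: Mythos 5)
Your lower bound argument and non-interactive upper bound are essentially the paper's: Lemma~\ref{sec3:lem:lowerboundlem} plus the goodness-of-fit lower bounds of \citet{Berrett:2020:Faster} for the lower bound, and a bounded $\varepsilon$-LDP encoding of the one-hot vectors (the paper uses unary encoding~\eqref{sec4:eq:UE}, which is exactly what you describe up to normalisation), followed by the $U$-statistic and Theorem~\ref{sec3:thm:sepcondU}, for the non-interactive upper bound. No issues there.

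The interactive upper bound is where your proposal breaks down, and there are two distinct problems.

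First, the normalisation of $\hat{\vecbf{v}}$ destroys the dimension-free rate. At the claimed threshold $\|\vecbf{p}_X - \vecbf{p}_Y\|_2 \asymp (n_1\varepsilon^2)^{-1/2}$, the first-stage estimate $\hat{\vecbf{p}}_X - \hat{\vecbf{p}}_Y$ has $L_2$ estimation error of order $(d/(n_1\varepsilon^2))^{1/2}$, which dominates the true signal by a factor of $\sqrt{d}$. The normalised direction $\hat{\vecbf{v}}$ is therefore essentially a random unit vector, and a short calculation shows that $|\hat{\vecbf{v}}^T(\vecbf{p}_X - \vecbf{p}_Y)| = O(\|\vecbf{p}_X - \vecbf{p}_Y\|_2 / \sqrt{d})$ with high probability. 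Your claimed alignment condition $|\hat{\vecbf{v}}^T(\vecbf{p}_X - \vecbf{p}_Y)| \gtrsim \|\vecbf{p}_X - \vecbf{p}_Y\|_p$ consequently fails unless $\|\vecbf{p}_X - \vecbf{p}_Y\|_2 \gtrsim (d/(n_1\varepsilon^2))^{1/2}$, giving a spurious $\sqrt{d}$ factor and failing to match the stated bound. The same issue arises for the sign estimate in the $L_1$ case. The paper avoids this precisely by \emph{not} normalising: it uses the raw first-stage estimate $\hat{p}_{X,j} - \hat{p}_{Y,j}$ truncated to $[-\tau,\tau]$ with fixed $\tau = (n_1\varepsilon^2)^{-1/2}$ (see~\eqref{sec4:eq:UEint2}), so that the second-stage linear statistic has mean close to $\sum_j (p_{X,j}-p_{Y,j})^2 \min\{1, \tau/|p_{X,j}-p_{Y,j}|\}$ (the quantity $D_\tau$ of~\eqref{app:eq:Dtau}), which is a strictly positive quadratic signal requiring no directional alignment whatsoever.

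Second, your use of Theorem~\ref{sec3:thm:sepcondU} with $d=1$ cannot deliver the $[\log\{1/(\alpha\beta)\}]^{1/2}$ dependence in the statement. That theorem gives a sub-exponential concentration argument for the permuted $U$-statistic, producing a $[\log\{1/(\alpha\beta)\}]^2$ factor on $\mathbb{E}[U]$, hence $\log\{1/(\alpha\beta)\}$ on the separation. The paper instead uses the linear statistic~\eqref{sec4:eq:intstatdisc}, whose permuted version is shown in Lemma~\ref{app:lem:linstatpermSG} to be sub-Gaussian, which in combination with the sub-Gaussian branch~(ii) of Proposition~\ref{sec3:prop:permtestcontrol} is exactly what yields the improved $[\log\{1/(\alpha\beta)\}]^{1/2}$ term in the claimed bound. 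Passing through Theorem~\ref{sec3:thm:sepcondU} forgoes that improvement, which the paper explicitly flags as a feature of the interactive setting.

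A third, more technical point: the stronger sample-size assumption $n_1\varepsilon^2 \geq C'\log\{1/(\alpha\beta)\}$ in part~(ii) (as opposed to $n_1 \geq C\log\{1/(\alpha\beta)\}$ in part~(i)) arises from the truncation-induced bias in the paper's statistic, which is controlled via a Berry--Esseen argument and the negative-association property of the unary-encoding coordinates (Lemma~\ref{app:lem:UENA}). Your proposal does not engage with this bias at all, which is another indication that the normalised-direction route is not what the proof requires.
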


        The upper bounds of the theorem follow from the testing procedures we construct in \Cref{sec4:UpperBoundNonInt} and \Cref{sec4:UpperBoundInt}, the theoretical performance of which are \beame{analysed}{analysed}{analyzed} in \Cref{app:sec:UBDiscrete}. Proofs of the lower bounds are contained in \Cref{app:sec:disclbproof}.

        Comparing the upper and lower bounds, we see that our proposed tests are minimax rate-optimal for interactive tests when $p \in \{1, 2\}$ and non-interactive tests when $p = 1$ up to the logarithmic factor in the type-I and type-II errors $\alpha, \beta$. This logarithmic dependence holds for arbitrary $n_1, n_2$, an improvement on both the requirement $n_1 \asymp n_2$ \citep{Kim:2023:PrivatePermutationTests}, or the polynomial dependence incurred by relaxing this assumption \citep{Berrett:2021:USP, Kim:2022:MinimaxPermutationTests}. We observe that the dependence on the type-I error $\alpha$ is improved from $O(\{\log(1/\alpha)\}^2)$ to $O(\{\log(1/\alpha)\}^{1/2})$ under interactivity. This improvement follows from the stronger concentration properties of the permuted test statistic under interactivity.

        We also note the improvement in the dependence on $d$ between the non-interactive and interactive settings, in particular the dimensional penalty being completely eliminated in the case of $L_2$-separation in the interactive setting. This matches the behaviour seen in goodness-of-fit testing problems under interactive LDP as in \cite{Berrett:2020:Faster}, and public-coin settings as in \cite{Acharya:2020:I, Acharya:2021:III}. Interestingly, this match between the two problems differs from what is observed in the non-private setting which we now discuss.

        Indeed, in the non-private setting the separation rate with respect to the $L_1$-norm is different between the goodness-of-fit and two-sample settings. There, the rates are $d^{1/4}/n_1^{1/2}$ and $\max\{d^{1/2}/(n_1^{1/2}n_2^{1/4})$, $d^{1/4}/n_1^{1/2}\}$ respectively \citep{Bhattacharya:2015:l1TwoSample, Diakonikolas:2016:l1TwoSample}, with the two-sample problem being strictly harder and depending crucially on the size of the larger sample. We see from our results that in neither the non-interactive nor interactive setting do we observe this behaviour, with the problems of LDP goodness-of-fit testing and two-sample testing remaining equally difficult in a minimax sense.

        Regarding results with respect to the $L_2$-norm in the non-private setting, the optimal separation rate without privacy constraints is $n_1^{-1/2}$ in the goodness-of-fit setting \citep[Proposition~11]{Chan:2014:DiscreteTesting}. The interactive rate under $L_2$-separation matches this up to a reduction of the effective sample size from $n_1$ to $n_1\varepsilon^2$, showing that the dimensional penalty of the problem does not worsen under privacy in contrast to the other private separation radii in \Cref{sec4:thm:main}.

        Regarding the lower bound in the interactive case, we observe that the test is optimal, up to logarithmic factors, for $p \in \{1, 2\}$, provided that $n_1\varepsilon^2 \geq d^{1/p - 1/2}$ which is natural as this is required for the lower bound to not be bounded away from zero. The lower bound is more delicate in the case $p \in (1, 2)$, as it is possible for $d^{1/p - 1} \leq d^{1/p - 1/2}/(n_1\varepsilon^2)^{1/2}$, and hence the second term in the minimum is selected. This term arises when ensuring the lower bound construction is valid, and the failure of the standard constructions for $p \in \{1, 2\}$ employed in prior literature (which only consider $p \in \{1, 2\}$) to provide tight rates is an interesting topic for future research. Nevertheless, this gap is resolved under the mild condition $n_1\varepsilon^2 \geq d$.

        We end by noting that the lower bound on the $L_p$-separation in the non-interactive setting with $p \neq 1$ does not match the upper bound in all regimes of $n_1\varepsilon^2$ and $d$. This phenomenon when $p = 2$ is an outstanding problem of goodness-of-fit testing problems under local differential privacy in the literature, and one that we inherit through \Cref{sec3:lem:lowerboundlem}, with the same gap arising across two different approaches to obtain the lower bound as in \cite{Berrett:2020:Faster} and \cite{Acharya:2020:I}. Whilst these works only consider $p \in \{1, 2\}$, our more general results for $p \in [1, 2]$ show that this in fact arises for all $p \neq 1$. Nevertheless, this gap is also resolved under the mild condition that $n_1\varepsilon^2 \geq d^{3/2}/\log(d)$.

        \begin{remark}[Comparisons with \cite{Mun:2025:LocalPerm}]
            Comparing with \cite{Mun:2025:LocalPerm}, in terms of the $L_2$-separation results in the non-interactive case, we are able to obtain logarithmic dependence on the type-I and type-II errors without the further assumptions made in \cite{Mun:2025:LocalPerm}. It is also worth noting that our results with respect to general $L_p$-separation in the non-interactive setting, and the results for the interactive setting, are new to the literature.
        \end{remark}

        \subsection{Testing Procedure (Non-Interactive)} \label{sec4:UpperBoundNonInt}
            We provide a non-interactive testing procedure which uses data \beame{privatised}{privatized}{privatized} by the unary-encoding mechanism (see, for example, \citealt{Wang:2017:UnaryEncoding} for a summary of this technique) to obtain a scaled estimate of $\|\vecbf{p}_X - \vecbf{p}_Y\|_2$.

            To implement unary-encoding, define for $j \in \mathbb{N}$ the functions $R_j : \mathbb{N} \times [0,1] \rightarrow \{0, 1\}$ where
            \begin{equation} \label{sec4:eq:UEFunc}
                \begin{aligned}
                    R_j(x, u)
                    = \mathbbm{1}\{X_i = j\}\mathbbm{1}&\{u \leq \exp(\varepsilon/2)/(\exp(\varepsilon/2) + 1)\} \\
                    &+ \mathbbm{1}\{X_i \neq j\}\mathbbm{1}\{u > \exp(\varepsilon/2)/(\exp(\varepsilon/2) + 1)\}.
                \end{aligned}
            \end{equation}
            The algorithm is then as follows.
            \begin{enumerate}
                \item
                    Set desired type-I error $\alpha$. For $i \in [n_1]$, $i' \in [n_2]$ and $j \in [d]$, let $V_{i,j}^{(X)}, V_{i',j}^{(Y)} \overset{\mathrm{i.i.d.}}{\sim} \mathrm{Unif}[0,1]$ and let $\vecbf{Z}_i, \vecbf{W}_{i'} \in \mathbb{R}^d$ with $j$-th co-ordinates
                    \begin{equation} \label{sec4:eq:UE}
                        Z_{i,j} = R_j(X_i, V_{i, j}^{(X)}), \mbox{ and } W_{i',j} = R_j(Y_{i'}, V_{i', j}^{(Y)}).
                    \end{equation}
                \item
                    Calculate the test statistic $U_{n_1, n_2} = U_{n_1, n_2}(\widetilde{\mathcal{D}}_{X, n_1}, \widetilde{\mathcal{D}}_{Y, n_2})$ as in \eqref{sec3:eq:Ustatistic}.
                \item
                    Sample $B \in \mathbb{N}$ permutations $\Pi_B = \{\pi_1, \hdots, \pi_B\}$ from $S_{n_1 + n_2}$ uniformly at random. Write $\widetilde{\mathcal{D}}_{n_1, n_2} = \{\widetilde{\vecbf{D}}_1, \hdots, \widetilde{\vecbf{D}}_{n_1}, \widetilde{\vecbf{D}}_{n_1 + 1}, \hdots, \widetilde{\vecbf{D}}_{n_1 + n_2}\}$ where, for $i \in [n_1]$ and $i' \in [n_2]$, $\widetilde{\vecbf{D}}_{i} = \vecbf{Z}_i$ and $\widetilde{\vecbf{D}}_{n_1 + i'} = \vecbf{W}_{i'}$. For $b \in [B]$, calculate the test statistic on the permuted data $U_{n_1, n_2}^{\pi_b}(\widetilde{\mathcal{D}}_{n_1, n_2})$ as in \eqref{sec3:eq:permutedUstatistic}.
                \item
                    Return the $p$-value $p(U_{n_1, n_2}, \Pi_B)$ as in \eqref{sec2:def:permpval}, and denote the test outcome $\mathbbm{1}\{p(U_{n_1, n_2}, \Pi_B) \leq \alpha\}$.
            \end{enumerate}
            The \beame{privatisation}{privatization}{privatization} in \eqref{sec4:eq:UE} satisfies $\varepsilon$-LDP by, for example, \citet[Section~3.2]{Duchi:2013:Minimax}. Additionally, by the post-processing property \Cref{sec2:prop:postprocessing}, no privacy leakage occurs when permuting the \beame{privatised}{privatized}{privatized} data.

            As the test statistic $U_{n_1, n_2}$ takes the form of a $U$-statistic, for the proof of the performance of this testing procedure we \beame{utilise}{utilize}{utilize} our general result \Cref{sec3:thm:sepcondU} for tests based on $U$-statistics. In particular, the statistic $U_{n_1, n_2}$ constitutes an unbiased estimator of the rescaled $L_2$-norm $[\{\exp(\varepsilon/2) - 1\}/\{\exp(\varepsilon/2) + 1\}]\|\vecbf{p}_X - \vecbf{p}_Y \|_2^2$, and so will be able to detect deviation from the null hypothesis, and importantly is precisely zero under the null.

            We remark that we do not use the common \beame{privatisation}{privatization}{privatization} method for discrete data of \beame{Randomised}{Randomized}{Randomized} Response \citep{Warner:1965:RR, Christofides:2003:GRR} which would constitute \beame{privatising}{privatizing}{privatizing} a value in $[d]$ by mapping it to some value, possibly different, also in $[d]$. This would result in sub-optimal dependence on the dimension scaling as $O(d^{5/4})$ instead of the optimal $O(d^{3/4})$, in the goodness-of-fit testing problem \citep[see][]{Sheffet:2018:LDPTesting, Acharya:2021:III}. This motivates our decision to use the more sophisticated, and harder to \beame{analyse}{analyse}{analyze} method of unary-encoding as in \eqref{sec4:eq:UE}.

            Finally, numerical simulations exploring the performance of this testing procedure are conducted in \Cref{sec6:disc}.
            
        \subsection{Testing Procedure (Interactive)} \label{sec4:UpperBoundInt}
            We now provide an interactive testing procedure, first introducing the necessary notation. For convenience, we assume that the total number of observations in each of the two samples is even, in particular $2n_1$ and $2n_2$ respectively. This does not change the final minimax rate except up to a constant. Denote the following samples
            \begin{equation}
                \begin{aligned} \label{sec4:eq:sampledefs}
                    &\mathcal{D}_{X, n_1} = \{X_1, \hdots, X_{n_1}\}, \quad
                    \mathcal{D}_{X, n_1}' = \{X_1', \hdots, X_{n_1}'\}, \\
                    &\mathcal{D}_{Y, n_2} = \{Y_{1}, \hdots, Y_{n_2}\}, \quad
                    \mathcal{D}_{Y, n_2}' = \{Y_{1}', \hdots, Y_{n_2}'\},
                \end{aligned}
            \end{equation}
            where the samples of \beame{privatised}{privatized}{privatized} data are denoted $\widetilde{\mathcal{D}}_{X, n_1}, \widetilde{\mathcal{D}}_{X, n_1}', \widetilde{\mathcal{D}}_{Y, n_2}, \widetilde{\mathcal{D}}_{Y, n_2}'$ analogously.

            To implement Randomised Response, define the function $R_0: \mathbb{R} \times [0, \infty) \times [0,1] \rightarrow \mathbb{R}$ where
            \begin{equation} \label{sec4:eq:RRFunc}
                R_0(x, t, u) = t c_\varepsilon \mathbbm{1}\bigg\{u \leq \frac{1}{2}\bigg(1 + \frac{\Pi_{[-t, t]}(x)}{t c_\varepsilon}\bigg)\bigg\} - t c_\varepsilon \mathbbm{1}\bigg\{u > \frac{1}{2}\bigg(1 + \frac{\Pi_{[-t, t]}(x)}{t c_\varepsilon}\bigg)\bigg\},
            \end{equation}
            where $c_\varepsilon = \{\exp(\varepsilon) + 1\}/\{\exp(\varepsilon) - 1\}$.
            The algorithm is then as follows.
            \begin{enumerate}
                \item
                    Set desired type-I error $\alpha$. For $i \in [n_1]$, $i' \in [n_2]$ and $j \in [d]$, let $V_{i,j}'^{(X)}, V_{i',j}'^{(Y)} \overset{\mathrm{i.i.d.}}{\sim} \mathrm{Unif}[0,1]$ and let $\vecbf{Z}_{i}', \vecbf{W}_{i'}' \in \mathbb{R}^d$ with $j$-th co-ordinates
                    \begin{equation} \label{sec4:eq:UEint}
                        Z_{i,j}' = R_j(X_i', V_{i, j}'^{(X)}),\; \mbox{and } W_{i',j}' = R_j(Y_{i'}', V_{i', j}'^{(Y)}),
                    \end{equation}
                    where $R_j$ is as in \eqref{sec4:eq:UEFunc}.
                \item
                    Calculate the estimates
                    \begin{equation} \label{sec4:eq:UEpmfests}
                        \begin{aligned}
                            &\hat{\vecbf{p}}_X = \frac{1}{n_1} \frac{\exp(\varepsilon/2) + 1}{\exp(\varepsilon/2) - 1}\sum_{i = 1}^{n_1} \bigg( \vecbf{Z}_{i}' - \frac{\vecbfm{1}}{\exp(\varepsilon/2) + 1} \bigg),\; \mbox{and} \\
                            &\hat{\vecbf{p}}_Y = \frac{1}{n_2} \frac{\exp(\varepsilon/2) + 1}{\exp(\varepsilon/2) - 1}\sum_{i' = 1}^{n_2} \bigg( \vecbf{W}_{i'}' - \frac{\vecbfm{1}}{\exp(\varepsilon/2) + 1} \bigg),
                        \end{aligned}
                    \end{equation}
                    where $\vecbfm{1}$ is the vector of ones.
                \item 
                    For $i \in [n_1]$ and $i' \in [n_2]$, let $V_{i}^{(X)}, V_{i'}^{(Y)} \overset{\mathrm{i.i.d.}}{\sim} \mathrm{Unif}[0,1]$ and construct
                    \begin{equation} \label{sec4:eq:UEint2}
                        Z_i = R_0(\hat{p}_{X, X_i} - \hat{p}_{Y, X_i}, \tau, V_{i}^{(X)}),\; \mbox{and } W_{i'} = R_0(\hat{p}_{X, Y_{i'}} - \hat{p}_{Y, Y_{i'}}, \tau, V_{i'}^{(Y)})
                    \end{equation}
                    where $\tau = 1/(n_1\varepsilon^2)^{1/2}$. Denote the \beame{privatised}{privatized}{privatized} samples $\widetilde{\mathcal{D}}_{X, n_1} = \{Z_1, \hdots, Z_{n_1}\}$ and $\widetilde{\mathcal{D}}_{Y, n_2} = \{W_1, \hdots, W_{n_2}\}$.
                \item 
                    Calculate the test statistic
                    \begin{equation}
                        T_{n_1, n_2}(\widetilde{\mathcal{D}}_{X, n_1}, \widetilde{\mathcal{D}}_{Y, n_2}) = \frac{1}{n_1} \sum_{i = 1}^{n_1} Z_i - \frac{1}{n_2} \sum_{i' = 1}^{n_2} W_{i'}. \label{sec4:eq:intstatdisc}
                    \end{equation}
                \item
                   Sample $B \in \mathbb{N}$ permutations $\Pi_B = \{\pi_1, \hdots, \pi_B\}$ from $S_{n_1 + n_2}$ uniformly at random. Write $\widetilde{\mathcal{D}}_{n_1, n_2} = \{\widetilde{D}_1, \hdots, \widetilde{D}_{n_1}, \widetilde{D}_{n_1 + 1}, \hdots, \widetilde{D}_{n_1 + n_2}\}$ where, for $i \in [n_1]$ and $i' \in [n_2]$, we denote $\widetilde{D}_{i} = Z_i, \widetilde{D}_{n_1 + i'} = W_{i'}$. For $b \in [B]$, calculate the test statistic on the permuted data
                    \begin{equation}
                        T_{n_1, n_2}^{\pi_b}(\widetilde{\mathcal{D}}_{n_1, n_2}) =  \frac{1}{n_1} \sum_{i = 1}^{n_1} \widetilde{D}_{\pi_b(i)} - \frac{1}{n_2} \sum_{i' = 1}^{n_2} \widetilde{D}_{\pi_b(n_1 + i')}. \label{sec4:eq:intstatdiscperm}
                    \end{equation}
                \item
                    Return the $p$-value $p(T_{n_1, n_2}, \Pi_B)$ as in \eqref{sec2:def:permpval}, and denote the test outcome $\mathbbm{1}\{p(T_{n_1, n_2}, \Pi_B) \leq \alpha\}$.
            \end{enumerate}
            The privatisation in \eqref{sec4:eq:UEint} satisfies $\varepsilon$-LDP by, for example, \citet[Section~3.2]{Duchi:2013:Minimax}, and \eqref{sec4:eq:UEint2} satisfies $\varepsilon$-LDP as an instance of binary \beame{randomised}{randomized}{randomized} response \citep[e.g.~Lemma~2.5][]{Gopi:2020:LDPHypoSelection}. Additionally, by the post-processing property \Cref{sec2:prop:postprocessing}, no privacy leakage occurs when permuting the \beame{privatised}{privatized}{privatized} data.

            The interactive procedure first uses an initial fold of the data to obtain crude estimates of the distributions of the two samples. Then, these estimates are used to inform the \beame{privatisation}{privatization}{privatization} mechanism for the second fold, yielding univariate responses which can sufficiently estimate the separation of the distributions using only a rescaled binary output. This reduction in dimension is the key to the improved rate, significantly reducing the magnitude of injected noise necessary to ensure privacy. 
            
            This procedure is similar to the interactive goodness-of-fit test in \cite{Berrett:2020:Faster}, but the two-sample setting significantly complicates the analysis, as does our goal to obtain logarithmic dependence on the type-I and type-II errors. Key to attaining this logarithmic dependence is a result which shows that the co-ordinates of the vectors arising from the unary-encoding \beame{privatisation}{privatization}{privatization} procedure \eqref{sec4:eq:UEint} satisfy the property of negative association \citep{Joag:1983:NA}. In particular, the co-ordinates of the resulting vector are not independent, making it difficult to \beame{analyse}{analyse}{analyze} the combined behaviour of the estimates \eqref{sec4:eq:UEpmfests}. Demonstrating the negative association property for this \beame{privatisation}{privatization}{privatization} procedure in \Cref{app:lem:UENA} lets us bypass this dependence issue by providing a bound on the expectation of a product that is at least as good as what could be attained under independence, allowing us to show that the test statistic $T_{n_1, n_2}$ is sub-exponential. More generally, this negative association result enables the proof of strong concentration properties of the unary-encoding \beame{privatisation}{privatization}{privatization} procedure which may be of independent interest.

            Numerical simulations exploring the performance of this testing procedure are conducted in \Cref{sec6:disc}. Further, in practice the truncation width $\tau$ in \eqref{sec4:eq:UEint2} can be varied up to a constant. The sensitivity of the testing procedure to changes in this truncation width is explored in \Cref{sec6:sensitivity}.
            
        \section{Continuous Case} \label{sec5}
            In this section, we consider the case of continuous observations where we take $\mathcal{X} = \mathcal{Y} = [0,1]^d$ for some $d \in \mathbb{N}$ and where the distributions $P_X, P_Y$ of interest have corresponding density functions $f_X, f_Y$ satisfying a Sobolev regularity condition. We consider an ellipsoid in a Sobolev space \citep[e.g.~Equation~(1.91) in][]{Tsybakov:2009:Book} defined as follows. For $R > 0$, let
            \begin{equation} \label{sec5:eq:VectorBall}
                \mathbb{N}_0 = \{0\} \cup \mathbb{N} \quad \mbox{and} \quad
                \mathbb{N}_0^d(R) = \{\vecbf{l} \in \mathbb{N}_0^d : 0 < \| \vecbf{l} \|_2 \leq R\}.
            \end{equation}
            Denote the orthonormal trigonometric basis
            \begin{equation} \label{sec5:eq:trigbasis}
                \varphi_0(x) = 1, \quad \varphi_{2j-1}(x) = 2^{1/2}\sin(2 \pi j x) \quad \mbox{and} \quad \varphi_{2j}(x) = 2^{1/2}\cos(2 \pi j x), \quad j \in \mathbb{N},
            \end{equation}
            and, for $\vecbf{l} \in \mathbb{N}_0^d(R)$, write
            \begin{equation} \label{sec5:eq:trigbasisalldim}
                \varphi_{\vecbf{l}}(\vecbf{x}) = \prod_{j = 1}^d \varphi_{l_j}(x_j).
            \end{equation}
            Given a smoothness parameter $s > 0$ and a constant radius $r > 0$, a Sobolev class of functions of smoothness $s$ and radius $r$ is given by
            \begin{equation} \label{sec5:eq:sobolev}
                \mathcal{S}_{s, r, d} = \bigg\{ f \in L^2([0,1]^d) : \, f = 1 + \sum_{\vecbf{l} \in \mathbb{N}_0^d} \theta_{\vecbf{l}} \varphi_{\vecbf{l}},\; \sum_{\vecbf{l} \in \mathbb{N}_0^d} \| \vecbf{l} \|_2^{2s} \theta_{\vecbf{l}}^2 \leq r^2,\; \theta_{\vecbf{l}} \in \mathbb{R} \; \forall \, \vecbf{l} \in \mathbb{N}_0^d(R) \bigg\}.
            \end{equation}
            Restricting to density functions, consider the sub-class of Sobolev densities
            \begin{align*}
                \mathcal{F}_{s, r, d} = \bigg\{ f \in \mathcal{S}_{s, r, d} : \, f \geq 0, \, \int f(\vecbf{x}) \diff{\vecbf{x}} = 1 \bigg\}.
            \end{align*}
            Hence, the testing problem is
            \begin{equation} \label{sec5:eq:Testdef}
                \mathrm{H}_0: f_X = f_Y \quad \mbox{vs} \quad \mathrm{H}_1: \|f_X - f_Y\|_p \geq \rho,
            \end{equation}
            for $p \in [1, 2]$.
            We have the following result.
            \begin{theorem}\label{sec5:thm:main}
                Take $n_1 \leq n_2$ without loss of generality and fix constant $s, r > 0$. Assume that $n_1\varepsilon^2 \geq C\log\{1/(\alpha\beta)\}$ for $C > 0$ some absolute constant. The following results hold for $p \in [1, 2]$.
                
                (i) The non-interactive minimax separation radii as defined in \eqref{sec2:eq:minimaxsep} for the testing problem \eqref{sec5:eq:Testdef} satisfy
                \begin{align*}
                    c_{r, s, d}\{\log(n_1\varepsilon^2)\}^{-1}\bigg(\frac{1}{n_1\varepsilon^2}\bigg)^{2s/(4s+3d)}
                    \leq \rho_{\mathrm{NI}}^\ast(&n_1, n_2, \varepsilon, L_p , \alpha, \beta) \\
                    &\leq C_{r, s, d}\bigg(\frac{[\log\{1/(\alpha\beta)\}]^2}{n_1\varepsilon^2}\bigg)^{2s/(4s+3d)},
                \end{align*}
                where $c_{r, s, d}, C_{r, s, d} > 0$ are constants depending on $r, s$ and $d$.

                (ii) The interactive minimax separation radii as defined in \eqref{sec2:eq:minimaxsep} for the testing problem \eqref{sec5:eq:Testdef} satisfy
                \begin{align*}
                    c_{r, s, d}'\{\log(n_1\varepsilon^2)\}^{-1} \bigg(\frac{1}{n_1\varepsilon^2}\bigg)^{2s/(4s+2d)}
                    \leq \rho_{\mathrm{I}}^\ast(&n_1, n_2, \varepsilon, L_p , \alpha, \beta) \\
                    &\leq C_{r, s, d}'\bigg(\frac{[\log(n_2/\beta)\log\{1/(\alpha\beta)\}]^2}{n_1\varepsilon^2}\bigg)^{2s/(4s+2d)},
                \end{align*}
                where $c_{r, s, d}', C_{r, s, d}' > 0$ are constants depending on $r, s$ and $d$.
            \end{theorem}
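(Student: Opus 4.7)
The plan is to mirror the strategy used in the discrete case of \Cref{sec4:thm:main}. Lower bounds follow immediately from \Cref{sec3:lem:lowerboundlem} by inheriting the corresponding private minimax lower bounds for Sobolev-density goodness-of-fit testing (of order $(n_1\varepsilon^2)^{-2s/(4s+3)}$ non-interactively and $(n_1\varepsilon^2)^{-2s/(4s+2)}$ interactively, available in the Sobolev private goodness-of-fit literature). Since $\|f_X-f_Y\|_1\leq \|f_X-f_Y\|_2$ on $[0,1]$ by Cauchy--Schwarz, the $L_1$ upper bounds follow directly from the $L_2$ ones, while the $L_2$ lower bounds come from perturbation families (e.g.\ constructed from the basis \eqref{sec5:eq:trigbasis}) for which the two norms are comparable. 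Hence the real work is to construct $L_2$-separation tests in both privacy regimes.

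For the non-interactive upper bound, I would project each sample to the first $K$ trigonometric basis functions \eqref{sec5:eq:trigbasis} to produce $\varphi(X_i)\in\mathbb{R}^K$ and $\varphi(Y_{i'})\in\mathbb{R}^K$, privatize via an $\varepsilon$-LDP mechanism (for instance a uniformly sampled single-coordinate release combined with a randomized-response step) calibrated so that the resulting $\vecbf{Z}_i, \vecbf{W}_{i'}$ are sub-Gaussian with per-coordinate parameter $\sigma^2 \asymp K/\varepsilon^2$, and then run the permutation test based on the $U$-statistic \eqref{sec3:eq:Ustatistic}. By construction $\mathbb{E}[U_{n_1,n_2}]=\|\theta_X-\theta_Y\|_{K,2}^2 \geq \rho^2 - r^2 K^{-2s}$ where $\theta_{X,j}=\mathbb{E}[\varphi_j(X)]$ and the Sobolev constraint \eqref{sec5:eq:sobolev} bounds the truncation bias. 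Applying \Cref{sec3:thm:sepcondU} with $d=K$ turns the separation requirement into $\rho^2 \gtrsim K^{-2s} + K^{3/2}/(n_1\varepsilon^2)$ up to a $[\log\{1/(\alpha\beta)\}]^2$ penalty. Balancing the two terms yields $K \asymp (n_1\varepsilon^2)^{2/(4s+3)}$ and $\rho \asymp (n_1\varepsilon^2)^{-2s/(4s+3)}$, matching the claimed upper bound.

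For the interactive upper bound, I would adapt the two-fold scheme of \Cref{sec4:UpperBoundInt}. On the first halves of the samples, run the non-interactive procedure above to obtain rough projection estimates $\hat f_X - \hat f_Y$ onto the first $K$ basis functions. On the second halves, each remaining user evaluates the scalar $(\hat f_X - \hat f_Y)(X_i)$ or $(\hat f_X - \hat f_Y)(Y_{i'})$, truncates it to $[-\tau,\tau]$ with $\tau$ of order $(n_1\varepsilon^2)^{-1/2}$ up to logarithmic factors, and privatizes it via the binary randomized response \eqref{sec4:eq:RRFunc}. The final test statistic is the difference of sample means \eqref{sec4:eq:intstatdisc} calibrated by the permutation procedure of \Cref{sec2:sec:permutationtest}. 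Conditioning on the first fold, \Cref{sec3:prop:permtestcontrol}(i) together with control of $\|\hat f_X - \hat f_Y\|_\infty$ on a high-probability event reduces the separation requirement to $\rho^2 \gtrsim K^{-2s} + K/(n_1\varepsilon^2)$, which balances at $K \asymp (n_1\varepsilon^2)^{1/(2s+1)}$ and $\rho \asymp (n_1\varepsilon^2)^{-2s/(4s+2)}$, matching the stated interactive rate.

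The hardest part will be engineering the non-interactive privatization so that the per-coordinate sub-Gaussian parameter scales as $K/\varepsilon^2$ rather than $K^2/\varepsilon^2$: a naive coordinate-wise Laplace mechanism has $L_1$-sensitivity linear in $K$ and would lose a factor of $K$ in the eventual rate, so a more careful single-coordinate or sketch-based release is required and its sub-Gaussian moments must be verified. On the interactive side, the analogous difficulty is showing a uniform bound on the plug-in $\hat f_X - \hat f_Y$ that keeps the truncation bias at $\pm\tau$ negligible relative to the signal; this high-probability sup-norm control is what introduces the additional $\log(n_2/\beta)$ factor in the interactive upper bound. The remaining pieces---the Sobolev truncation identity $\sum_{j>K}(\theta_{X,j}-\theta_{Y,j})^2\leq r^2 K^{-2s}$, the $L_1\leq L_2$ reduction, and the sub-exponential concentration of the permuted statistic---follow routinely from the machinery already developed in \Cref{sec3} and \Cref{sec4}.
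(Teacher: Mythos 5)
Your non-interactive plan is essentially the paper's: project onto the first $M$ trigonometric basis functions, privatize so that the $\mathbb{R}^M$-valued output is sub-Gaussian with variance proxy of order $M/\varepsilon^2$ rather than $M^2/\varepsilon^2$ (the paper achieves this with the hypercube-sampling mechanisms of \citet[Eq.~26]{Duchi:2018:DJW} for odd $M$ and \citet[Eq.~3.18]{Li:2023:Thesis} for even $M$), invoke \Cref{sec3:thm:sepcondU} with $d=M$, and balance the Sobolev truncation bias $r^2 M^{-2s}$ against the variance term $M^{3/2}/(n_1\varepsilon^2)$. For the lower bounds your plan is missing a step: private goodness-of-fit lower bounds are not stated in the literature for Sobolev classes, so one must first observe that, for integer $s$, the H\"older ball $\mathcal{H}_{s,r\pi^s}$ is contained in the Sobolev ellipsoid $\mathcal{S}_{s,r}$ (Tsybakov, Defn.~1.11 and Prop.~1.14) and then import the H\"older lower bounds via \Cref{sec3:lem:lowerboundlem}. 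Your $L_1\leq L_2$ reduction for the upper bounds and the remark that the hard lower-bound instances make the two norms comparable are both correct.

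The serious gap is in your interactive construction. You propose a single two-stage procedure that mirrors \Cref{sec4:UpperBoundInt}: each second-fold user computes the plug-in scalar $\tilde X_i = \sum_{l\leq M}(\hat\theta_{X,l}-\hat\theta_{Y,l})\varphi_l(X_i)$, truncates it to $[-\tau,\tau]$ with $\tau\asymp(n_1\varepsilon^2)^{-1/2}$ (up to logs), and privatizes via randomized response. This fails. In the discrete case the per-user quantity is a single coordinate look-up $\hat p_{X,X_i}-\hat p_{Y,X_i}$, whose fluctuations around a small true difference are of order $(n_1\varepsilon^2)^{-1/2}$; moreover the co-ordinates of the unary-encoding estimator are negatively associated (\Cref{app:lem:UENA}), which is what lets the paper prove sub-exponential concentration of the truncated statistic at that truncation level. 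In the continuous case the per-user quantity is a sum of $M$ dependent terms, and its typical magnitude scales like $M^{1/2}L_{2,M}$, where $L_{2,M}=\{\sum_{l\leq M}(\theta_{X,l}-\theta_{Y,l})^2\}^{1/2}$. Since the minimax-critical separation is $\rho\asymp(n_1\varepsilon^2)^{-2s/(4s+2)}\gg(n_1\varepsilon^2)^{-1/2}$ for every $s>0$, truncating at $(n_1\varepsilon^2)^{-1/2}$ would clip away essentially all of the signal. The correct truncation level is of order $M^{1/2}L_{2,M}$, which depends on the \emph{unknown} quantity $L_{2,M}$, and the paper explicitly notes that no negative-association argument is available to replace it. The paper's resolution is therefore structurally different from yours: a first test $\phi_{\mathrm{trunc}}$ that runs the Laplace-noise truncated procedure over a dyadic grid of truncation levels $\eta_j$ (each a candidate for $M^{1/2}L_{2,M}$) with a Bonferroni correction, combined with a second ``local'' test $\phi_{\mathrm{local}}$ that picks the coordinate $l^\ast$ maximising $|\hat\theta_{X,l}-\hat\theta_{Y,l}|$ and runs a one-dimensional $U$-statistic test on it, to cover the regime where $L_{2,M}$ is too large for the grid. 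Without this two-test combination, a single truncated procedure with a fixed $\tau$ cannot attain the stated interactive rate, so the key construction in part (ii) of the theorem is absent from your plan.

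Two smaller inaccuracies: the privatization of the truncated plug-in values in the first interactive sub-procedure is via the Laplace mechanism \eqref{sec5:eq:contMethod1NotTrunc}, not the binary randomized-response map $R_0$; and the $\log(n_2/\beta)$ factor arises from a union bound over the $O(n_2)$ events $\{|\tilde X_i|\leq\eta\}$ and $\{|\tilde Y_{i'}|\leq\eta\}$ used to show the truncation is inactive with high probability, rather than from a sup-norm bound on $\hat f_X-\hat f_Y$ as such.
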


            The upper bounds in the theorem follow from the testing procedures we construct in \Cref{sec5:UpperBoundNonInt} and \Cref{sec5:UpperBoundInt}, the performance of which are \beame{analysed}{analysed}{analyzed} in \Cref{app:sec:ContUB}. The proof of the lower bounds are contained in \Cref{app:sec:contlbproof}.

            Inspecting the upper and lower bounds, we see that we obtain matching upper and lower bounds on both the non-interactive and interactive separation radii with respect to the $L_p$-norm, up to the logarithmic factor of the type-I and type-II errors $\alpha, \beta$, and a factor logarithmic in $n_2$ in the interactive setting. As with the discrete setting in \Cref{sec4}, this demonstrates an improvement on existing work on permutation tests, private or non-private, which requires the assumption $n_1 \asymp n_2$ when aiming for logarithmic dependence on the type-I and type-II errors.
            
            We see that in both the non-interactive and interactive settings, the $L_p$-separation rates are the same for all $p \in [1, 2]$, unlike in the setting with discrete data. Considering the upper bounds in particular, this follows from the fact that, by H\"{o}lder's inequality, the $L_2$-norm of a function is lower bounded by the $L_1$-norm for functions supported on $[0,1]$. Further, this inequality still holds, up to some constant in $d$, for functions supported on any bounded subset of $\mathbb{R}^d$. This assumption that the function is supported on a bounded subset of $\mathbb{R}^d$ is common in both LDP non-parametric estimation \citep[e.g.][]{Duchi:2018:DJW, Li:2023:Robust} and testing problems \citep[e.g.][]{Lam-Weil:2022:GOFtest}. Nevertheless, there is some literature for testing under local privacy constraints which does not make this assumption \citep{Dubois:2019:GOFtest}, and exploring the difference, if any, of $L_p$-separation radii for various values of $p$ under this setting remains an interesting open problem, as only $L_1$-separation is considered therein.

            Focusing on the difference between the non-interactive and interactive rates, we observe an improved polynomial exponent. Such an improvement is also observed in the goodness-of-fit setting under a H\"{o}lder assumption \citep{Dubois:2019:GOFtest}. However, the tests we construct differ significantly, and the way interactivity is implemented differs between the two settings. We note the additional logarithmic dependence on the larger sample $n_2$ in the upper bound for the interactive rate. This arises during the analysis due to a union bound argument controlling $O(n_1 + n_2) = O(n_2)$ many events using sub-Gaussian concentration arguments, leading to this logarithmic factor. As the benefits of a larger sample will be generally exhausted before the size of the larger sample is exponentially larger than the smaller one, the presence of this term does will not cause issues with unbalanced samples in practice.
            
        \begin{remark}[Comparisons with \cite{Mun:2025:LocalPerm}]
            Comparing with \cite{Mun:2025:LocalPerm}, we first note that we make differing smoothness assumptions. In \cite{Mun:2025:LocalPerm}, both H\"{o}lder and Besov conditions are considered, which enable a binning argument after which one may apply the result for discrete distributions. We consider a Sobolev condition which does not lend itself to such a \beame{discretisation}{discretization}{discretization} approach. Nevertheless, we are again able to obtain logarithmic dependence on the type-I and type-II errors without the further assumptions made in \cite{Mun:2025:LocalPerm}. We further obtain results with respect to general $L_p$-separation and in the interactive setting which are new to the literature.
        \end{remark}

        \begin{remark}
            We consider comparing our results for densities satisfying the Sobolev condition \eqref{sec5:eq:sobolev} to those based on a Besov condition as in \cite{Lam-Weil:2022:GOFtest} and also \cite{Mun:2025:LocalPerm}. The definition of a Besov space using moduli of smoothness can be shown to include a Sobolev space as a special case \citep[see e.g.~Section~4.3 in][]{Gine:2021:InfiniteDimensionalBook}. Hence, in some cases results for Besov spaces are adapted for Sobolev spaces \citep[e.g.~Section~4.1][]{Li:2023:Robust}. However, in \cite{Lam-Weil:2022:GOFtest} and \cite{Mun:2025:LocalPerm} the Besov condition considered is based on an ellipsoid condition for the coefficients of an expansion using Haar wavelets, for which an equivalence with the definition based on moduli of smoothness holds for only a small range of the smoothness parameter. To be precise, the Haar wavelet basis is $1$-regular \citep[Definition~4.2.14 in][]{Gine:2021:InfiniteDimensionalBook}, and so equivalence only holds for smoothness $s < 1$ \citep[Theorem~4.3.2 in][]{Gine:2021:InfiniteDimensionalBook}. Therefore, our results under the Sobolev condition \eqref{sec5:eq:sobolev} with smoothness parameters $s \geq 1$ are not immediately comparable to work based on Besov-type conditions, even in the common setting of $s = 1$ analogous to a Lipschitz condition.
        \end{remark}
        
        \subsection{Testing Procedure (Non-Interactive)}  \label{sec5:UpperBoundNonInt}
            Via the Sobolev regularity assumption, we are able to reduce from an infinite dimensional to finite dimensional problem as follows: letting $R > 0$ be specified later, denote $V = |\mathbb{N}_0^d(R)|$. We then write $\vecbfm{\theta}_{X, 1:V} = (\theta_{X, \vecbf{l}} : \vecbf{l} \in \mathbb{N}_0^d(R))^T$ and likewise for $\vecbfm{\theta}_{Y, 1:V}$ with some arbitrary fixed indexing on the $\vecbf{l}$. We have via the orthonormality of the basis \eqref{sec5:eq:trigbasis} that
            \begin{align}
                \|f_X - f_Y\|_2^2
                = \sum_{\vecbf{l} \in \mathbb{N}_0^d} (\theta_{X, \vecbf{l}} - \theta_{Y, \vecbf{l}})^2
                &= \sum_{\vecbf{l} \in \mathbb{N}_0^d(R)} (\theta_{X, \vecbf{l}} - \theta_{Y,\vecbf{l}})^2 + \sum_{\vecbf{l} \in \mathbb{N}_0^d \setminus \mathbb{N}_0^d(R)} (\theta_{X, \vecbf{l}} - \theta_{Y, \vecbf{l}})^2 \nonumber \\
                &\leq \|\vecbfm{\theta}_{X, 1:V} - \vecbfm{\theta}_{Y, 1:V} \|_2^2 + \frac{4r^2}{R^{2s}}, \label{sec5:eq:SobolevError}
            \end{align}
            where the last inequality follows from the Sobolev smoothness condition \eqref{sec5:eq:sobolev}. We see that the second term in the final expression corresponds to a bias term arising due to the truncation of the sequence of coefficients, whilst a greater value of $R$ would reduce this bias term at the expense of a larger variance when estimating the first term. We explore the dependence on $R$ in numerical simulations in \Cref{sec6:cont}. We further observe that under the alternative hypothesis where $\|f_X - f_Y\|_2^2$ is large, the term $\|\vecbfm{\theta}_{X, 1:V} - \vecbfm{\theta}_{Y, 1:V} \|_2^2$ will also be large. Hence, we first seek to privately estimate the value of the multivariate quantities $\vecbfm{\theta}_{X, 1:V}$ and $\vecbfm{\theta}_{Y, 1:V}$, then apply a similar test statistic to \Cref{sec4:UpperBoundNonInt} which provides an estimate of $\|\vecbfm{\theta}_{X, 1:V} - \vecbfm{\theta}_{Y, 1:V} \|_2^2$.
            
            In the following algorithm, we use \beame{privatisation}{privatization}{privatization} mechanisms for vectors confined to a sup-norm ball of fixed radius from \cite{Duchi:2018:DJW} and \cite{Li:2023:Thesis}. Note for any $\vecbf{l} \in \mathbb{N}_0^d(R)$, we have $\sup_{\vecbf{x}} |\varphi_{\vecbf{l}}(\vecbf{x})| \leq 2^{d/2}$, which allows us to apply the aforementioned \beame{privatisation}{privatization}{privatization} mechanisms. In what follows, we make use of the following function. We define $R_0^\ast: \mathbb{R} \times [0, \infty) \times [0,1] \rightarrow \mathbb{R}$ where
            \begin{equation} \label{sec5:eq:RRFunc}
                R_0^\ast(x, t, u) = tc_\varepsilon \mathbbm{1}\bigg\{u \leq \frac{1}{2}\bigg(1 + \frac{x}{t}\bigg)\bigg\} - t c_\varepsilon \mathbbm{1}\bigg\{u > \frac{1}{2}\bigg(1 + \frac{x}{t}\bigg)\bigg\},
            \end{equation}

            The algorithm is \beame{formalised}{formalized}{formalized} as follows.
            \begin{enumerate}
                \item
                    Set $R = \{n_1\varepsilon^2/[\log\{1/(\alpha\beta)\}]\}^{1/(2s + 3d/2)}$. Then, for $i \in [n_1], i' \in [n_2]$ and $\vecbf{l} \in \mathbb{N}_0^d(R)$, construct the vectors $\hat{\vecbfm{\theta}}_{X, 1:V}^{(i)}$ and $\hat{\vecbfm{\theta}}_{Y, 1:V}^{(i')}$ with co-ordinates as
                    \begin{equation} \label{sec4:eq:basistermestimate}
                        \hat{\theta}_{X, \vecbf{l}}^{(i)} = \varphi_\vecbf{l}(\vecbf{X}_i), \mbox{ and } \hat{\theta}_{Y, \vecbf{l}}^{(i')} = \varphi_\vecbf{l}(\vecbf{Y}_{i'}),
                    \end{equation}
                    where $\varphi_{\vecbf{l}}$ is as in \eqref{sec5:eq:trigbasis}.
                \item 
                    For $i \in [n_1]$ and $i' \in [n_2]$, publish $\varepsilon$-private views $\vecbf{Z}_i$, $\vecbf{W}_{i'}$ of $\hat{\vecbfm{\theta}}_{X, 1:V}^{(i)}$ and $\hat{\vecbfm{\theta}}_{Y, 1:V}^{(i')}$ respectively via the following procedure:

                    For $\vecbf{l} \in \mathbb{N}_0^d(R)$, let $V_{i, \vecbf{l}}^{(X)}, V_{i', \vecbf{l}}^{(Y)} \overset{\mathrm{i.i.d.}}{\sim} \mathrm{Unif}[0,1]$ and let $\tilde{\vecbf{Z}}_i, \tilde{\vecbf{W}}_i \in \{-2^{d/2}, 2^{d/2}\}^V$ with co-ordinates
                    \begin{equation} \label{sec4:eq:nonintcoefviews}
                        \tilde{Z}_{i, \vecbf{l}} = R_0^\ast(\hat{\theta}_{X, \vecbf{l}}^{(i)}, 2^{d/2}, V_{i,\vecbf{l}}^{(X)}), \mbox{ and } \tilde{W}_{i', \vecbf{l}} = R_0^\ast(\hat{\theta}_{Y, \vecbf{l}}^{(i')}, 2^{d/2}, V_{i',\vecbf{l}}^{(Y)}),
                    \end{equation}
                    with the function $R_0^\ast$ as in \eqref{sec5:eq:RRFunc}.

                    Obtain the \beame{privatised}{privatized}{privatized} samples $\widetilde{\mathcal{D}}_{X, n_1} = \{\vecbf{Z}_1, \hdots, \vecbf{Z}_{n_1}\}$ and $\widetilde{\mathcal{D}}_{Y, n_2} = \{\vecbf{W}_1, \hdots, \vecbf{W}_{n_2}\}$ using the sampling mechanisms \eqref{app:eq:oddMsamplemech} and \eqref{app:eq:evenMsamplemech} in the cases of odd $V$ and even $V$ respectively.

                \item
                    Calculate the test statistic $U_{n_1, n_2} = U_{n_1, n_2}(\widetilde{\mathcal{D}}_{X, n_1}, \widetilde{\mathcal{D}}_{Y, n_2})$ as in \eqref{sec3:eq:Ustatistic}.
                \item
                    Sample $B \in \mathbb{N}$ permutations $\Pi_B = \{\pi_1, \hdots, \pi_B\}$ from $S_{n_1 + n_2}$ uniformly at random. Write $\widetilde{\mathcal{D}}_{n_1, n_2} = \{\widetilde{\vecbf{D}}_1, \hdots, \widetilde{\vecbf{D}}_{n_1}, \widetilde{\vecbf{D}}_{n_1 + 1}, \hdots, \widetilde{\vecbf{D}}_{n_1 + n_2}\}$ where, for $i \in [n_1]$ and $i' \in [n_2]$, $\widetilde{\vecbf{D}}_{i} = \vecbf{Z}_i$ and $\widetilde{\vecbf{D}}_{n_1 + i'} = \vecbf{W}_{i'}$. For $b \in [B]$, calculate the test statistic on the permuted data $U_{n_1, n_2}^{\pi_b}(\widetilde{\mathcal{D}}_{n_1, n_2})$ as in \eqref{sec3:eq:permutedUstatistic}.
                \item
                    Return the $p$-value $p(U_{n_1, n_2}, \Pi_B)$ as in \eqref{sec2:def:permpval}, and denote the test outcome $\mathbbm{1}\{p(U_{n_1, n_2}, \Pi_B) \leq \alpha\}$.
            \end{enumerate}
            We first discuss the choice of \beame{privatisation}{privatization}{privatization} mechanisms in Step~2. The method for odd $V$ \citep[Equation~26 in][]{Duchi:2018:DJW} and even $V$ \citep[Equation~3.18 in][]{Li:2023:Thesis} both satisfy $\varepsilon$-LDP and are unbiased as shown in their respective references. The reason for different mechanisms based on the parity of $V$ is as follows: In the case that $V$ is odd, the mechanism of \cite{Duchi:2018:DJW} is shown to be sub-Gaussian in \cite{Li:2023:Thesis}. For the case of even $V$, the same argument does not hold, and so instead the sub-Gaussianity of a slightly different mechanism is proved, based on that of \cite{Dubois:2021:thesis}. By choosing between these two mechanisms, we have that the private output of Step~2 is sub-Gaussian for all $V$, allowing us to satisfy the conditions of \Cref{sec3:thm:sepcondU}. Finally, by the post-processing property \Cref{sec2:prop:postprocessing}, the permuted data remains $\varepsilon$-LDP.

            We now give intuition for this procedure. We carry out an LDP mean estimation procedure to estimate the basis coefficients. We then use these to estimate the $L_2$-norm and use large values of this estimator as evidence against the null. Regarding the truncation radius $R$ determining the number of coefficients considered, less approximation bias is introduced as $R$ grows, but the variance associated with the higher dimensional mean estimation problem is the greater. The choice of $R$ in Step~1 is chosen to optimally balance these bias and variance terms.

            Numerical simulations exploring the performance of this testing procedure are conducted in \Cref{sec6:cont}, in particular the behaviour as we vary the truncation radius $R$.

        \subsection{Testing Procedure (Interactive)}
            \label{sec5:UpperBoundInt}
            To obtain a testing procedure for the interactive case with the desired performance, we require a combination of multiple tests. We first give an overview of each test before \beame{formalising}{formalizing}{formalizing} the algorithms.

            The first test mimics the interactive procedure in the discrete setting, obtaining a rough estimate of the distributions and using truncation to limit the required magnitude of injected noise for privacy. This enables users in the second half to estimate the $L_2$-norm using univariate quantities, reducing the noise incurred due to \beame{privatisation}{privatization}{privatization}. However, to be able to obtain the desired power guarantee, the unknown $L_2$-norm would have to be used as the truncation threshold. We circumvent this issue by using an adaptive procedure to carry out this test over a grid of candidate values of the unknown $L_2$-norm.
            
            We then introduce a second interactive test to avoid the issue of testing over an arbitrarily large grid in the previous test. This test restricts to some index $\vecbf{l}^\ast \in \mathbb{N}_0^d$ and estimates the difference $|\theta_{X, \vecbf{l}^\ast} - \theta_{Y, \vecbf{l}^\ast}|$, a non-zero value being an indication of departure from the null. The index $\vecbf{l}^\ast$ is chosen as the index corresponding to the maximal difference in an initial rough estimate of all the candidate differences. For this procedure to be successful, at least one difference must be sufficiently large so that it can be detected with high probability. This condition is satisfied once the $L_2$-norm itself is sufficiently large.

            We now \beame{formalise}{formalize}{formalize} these testing procedures. We split our samples used within each individual test as in~\eqref{sec4:eq:sampledefs}.

            \noindent
            \textbf{First Procedure:}
            Given a value $\eta > 0$ and desired type-I error guarantee $\alpha$, we construct the test $\phi_{\mathrm{trunc}, \alpha}^{\eta}$ as follows.
            \begin{enumerate}
                \item
                    Set desired type-I error $\alpha$ and type-II error $\beta$. Set
                    \begin{equation} \label{sec5:eq:trunc1}
                        R = [n_1\varepsilon^2/\{\log(4n_2/\beta)[\log\{1/(\alpha\beta)\}]^2\}]^{1/(2s + d)}.
                    \end{equation}
                    Partition the observations in $\mathcal{D}_{X, n_1}'$ and $\mathcal{D}_{Y, n_1}'$ into $V = |\mathbb{N}_0^d(R)|$ equally sized samples as follows: For $\vecbf{l} \in \mathbb{N}_0^d(R)$, and some bijective indexing $\iota : \mathbb{N}_0^d(R) \rightarrow [V]$, denote the index sets
                    \begin{equation} \label{sec5:eq:SamplePartition}
                        \begin{aligned}
                            N_{X, \vecbf{l}} &= \{
                            (\iota(\vecbf{l}) - 1)\lfloor n_1/V \rfloor + 1, \hdots, \iota(\vecbf{l})\lfloor n_1/V \rfloor\}, \\
                            N_{Y, \vecbf{l}} &= \{(\iota(\vecbf{l}) - 1)\lfloor n_2/V \rfloor + 1, \hdots, \iota(\vecbf{l})\lfloor n_2/V \rfloor\},
                        \end{aligned}
                    \end{equation}
                    and define the partitions $\mathcal{D}_{X, n_1}^{(\vecbf{l})} = \{\vecbf{X}_i'\}_{i \in N_{X,\vecbf{l}}}$ and  $\mathcal{D}_{Y, n_2}^{(\vecbf{l})} = \{\vecbf{Y}_{i'}'\}_{i' \in N_{Y,\vecbf{l}}}$.
                \item
                    For $\vecbf{l} \in \mathbb{N}_0^d(R)$, $i \in N_{X,\vecbf{l}}$, and $i' \in N_{Y,\vecbf{l}}$, let $V_{i}'^{(X)}, V_{i'}'^{(Y)} \overset{\mathrm{i.i.d.}}{\sim} \mathrm{Unif}[0,1]$ and construct
                    \begin{equation} \label{sec5:eq:RRsobcoeffestsProc1}
                        Z_{i}' = R_0(\varphi_{\vecbf{l}}(\vecbf{X}_i'), 2^{d/2}, V_{i}'^{(X)}),\; \mbox{and } W_{i'}' = R_0(\varphi_{\vecbf{l}}(\vecbf{Y}_{i'}'), 2^{d/2} V_{i'}'^{(Y)}),
                    \end{equation}
                    where $R_0$ is as in \eqref{sec4:eq:RRFunc}.
                \item
                    For $\vecbf{l} \in \mathbb{N}_0^d(R)$, calculate the private estimates of the $\vecbf{l}$-th coefficient
                    \begin{equation} \label{sec5:eq:RRsobcoeffestscombProc1}
                        \hat{\theta}_{X,\vecbf{l}} = \frac{V}{n_1} \sum_{i \in N_{X, \vecbf{l}}}  Z_i',\; \mbox{and} \quad
                        \hat{\theta}_{Y,\vecbf{l}} = \frac{V}{n_2} \sum_{i' \in N_{Y, \vecbf{l}}}  W_{i'}'.
                    \end{equation}
                \item 
                    For $i \in [n_1]$, $i' \in [n_2]$, construct the private views
                    \begin{equation} \label{sec5:eq:contMethod1NotTrunc}
                        \begin{aligned} 
                            Z_i &= \Pi_{[-\eta, \eta]}\bigg\{\sum_{\vecbf{l} \in \mathbb{N}_0^d(R)} (\hat{\theta}_{X,\vecbf{l}} - \hat{\theta}_{Y,\vecbf{l}})\varphi_\vecbf{l}(\vecbf{X}_i)\bigg\} + \frac{2\eta}{\varepsilon}\zeta_i,\; \mbox{and} \\
                            W_{i'} &= \Pi_{[-\eta, \eta]}\bigg\{\sum_{\vecbf{l} \in \mathbb{N}_0^d(R)} (\hat{\theta}_{X,\vecbf{l}} - \hat{\theta}_{Y,\vecbf{l}})\varphi_\vecbf{l}(\vecbf{Y}_i)\bigg\} + \frac{2\eta}{\varepsilon}\xi_{i'},
                        \end{aligned}
                    \end{equation}
                    where $\zeta_i$ and $\xi_{i'}$ are i.i.d.~standard Laplace random variables. Denote the \beame{privatised}{privatized}{privatized} samples $\widetilde{\mathcal{D}}_{X, n_1} = \{Z_1, \hdots, Z_{n_1}\}$ and $\widetilde{\mathcal{D}}_{Y, n_2} = \{W_1, \hdots, W_{n_2}\}$.
                \item 
                    Calculate the test statistic $T_{n_1, n_2}(\widetilde{\mathcal{D}}_{X, n_1}, \widetilde{\mathcal{D}}_{Y, n_2})$ as in \eqref{sec4:eq:intstatdisc}.
                \item
                    Sample $B \in \mathbb{N}$ permutations $\Pi_B = \{\pi_1, \hdots, \pi_B\}$ from $S_{n_1 + n_2}$ uniformly at random. Write $\widetilde{\mathcal{D}}_{n_1, n_2} = \{\widetilde{D}_1, \hdots, \widetilde{D}_{n_1}, \widetilde{D}_{n_1 + 1}, \hdots, \widetilde{D}_{n_1 + n_2}\}$ where, for $i \in [n_1]$ and $i' \in [n_2]$, we denote $\widetilde{D}_{i} = Z_i, \widetilde{D}_{n_1 + i'} = W_{i'}$. For $b \in [B]$, calculate the test statistic on the permuted data $T_{n_1, n_2}^{\pi_b}(\widetilde{\mathcal{D}}_{n_1, n_2})$ as in \eqref{sec4:eq:intstatdiscperm}.
                \item
                    Return the $p$-value $p(T_{n_1, n_2}, \Pi_B)$ as in     \eqref{sec2:def:permpval}, and denote the outcome of the test by $\phi_{\mathrm{trunc}, \alpha}^{\eta} = \mathbbm{1}\{p(T_{n_1, n_2}, \Pi_B) \leq \alpha\}$.
            \end{enumerate}
            The \beame{privatisation}{privatization}{privatization} in \eqref{sec5:eq:RRsobcoeffestsProc1} satisfies $\varepsilon$-LDP by Equation~4.1 in \cite{Rohde:2020:Geometrizing}, and note that truncation does not occur as $\sup_{\vecbf{x}} |\varphi_{\vecbf{l}}(\vecbf{x})| \leq 2^{d/2}$. The \beame{privatisation}{privatization}{privatization} in \eqref{sec5:eq:contMethod1NotTrunc} satisfies $\varepsilon$-LDP as an instance of the Laplace mechanism \citep[e.g.][]{Dwork:2006:CalibratingNoise}. The post-processing property \Cref{sec2:prop:postprocessing} ensures the permuted data remains $\varepsilon$-LDP.

            Comparing this procedure to the interactive procedure in the discrete case in \Cref{sec4:UpperBoundInt}, a key difference is that the initial rough estimate is only over a finite truncation of the basis expansion as per \eqref{sec5:eq:sobolev}. Further, the estimator produced by each user in the second stage is a sum of dependent terms, complicating concentration arguments and precluding an easy analysis. Unlike the discrete setting, we do not have an analogous negative association result. Nevertheless, we are able to produce an optimal testing procedure by \beame{utilising}{utilizing}{utilizing} truncation and calibrating the noise added for privacy to this truncation level. 

            \noindent
            \textbf{Second Procedure:}
            For a desired type-I error guarantee $\alpha$, we construct the test $\phi_{\mathrm{local}, \alpha}$.
            \begin{enumerate}
                \item
                    Set desired type-I error $\alpha$ and type-II error $\beta$. Set
                    \begin{equation} \label{sec5:eq:trunc2}
                        R = [n_1\varepsilon^2/\{\log(4n_2/\beta)[\log\{1/(\alpha\beta)\}]^2\}]^{1/(2s + 1)}.
                    \end{equation}
                    For $\vecbf{l} \in \mathbb{N}_0^d(R)$, partition the observations in $\mathcal{D}_{X, n_1}'$ and $\mathcal{D}_{Y, n_1}'$ into $V = |\mathbb{N}_0^d(R)|$ equally sized partitions as in \eqref{sec5:eq:SamplePartition} yielding $N_{X, \vecbf{l}}$, $N_{Y,\vecbf{l}}$.
                \item
                    For $\vecbf{l} \in \mathbb{N}_0^d(R)$, $i \in N_{X, \vecbf{l}}$, and $i' \in N_{Y, \vecbf{l}}$, let $V_{i}'^{(X)}, V_{i'}'^{(Y)} \overset{\mathrm{i.i.d.}}{\sim} \mathrm{Unif}[0,1]$ and construct
                    \begin{equation} \label{sec5:eq:RRsobcoeffests}
                        Z_{i}' = R_0(\varphi_{\vecbf{l}}(X_i'), 2^{d/2}, V_{i}'^{(X)}),\; \mbox{and } W_{i'}' = R_0(\varphi_{\vecbf{l}}(Y_{i'}'), 2^{d/2}, V_{i'}'^{(Y)}),
                    \end{equation}
                    where $R_0$ is as in \eqref{sec4:eq:RRFunc}.
                \item
                    For $\vecbf{l} \in \mathbb{N}_0^d(R)$, calculate the private estimates of the $\vecbf{l}$-th coefficient as in \eqref{sec5:eq:RRsobcoeffestscombProc1}, yielding $\hat{\theta}_{X,\vecbf{l}}$, $\hat{\theta}_{Y,\vecbf{l}}$. Let $\vecbf{l}^\ast = \argmax_{\vecbf{l} \in \mathbb{N}_0^d(R)}\{|\hat{\theta}_{X,\vecbf{l}} - \hat{\theta}_{Y,\vecbf{l}}|\}$.
                \item 
                    For $i \in [n_1]$, $i' \in [n_2]$, let $V_{i}^{(X)}, V_{i'}^{(Y)} \overset{\mathrm{i.i.d.}}{\sim} \mathrm{Unif}[0,1]$ and construct
                    \begin{equation} \label{sec5:eq:localProcSelected}
                        Z_{i} = R_0(\varphi_{\vecbf{l}^\ast}(X_i), 2^{d/2}, V_{i}^{(X)}),\; \mbox{and } W_{i'} = R_0(\varphi_{\vecbf{l}^\ast}(Y_{i'}), 2^{d/2}, V_{i'}^{(Y)}),
                    \end{equation}
                    Denote the \beame{privatised}{privatized}{privatized} samples $\widetilde{\mathcal{D}}_{X, n_1} = \{Z_1, \hdots, Z_{n_1}\}$ and $\widetilde{\mathcal{D}}_{Y, n_2} = \{W_1, \hdots, W_{n_2}\}$.
                \item
                    Calculate the test statistic $U_{n_1, n_2} = U_{n_1, n_2}(\widetilde{\mathcal{D}}_{X, n_1}, \widetilde{\mathcal{D}}_{Y, n_2})$ as in \eqref{sec3:eq:Ustatistic}.
                \item
                    Sample $B \in \mathbb{N}$ permutations $\Pi_B = \{\pi_1, \hdots, \pi_B\}$ from $S_{n_1 + n_2}$ uniformly at random. Write $\widetilde{\mathcal{D}}_{n_1, n_2} = \{\widetilde{\vecbf{D}}_1, \hdots, \widetilde{\vecbf{D}}_{n_1}, \widetilde{\vecbf{D}}_{n_1 + 1}, \hdots, \widetilde{\vecbf{D}}_{n_1 + n_2}\}$ where, for $i \in [n_1]$ and $i' \in [n_2]$, $\widetilde{\vecbf{D}}_{i} = \vecbf{Z}_i$ and $\widetilde{\vecbf{D}}_{n_1 + i'} = \vecbf{W}_{i'}$. For $b \in [B]$, calculate the test statistic on the permuted data $U_{n_1, n_2}^{\pi_b}(\widetilde{\mathcal{D}}_{n_1, n_2})$ as in \eqref{sec3:eq:permutedUstatistic}.
                \item
                    Return the $p$-value $p(U_{n_1, n_2}, \Pi_B)$ as in     \eqref{sec2:def:permpval}, and denote the outcome of the test by $\phi_{\mathrm{local}, \alpha} = \mathbbm{1}\{p(U_{n_1, n_2}, \Pi_B) \leq \alpha\}$.
            \end{enumerate}
            The \beame{privatisation}{privatization}{privatization} in both \eqref{sec5:eq:RRsobcoeffests} and \eqref{sec5:eq:localProcSelected} satisfy $\varepsilon$-LDP by Equation~4.1 in \cite{Rohde:2020:Geometrizing}, and note that truncation never occurs as $\sup_\vecbf{x} |\varphi_{\vecbf{l}}(\vecbf{x})| \leq 2^{d/2}$. The permuted data remains $\varepsilon$-LDP by \Cref{sec2:prop:postprocessing}.

            With these tests constructed, we now combine them to form the following interactive test.
            
            \noindent
            \textbf{Combined Test:}
            \begin{enumerate}
                \item Set desired type-I error $\alpha$ and type-II error $\beta$. Denote $J = \{1, \hdots, \log_2(V)\}$. For $j \in J$, let
                \begin{equation} \label{sec5:eq:truncvals}
                    \eta_j = \bigg\{ \frac{C_{r, s, d} 2^{j} V^2[\log\{2|J|/(\alpha\beta)\}]^2\{\log(4n_2/\beta)\}^2}{n_1\varepsilon^2/(2|J|)} \bigg\}^{1/2},
                \end{equation}
                for $C_{r, s, d} > 0$ some sufficiently large constant depending on $r, s$ and $d$.

                Partition the observations in $\mathcal{D}_{X, n_1}$ and $\mathcal{D}_{Y, n_2}$ into $|J|$ equally sized samples as follows: For $l \in [|J|]$, denote the index sets
                \begin{equation}
                    \label{sec5:eq:combsamplesplit}
                    \begin{aligned}
                        N_{X, l} &= \{\lfloor n_1/2 \rfloor + 1 + (l-1)\lfloor n_1/|J| \rfloor + 1, \hdots, \lfloor n_1/2 \rfloor + 1 + l\lfloor n_1/|J| \rfloor\}, \\
                        N_{Y, l} &= \{\lfloor n_2/2 \rfloor + 1 + (l-1)\lfloor n_2/|J| \rfloor + 1, \hdots, \lfloor n_2/2 \rfloor + 1 + l\lfloor n_2/|J| \rfloor\},
                    \end{aligned}
                \end{equation}
                and
                \begin{equation*}
                        N_{X, 0} = \{1, \hdots, \lfloor n_1/2 \rfloor\}, \;
                        N_{Y, 0} = \{1, \hdots, \lfloor n_2/2 \rfloor\}.
                \end{equation*}
                For $k \in \{0, 1, \hdots, |J|\}$, define the partitions $\mathcal{D}_{X, n_1}^{(k)} = \{X_i\}_{i \in N_{X,\vecbf{l}}}$ and  $\mathcal{D}_{Y, n_2}^{(k)} = \{Y_{i'}\}_{i' \in N_{Y,\vecbf{l}}}$.

                \item
                For $j \in J$, using the samples $\mathcal{D}_{X, n_1}^{(j)}$ and $\mathcal{D}_{Y, n_2}^{(j)}$, carry out the first procedure with type-I error level $\alpha/(2|J|)$ and truncation level $\eta_j$ to obtain $\phi_{\mathrm{trunc}, \alpha/2}^{\eta_j}$.
                
                Using the samples $\mathcal{D}_{X, n_1}^{(0)}$ and $\mathcal{D}_{Y, n_2}^{(0)}$, carry out the second procedure with type-I error level $\alpha/2$ to obtain $\phi_{\mathrm{local}, \alpha/2}$.

                \item
                Define the test $\phi_{\mathrm{trunc}, \alpha/2} = \max_{j \in J}\{ \phi_{\mathrm{trunc}, \alpha/(2|J|)}^{\eta_j} \}$. Then, denote the combined test $\phi_{\mathrm{final}, \alpha} = \max\{\phi_{\mathrm{trunc}, \alpha/2}, \phi_{\mathrm{local}, \alpha/2}\}$.
            \end{enumerate}

            Comparing the two sub-procedures developed, we note that the second is comparatively simpler compared to the first one, which must test over a grid of candidates for the value $\eta$ and therefore apply a Bonferroni-type correction. Hence, the second test will in many cases perform better with higher power. Nevertheless, the power guarantee fails when the separation lies in a small interval close to the lower bound, and so the first procedure is vital for constructing an overall test that has power in all the desired regimes of the separation and attaining the matching minimax bounds.

            Numerical simulations exploring the performance of this testing procedure are conducted in \Cref{sec6:cont}. The truncation width in \eqref{sec5:eq:truncvals} depends on a constant $C_{r, s, d} > 0$ where for the theoretical analysis, this is taken sufficiently large. However, in practice we may wish to set this constant smaller to reduce the magnitude of the noise we must inject for privacy. Hence, we explore the sensitivity of the testing procedure to changes in this constant in \Cref{sec6:sensitivity}.

        \subsection{Adaptive Tests}
            \label{sec5:UpperBoundNonIntAdapt}
            The tests developed in the previous section make use of and fix a smoothness parameter which is generally unknown in practice. To circumvent this, we develop tests which are adaptive to this unknown parameter, testing over a grid of candidate smoothness values and \beame{utilising}{utilizing}{utilizing} a Bonferroni correction-type argument as in \cite{Ingster:2000:AdaptiveTests}. In what follows, we construct \beame{generalisations}{generalizations}{generalizations} of the non-interactive method of \Cref{sec5:UpperBoundNonInt} and the interactive method of \Cref{sec5:UpperBoundInt}.

            \begin{theorem}\label{sec5:thm:adaptive}
                Suppose $n_1 \leq n_2$ without loss of generality. Suppose further $n_1\varepsilon^2 \geq C\log\{1/(\alpha\beta)\}$ for $C > 0$ some absolute constant and fix $r > 0$.
                
                (i) For the testing problem \eqref{sec5:eq:Testdef}, there exists a non-interactive test which is adaptive to the unknown parameter $s > 0$ and has type-I and type-II error controlled by $\alpha$ and $\beta$ respectively, provided
                \begin{align*}
                    \|f_X - f_Y\|_p \geq C_{r, s, d} \bigg(\frac{\log(n_1\varepsilon^2)[\log\{\log(n_1\varepsilon^2)/(\alpha\beta)\}]^2}{n_1\varepsilon^2}\bigg)^{2s/(4s+3d)},
                \end{align*}
                where $C_{r, s, d} > 0$ is a constant depending on $r, s$ and $d$ and $p \in [1, 2]$.
                
                (ii) For the testing problem \eqref{sec5:eq:Testdef}, there exists an interactive test which is adaptive to the unknown parameter $s > 0$ and has type-I and type-II error controlled by $\alpha$ and $\beta$ respectively, provided
                \begin{align*}
                    \|f_X - f_Y\|_p \geq C_{r, s, d}' \bigg(\frac{\log(n_1\varepsilon^2)[\log(n_2/\beta)\log\{\log(n_1\varepsilon^2)/(\alpha\beta)\}]^2}{n_1\varepsilon^2}\bigg)^{2s/(4s+2d)},
                \end{align*}
                where $C_{r, s, d}' > 0$ is a constant depending on $r, s$ and $d$ and $p \in [1, 2]$.
            \end{theorem}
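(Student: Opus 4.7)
The plan is to combine the non-adaptive procedures of \Cref{sec5:UpperBoundNonInt} (for (i)) and \Cref{sec5:UpperBoundInt} (for (ii)) over a grid of candidate smoothness values via a Bonferroni-style correction, with each sub-test run on a distinct portion of the data so that every observation is used only once at its full privacy budget $\varepsilon$. Specifically, I would introduce a geometric grid of truncation parameters $\mathcal{M} = \{M_k = 2^k : k = 0, 1, \ldots, K-1\}$ with $K = \lceil \log_2(n_1\varepsilon^2) \rceil$; via the mapping $s \mapsto M^\ast(s)$ appearing in the non-adaptive tests, this translates into a grid of candidate smoothness values such that, for any $s > 0$, some $M_k$ lies within a factor of two of the optimal truncation $M^\ast(s)$. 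I would then split each sample $\mathcal{D}_{X, n_1}$ and $\mathcal{D}_{Y, n_2}$ into $K$ equally-sized disjoint subsets, and for each $k \in \{0, \ldots, K-1\}$ run the non-adaptive procedure from \Cref{sec5:UpperBoundNonInt} (respectively the combined interactive test from \Cref{sec5:UpperBoundInt}) at level $\alpha/K$ with truncation $M_k$ on the $k$-th subset; the final adaptive test rejects $H_0$ if any of these $K$ sub-tests rejects. Splitting data rather than the privacy budget is what introduces the single $\log(n_1\varepsilon^2)$ factor in the base of the claimed rate (as opposed to a squared factor one would incur by splitting $\varepsilon$), and post-processing (\Cref{sec2:prop:postprocessing}) preserves the $\varepsilon$-LDP guarantee of the overall test.

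Type-I control follows immediately from a union bound: $\mathbb{P}(\text{reject} \mid H_0) \leq \sum_k \alpha/K = \alpha$. For type-II control under the true (unknown) $s$, I would identify the grid point $M_{k^\ast}$ closest to the optimal truncation for subset sizes $n_1/K$ and $n_2/K$, and apply the non-adaptive \Cref{sec5:thm:main} with type-I level $\alpha/K$, type-II level $\beta$, and these reduced sample sizes, yielding rejection with probability at least $1 - \beta$ once
\[
\|f_X - f_Y\|_2 \geq C_r \bigg(\frac{[\log\{K/(\alpha\beta)\}]^2}{(n_1/K)\varepsilon^2}\bigg)^{2s/(4s+3)} = C_r \bigg(\frac{K [\log\{K/(\alpha\beta)\}]^2}{n_1\varepsilon^2}\bigg)^{2s/(4s+3)},
\]
which, with $K \asymp \log(n_1\varepsilon^2)$, matches the non-interactive rate in (i); the interactive rate in (ii) follows analogously, inheriting the additional $\log(4n_2/\beta)$ factor present in the non-adaptive interactive theorem. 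The $L_1$-results then follow from the containment $\{\|f_X - f_Y\|_1 \geq \rho\} \subseteq \{\|f_X - f_Y\|_2 \geq \rho\}$ for densities supported on $[0, 1]$, via H\"older's inequality, so any test valid for the $L_2$-alternative also controls the $L_1$-alternative at the same $\rho$.

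The main obstacle will be the rate analysis at the selected grid point $M_{k^\ast}$: one must verify that using $M_{k^\ast}$ within a factor two of the (unknown) optimal truncation preserves the balance between the variance term of order $M_{k^\ast}^{3/4}/(n_1\varepsilon^2/K)^{1/2}$ and the bias term of order $r M_{k^\ast}^{-s}$ up to an absolute constant, which reduces to a convex-in-$\log M$ calculation. Handling very small $s$ (where the required separation is bounded below by a constant) and very large $s$ (where the rate saturates near the parametric $(n_1\varepsilon^2)^{-1/2}$) is absorbed by the grid endpoints $M_0$ and $M_{K-1}$ respectively. In the interactive case, the combined test from \Cref{sec5:UpperBoundInt} itself already contains an internal Bonferroni correction over truncation levels $\eta_j$, so a careful accounting of all the logarithmic factors is needed to confirm that no factor beyond the $\log(n_1\varepsilon^2)\log(4n_2/\beta)[\log\{\log(n_1\varepsilon^2)/(\alpha\beta)\}]^2$ appearing in the theorem arises in the final rate.
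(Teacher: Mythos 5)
Your proposal matches the paper's own proof in all essential respects: a geometric grid $M_k = 2^k$ with $k_{\max} \asymp \log(n_1\varepsilon^2)$, splitting the sample into $k_{\max}$ disjoint folds and running each non-adaptive sub-test at level $\alpha/k_{\max}$ on its own fold at full privacy budget $\varepsilon$, type-I control by a union bound, type-II control by locating a grid point within a factor of two of the optimal truncation and invoking \Cref{sec5:thm:main} with reduced sample size $n_1/k_{\max}$, and finally H\"older's inequality for the $L_1$-conclusion. The only cosmetic difference is that the paper uses $k_{\max}^{\mathrm{NI}} = (2/3)\log_2(n_1\varepsilon^2)+1$ rather than $\lceil\log_2(n_1\varepsilon^2)\rceil$ in the non-interactive case (tightened to the range of admissible $M$), but since both are $\asymp \log(n_1\varepsilon^2)$ this affects only absolute constants.
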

            The proof of the above theorem can be found in \Cref{app:sec:adaptivetestproofs}. Before discussing this result in detail, we introduce the adaptive tests, which are \beame{formalised}{formalized}{formalized} as follows.

            \begin{enumerate}
                \item
                    Set desired type-I error $\alpha$. Denote the integers $k_{\mathrm{max}}^{\mathrm{NI}} = (2/3)\log_2(n_1\varepsilon^2) + 1$, and $k_{\mathrm{max}}^{\mathrm{I}} = \log_2(n_1\varepsilon^2) + 1$. Set $k_{\mathrm{max}}$ to be $k_{\mathrm{max}}^{\mathrm{NI}}$ or $k_{\mathrm{max}}^{\mathrm{I}}$ in the non-interactive and interactive settings respectively.
                \item
                    Split the samples $\mathcal{D}_{X, n_1}$, $\mathcal{D}_{Y, n_1}$ into $k_{\mathrm{max}}$ folds, where, for $k \in [k_{\mathrm{max}}]$, we define
                    \begin{align*}
                        &\mathcal{D}_{X, n_1}^{(k)} = \{X_{\lfloor n_1/k_{\max} \rfloor(k-1)+1}, \hdots, X_{\lfloor n_1/k_{\max} \rfloor k}\},
                        \mbox{ and } \\
                        &\mathcal{D}_{Y, n_1}^{(k)} = \{Y_{\lfloor n_2/k_{\max} \rfloor(k-1)+1}, \hdots, Y_{\lfloor n_2/k_{\max} \rfloor k}\}.
                    \end{align*}
                \item
                    For each $k \in [k_{\mathrm{max}}]$, carry out the test of \Cref{sec5:UpperBoundNonInt} or \Cref{sec5:UpperBoundInt} in the non-interactive and interactive settings respectively, with samples $\mathcal{D}_{X, n_1}^{(k)}, \mathcal{D}_{Y, n_2}^{(k)}$ and significance level $\alpha/k_{\mathrm{max}}$, fixing $R = 2^k$ therein. Denote $\phi_k$ the outcome of this test with a value $1$ for rejection of the null hypothesis and $0$ otherwise.
                \item 
                    Define the adaptive test $\phi_{\mathrm{adapt}} = \max_{k \in [k_{\mathrm{max}}]} \phi_k$, where a value of $1$ corresponds to rejection of the null hypothesis.
            \end{enumerate}   

            Inspecting the upper bound on the separation radii given by the above theorem, we see one can adapt to an unknown smoothness parameter $s$ at the cost of an extra poly-logarithmic penalty in $n_1\varepsilon^2$ in both the non-interactive and interactive settings.
            
            Focusing on the additional logarithmic terms, these arise as both a logarithmic prefactor, and as an iterated logarithm term. The logarithmic prefactor arises from splitting the sample $O(\log(n\varepsilon^2))$ times, reducing the sample size in each fold by this amount, leading directly to this term. The double-logarithm term arises due to the Bonferroni correction, where $\alpha$ is divided by $O(\log(n\varepsilon^2))$ to correct for this many tests, with $\alpha$ itself appearing inside a logarithm, resulting in the iterated logarithm. Unlike with adaptive tests without privacy as in \cite{Ingster:2000:AdaptiveTests}, where an iterated logarithmic penalty $O([\log\{\log(n_1)\}]^{1/2})$ is incurred for adaptivity, we see that in the LDP setting we instead incur a logarithmic penalty. This occurs as we cannot immediately repeat a test with the same users without risking privacy leakage unlike in the non-private setting. Attempting to avoid sample splitting by instead increase the privacy guarantee to $\varepsilon/k_{\mathrm{max}}$, incurs a penalty of $k_{\mathrm{max}}^2 = \{O(\log(n_1\varepsilon^2))\}^2$ instead. As splitting the sample incurs a smaller penalty, we adopt this strategy. The lack of a square root in our results compared to the square root on the iterated logarithm in \cite{Ingster:2000:AdaptiveTests} arises due to the weaker concentration properties of our test statistics, in particular the sub-exponential concentration as opposed to sub-Gaussian.

    \section{Simulations} \label{sec6}
        \subsection{Discrete Setting} \label{sec6:disc}
            In this subsection, we investigate the performance of the tests constructed in \Cref{sec4}, and empirically validate the minimax rates. We consider the testing problem \eqref{sec4:eq:Testdef}. We also compare against the test of \cite{Canonne:2024:Heterogeneous} given as Algorithm~2 therein, which we implement, to the best of our understanding, as in the original paper wherein the test is calibrated using a critical value, as well as our own implementation where we calibrate the test statistic via a permutation procedure. The first sample will be drawn from $P_X \sim \mathrm{Multinom}(\vecbf{p}_X)$ with $\vecbf{p}_X = (1/d, \hdots, 1/d)^T \in [0,1]^d$. For the second sample, let $\gamma \in [0,1]$. We consider two distributions $P_{Y, \gamma}^{(1)} \sim \mathrm{Multinom}(\vecbf{p}_{Y, \gamma}^{(1)})$ and $P_{Y, \gamma}^{(2)} \sim \mathrm{Multinom}(\vecbf{p}_{Y, \gamma}^{(2)})$, where
            \begin{equation} \label{app:eq:discsimdistributions}
                \begin{aligned}
                    \vecbf{p}_{Y, \gamma}^{(1)} &= (1/d + \gamma/d, 1/d - \gamma/d, 1/d + \gamma/d, 1/d - \gamma/d, \hdots, 1/d + \gamma/d, 1/d - \gamma/d)^T, \\
                    \vecbf{p}_{Y, \gamma}^{(2)} &= (1/d + (d-1)\gamma/d, (1-\gamma)/d, \hdots, (1-\gamma)/d)^T.
                \end{aligned}
            \end{equation}
            These are constructed so that for $\gamma \in [0,1]$, we have $\|P_X - P_{Y, \gamma}^{(1)}\|_1 = \gamma$, and $\|P_X - P_{Y, \gamma}^{(2)}\|_2 = \{(d-1)/d\}^{1/2}\gamma$, and at $\gamma = 0$, the null hypotheses $P_X = P_{Y, 0}^{(1)}$ and $P_X = P_{Y, 0}^{(2)}$ hold.

            In all simulations which follow, we set the desired type-I error guarantee $\alpha = 0.05$, carry out permutation tests with $B = 199$, and take the average over $2000$ repetitions. For brevity, we refer to testing between $P_X$ and $P_{Y,\gamma}^{(1)}$ as the $L_1$-problem, and $P_X$ and $P_{Y,\gamma}^{(2)}$ as the $L_2$-problem. We run simulations varying model parameters in the following ways:
            \begin{enumerate}
                \item \textbf{Varying privacy}: Fix $n_1 = n_2 = 250$, $d = 8$. Vary $\varepsilon \in \{0.5, 1, 2, 4\}$. We also compare to a minimax optimal $\chi^2$ test in the non-private setting \citep{Chan:2014:DiscreteTesting}, denoted by $\varepsilon = \infty$. The results are contained in the first row of \Cref{sec6:fig:discretepriv}.
            
                \item \textbf{Varying sample size}: Fix $n_1 = 250$, $d = 8$, $\varepsilon = 2$. Vary $n_2 \in \{250, 500, 750, 1000\}$. The results are contained in the second row of \Cref{sec6:fig:discretepriv}.
                
                \item \textbf{Varying dimension}: Fix $n_1 = n_2 = 250$, $\varepsilon = 2$. Vary $d \in \{8, 16, 24, 32\}$. The results are contained in the third row of \Cref{sec6:fig:discretepriv}.
            \end{enumerate}

            \begin{figure}[ht]
                \captionsetup{aboveskip=0pt}
                \centering
                \begin{subfigure}[t]{0.2\textwidth}
                    \centering
                    \includegraphics[height=0.11\textheight]{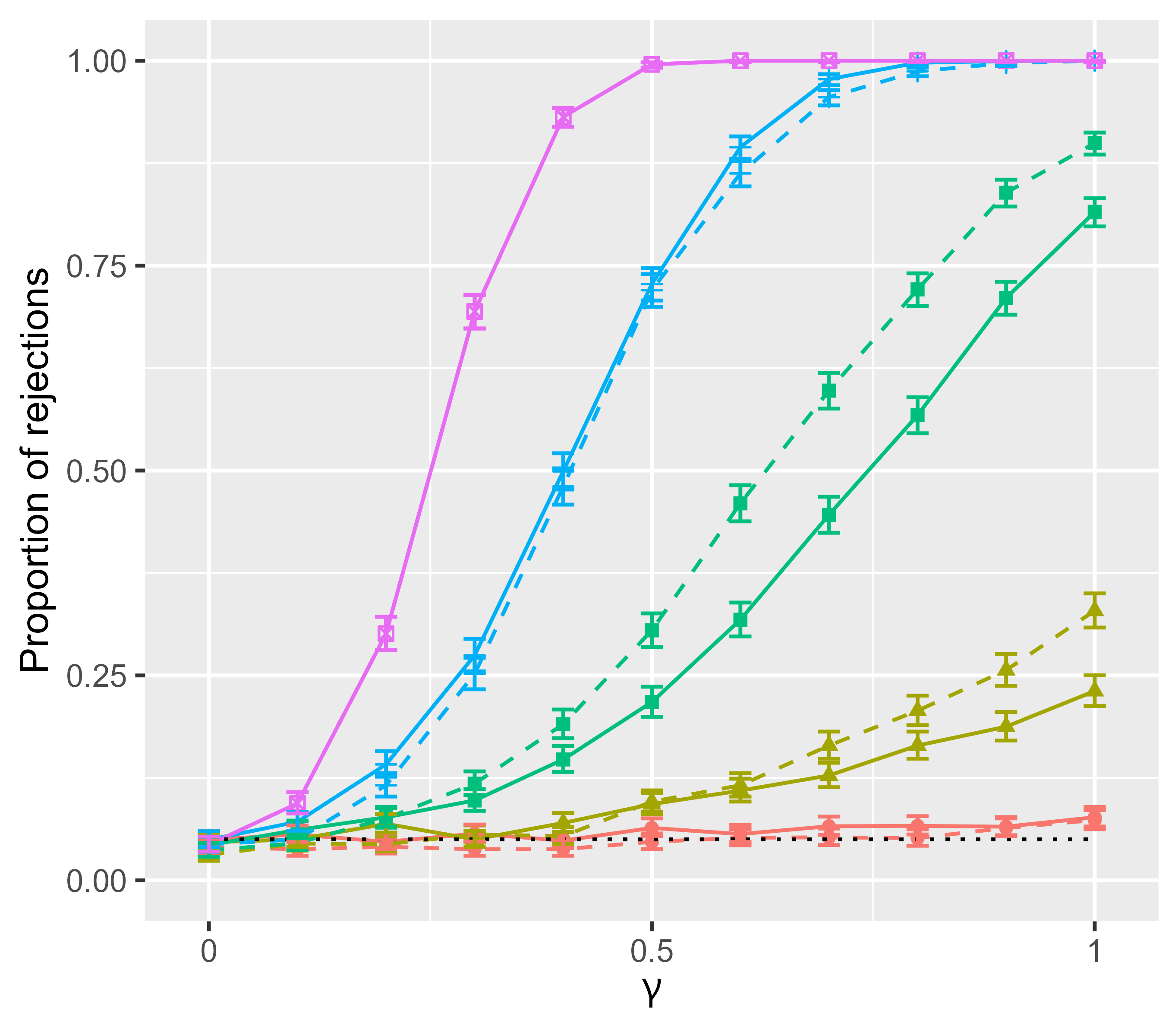}
                \end{subfigure}%
                \begin{subfigure}[t]{0.3\textwidth}
                    \centering
                    \includegraphics[height=0.11\textheight]{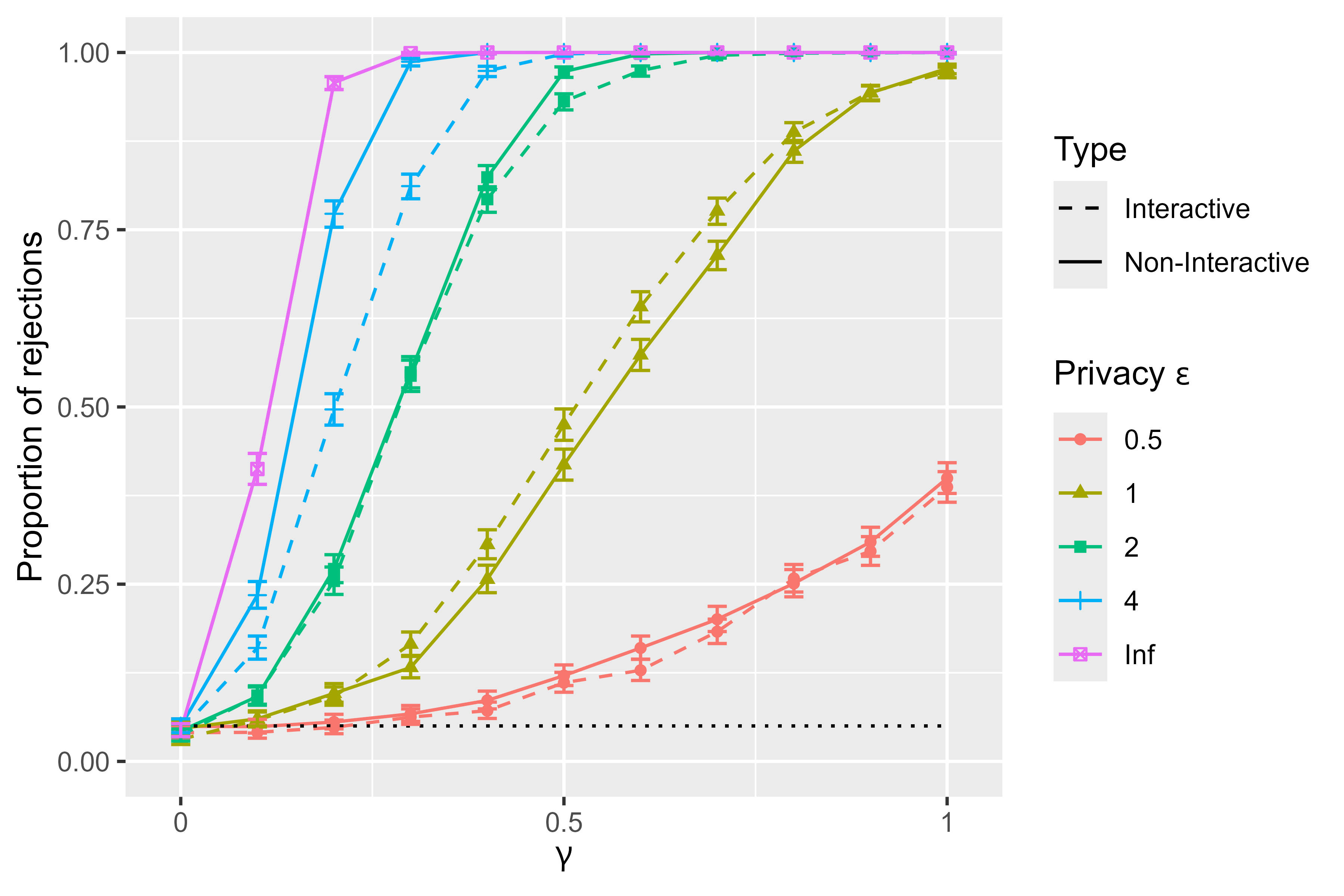}
                \end{subfigure}%
                \begin{subfigure}[t]{0.2\textwidth}
                    \centering
                    \includegraphics[height=0.11\textheight]{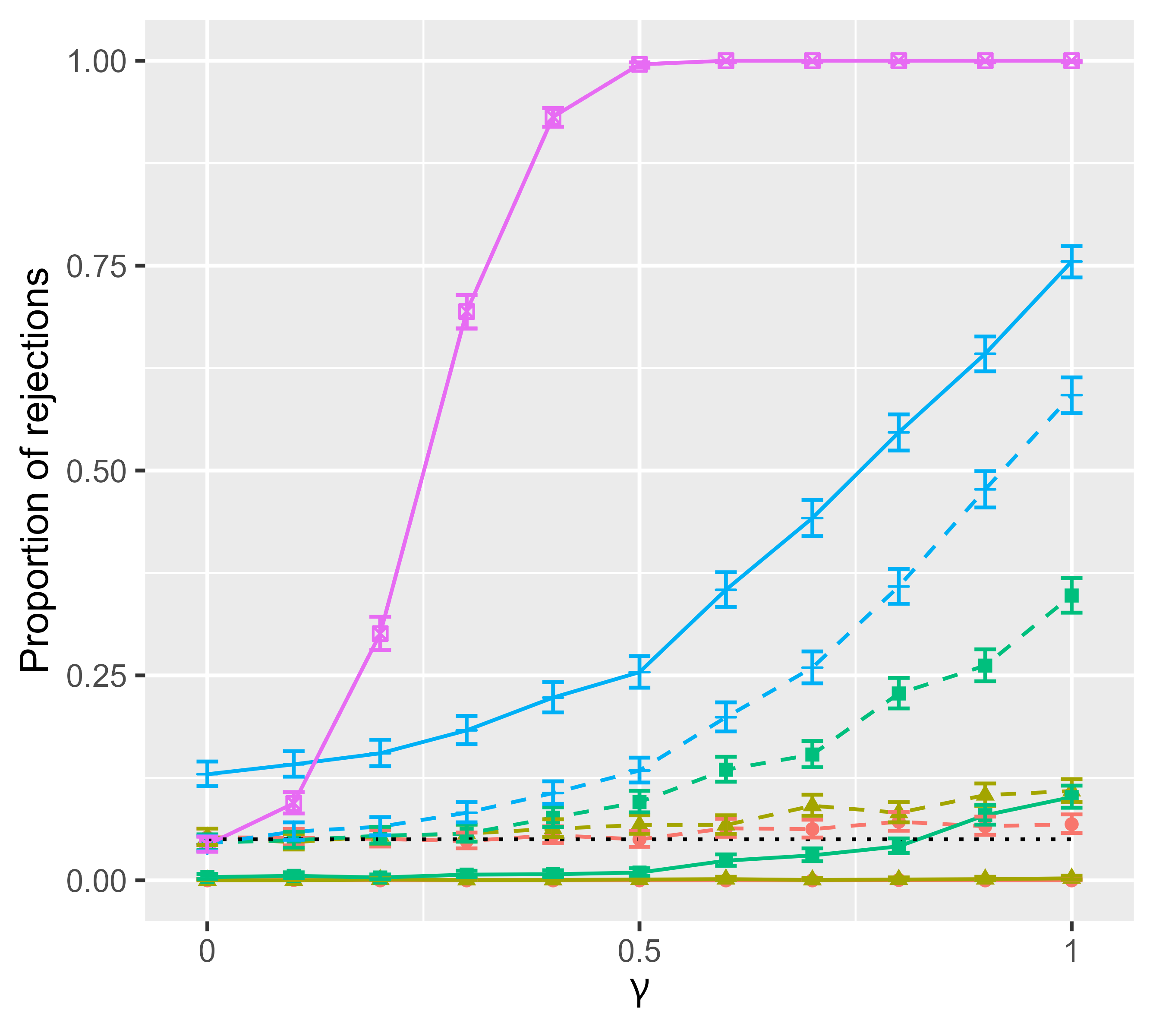}
                \end{subfigure}%
                \begin{subfigure}[t]{0.3\textwidth}
                    \centering
                    \includegraphics[height=0.11\textheight]{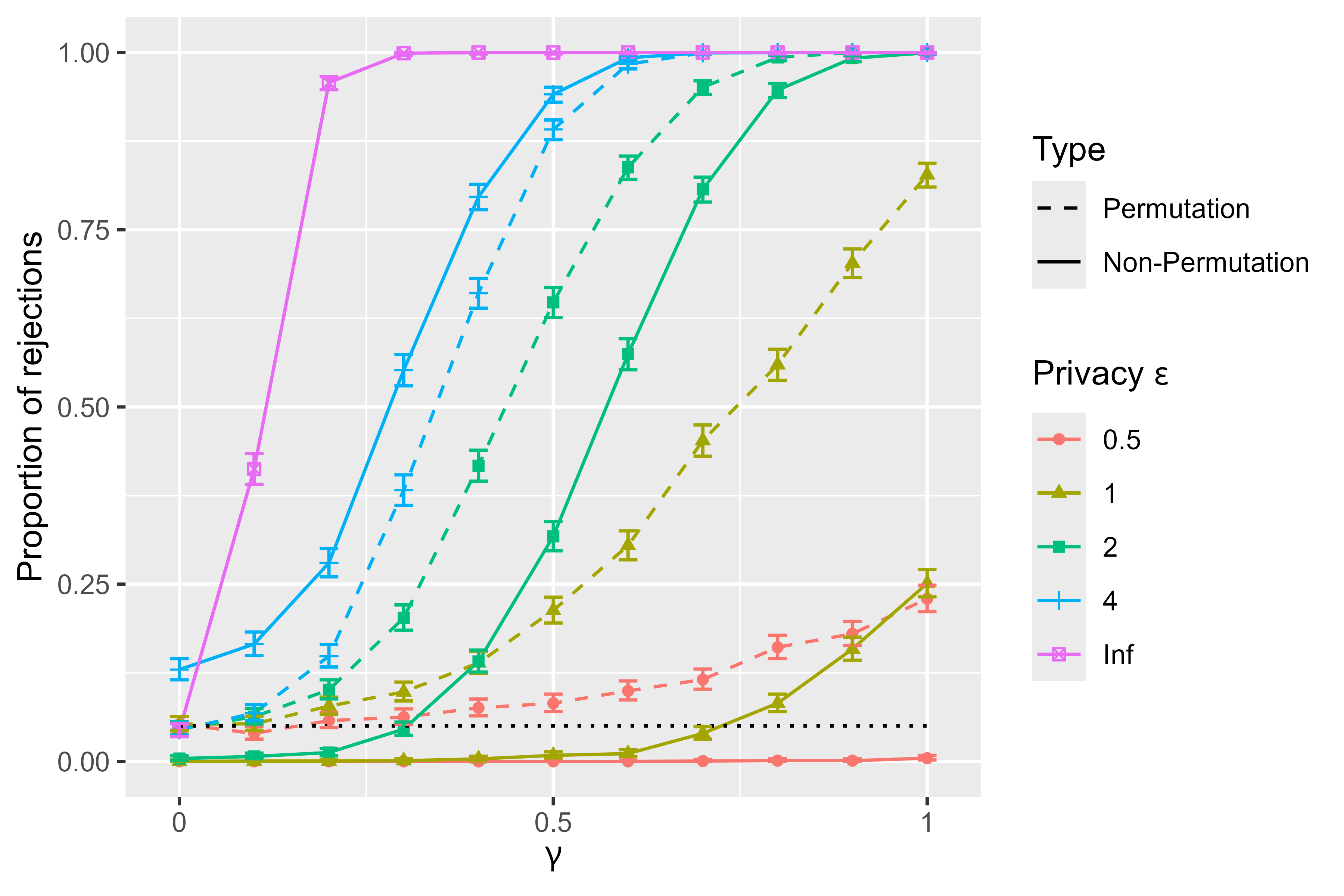}
                \end{subfigure}

                \vspace{1em}

                \begin{subfigure}[t]{0.2\textwidth}
                    \centering
                    \includegraphics[height=0.11\textheight]{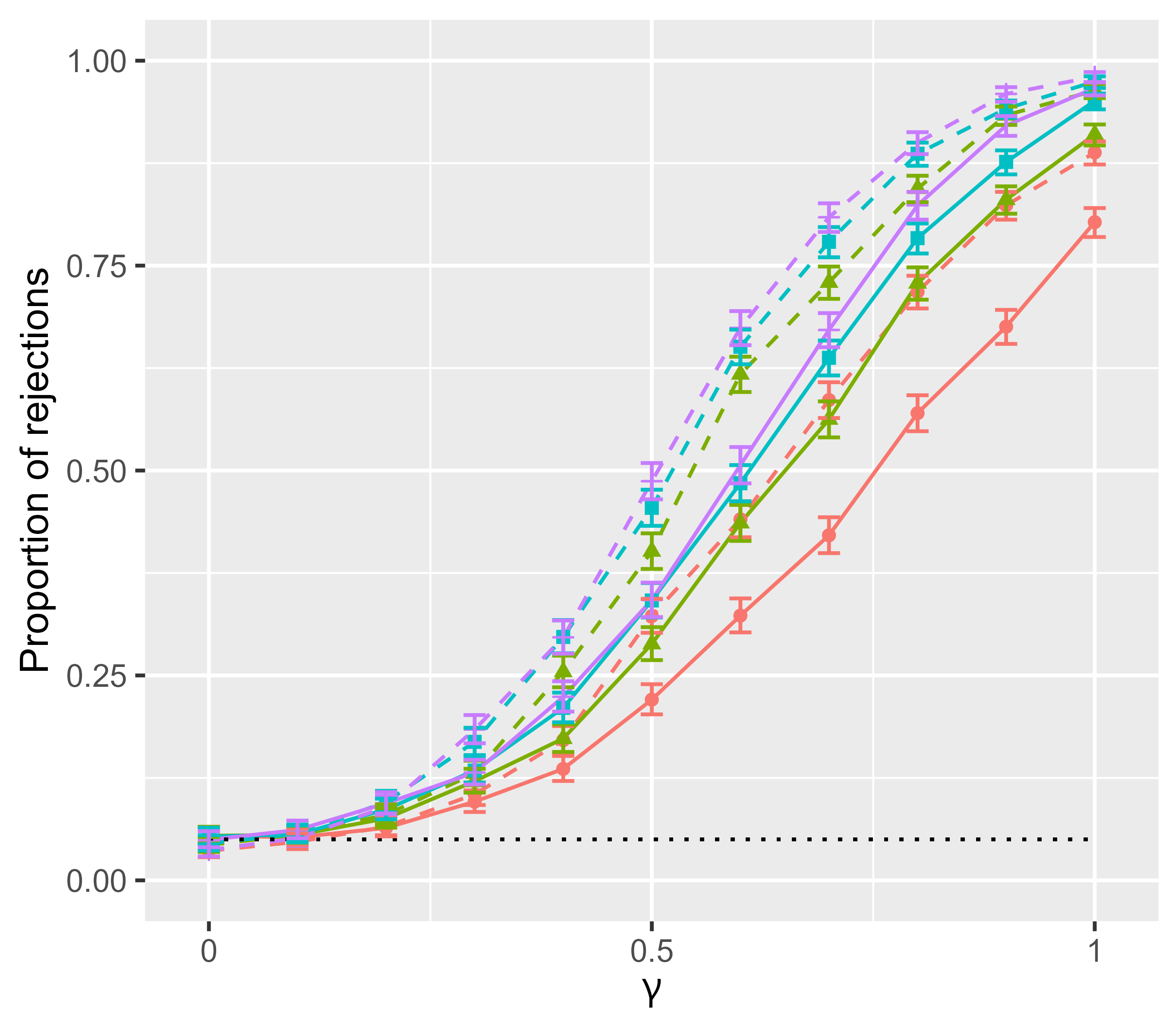}
                \end{subfigure}%
                \begin{subfigure}[t]{0.3\textwidth}
                    \centering
                    \includegraphics[height=0.11\textheight]{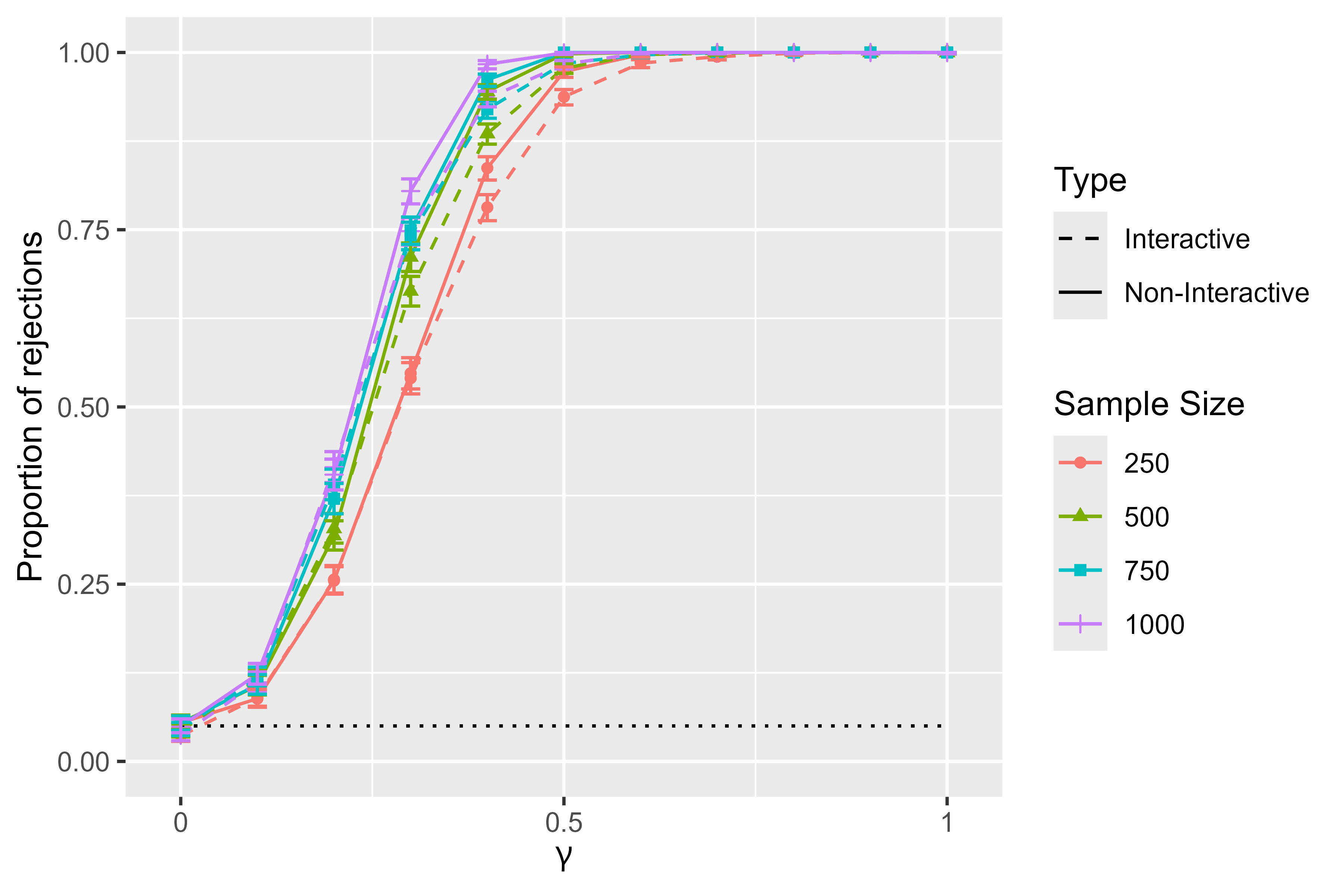}
                \end{subfigure}%
                \begin{subfigure}[t]{0.2\textwidth}
                    \centering
                    \includegraphics[height=0.11\textheight]{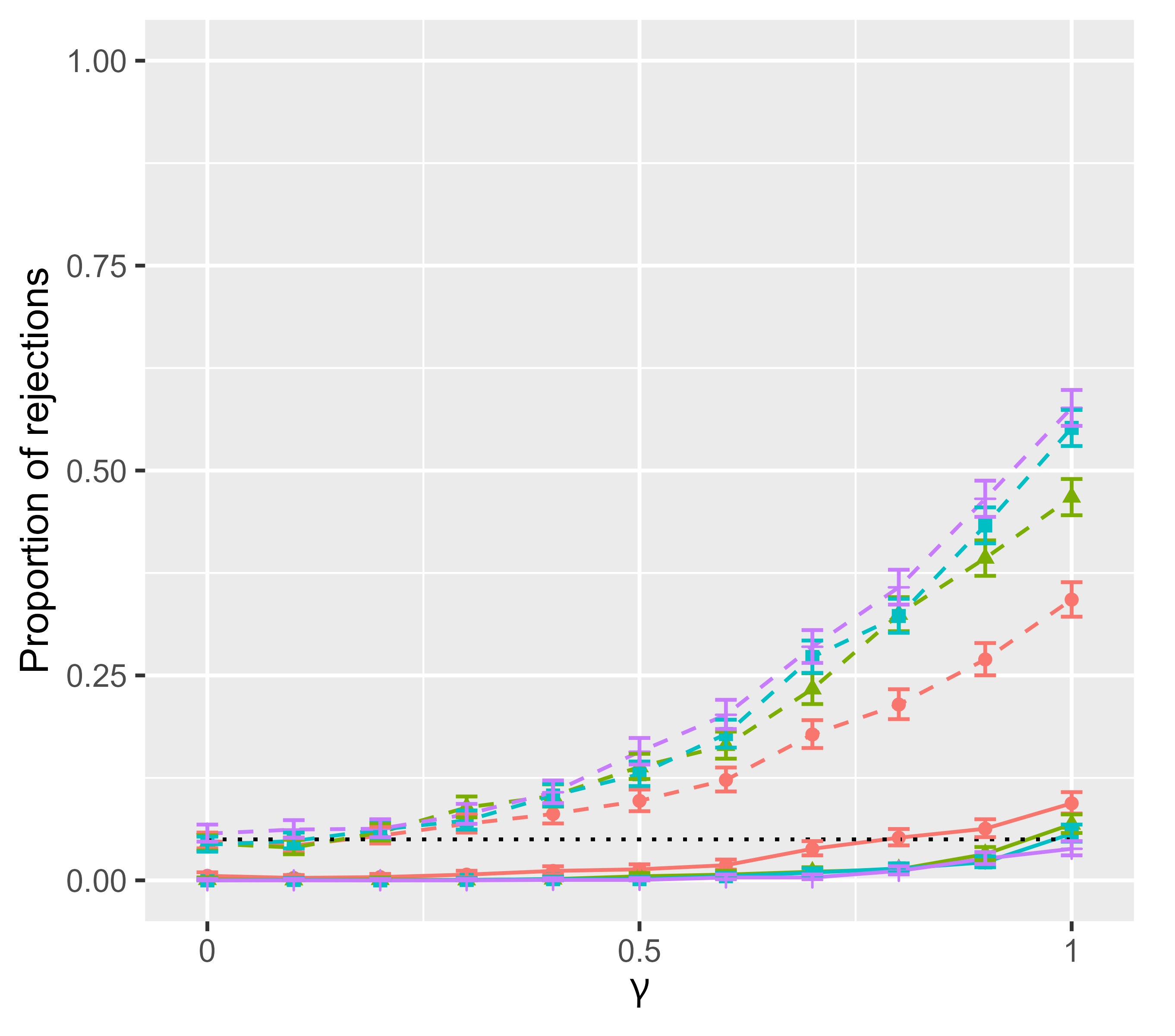}
                \end{subfigure}%
                \begin{subfigure}[t]{0.3\textwidth}
                    \centering
                    \includegraphics[height=0.11\textheight]{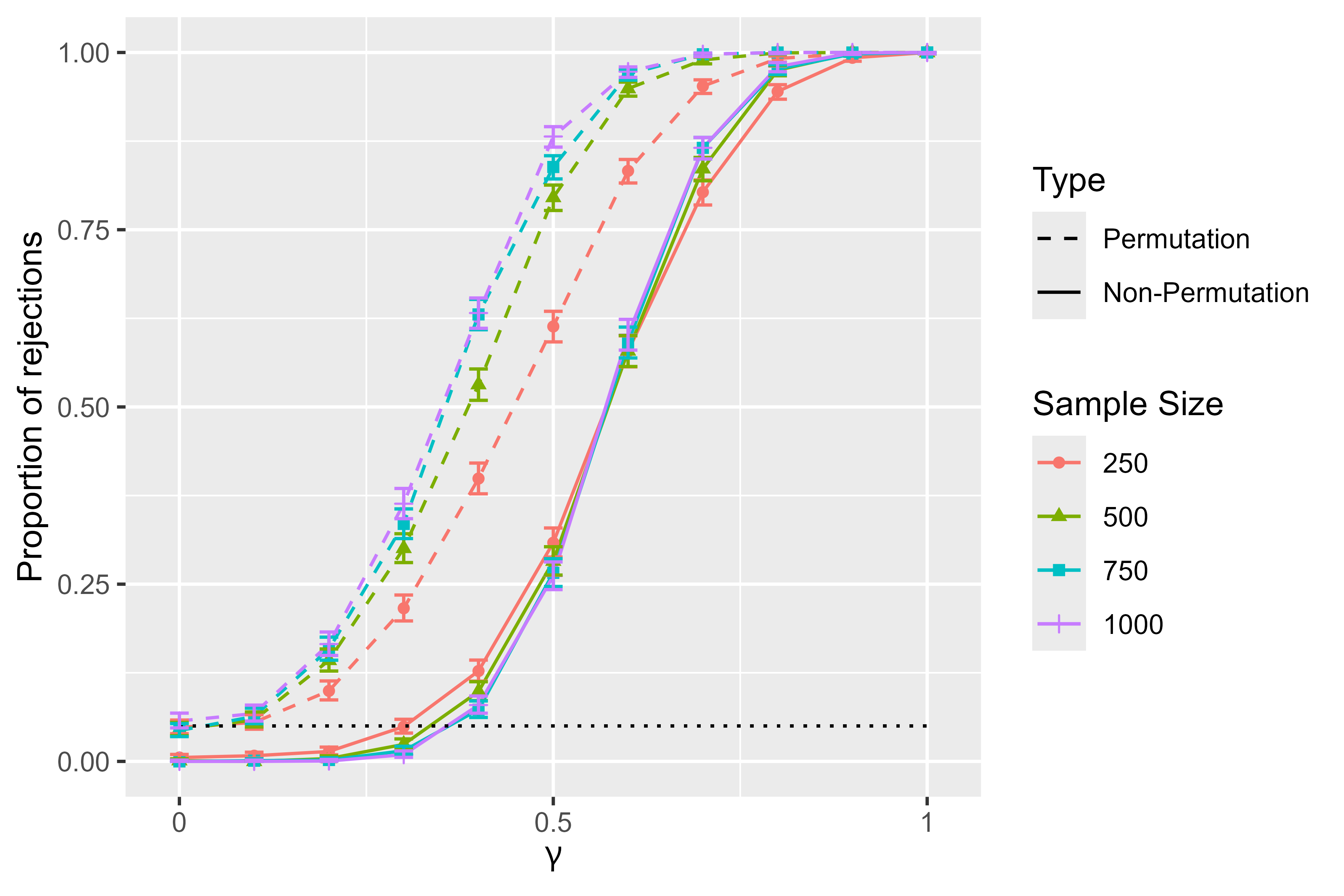}
                \end{subfigure}  

                \vspace{1em}

                \begin{subfigure}[t]{0.2\textwidth}
                    \centering
                    \includegraphics[height=0.11\textheight]{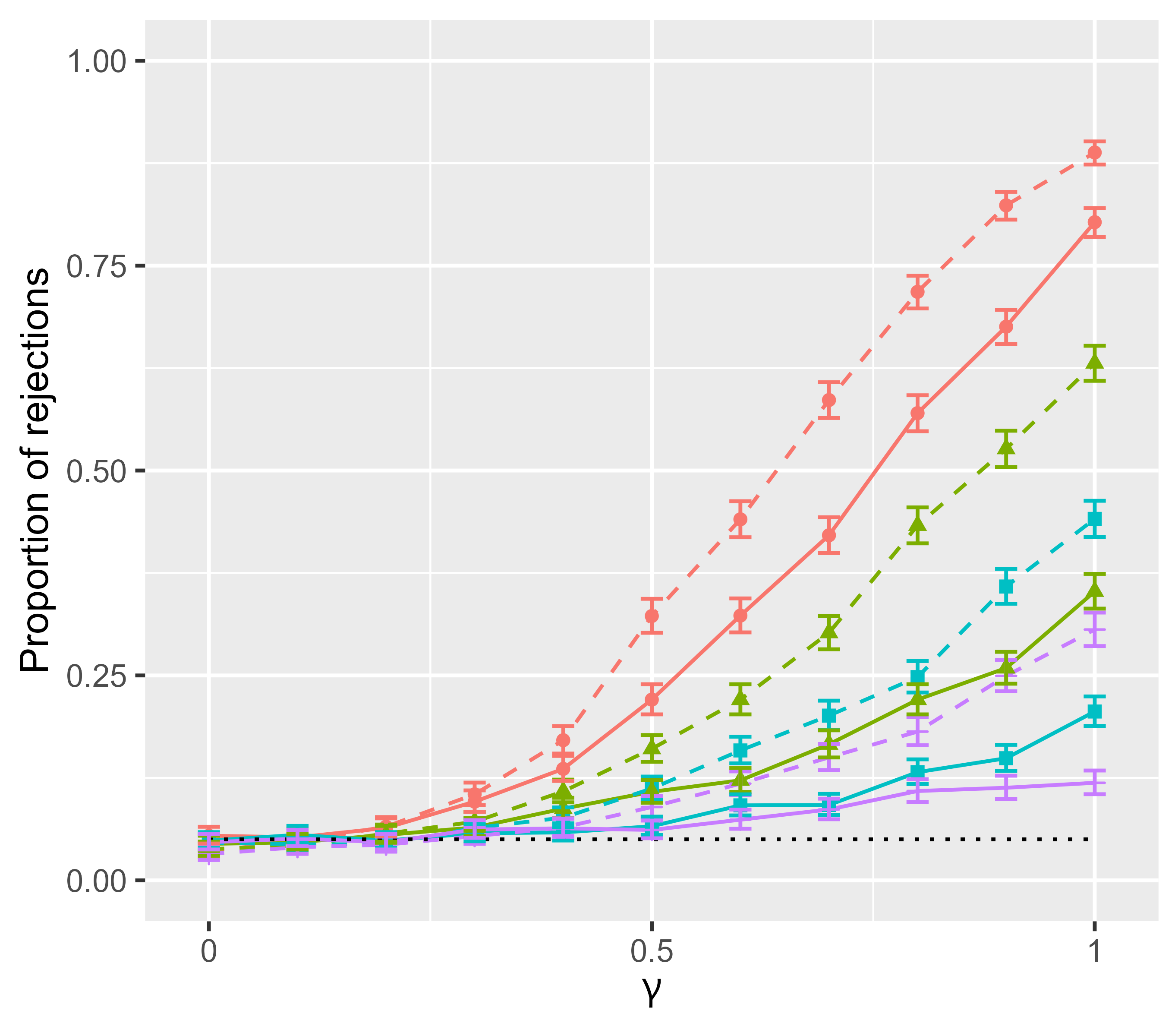}
                \end{subfigure}%
                \begin{subfigure}[t]{0.3\textwidth}
                    \centering
                    \includegraphics[height=0.11\textheight]{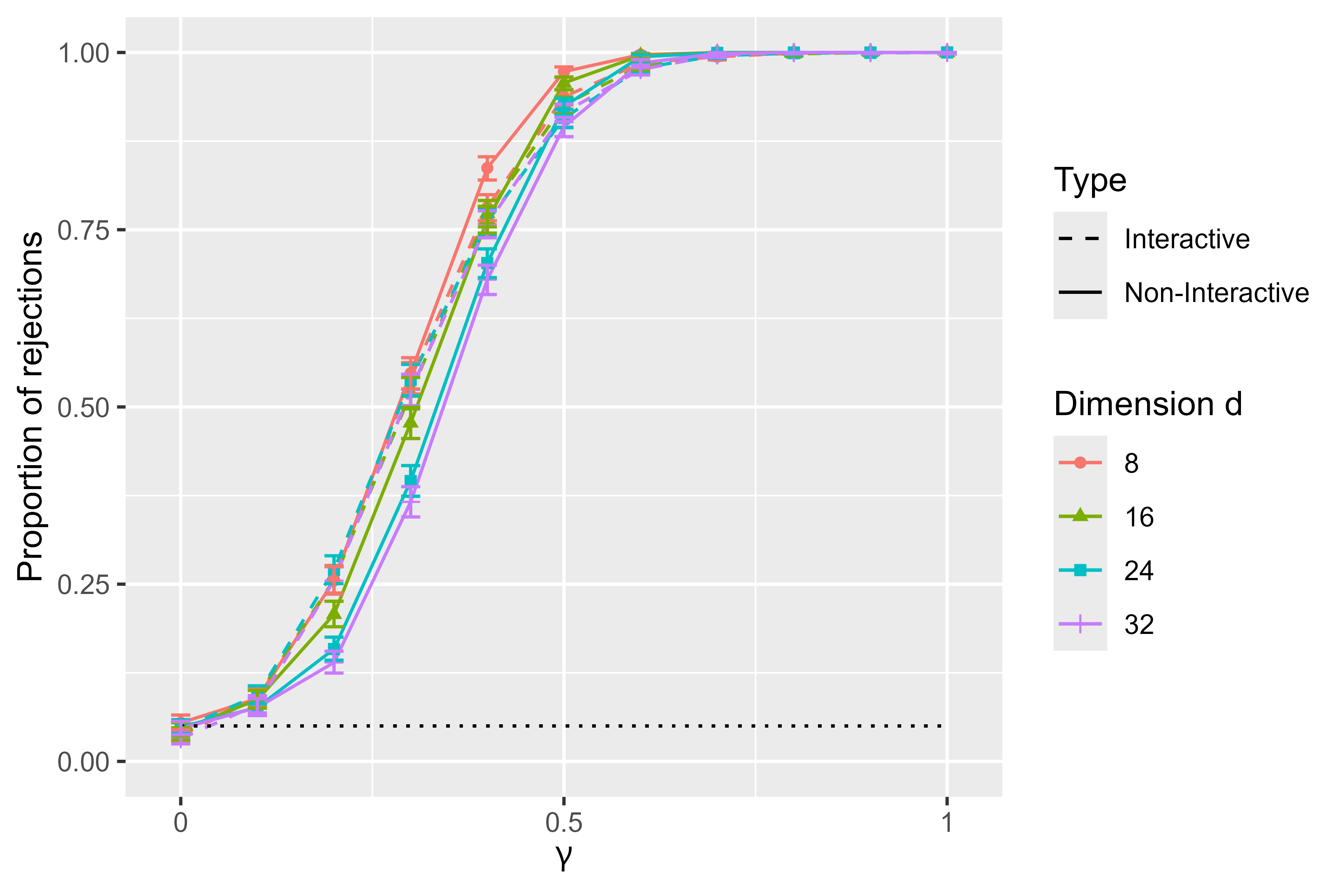}
                \end{subfigure}%
                \begin{subfigure}[t]{0.2\textwidth}
                    \centering
                    \includegraphics[height=0.11\textheight]{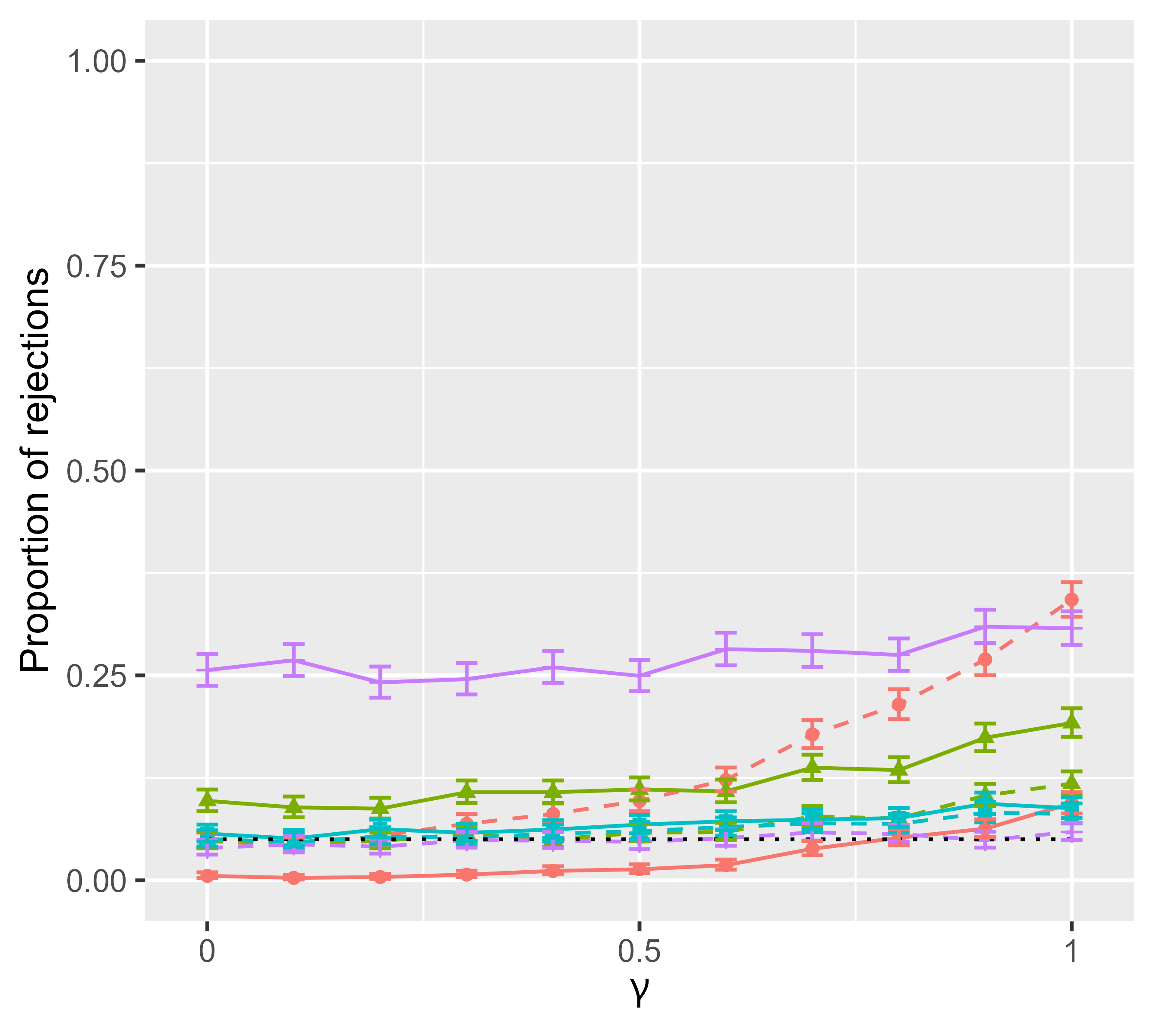}
                \end{subfigure}%
                \begin{subfigure}[t]{0.3\textwidth}
                    \centering
                    \includegraphics[height=0.11\textheight]{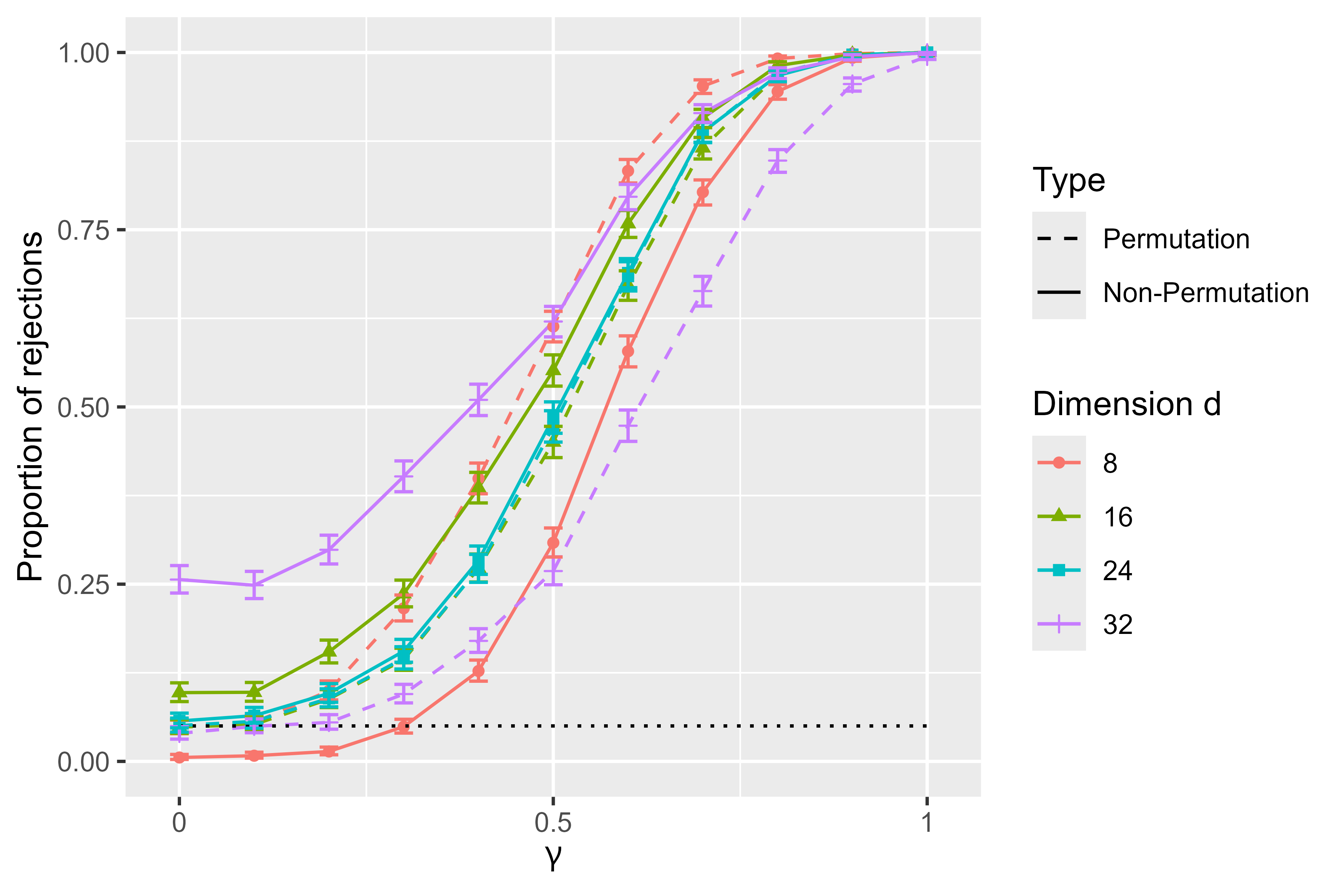}
                \end{subfigure}  
                
                \caption{Power curves for the $L_1$-problem (first and third columns), and $L_2$-problem (second and fourth columns). The first two columns show results for our test procedures, the last two columns show results for test of \cite{Canonne:2024:Heterogeneous}. First, second and third rows correspond to settings of varying privacy, sample size and dimension respectively  Bars indicate pointwise 95\% confidence intervals. Privacy level $\varepsilon = \infty$ corresponds to a centred $\chi^2$-test applied to the un\beame{privatised}{privatized}{privatized} data.}
                \label{sec6:fig:discretepriv}
            \end{figure}

            \noindent
            \textbf{Discussion}: 
            We start by comparing our tests against that of \cite{Canonne:2024:Heterogeneous}. A key focus of \cite{Canonne:2024:Heterogeneous} is developing a test that is able to handle heterogeneous privacy requirements across the two samples, which our test does not account for. Further, for calibrating the test, \cite{Canonne:2024:Heterogeneous} choose a critical value so that the type-I and type-II errors are bounded above by at most $1/3$. When implemented to the best of our understanding, the test exhibits more irregular type-I error compared to our permutation tests which are constructed with the aim of controlling the type-I error exactly. However, we emphasise that this merely due to the choice of calibration for the test, rather than a fundamental weakness of the test statistic or procedure in \cite{Canonne:2024:Heterogeneous}.
            
            To help facilitate comparison, we additionally carry out the test of \cite{Canonne:2024:Heterogeneous} wherein we calibrate via a permutation procedure. Here, the test performs more similarly to ours, controlling the type-I error exactly at the target level. However, our tests, designed specifically with permutation testing in mind, outperform in all settings considered.

            The second row of \Cref{sec6:fig:discretepriv} shows the performance as the size of the second sample is grown. We observe that even as the larger sample size becomes many multiples of the smaller, there are still improvements in power. This demonstrates potential benefits in practice of using the entire samples in the case of unbalanced sample sizes.

            The third row of \Cref{sec6:fig:discretepriv} shows the performance as the dimension of the problem is increased. We see in the $L_1$-problem that, for example, the interactive procedure with $d = 32$ performs comparably to the non-interactive method with half the dimension $d = 16$, demonstrating significant advantages of the interactive procedure as the dimension grows. We see similar behaviour in the $L_2$-problem. In fact, in the $L_2$-problem with the interactive procedure, we see that the power appears almost independent of the dimension, as suggested by the minimax testing radius in \Cref{sec4:thm:main} being independent of $d$.

            \noindent
            \textbf{Validating minimax rates}: We now explore empirically the dependence of the separation radii on the model parameters. In what follows, we use a binary search procedure to find the value $\gamma$ which results in a type-II error rate of $0.5$, up to a tolerance of $0.001$. Details of the implementation are given in \Cref{app:sec:binarysearch}. We transform this $\gamma$ such that based on \Cref{sec4:thm:main} we expect a straight line when plotting this transformation of $\gamma$ against relevant model parameters.

            We first observe in \Cref{sec6:fig:nondimseparation} the resulting curves for the $L_1$-problem as we vary privacy and sample size respectively. The two samples are grown together so that both have the same size. In both cases we generally notice a linear trend, supporting the \beame{theorised}{theorized}{theorized} separation rate. For the interactive methods, we notice a flattening off of the curves as $\varepsilon$ increases, which we \beame{hypothesise}{hypothesize}{hypothesize} is due to a transition from a high-privacy to low-privacy regime as $\varepsilon$ is increased. Figures for the $L_2$-problem are omitted for brevity, but the same trend is observed.

            \begin{figure}[ht]
                \captionsetup{aboveskip=0pt}
                \centering               
                \begin{subfigure}[t]{0.235\textwidth}
                    \centering
                    \includegraphics[width=\textwidth]{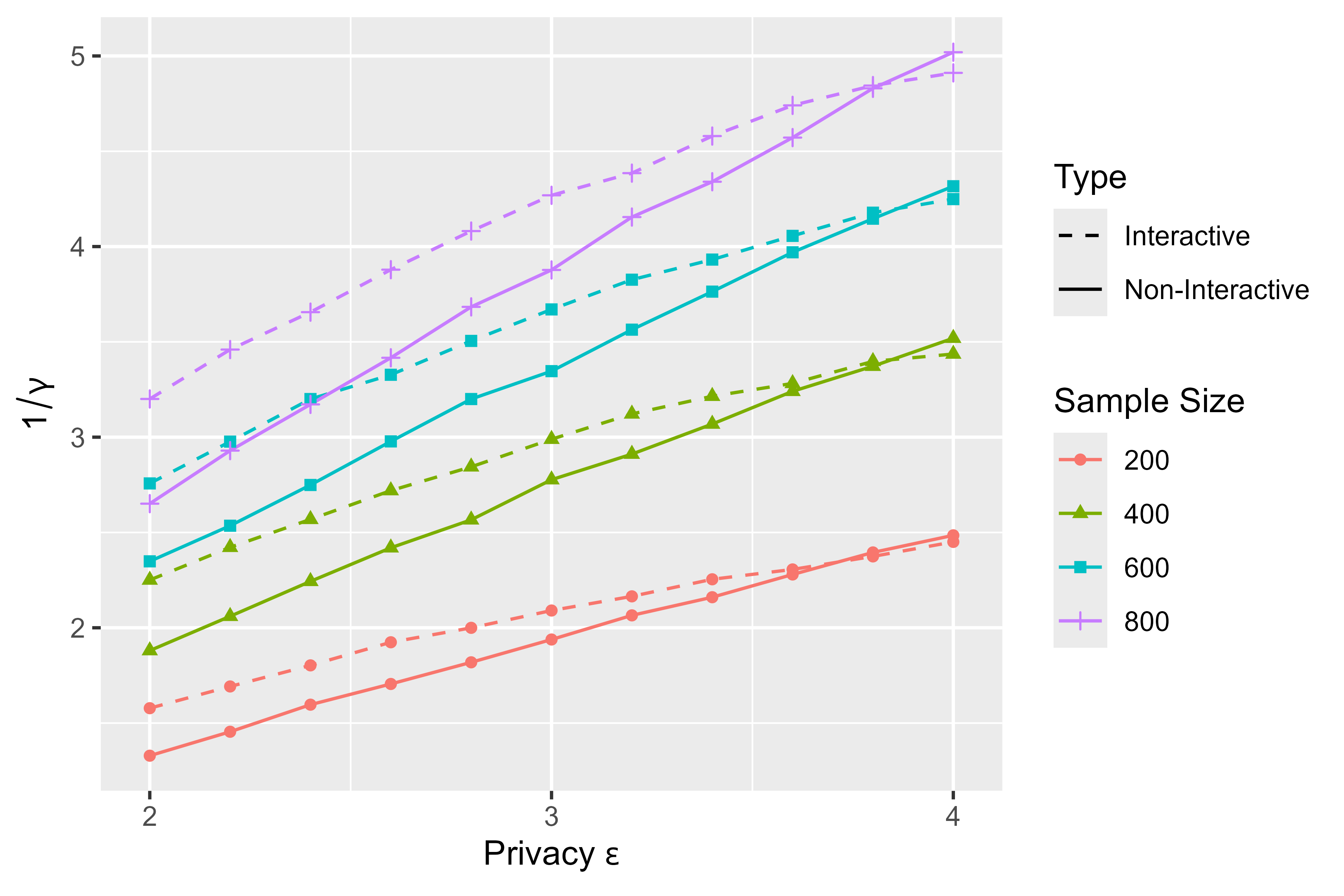}
                    \caption{}
                \end{subfigure}%
                \begin{subfigure}[t]{0.235\textwidth}
                    \centering
                    \includegraphics[width=\textwidth]{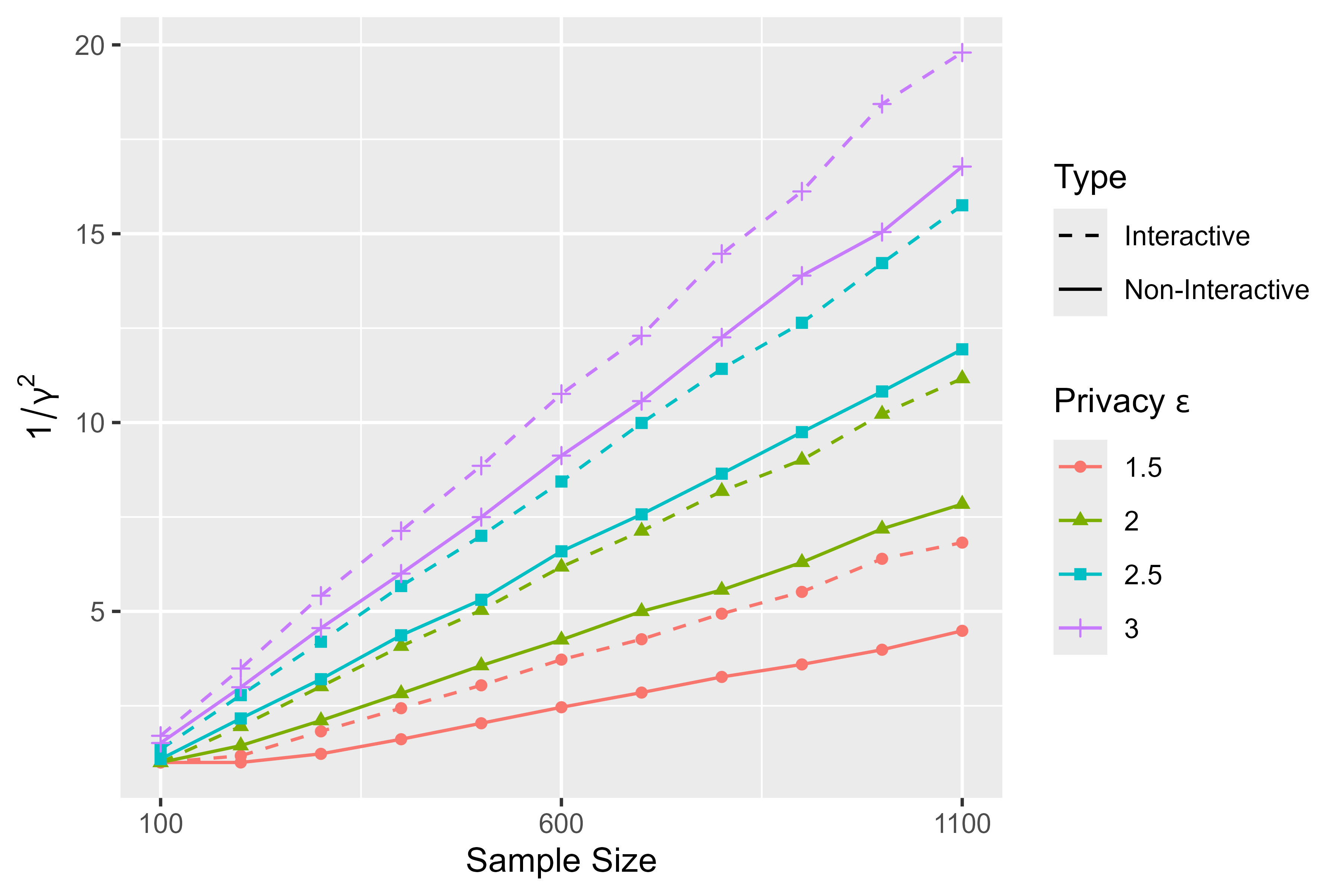}
                    \caption{}
                \end{subfigure}%
                \begin{subfigure}[t]{0.235\textwidth}
                    \centering
                    \includegraphics[width=\textwidth]{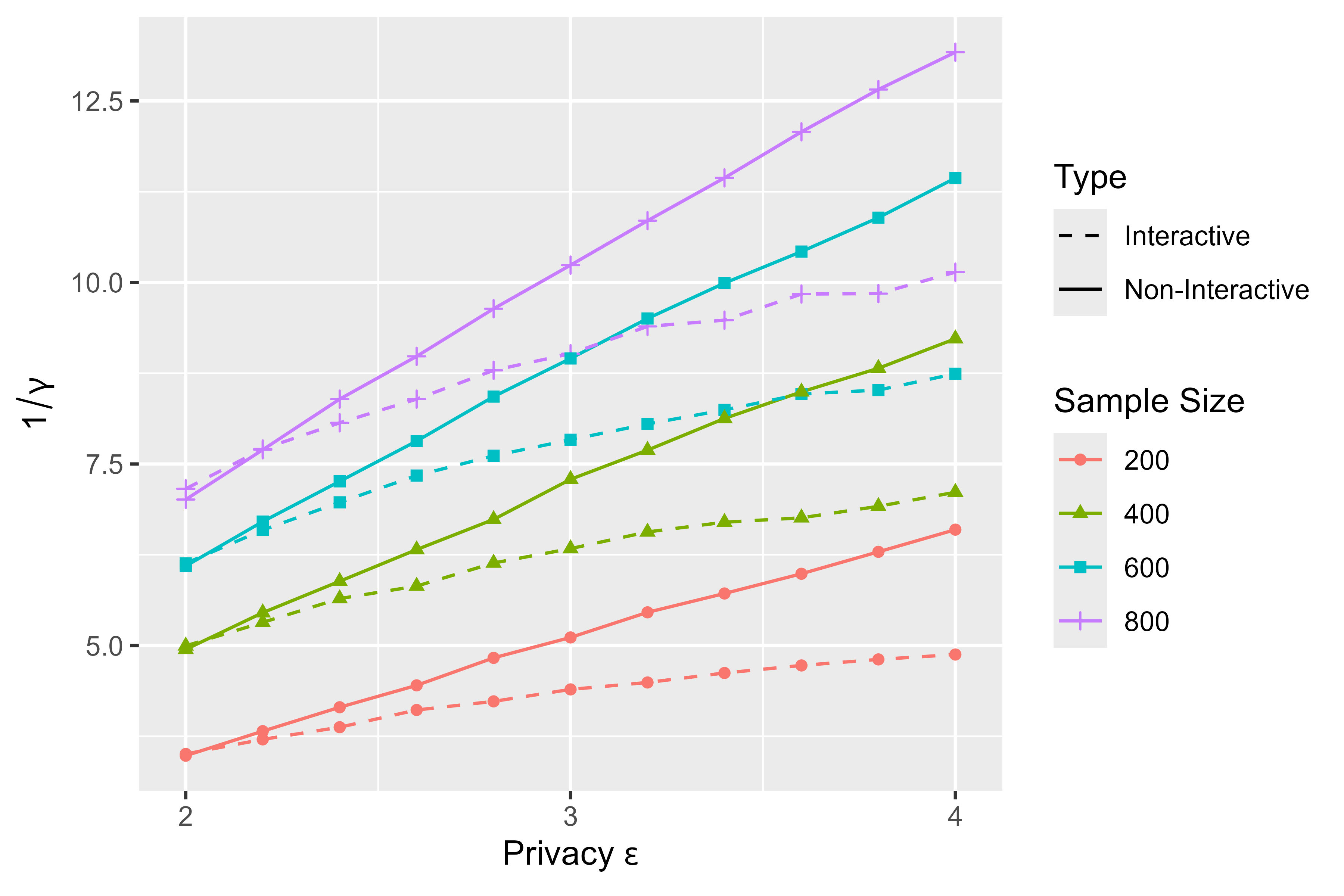}
                    \caption{}
                \end{subfigure}%
                \begin{subfigure}[t]{0.235\textwidth}
                    \centering
                    \includegraphics[width=\textwidth]{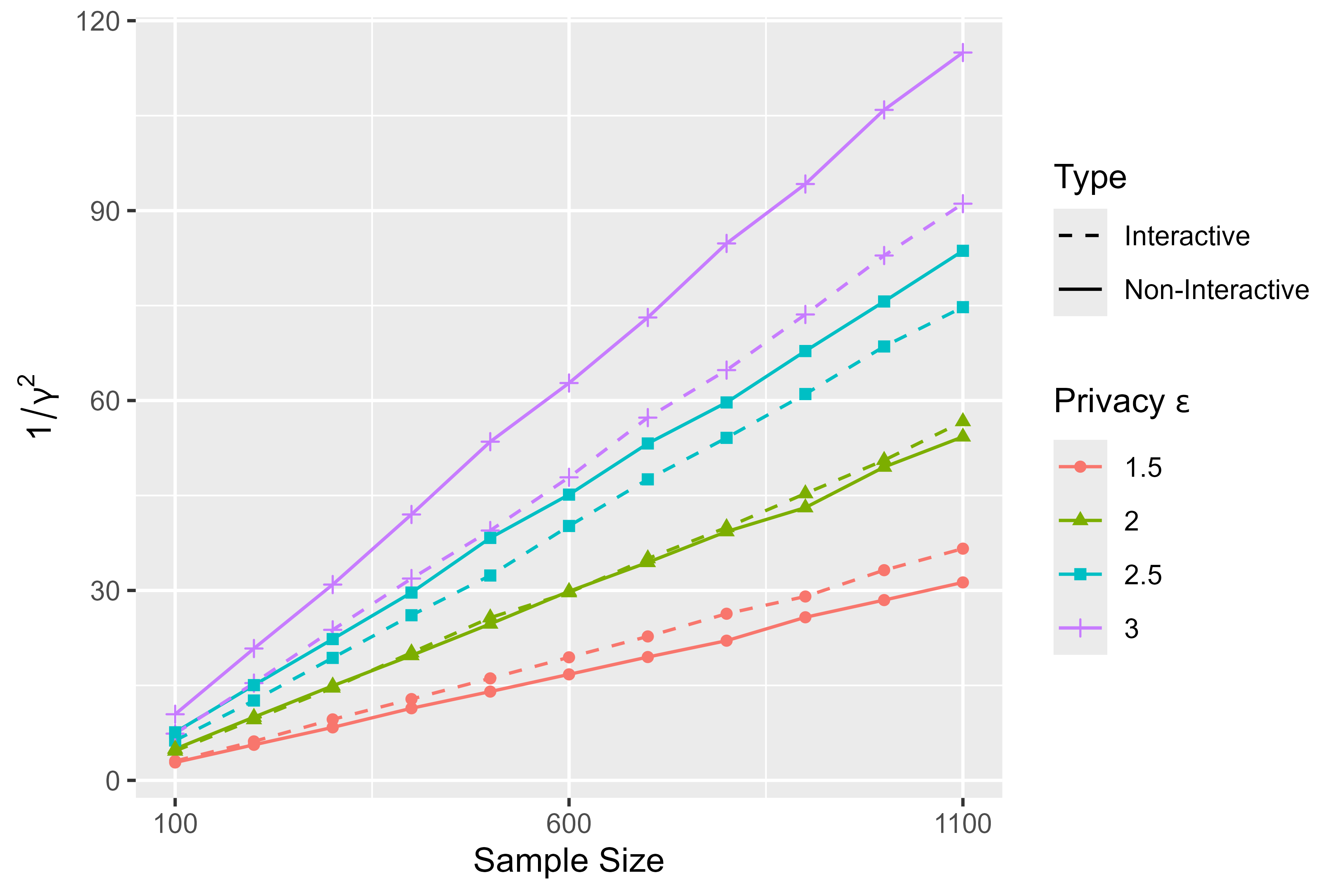}
                    \caption{}
                \end{subfigure}  
                \caption{Plots of transformed separation against dimension for the non-interactive and interactive tests in the $L_1$-problem (a, b) and $L_2$-problem (c, d). $n_1 = n_2 = 1000$, $d = 8$.}
                \label{sec6:fig:nondimseparation}
            \end{figure}
            
            We now consider the more interesting case of the dependence of the separation on the dimensionality, for which the theoretical rate varies greatly between the non-interactive and interactive cases and the $L_1$- and $L_2$-problems. In \Cref{sec6:fig:dimseparation}, after a suitable transformation of $\gamma$, we observe approximately linear trends between the transformed separations and the dimension. In particular, for the interactive procedure in the $L_2$-problem the curves are approximately horizontal lines, matching the theoretical separation which is independent of the dimension.

            \begin{figure}[ht]
                \captionsetup{aboveskip=0pt}
                \centering               
                \begin{subfigure}[t]{0.235\textwidth}
                    \centering
                    \includegraphics[width=\textwidth]{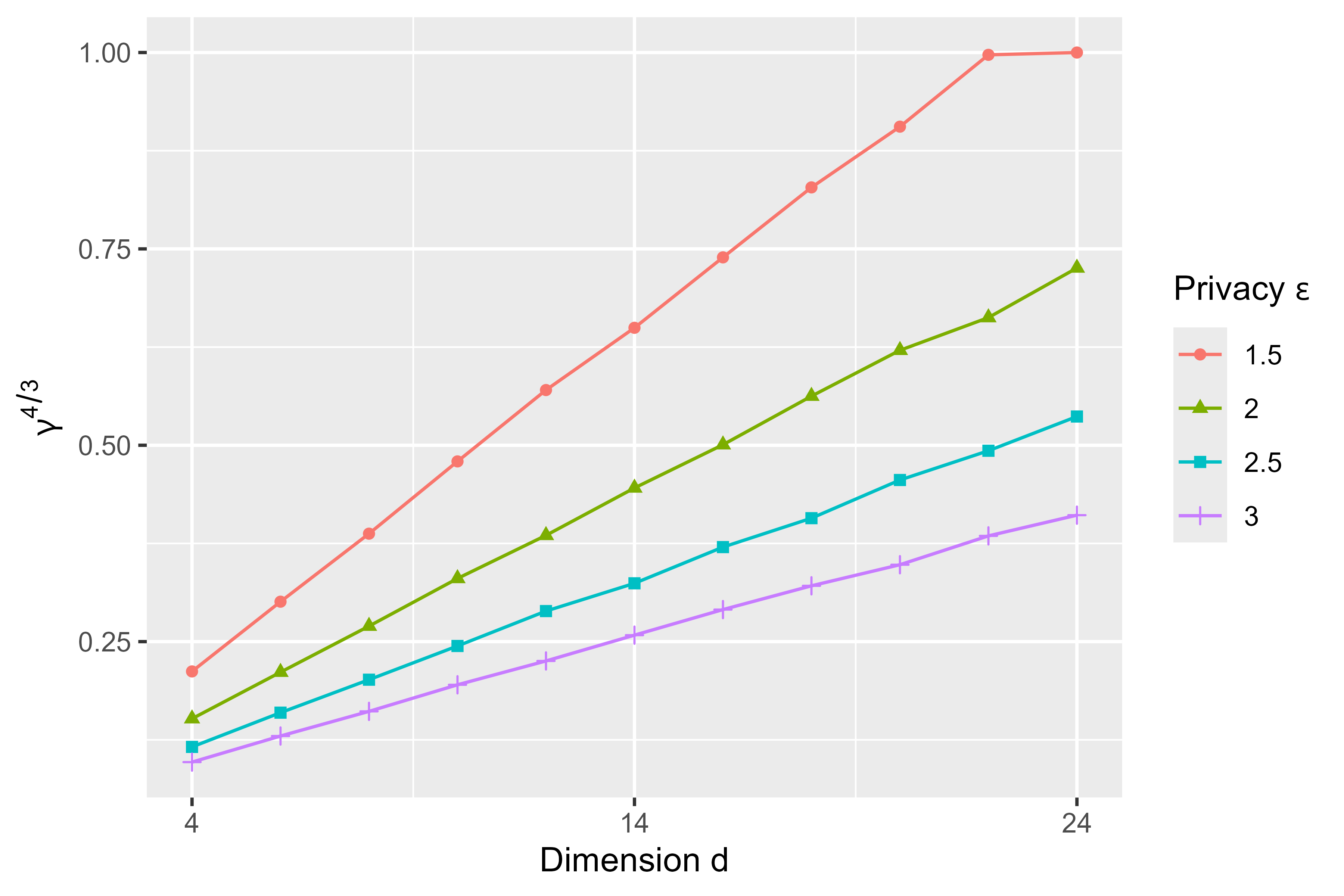}
                    \caption{}
                \end{subfigure}%
                \begin{subfigure}[t]{0.235\textwidth}
                    \centering
                    \includegraphics[width=\textwidth]{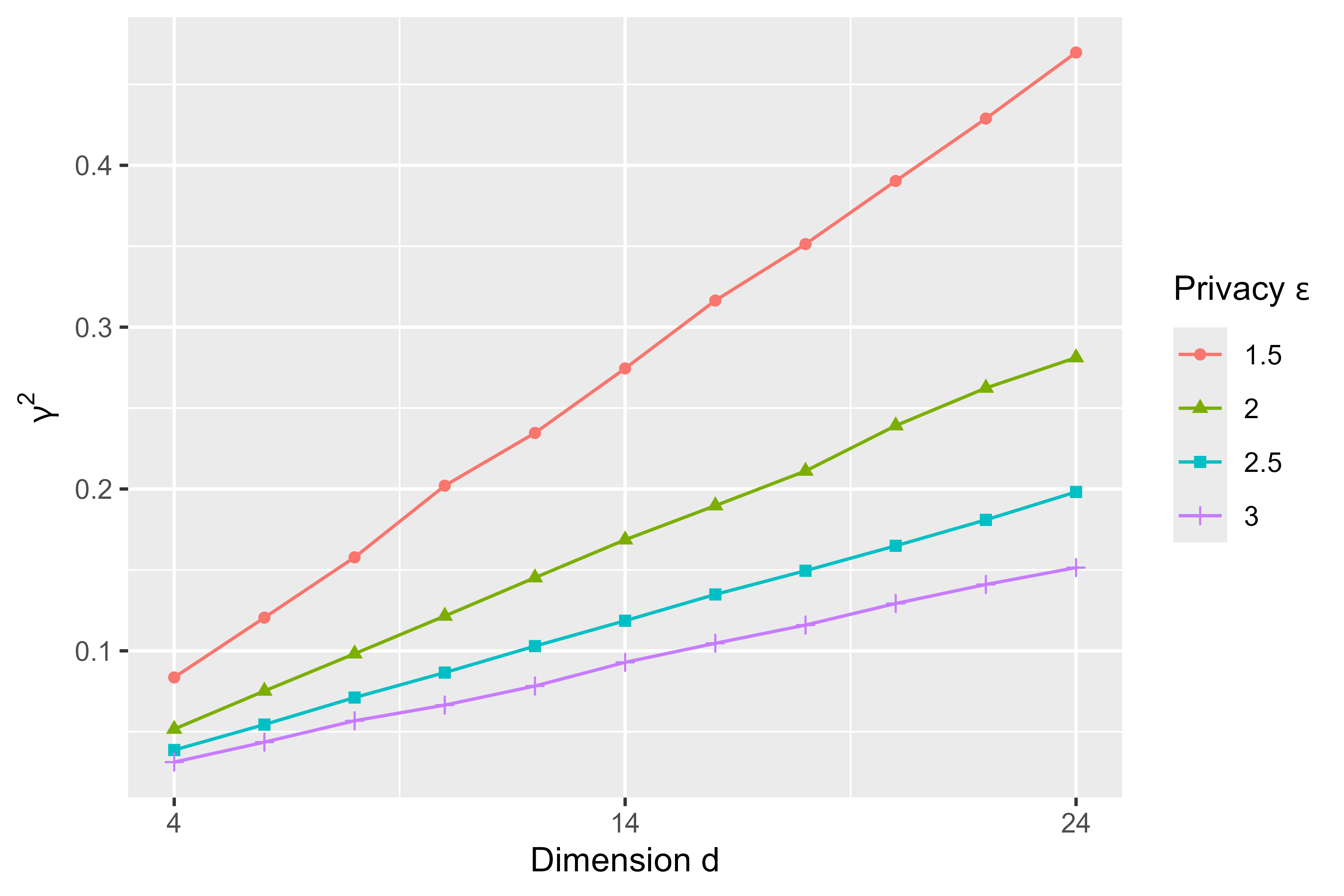}
                    \caption{}
                \end{subfigure}%
                \begin{subfigure}[t]{0.235\textwidth}
                    \centering
                    \includegraphics[width=\textwidth]{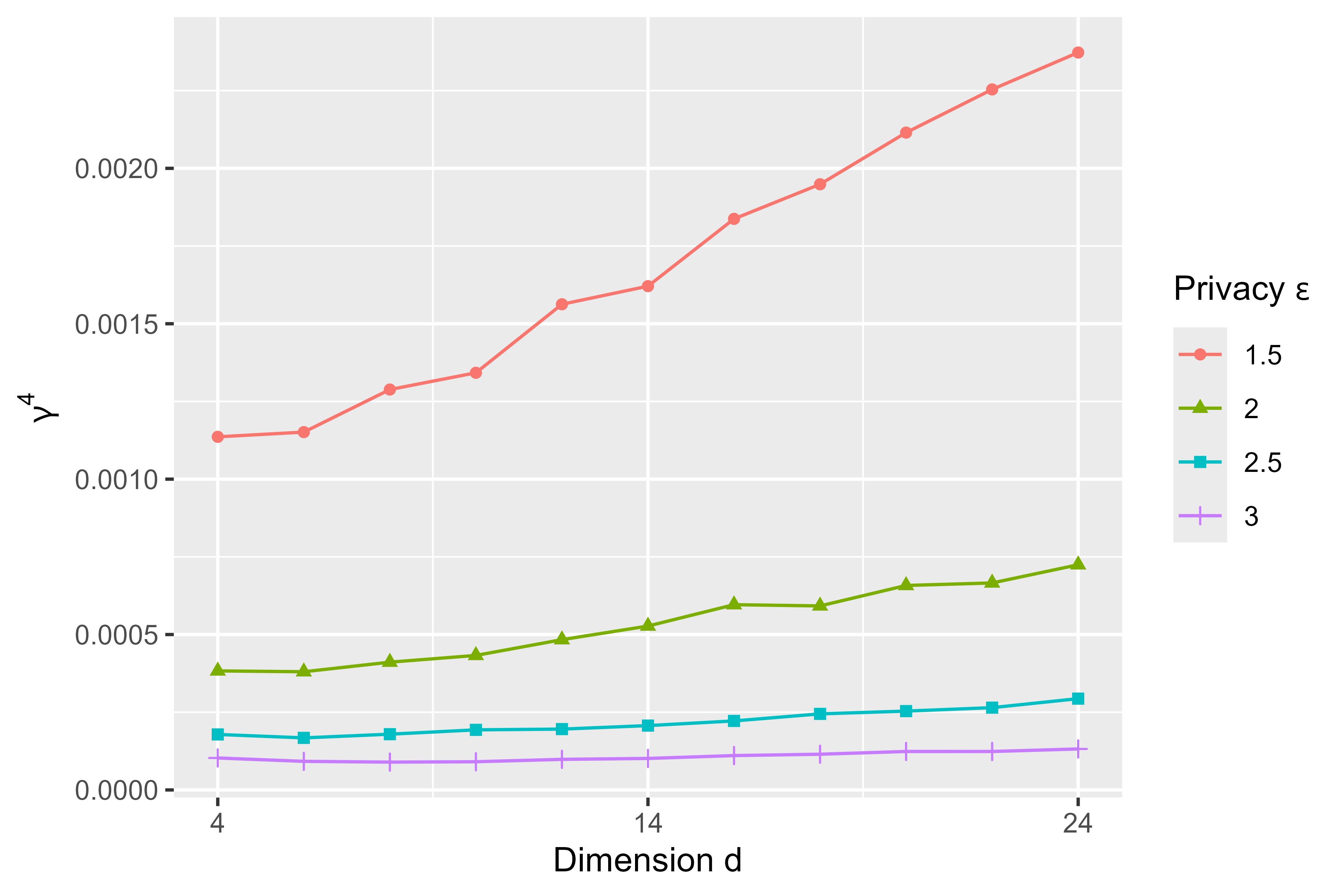}
                    \caption{}
                \end{subfigure}%
                \begin{subfigure}[t]{0.235\textwidth}
                    \centering
                    \includegraphics[width=\textwidth]{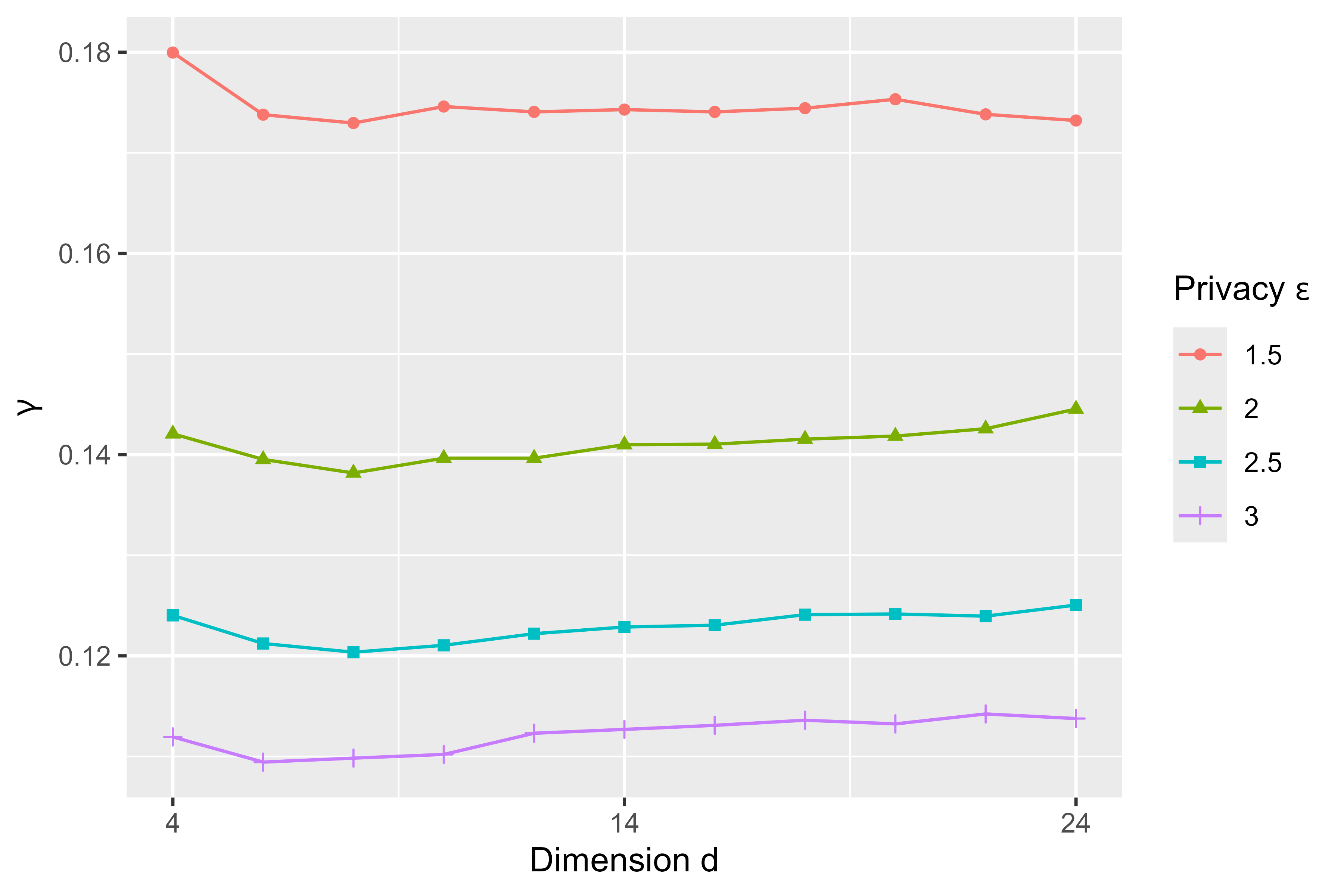}
                    \caption{}
                \end{subfigure}  
                \caption{Plots of transformed separation against dimension for the non-interactive and interactive tests in the $L_1$-problem (a, b) and $L_2$-problem (c, d). $n_1 = n_2 = 1000$.}
                \label{sec6:fig:dimseparation}
            \end{figure}
            
        \subsection{Continuous Setting} \label{sec6:cont}
        
            In this subsection, we investigate the performance of both our non-interactive and interactive testing procedures for continuous data. We also compare against a method which \beame{privatises}{privatizes}{privatizes} the data by adding suitably scaled Laplace noise, and applies an MMD based kernel test implemented via the R package \texttt{euMMD} \citep{Bodenham:2023:eummd}. We first introduce the distributions we consider. The first sample is obtained from i.i.d.~draws from $P_X = \mathrm{Unif}([0,1])$. For the second sample, let $\gamma \in [0, 1]$. We consider three families of distributions $P_{Y, \gamma}^{\mathrm{Beta}}$, $P_{Y, \gamma}^{\mathrm{Tri}}$ and $P_{Y, \gamma, k}^{\mathrm{Cos}}$ induced respectively, for $x \in [0, 1]$ and $k \in \mathbb{N}$, by the following densities supported on $[0,1]$.
            \begin{equation} \label{app:eq:contdistributions}
                \begin{aligned}
                    &f_{\gamma}^{\mathrm{Beta}}(x) = (1 - \gamma) + 5\gamma(1-x)^4, \\
                    &f_{\gamma}^{\mathrm{Tri}}(x)  =  (1 - \gamma) + \gamma(2x\mathbbm{1}\{x \in [0, 1/2)\} + 2(1-x)\mathbbm{1}\{x \in [1/2, 1]\}), \\
                    &f_{\gamma, k}^{\mathrm{Cos}}(x) = 1 + \gamma\cos(2k\pi x).
                \end{aligned}
            \end{equation}
            These densities are constructed to interpolate between a uniform distribution and some alternative such that at $\gamma = 0$ the null hypothesis holds and at $\gamma = 1$ the separation is \beame{maximised}{maximized}{maximized}.
        
            In all simulations which follow, we set the desired type-I error guarantee $\alpha = 0.05$, carry out permutation tests with $B = 199$, and take the average over $2000$ repetitions. For the truncation level of the basis expansion via the formulae \eqref{sec5:eq:trunc2} and \eqref{sec5:eq:trunc2}, we set $\beta = 0.05$, assume $s = 1$ analogous to a Lipschitz condition, and use $\min\{\varepsilon^2, 1\}$ in place of $\varepsilon^2$ therein as we consider the high privacy regime $\varepsilon^2 \lesssim 1$. We run simulations varying model parameters in the following ways.
            \begin{enumerate}
                \item \textbf{Varying privacy}: Fix $n_1 = n_2 = 500$. Vary $\varepsilon \in \{0.5, 1, 2, 4\}$. We also compare with the non-private setting using the raw data and the euMMD test, implemented via the R package \texttt{euMMD} \citep{Bodenham:2023:eummd}, denoted by $\varepsilon = \infty$. The results are contained in the first row of \Cref{sec6:fig:continuouspriv}.
            
                \item \textbf{Varying sample size}: Fix $n_1 = 500$, $\varepsilon = 2$. Vary $n_2 \in \{500, 750, 1000, 1250, 1500\}$. The results are contained in the second row of \Cref{sec6:fig:continuouspriv}.
                
                \item \textbf{Varying truncation}: Fix $n_1 = n_2 = 500$, $\varepsilon = 2$. Rather than choosing the truncation level of the basis expansion via the formulae \eqref{sec5:eq:trunc1} and \eqref{sec5:eq:trunc2}, we fix values of $R$ independently of the other model parameters. The results are contained in the third row of \Cref{sec6:fig:continuouspriv}.
            \end{enumerate}

            \begin{figure}[ht]
                \captionsetup{aboveskip=0pt}
                \centering       
                \begin{subfigure}[t]{0.3\textwidth}
                    \centering
                    \includegraphics[height=0.16\textheight]{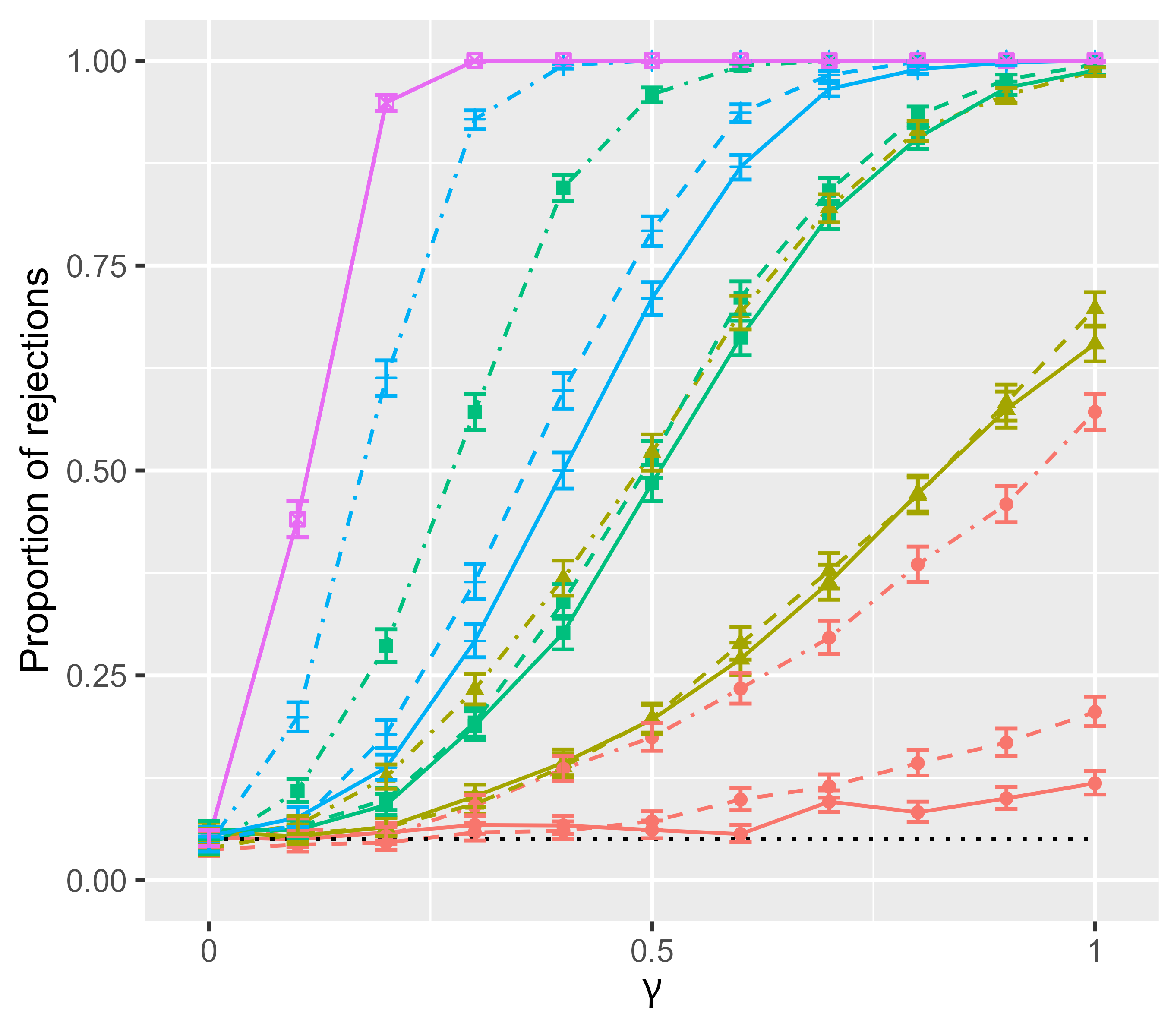}
                \end{subfigure}%
                \begin{subfigure}[t]{0.3\textwidth}
                    \centering
                    \includegraphics[height=0.16\textheight]{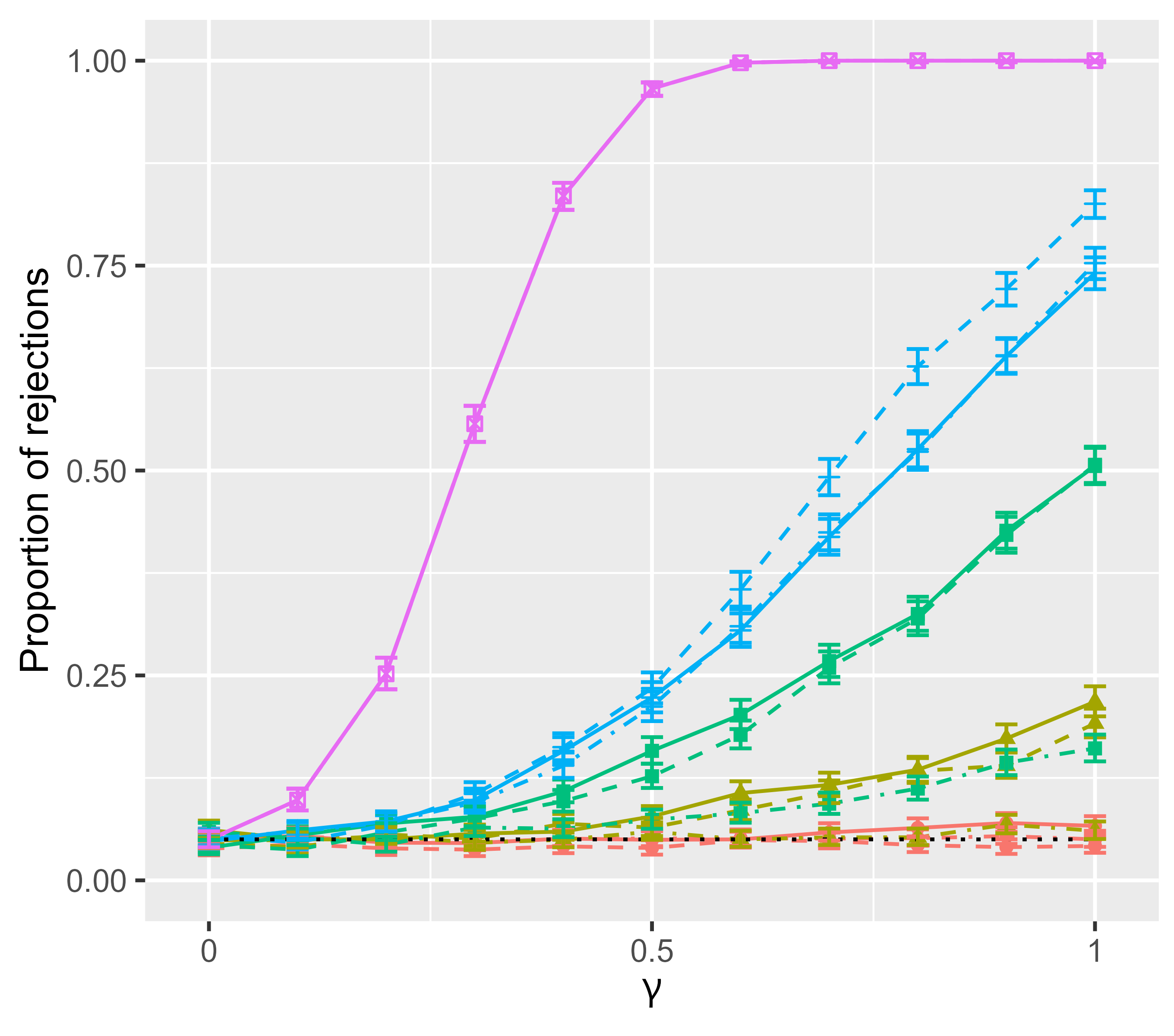}
                \end{subfigure}%
                \begin{subfigure}[t]{0.38\textwidth}
                    \centering
                    \includegraphics[height=0.16\textheight]{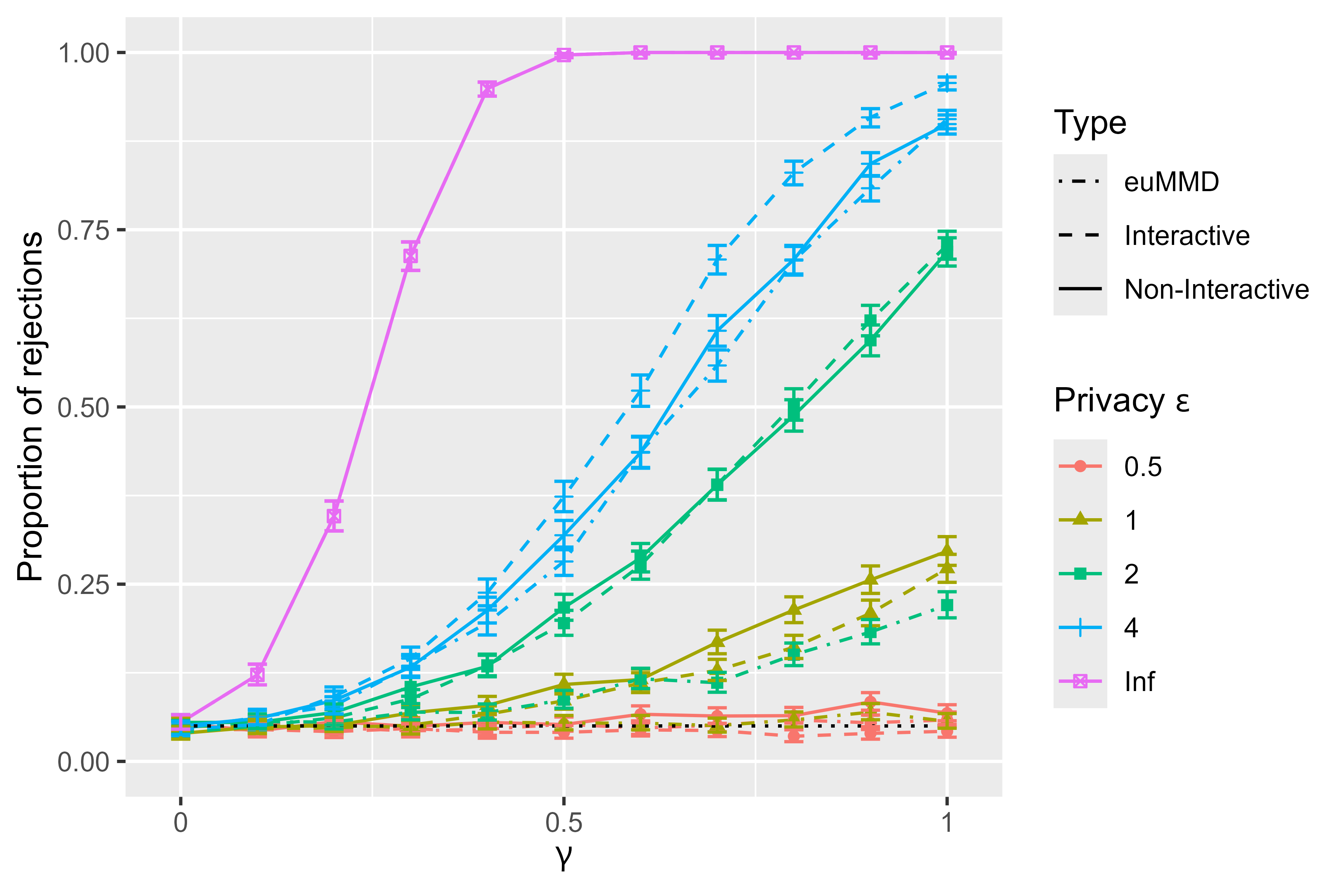}
                \end{subfigure}%

                \vspace{1em}
                
                \begin{subfigure}[t]{0.3\textwidth}
                    \centering
                    \includegraphics[height=0.16\textheight]{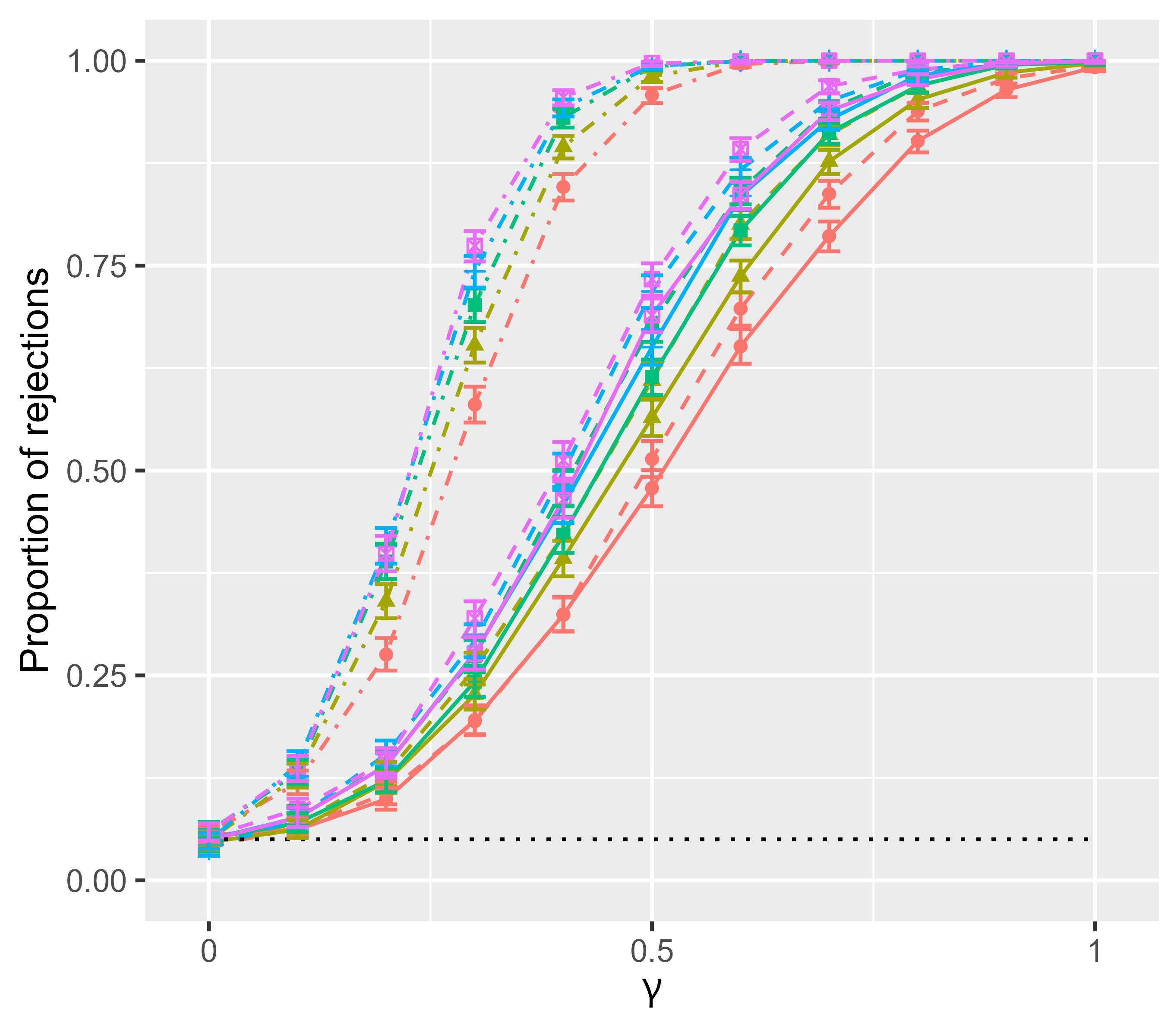}
                \end{subfigure}%
                \begin{subfigure}[t]{0.3\textwidth}
                    \centering
                    \includegraphics[height=0.16\textheight]{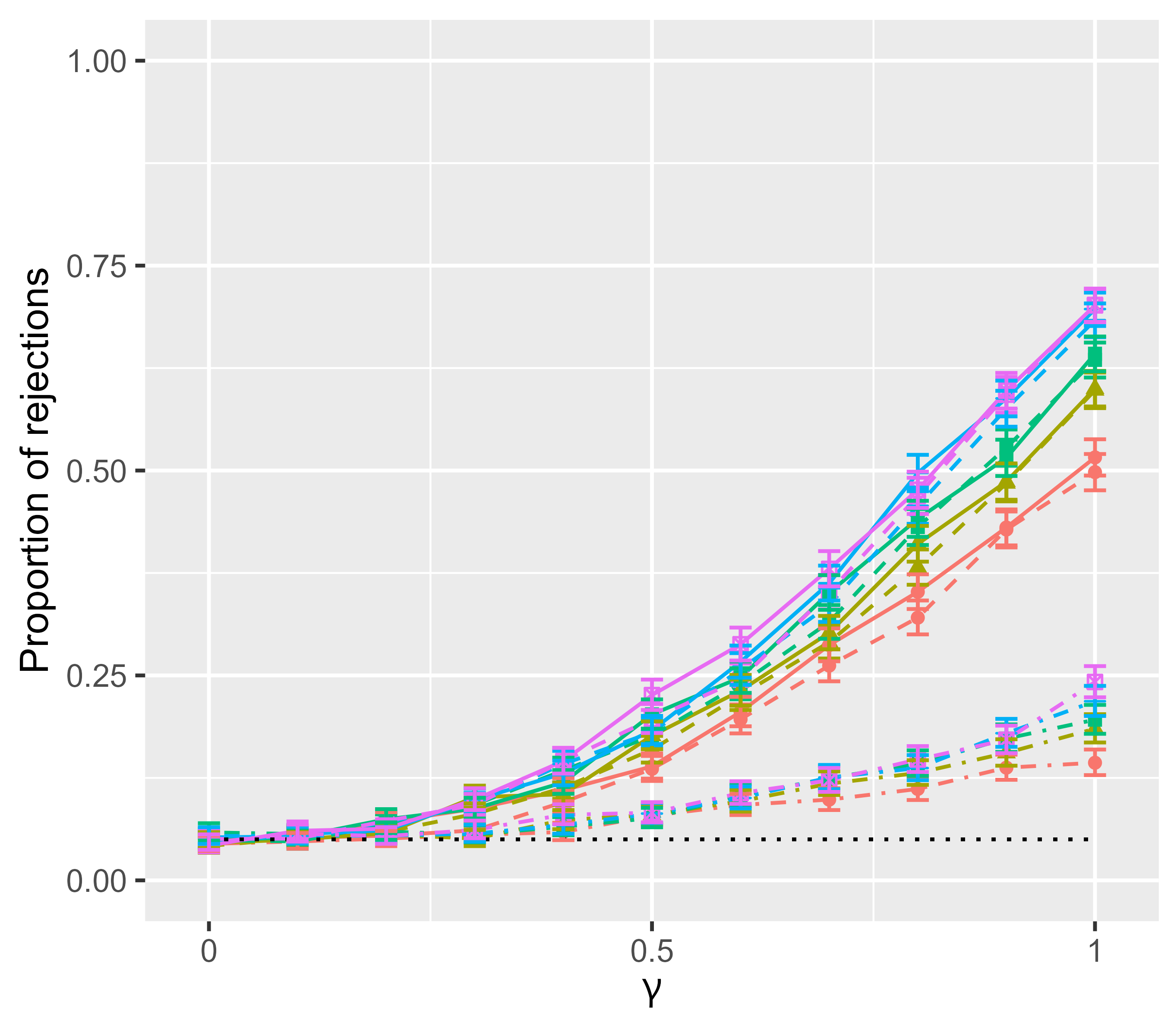}
                \end{subfigure}%
                \begin{subfigure}[t]{0.38\textwidth}
                    \centering
                    \includegraphics[height=0.16\textheight]{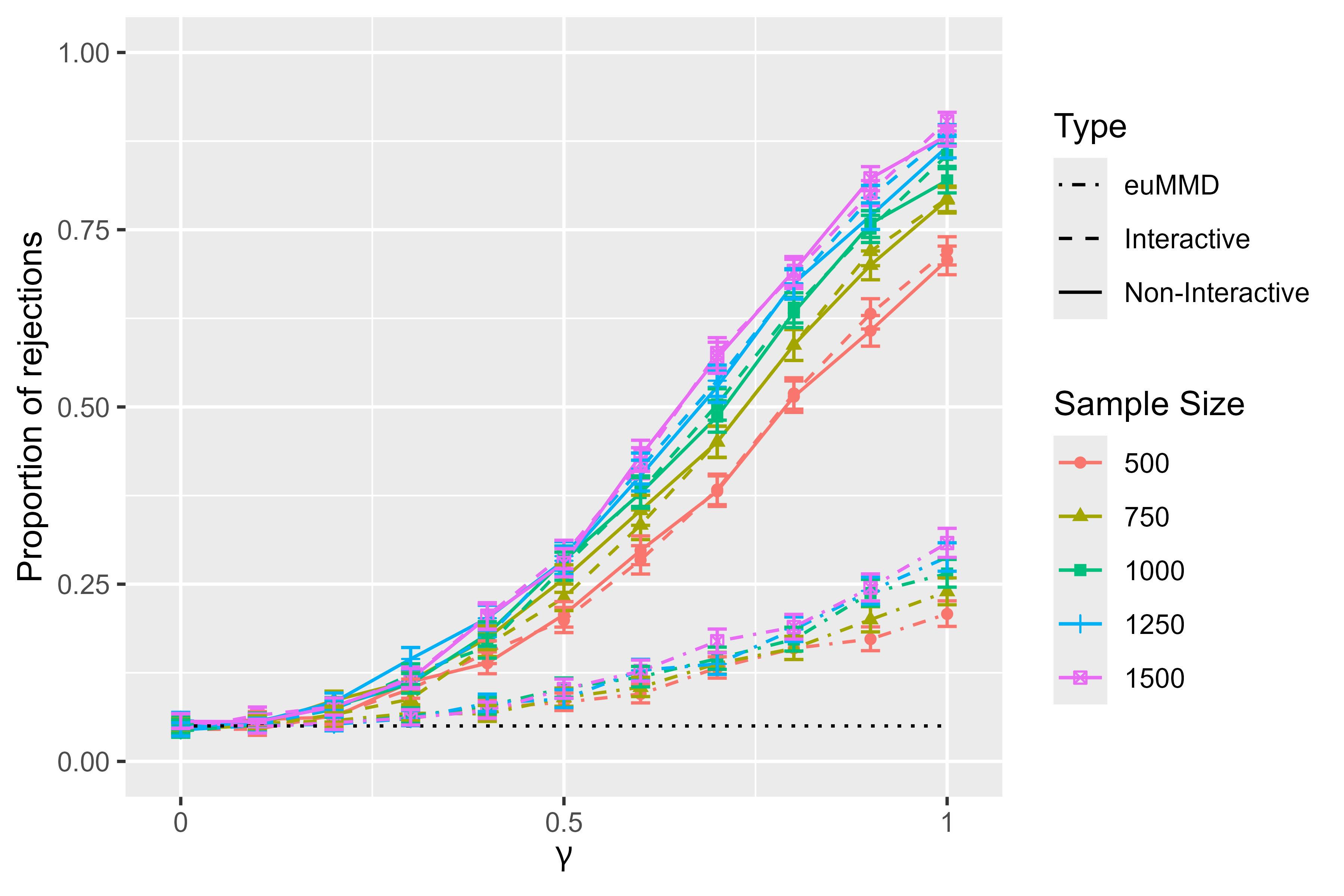}
                \end{subfigure}%

                \vspace{1em}

                \begin{subfigure}[t]{0.3\textwidth}
                    \centering
                    \includegraphics[height=0.16\textheight]{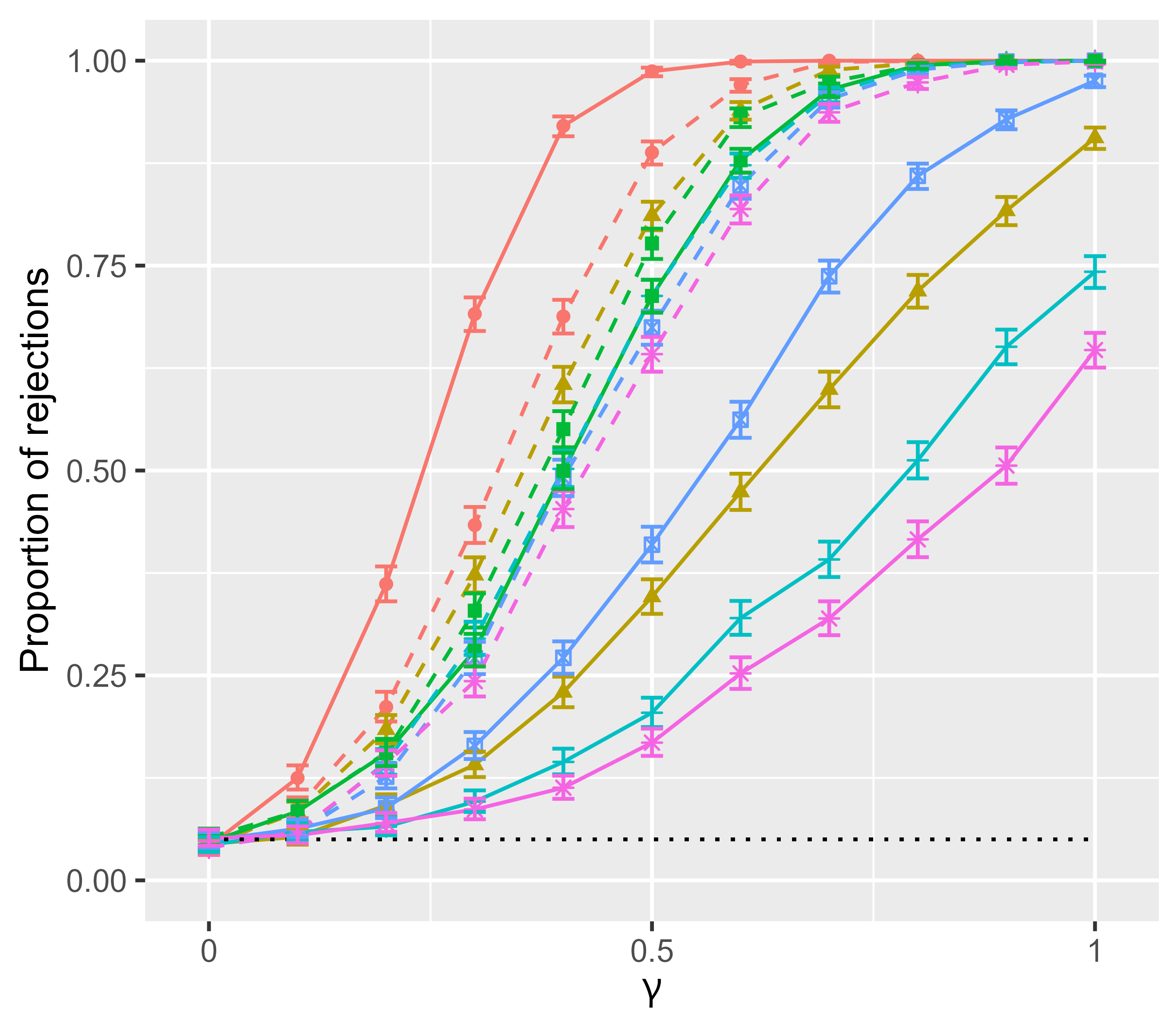}
                \end{subfigure}%
                \begin{subfigure}[t]{0.3\textwidth}
                    \centering
                    \includegraphics[height=0.16\textheight]{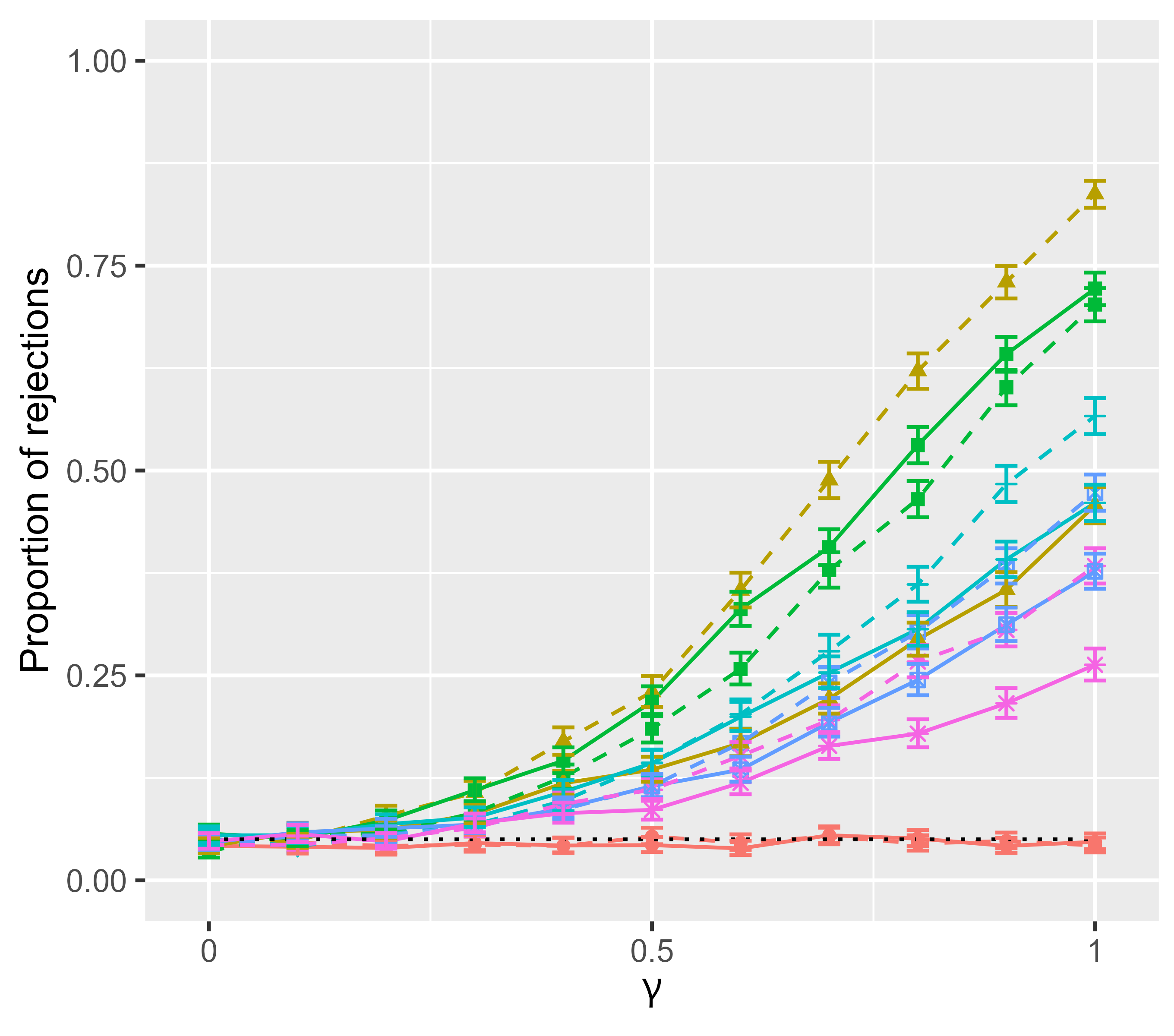}
                \end{subfigure}%
                \begin{subfigure}[t]{0.38\textwidth}
                    \centering
                    \includegraphics[height=0.16\textheight]{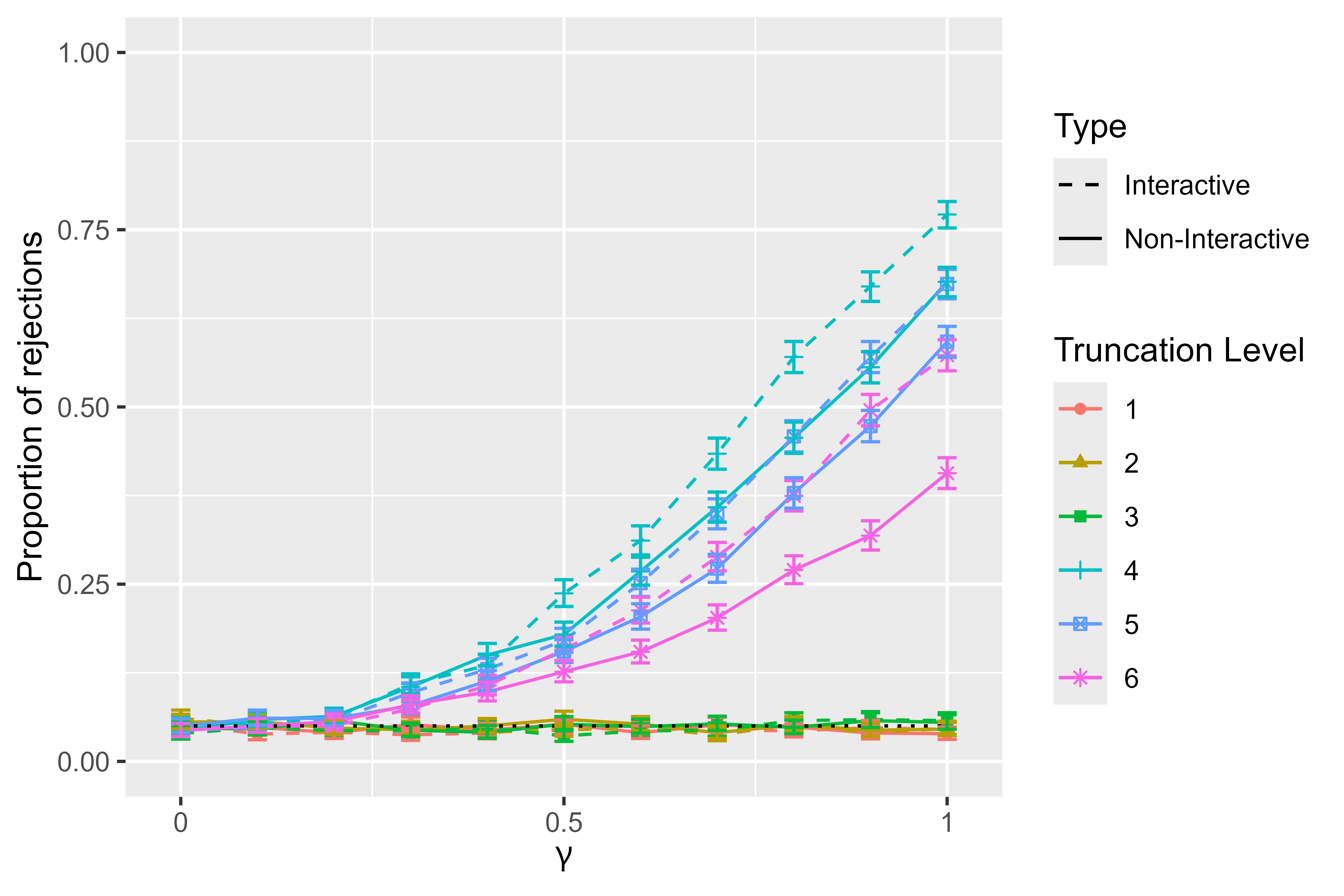}
                \end{subfigure}%
                
                \caption{Power curves for the testing problems with the distributions $P_{Y, \gamma}^{\mathrm{Beta}}$, $P_{Y, \gamma}^{\mathrm{Tri}}$ and $P_{Y, \gamma, k}^{\mathrm{Cos}}$ (first, second and third columns respectively), with $k = 1$ (first and second rows), and $k = 2$ (third row). First, second and third rows correspond to settings of varying privacy, sample size and manually specified truncation parameters respectively. Bars indicate pointwise 95\% confidence intervals. Privacy level $\varepsilon = \infty$ corresponds to an MMD-based test applied to the un\beame{privatised}{privatized}{privatized} data.}
                \label{sec6:fig:continuouspriv}
            \end{figure}

            \noindent
            \textbf{Discussion}: 
            The first row of \Cref{sec6:fig:continuouspriv} shows the performance as the privacy parameter $\varepsilon$ is varied. In summary, we observe two phenomena. Firstly, the relative performance of our methods improve against \texttt{euMMD} as the privacy requirement is strengthened. This is because, when \beame{privatising}{privatizing}{privatizing} by adding Laplace noise, the magnitude of the noise vanishes as $\varepsilon \rightarrow \infty$. On the other hand, our methods \beame{privatise}{privatize}{privatize} using the mechanisms of \cite{Duchi:2018:DJW}. These methods, whilst optimal in the high privacy regime where $\varepsilon$ is considered bounded from above by an absolute constant, introduce a non-zero magnitude of noise even as $\varepsilon \rightarrow \infty$, making them potentially less performant for larger values of $\varepsilon$. Secondly, we see that our method is superior when testing against $P_{Y, \gamma}^{\mathrm{Tri}}$ and $P_{Y, \gamma, 1}^{\mathrm{Cos}}$, whereas the MMD-based method outperforms for $P_{Y, \gamma}^{\mathrm{Beta}}$. We \beame{hypothesise}{hypothesize}{hypothesize} this is because $P_{Y, \gamma}^{\mathrm{Beta}}$ appears to be the smoothest of the three distributions considered, with deviations from the null being driven by lower frequency terms in the Fourier expansion. On the other hand, deviations are driven by higher frequency terms for the rougher distributions $P_{Y, \gamma}^{\mathrm{Tri}}$ and $P_{Y, \gamma, 1}^{\mathrm{Cos}}$. Hence, deviations are harder to detect after \beame{privatising}{privatizing}{privatizing} with additive Laplace noise which acts as a convolution, and thus attenuates the higher frequency terms.

            The second row of \Cref{sec6:fig:continuouspriv} shows the performance as the size of the second sample is grown. We observe, as in the discrete setting, that we obtain improvements in power, with these improvements holding even for an unbalanced sample many times larger than the smaller one.

            The third row of \Cref{sec6:fig:continuouspriv} shows the performance as the truncation level is manually specified. We observe that in the non-interactive setting, the power is highly sensitive to the chosen truncation. For example, considering the testing problem against $P_{Y, \gamma}^{\mathrm{Beta}}$, the power depends on the parity of the truncation level, with odd and small values being most powerful. This is likely due to the shape of the density corresponding to $P_{Y, \gamma}^{\mathrm{Beta}}$, which, due to the basis functions in \eqref{sec5:eq:trigbasis}, is better detected by sines corresponding to odd index basis functions, rather than the cosine terms. On the other hand, the interactive procedure is significantly more robust. This suggests a major benefit of the interactive procedure, and so it may be preferable to use the interactive procedure in practice, especially when there is uncertainly in the smoothness or shape of the distribution in question.
      
            For further comparison, we now consider the case of testing against $P_{Y, \gamma, 2}^{\mathrm{Cos}}$. The construction of $P_{Y, \gamma, 2}^{\mathrm{Cos}}$ is a perturbation of the uniform distribution such that the separation is concentrated at a specific frequency so that detection of deviation from the null is impossible unless the number of basis elements considered is sufficiently large, in this case at least four. Further, any choice larger than four provides no further information, and so will invariability result in a loss of power due to the additional variance. Such a construction is common when evaluating the performance of testing procedures under a Sobolev-type regularity assumption (e.g.~\citealt{Berrett:2021:IndepPerm}). As expected, neither procedure has any power for fewer than four basis elements. For larger than four considered basis elements, we see the interactive procedure preserves power better as this number increases. Hence, we can see that the interactive procedure is more robust even when the additional basis elements considered contain absolutely no further information.

        \subsection{Sensitivity Analyses} \label{sec6:sensitivity}
            We finish by considering the sensitivity to tuning parameters. First, we consider the sensitivity to the choice of $B$, the number of permutations. Throughout, we considered $B = 199$. The performance of the procedures is not particularly sensitive to $B$ once taken sufficiently large, and a value of this magnitude is common \citep[e.g.][]{Berrett:2021:USP}. We consider the discrete $L_1$-problem as in \Cref{sec6:disc}, with $n_1 = n_2 = 250$, $d = 4$, $\gamma = 0.75$, and consider $B \in \{10, 11, \hdots, 200\}$ with the power estimated for each value of $B$ across $10,000$ repetitions. We present in \Cref{fig:PermSens} the results of the sensitivity analysis, where we see that the power and type-I error guarantee is not particularly sensitive to the choice of $B$ once large enough. In particular, we see that the power jumps at every value one less of a multiple of $1/\alpha = 20$. This would suggest our value of $B = 199$ is competitive. Regardless, for large enough values of $B$ such as ours, the power does not appear to be particularly sensitive to the choice of $B$.

            \begin{figure}[H]
                \centering
                \includegraphics[width=0.45\linewidth]{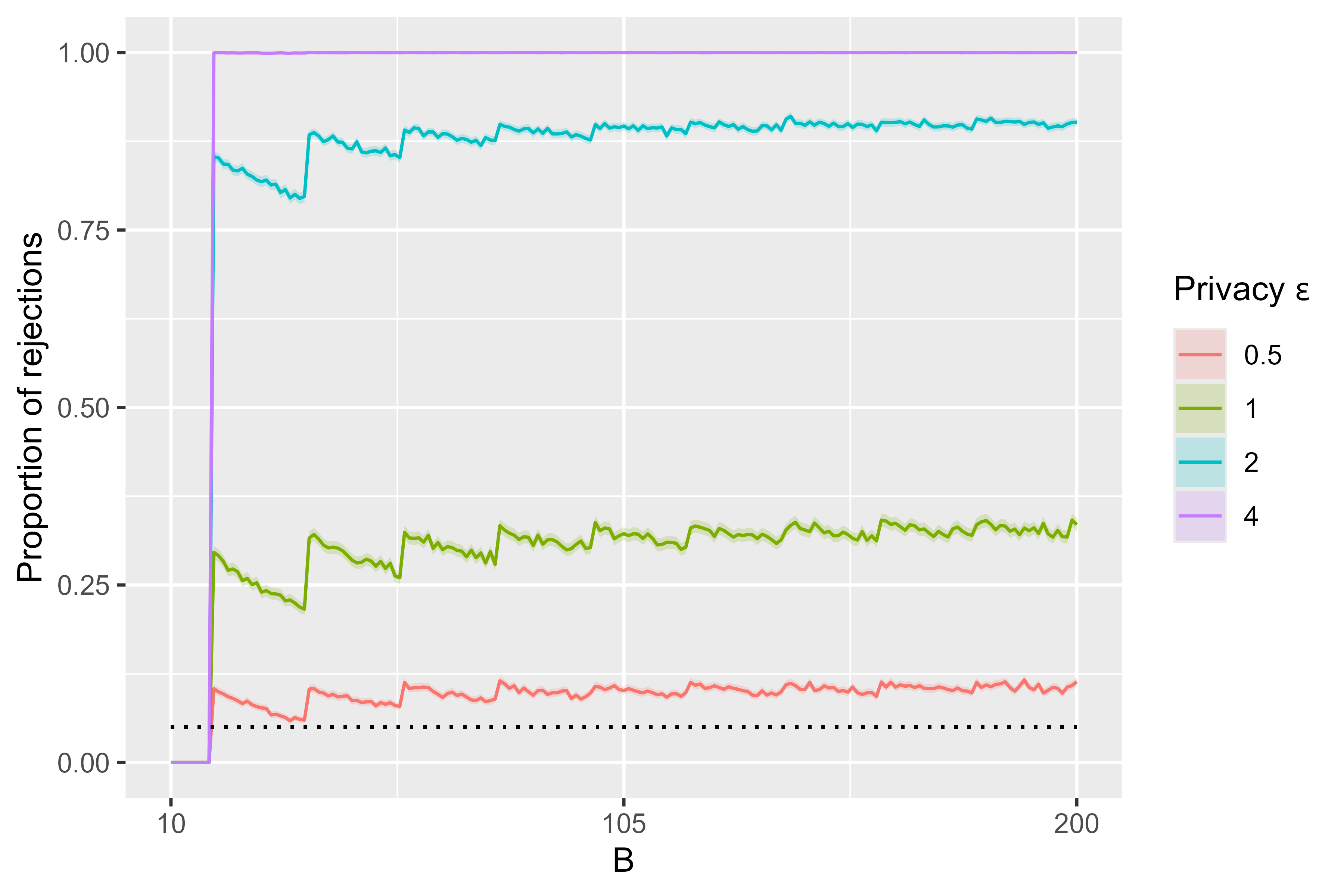}
                \caption{Power curves for the $L_1$-problem as the number of permutations $B$ is varied. Ribbons indicate pointwise $95$\% confidence intervals.}
                \label{fig:PermSens}
            \end{figure}            
            
            The interactive procedures for both discrete and continuous distribution problems involve truncating observations to facilitate the addition of noise for privacy, and the theoretical analysis permits a tunable constant in the truncation widths if desired. Given the performant nature of the interactive procedures over the non-interactive procedures previously observed, it is of interest to ensure that they are not overly sensitive to this choice in practice.

            In \Cref{fig:DISensitivityConstant}, we consider the $L_1$-problem of \Cref{sec6:disc}. Here, we vary the truncation width which is used in the sampling of the private views \eqref{sec4:eq:UEint2}, replacing $\tau$ therein with $\tau = c/(n_1\varepsilon^2)^{1/2}$ for a range of values of $c > 0$. We observe that the change in performance is minimal, suggesting the value of this tuning parameter is not of significant consequence.

            \begin{figure}[H]
                \centering
                \subfloat[Power curves for the $L_1$-problem. $d = 8$.]{\label{fig:DISensitivityConstant}
                \centering
                \includegraphics[width=0.45\linewidth]{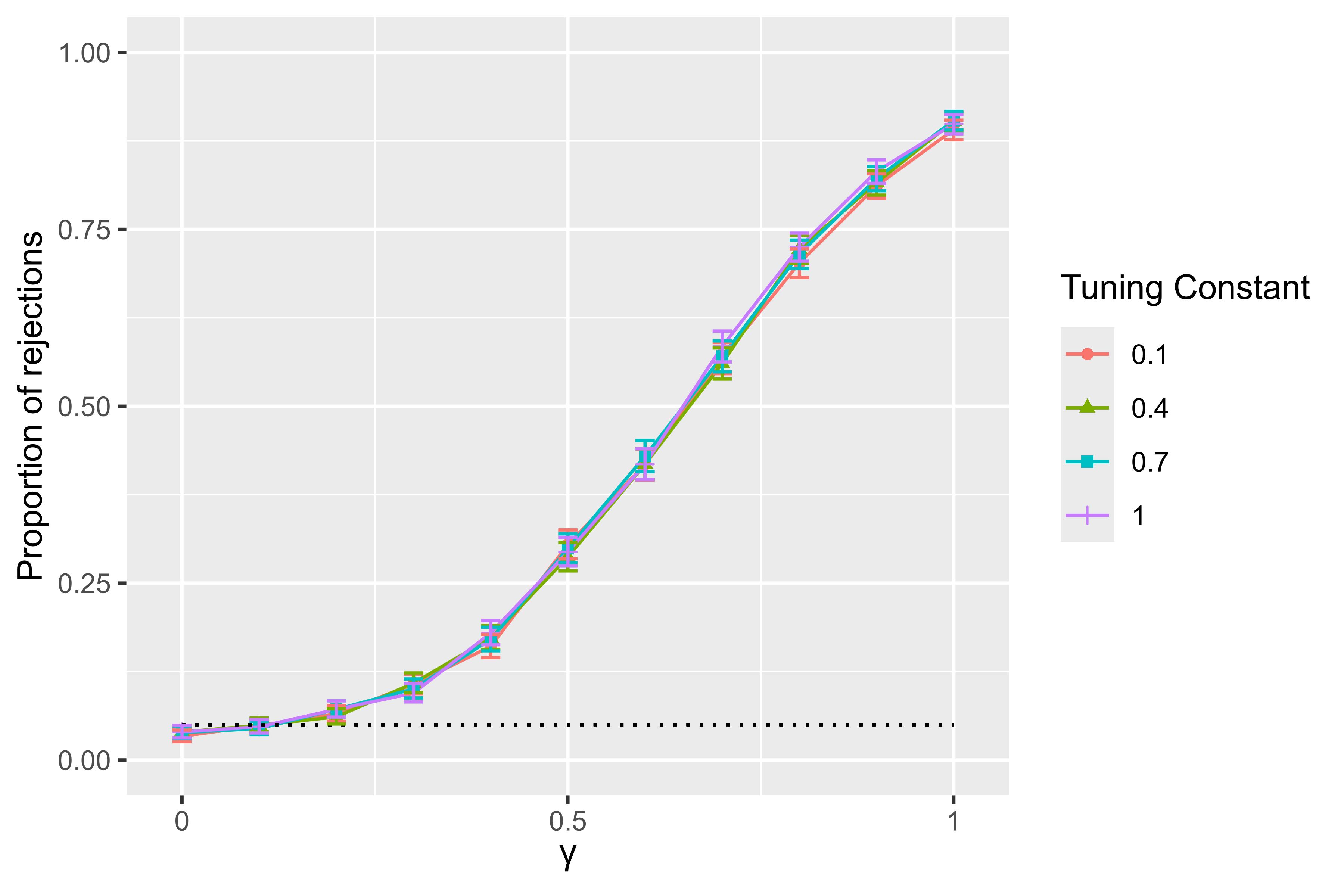}
                }
                \hfill
                \subfloat[Power curves for the smooth problem]{\label{fig:CISensitivityConstant}
                \centering
                
                \includegraphics[width=0.45\linewidth]{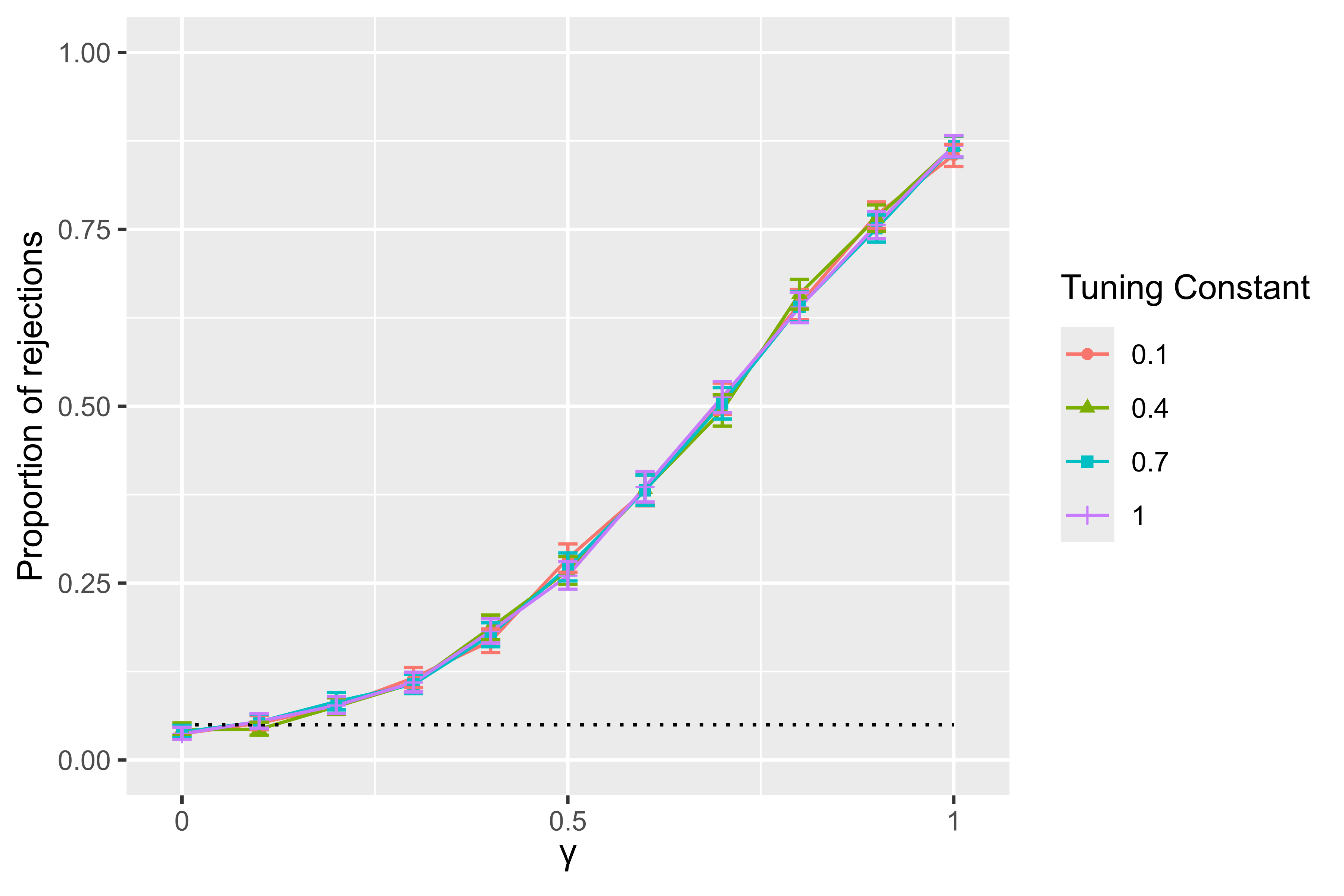}
                }
                
                \caption{Power curves for the interactive tests applied to the discrete $L_1$-problem (a) and the continuous smooth problem (b). $n_1 = n_2 = 250$. Bars indicate pointwise $95$\% confidence intervals.}
            \end{figure}

            For the interactive procedure for continuous distributions, we recall the smooth problem of \Cref{sec6:cont}. In particular, we consider the effect of the tuning parameter $C$ in the truncation widths $\{\eta_j\}_{j=1}^M$ given in \eqref{sec5:eq:truncvals} used when sampling of the private views \eqref{sec5:eq:contMethod1NotTrunc}. To ease the theoretical analysis, a sufficiently large value of $C$ is chosen, but such a large value might lead to poor performance in practice, and hence we set this constant to $1$ in our simulations. In \Cref{fig:CISensitivityConstant}, we consider the performance as we vary this constant, and observe that the performance of the algorithm is little changed.
            
    \section{Conclusion}

        In this paper, we considered two-sample distribution testing in classical settings, under the constraint of both non-interactive and interactive local differential privacy, \beame{utilising}{utilizing}{utilizing} permutation tests. We \beame{characterised}{characterized}{characterized} the minimax separation radii and developed further improvements, relaxing assumptions made in earlier permutation testing literature.

        To the best of our knowledge, there is no prior literature applying permutation testing procedures under interactive local privacy constraints. We demonstrate the improved theoretical and numerical properties of interactive procedures. Given it is not a-priori obvious whether the permutation procedure could still be applied to \beame{privatised}{privatized}{privatized} data under local privacy constraints, and especially the interactive setting where data are no longer necessarily independent, these positive results may help serve as a blueprint for further study into permutation testing under local privacy constraints such as independence testing or testing under distributional shifts.

        One note of interest is the cost of adapting to unknown smoothness parameters in the setting of continuous distributions. As shown in \cite{Ingster:2000:AdaptiveTests}, one must pay an extra iterated logarithmic cost $O([\log\{\log(n_1)\}]^{1/2})$ in the sample size to allow for adaptivity. In the private setting it is seen that this becomes $O(\log(n_1))$. This raises two questions: Whether the iterated logarithm can be recovered with more advanced techniques, and whether the square root can be recovered. The former question appears more difficult, as the common method of carrying out multiple tests to adapt to the unknown parameter runs the risk of privacy leakage, requiring the sample to be split $O(\log(n_1))$ times, suggesting an iterated logarithm is unattainable, but the possibility of a more advanced, possibly interactive, mechanism cannot be ruled out. In particular, the presence of a gap, if one exists, between the non-interactive and interactive settings in the context of adaptive tests would be highly interesting. The latter question seems more attainable, and in fact would simply require sub-Gaussian concentration rather than the sub-exponential concentration arguments used in our results. In particular, our interactive mechanisms enable stronger concentration results, and it seems possible to adapt the arguments to attain sub-Gaussian concentration.

        Another avenue for further research would be to further develop our interactive testing procedure in the setting of continuous distributions. As seen in simulations, the interactive test demonstrates greater power, and is more robust to tuning parameters, than the non-interactive test. Nevertheless, the test is more delicate than interactive goodness-of-fit tests under, for example, H\"{o}lder assumptions, requiring the combination of multiple tests to handle different regimes. The complexity seems to arise due to our choice of a Sobolev smoothness assumption, which is less commonly considered in these settings. Hence, further work to see whether the interactive test could be simplified, or more generally to understand the impact of a Sobolev smoothness assumption in locally private testing problems, could be of interest.

        We note that in this paper, we assumed the privacy level $\varepsilon$ is the same for both samples. However, \cite{Canonne:2024:Heterogeneous} instead consider heterogeneous privacy levels between the two samples, $\varepsilon_1$ and $\varepsilon_2$, where a greater magnitude of noise may be injected into the observations of one sample than the other. Applying heterogeneous privacy constraints to the permutation tests we consider is an interesting direction but introduces difficulties, as the differing privacy budgets means the distribution of the two \beame{privatised}{privatized}{privatized} samples will differ, even if the distribution of the underlying samples is the same, breaking the requirement of permutation invariance under the null. Seeing whether this issue can be overcome, such as by using a \beame{Studentising}{Studentizing}{Studentizing} procedure, is an interesting direction for future research.

        Finally, we note that this work and prior work on permutation tests under Central Differential Privacy (CDP) \citep{Kim:2023:PrivatePermutationTests} may potentially be combined for the setting of hypothesis testing under Federated Differential Privacy (FDP). The FDP setting in \cite{Cai:2024:FDPHypothesisTesting} supposes multiple independent data silos, each requiring a CDP guarantee for their data. Here, LDP and CDP testing strategies are combined, suggesting possible future directions of research.

\section*{Acknowledgements}
The second author was supported by European Research Council Starting Grant 101163546. The third author was partially funded by the EPSRC (EP/Z531327/1) and the Leverhulme Trust (Philip Leverhulme Prize).

\bibliographystyle{plainnat}
\bibliography{bibliography}

\appendix

\section*{Appendices}

The appendices are \beame{organised}{organized}{organized} as follows. In \Cref{app:sec:separationcriteria} we collect the proofs of the separation criteria stated in \Cref{sec3:sec:separationcriteria}. In \Cref{app:sec:UBDiscrete} and \Cref{app:sec:ContUB} we collect the proofs of the theoretical properties of our constructed hypothesis tests, yielding the upper bounds for the minimax separation radii results in \Cref{sec4} and \Cref{sec5} respectively. In \Cref{app:sec:lower}, we provide the proof of \Cref{sec3:lem:lowerboundlem} for obtaining lower bounds for the two-sample testing problem from the analogous goodness-of-fit testing problem. In Appendices \ref{app:sec:tailbounds1}, \ref{app:sec:tailbounds2} and \ref{app:sec:tailbounds3}, we collect the proofs for numerous tail bound results for, respectively, the $U$-statistics considered for our non-interactive tests; for the test statistics considered in the interactive test for discrete data; and the test statistics considered for the interactive tests for continuous data. Finally, in \Cref{app:misc} we collected miscellaneous technical lemmata used throughout the main text. 
\section{Separation Criteria} \label{app:sec:separationcriteria} \label{Sec:appA}
    \subsection{Proof of \texorpdfstring{\Cref{sec3:prop:permtestcontrol}}{Proposition 1}}
        \begin{proof}[Proof of \Cref{sec3:prop:permtestcontrol}]
            Consider a sample of permutations $\{\pi_1, \hdots, \pi_B\}$ of size $B$ drawn, with replacement, uniformly from $S_n$, the group of permutations on $[n]$. We start by noting that for any $t > 0$, $p_B$ as defined in \eqref{sec2:def:permpval} satisfies
            \begin{align}
                \mathbb{P}(p_B > \alpha)
                &= \mathbb{P}\bigg(1 + \sum_{b = 1}^B \mathbbm{1}\{T_n \leq T_n^{\pi_b}\} \geq (1+B)\alpha\bigg) \nonumber \\
                &\leq \mathbb{P}\bigg( \bigg\{1 + \sum_{b = 1}^B \mathbbm{1}\{T_n \leq T_n^\pi\} \geq (1+B)\alpha \bigg\} \cap \{T_n \geq t\}\bigg) + \mathbb{P}(T_n < t) \nonumber \\
                &\leq \mathbb{P}\bigg( 1 + \sum_{b = 1}^B \mathbbm{1}\{T_n^{\pi_b} \geq t\} \geq (1+B)\alpha\bigg) + \mathbb{P}(T_n < t) \nonumber \\
                &\leq \frac{1+B\mathbb{P}(T_n^{\pi_1} \geq t)}{(1+B)\alpha} + \mathbb{P}(T_n < t), \label{app:eq:generaltype2errorbound}
            \end{align}
            where the final inequality follows from Markov's inequality. 
            
            First, consider the case where $T_n^{\pi_1}$ is $\mathrm{SE}(\widetilde{\Sigma})$. By Proposition~2.7.1 in \cite{Vershynin:2018:HDPBook}, there exists an absolute constant $C > 0$, possibly differing from that in the proposition statement, such that $\mathbb{P}[T_n^{\pi_1} \geq C\widetilde{\Sigma} \log\{8/(\alpha\beta)\} ] \leq \alpha\beta/4$, where we use the fact that $\mathbb{E}[T_n^{\pi_1}] = 0$. We also have
            \begin{align*}
                \mathbb{P}\big[T_n < C\widetilde{\Sigma}\log\{8/(\alpha\beta)\}\big]
                &= \mathbb{P}\big[T_n - \mathbb{E}[T_n] < C\widetilde{\Sigma}\log\{8/(\alpha\beta)\} - \mathbb{E}[T_n]\big] \\
                &\leq \mathbb{P}\{T_n - \mathbb{E}[T_n] < -C\Sigma \log(4/\beta)\}
                \leq \beta/2,
            \end{align*}
            where in the first inequality we use the fact that $\mathbb{E}[T_n] \geq C\widetilde{\Sigma}\log\{8/(\alpha\beta)\} + C\Sigma \log(4/\beta)$ as in the proposition statement, and the final by the fact that $T_n$ is $\mathrm{SE}(\Sigma)$ and Proposition~2.7.1 in \cite{Vershynin:2018:HDPBook}.

            Thus, setting $t = C\widetilde{\Sigma}\log\{8/(\alpha\beta)\}$ in \eqref{app:eq:generaltype2errorbound} and using the above inequalities, we obtain
            \begin{align*}
                \mathbb{P}(p_B > \alpha)
                \leq \frac{1 +\alpha\beta B/4}{(1+B)\alpha} + \beta/2
                \leq \beta,
            \end{align*}
            where the final inequality uses the fact that $B \geq 4/(\alpha\beta) - 1$.
            
            In the case $T_n^{\pi_1}$ is $\mathrm{SG}(\widetilde{\Sigma}^2)$, the proof follows similarly as $\mathbb{P}[T_n^{\pi_1} \geq 2^{1/2}\widetilde{\Sigma} [\log\{8/(\alpha\beta)\}]^{1/2} ] \leq \alpha\beta/4$ by \citet[e.g.~Proposition~2.5][]{Wainwright:2019:HDSBook}.
        \end{proof}

    \subsection{Proof of \texorpdfstring{\Cref{sec3:thm:sepcondU}}{Theorem 3}}
        The proof relies on the following tail bounds for the original and permuted version of the $U$-statistic, the proofs of which are deferred to \Cref{app:sec:tailbounds1}.
    
        \begin{proposition} \label{app:prop:Ustatconc}
            Using the notation and setting of \Cref{sec3:thm:sepcondU}, the $U$-statistic $U_{n_1, n_2}$ is $\mathrm{SE}(\Sigma)$  where
            \begin{equation} \label{app:eq:Ustatconcsigma}
                \Sigma = C\max\bigg\{ \frac{d^{1/2}\sigma^2}{n_1}, \frac{(\mathbb{E}[U_{n_1, n_2}])^{1/2} \sigma}{n_1^{1/2}}\bigg\} ,
            \end{equation}
            for $C > 0$ some absolute constant.
        \end{proposition}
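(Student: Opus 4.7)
The plan is to prove the sub-exponentiality by a Hoeffding-type decomposition of $U_{n_1,n_2}$ into a constant piece, a linear piece, two degenerate quadratic pieces, and a bilinear cross term, and then bound each piece separately. Write $\vecbf{\mu}_X = \mathbb{E}[\vecbf{Z}_1]$, $\vecbf{\mu}_Y = \mathbb{E}[\vecbf{W}_1]$, $\vecbf{\delta} = \vecbf{\mu}_X - \vecbf{\mu}_Y$, and $\vecbf{\xi}_i = \vecbf{Z}_i - \vecbf{\mu}_X$, $\vecbf{\eta}_k = \vecbf{W}_k - \vecbf{\mu}_Y$. Expanding $(\vecbf{Z}_i-\vecbf{W}_k)^{\top}(\vecbf{Z}_j-\vecbf{W}_l)$ around these means and using the $U$-statistic symmetry to cancel the terms with a single factor of $\vecbf{\xi}$ or $\vecbf{\eta}$ under $\sum_{(i,j)\in\mathcal{I}_2^{n_1}}$ and $\sum_{(k,l)\in\mathcal{I}_2^{n_2}}$ gives
\begin{equation*}
U_{n_1,n_2} - \mathbb{E}[U_{n_1,n_2}] = 2\vecbf{\delta}^{\top}(\bar{\vecbf{\xi}}-\bar{\vecbf{\eta}}) + A_X + A_Y - 2\bar{\vecbf{\xi}}^{\top}\bar{\vecbf{\eta}},
\end{equation*}
where $\bar{\vecbf{\xi}},\bar{\vecbf{\eta}}$ are the sample means and $A_X = \{n_1(n_1-1)\}^{-1}\sum_{i\neq j}\vecbf{\xi}_i^{\top}\vecbf{\xi}_j$, $A_Y$ defined analogously; in particular $\mathbb{E}[U_{n_1,n_2}] = \|\vecbf{\delta}\|^2$.

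For the linear piece, independence of the samples and the sub-Gaussianity of each $\vecbf{\xi}_i,\vecbf{\eta}_k$ make $\bar{\vecbf{\xi}}$ (resp.\ $\bar{\vecbf{\eta}}$) a vector whose linear functionals are sub-Gaussian with variance proxy $\sigma^2/n_1$ (resp.\ $\sigma^2/n_2$). Hence $2\vecbf{\delta}^{\top}(\bar{\vecbf{\xi}}-\bar{\vecbf{\eta}})$ is sub-Gaussian with variance proxy $\lesssim \|\vecbf{\delta}\|^2\sigma^2/n_1 = \mathbb{E}[U_{n_1,n_2}]\sigma^2/n_1$, contributing exactly the second entry of the claimed $\Sigma$. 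For the bilinear cross term, conditioning on $\bar{\vecbf{\xi}}$ makes it sub-Gaussian in $\bar{\vecbf{\eta}}$ with variance proxy $\|\bar{\vecbf{\xi}}\|^2\sigma^2/n_2$; integrating against the sub-exponential tail of $\|\bar{\vecbf{\xi}}\|^2$ (see next paragraph) yields a sub-exponential parameter $\lesssim d^{1/2}\sigma^2/\sqrt{n_1n_2}$, which is dominated by $d^{1/2}\sigma^2/n_1$.

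The main obstacle is controlling the degenerate quadratic $U$-statistic terms $A_X$ and $A_Y$. Using the identity
\begin{equation*}
A_X = \frac{n_1}{n_1-1}\|\bar{\vecbf{\xi}}\|^2 - \frac{1}{n_1(n_1-1)}\sum_{i=1}^{n_1}\|\vecbf{\xi}_i\|^2,
\end{equation*}
each summand can be handled by a Hanson-Wright-type concentration inequality (exploiting the coordinate-wise independence arising in each of the privatisation mechanisms considered later in the paper): for $\bar{\vecbf{\xi}}$ a sub-Gaussian vector with variance proxy $\sigma^2/n_1$, the fluctuation $\|\bar{\vecbf{\xi}}\|^2 - \mathbb{E}\|\bar{\vecbf{\xi}}\|^2$ is sub-exponential with parameter $\lesssim d^{1/2}\sigma^2/n_1$, while the sample mean $n_1^{-1}\sum_i \|\vecbf{\xi}_i\|^2$ fluctuates with the smaller sub-exponential scale $d^{1/2}\sigma^2/n_1^{1/2}$ (so after the extra $1/(n_1-1)$ factor it is negligible). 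An analogous bound holds for $A_Y$, and since $n_1\leq n_2$, both are sub-exponential with parameter $\lesssim d^{1/2}\sigma^2/n_1$. Adding the MGF bounds of the linear piece (which is sub-Gaussian and hence sub-exponential at the same scale), the two quadratic pieces, and the bilinear cross term, and using that the sum of sub-exponential random variables is sub-exponential with parameter bounded by a constant times the maximum of the individual parameters, yields the claimed $\Sigma = C\max\{d^{1/2}\sigma^2/n_1,\,(\mathbb{E}[U_{n_1,n_2}])^{1/2}\sigma/n_1^{1/2}\}$.
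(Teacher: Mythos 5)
Your Hoeffding-type decomposition is a legitimate alternative route, and the reduction of each piece is essentially what the paper does after its own preliminary step (the paper first couples to a one-sample $U$-statistic by drawing an $n_1$-tuple $\vecbf{L}$ without replacement from $[n_2]$ and applying Jensen, which elegantly sidesteps the sample-size imbalance; you instead carry all four pieces of the direct decomposition and observe $A_Y$ and the cross term pick up an $n_2$ in the denominator, which is fine since $n_1\leq n_2$). The linear piece and the cross term are handled correctly.

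The genuine gap is in your treatment of $A_X$. You pass to the identity $A_X = \frac{n_1}{n_1-1}\|\bar{\vecbfm{\xi}}\|^2 - \frac{1}{n_1(n_1-1)}\sum_i\|\vecbfm{\xi}_i\|^2$ and then invoke a Hanson--Wright argument ``exploiting the coordinate-wise independence arising in each of the privatisation mechanisms.'' But the proposition is stated under the hypotheses of \Cref{sec3:thm:sepcondU}, which assume only that $\vecbf{Z}_i,\vecbf{W}_{i'}$ are $\mathrm{SG}(\sigma^2)$ random vectors -- no coordinate-wise independence is given, and it is in fact false for the mechanisms used downstream: the unary-encoded coordinates are dependent through the latent $X_i$ once you centre at the marginal mean, and the spherical/hypercube sampling mechanisms of \cite{Duchi:2018:DJW} and \cite{Li:2023:Thesis} produce obviously dependent coordinates. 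Without coordinate-wise independence the diagonal term $\frac{1}{n_1}\sum_i\|\vecbfm{\xi}_i\|^2$ need not concentrate at the scale $d^{1/2}\sigma^2/n_1^{1/2}$: each $\|\vecbfm{\xi}_i\|^2$ is a sum of $d$ possibly strongly dependent $\mathrm{SE}(\sigma^2)$ coordinates, so the best general bound is $\mathrm{SE}(d\sigma^2)$, losing the crucial $d^{1/2}$ factor when $d>n_1$. The fix is to drop the variance identity and apply the decoupled Hanson--Wright bound (Exercise 6.2.7 in \cite{Vershynin:2018:HDPBook}) directly to the off-diagonal sum $\sum_{i\neq j}\vecbfm{\xi}_i^{\top}\vecbfm{\xi}_j$; that result only requires the $\vecbfm{\xi}_i$ to be independent sub-Gaussian random \emph{vectors}, not coordinate-wise independent, and is exactly what the paper uses.

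Your bound for the cross term $\bar{\vecbfm{\xi}}^{\top}\bar{\vecbfm{\eta}}$ has a related soft spot -- you say you integrate against ``the sub-exponential tail of $\|\bar{\vecbfm{\xi}}\|^2$,'' appealing to the same independence -- but here you can get away with it: conditioning on $\bar{\vecbfm{\xi}}$, taking the MGF, and using only $\mathbb{E}\|\bar{\vecbfm{\xi}}\|^2 \lesssim d\sigma^2/n_1$ together with the coarse $\mathrm{SE}(d\sigma^2/n_1)$ bound on $\|\bar{\vecbfm{\xi}}\|^2$ already delivers $\mathrm{SE}(d^{1/2}\sigma^2/\sqrt{n_1 n_2})$ in the range of $\lambda$ that matters, without any coordinate-wise structure. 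So that piece is salvageable as written; $A_X$ is not.
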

        
        \begin{proposition} \label{app:prop:Ustatpermconc} 
            Using the notation and setting of \Cref{sec3:thm:sepcondU}, the permuted $U$-statistic $U_{n_1, n_2}^\pi$ is $\mathrm{SE}(\widetilde{\Sigma})$ where
            \begin{equation} \label{app:eq:Ustatpermconcsigma}
                \widetilde{\Sigma} = \widetilde{C} \max\bigg\{ \frac{d^{1/2}\sigma^2}{n_1}, \frac{(\mathbb{E}[U_{n_1, n_2}])^{1/2}\sigma}{n_1^{1/2}}, \frac{\mathbb{E}[U_{n_1, n_2}]}{n_1} \bigg\}
            \end{equation}
            for $\widetilde{C} > 0$ some absolute constant.
        \end{proposition}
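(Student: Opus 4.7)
The plan is to decompose $U^\pi_{n_1,n_2}$ into a dominant quadratic term in the subsample means plus lower-order corrections, and then control each piece by combining the sub-Gaussianity of the underlying data with combinatorial concentration for uniformly sampled subsets. Setting $N = n_1+n_2$, writing $A_X = \{\pi(1),\dots,\pi(n_1)\}$ and $A_Y = \{\pi(n_1+1),\dots,\pi(N)\}$ for the subsets induced by $\pi$, and defining the subsample means $\bar{\vecbf{D}}^{(X)} = n_1^{-1}\sum_{m\in A_X}\widetilde{\vecbf{D}}_m$ and $\bar{\vecbf{D}}^{(Y)} = n_2^{-1}\sum_{m\in A_Y}\widetilde{\vecbf{D}}_m$, I expand the inner product in \eqref{sec3:eq:permutedUstatistic} to obtain
\[
    U^\pi_{n_1,n_2} = \bigl\|\bar{\vecbf{D}}^{(X)}-\bar{\vecbf{D}}^{(Y)}\bigr\|_2^2 + \tfrac{1}{n_1-1}\bigl\|\bar{\vecbf{D}}^{(X)}\bigr\|_2^2 + \tfrac{1}{n_2-1}\bigl\|\bar{\vecbf{D}}^{(Y)}\bigr\|_2^2 - R,
\]
where $R$ collects the self-norm diagonal sums $n_1^{-1}(n_1-1)^{-1}\sum_{m\in A_X}\|\widetilde{\vecbf{D}}_m\|_2^2$ and its $A_Y$ analogue. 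Using the pooled mean $\bar{\vecbf{D}} = N^{-1}\sum_{m=1}^N\widetilde{\vecbf{D}}_m$ and the identity $\bar{\vecbf{D}}^{(X)} - \bar{\vecbf{D}}^{(Y)} = (N/n_2)(\bar{\vecbf{D}}^{(X)} - \bar{\vecbf{D}})$, the main term reduces to the squared deviation of a uniformly sampled subsample mean from the pooled mean, which is the central object to concentrate.

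Next, I would concentrate conditionally on the data. Given the pooled sample, $\bar{\vecbf{D}}^{(X)}$ is the mean of $n_1$ points drawn uniformly without replacement from $\{\widetilde{\vecbf{D}}_m\}_{m\in[N]}$. A Hoeffding/Serfling-type inequality for sampling without replacement, applied coordinatewise and assembled through a Hanson--Wright style bound on the resulting random quadratic form, produces conditional sub-exponential control of $\|\bar{\vecbf{D}}^{(X)} - \bar{\vecbf{D}}\|_2^2$ with proxy parameters driven by the pooled empirical second moment $N^{-1}\sum_{m=1}^N\|\widetilde{\vecbf{D}}_m - \bar{\vecbf{D}}\|_2^2$. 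The same machinery controls the fluctuations of $R$ and of $\|\bar{\vecbf{D}}^{(X)}\|_2^2, \|\bar{\vecbf{D}}^{(Y)}\|_2^2$ around their conditional means, so the corrections contribute only lower-order terms.

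I would then integrate over the data randomness. The sub-Gaussianity of $\vecbf{Z}_i, \vecbf{W}_k$ concentrates the pooled empirical second moment around a deterministic quantity of order $d\sigma^2 + \|\mu_X-\mu_Y\|_2^2 \asymp d\sigma^2 + \mathbb{E}[U_{n_1,n_2}]$, and analogous estimates control the associated MGFs. Combining via iterated expectations and the standard arithmetic of sub-exponential proxies (using $N/n_2 \leq 2$), one recovers an unconditional sub-exponential tail for $U^\pi_{n_1,n_2}$ with the stated proxy $\widetilde{\Sigma}$. The three regimes then arise transparently: the null-like dimensional noise $d^{1/2}\sigma^2/n_1$, the mixed noise--signal cross term $(\mathbb{E}[U_{n_1,n_2}])^{1/2}\sigma/n_1^{1/2}$, and the new term $\mathbb{E}[U_{n_1,n_2}]/n_1$ coming from the permutation-induced variance of the group-mean difference when the two distributions genuinely differ.

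The main obstacle is preserving the sharp $\mathbb{E}[U_{n_1,n_2}]/n_1$ dependence --- which is precisely the extra term in $\widetilde{\Sigma}$ relative to the unpermuted $\Sigma$ of \Cref{app:prop:Ustatconc} --- without absorbing the signal into a cruder bound, while simultaneously decoupling the permutation randomness from the data randomness without the balance assumption $n_1 \asymp n_2$ used in prior permutation-testing literature. This is where the paper's combinatorial argument, a finite-sample concentration inequality for subsample means under sampling without replacement applied layer-wise, is essential: it supplies the conditional sub-Gaussianity without balance assumptions and allows the additive sub-exponential proxy to split cleanly into the three regimes of $\widetilde{\Sigma}$.
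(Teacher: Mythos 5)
Your route is genuinely different from the paper's. You pool all $N = n_1 + n_2$ observations, rewrite $U^\pi_{n_1,n_2}$ as a $V$-statistic in the group means $\bar{\vecbf D}^{(X)}, \bar{\vecbf D}^{(Y)}$ minus lower-order diagonal corrections, and reduce to concentration of a subsample mean under sampling without replacement. The paper instead couples in a random matching $\vecbf L$ of the smaller sample to $n_1$ of the larger, reducing to a matched-pairs statistic $U_{n_1}^{\vecbf L,\pi}$ over the $n_1$ differences $\vecbf D_{\pi(i)} - \vecbf D_{\pi(n_1+l_i)}$ which, conditional on $(\vecbf L, \pi)$, are \emph{independent} sub-Gaussian vectors; this is exactly the structure needed to apply the decoupled quadratic-form inequality (Exercise~6.2.7 of Vershynin), and it isolates the only genuinely permutation-dependent quantity as the signed count $\psi_{n_1}^{\vecbf L,\pi}$ of matched pairs straddling the two groups, which \Cref{app:lem:permcontrol} shows is $\mathrm{SG}(2n_1)$ by a martingale over successive positions of $\pi$. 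Your description at the end ascribes to the paper a ``finite-sample concentration inequality for subsample means under sampling without replacement,'' but the combinatorial object is the signed tally $\psi_{n_1}^{\vecbf L,\pi}$, not a subsample mean; the distinction matters because that is where the paper avoids the dependence problem you are about to run into.

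The main gap is in your step controlling $\|\bar{\vecbf D}^{(X)} - \bar{\vecbf D}\|_2^2$. You propose to apply a Serfling-type bound coordinatewise and assemble via ``a Hanson--Wright style bound on the resulting random quadratic form.'' But Hanson--Wright requires independent coordinates, and the $d$ coordinates of $\bar{\vecbf D}^{(X)} - \bar{\vecbf D}$ all share the same random index selection: the indicator of whether observation $m$ enters $A_X$ is common to all coordinates. These coordinates are therefore dependent, and a coordinatewise union bound would cost a factor of $d$ or $\log d$ that ruins the target scaling $d^{1/2}\sigma^2/n_1$. You would need a concentration inequality for the norm of a subsample mean under sampling without replacement that handles this shared index randomness directly; this is not a routine assembly and is precisely what the paper's matched-pairs structure is designed to sidestep, since it delivers $n_1$ conditionally independent vectors $\vecbf V_i^{\vecbf L,\pi}$ to which the off-diagonal decoupling inequality applies verbatim.

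A secondary gap is the integration over the data randomness. Conditional on the data, the sub-exponential parameter of the permutation fluctuation depends on the random pooled second moment $N^{-1}\sum_m\|\widetilde{\vecbf D}_m - \bar{\vecbf D}\|_2^2$, so the outer expectation is of an exponential of a random variance proxy. Turning that into an unconditional sub-exponential bound requires an explicit H\"older or iterated-MGF decoupling (the paper does this in the proof of \Cref{app:lem:permstattail}), not just ``the standard arithmetic of sub-exponential proxies.'' Relatedly, the pooled second moment concentrates around $d\sigma^2 + (n_1 n_2/N^2)\|\mu_X-\mu_Y\|_2^2$, not $d\sigma^2 + \|\mu_X-\mu_Y\|_2^2$ as written; the missing $n_1 n_2/N^2$ is exactly what cancels the $(N/n_2)^2$ prefactor from your identity $\bar{\vecbf D}^{(X)}-\bar{\vecbf D}^{(Y)} = (N/n_2)(\bar{\vecbf D}^{(X)}-\bar{\vecbf D})$ to recover $\mathbb{E}[U_{n_1,n_2}]/n_1$, so it cannot be dropped; it is the place where the unbalanced case $n_1 \ll n_2$ is most delicate.
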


        With these results in hand, the proof of \Cref{sec3:thm:sepcondU} follows simply from an application of \Cref{sec3:prop:permtestcontrol}.
        \begin{proof}[Proof of \Cref{sec3:thm:sepcondU}]
            So that we may apply \Cref{sec3:prop:permtestcontrol}, we first verify $\mathbb{E}[U_{n_1, n_2}^\pi] = 0$. Indeed, denote $\tau_{i, n_1 + k}$ the transposition on $(i, n_1 + k)$, that is, the permutation which swaps these two symbols and keeps the remaining symbols fixed. We note that for any $(i,j) \in \mathcal{I}_{2}^{n_1}$ and $(k,l) \in \mathcal{I}_{2}^{n_2}$, that
            \begin{align*}
                \mathbb{E}&[(\vecbf{D}_{\pi(i)} - \vecbf{D}_{\pi(n_1 + k)})^T(\vecbf{D}_{\pi(j)} - \vecbf{D}_{\pi(n_1 + l)})] \\
                &= \mathbb{E}[(\vecbf{D}_{\pi \circ \tau_{i, n_1 + k}(i)} - \vecbf{D}_{\pi \circ \tau_{i, n_1 + k}(n_1 + k)})^T(\vecbf{D}_{\pi \circ \tau_{i, n_1 + k}(j)} - \vecbf{D}_{\pi \circ \tau_{i, n_1 + k}(n_1 + l)})] \\
                &= -\mathbb{E}[(\vecbf{D}_{\pi(i)} - \vecbf{D}_{\pi(n_1 + k)})^T(\vecbf{D}_{\pi(j)} - \vecbf{D}_{\pi(n_1 + l)})],                
            \end{align*}
            where the first equality is by the fact that $\pi$ and the composition $\pi \circ \tau_{i, n_1 + k}$ are equal in distribution. Hence, we have that $\mathbb{E}[(\vecbf{D}_{\pi(i)} - \vecbf{D}_{\pi(n_1 + k)})^T(\vecbf{D}_{\pi(j)} - \vecbf{D}_{\pi(n_1 + l)})] = 0$ and consequently, by the linearity of expectation, that $\mathbb{E}[U_{n_1, n_2}^\pi] = 0$.

            Then, combining \Cref{app:prop:Ustatconc} and \Cref{app:prop:Ustatpermconc}, we have by the condition \eqref{sec3:eq:permtestcontrolSE} of \Cref{sec3:prop:permtestcontrol} that the type-II error is controlled provided
            \begin{align*}
                \mathbb{E}[U_{n_1, n_2}] \geq C\widetilde{\Sigma}\log\{1/(\alpha\beta)\} + C\Sigma\log(1/\beta),
            \end{align*}
            for $C > 0$ some sufficiently large constant, where $\Sigma$ and $\widetilde{\Sigma}$ are as in \eqref{app:eq:Ustatconcsigma} and \eqref{app:eq:Ustatpermconcsigma} respectively. Simplifying the resulting expression shows that the criteria given in \eqref{sec2:eq:sepcriteria} are sufficient, which completes the proof.
        \end{proof}

    \section{Lower Bounds} \label{app:sec:lower}
        \subsection{Proof of \texorpdfstring{\Cref{sec3:lem:lowerboundlem}}{Lemma 4}}
            Before providing the proof of \Cref{sec3:lem:lowerboundlem}, we first formally define the goodness-of-fit testing setting.
        
            Consider the set-up introduced in \Cref{sec2:sec:problemdef}. For $\mathcal{D}_{X, n_1}$ and some distribution $P_0 \in \mathcal{P}$, we are interested in testing
            \begin{equation} \label{app:eq:GOFtestdef}
                H_0': P_X = P_0 \quad \mbox{vs.} \quad H_1': D'(P_X, P_0) \geq \rho',
            \end{equation}
            for some metric $D'$ on the space of distributions, and some level of separation $\rho' > 0$.
        
            We fix a collection of privacy mechanisms $Q$, observing $\tilde{\mathcal{D}}_{X, n_1} = \{ Z_i \}_{i \in [n_1]}$ the induced sample  of $\varepsilon$-LDP views satisfying \eqref{sec2:eq:LDPdef}.
            
            Consider a test $\phi' : \tilde{\mathcal{D}}_{X, n_1} \rightarrow \{0, 1\}$ where a value of $\phi = 1$ indicates that we reject the null hypothesis of \eqref{app:eq:GOFtestdef}. We then respectively define the space of distributions satisfying the alternative, and the set of level-$\alpha$ tests for $P_0$ and a fixed $\varepsilon$-LDP mechanism $Q$ as
            \begin{equation*}
                \mathcal{P}_1'(D', \rho') = \big\{P \in \mathcal{P} : D'(P, P_0) \geq \rho' \big\}, \quad \Phi_{n_1, P_0, Q}'(\alpha) = \bigg\{\phi': \mathbb{P}_{P_0, Q} (\phi' = 1) \leq \alpha \bigg\}.
            \end{equation*}
            For a fixed level $\alpha \in (0, 1)$, privacy level $\varepsilon \in (0, 1]$ and separation $\rho' > 0$, the private minimax testing risk is
            \begin{equation} \label{app:eq:GOFTestingRisk}
                 \mathcal{R}_{n_1, P_0, \varepsilon, \alpha, D', \rho'}'
                 = \alpha + \inf_{Q \in \mathcal{Q}_\varepsilon}\inf_{\phi' \in \Phi_{n_1, P_0, Q}(\alpha)} \sup_{P_X \in \mathcal{P}_1(D', \rho')} \mathbb{P}_{P_X, Q}(\phi' = 0),
            \end{equation}
            and, fixing a desired type-II error probability $\beta \in (0, 1 - \alpha)$, we define the (private) minimax separation radius 
            \begin{equation*}
                \rho^\ast(n_1, P_0, \varepsilon, \alpha, \beta, D') = \inf\{\rho' > 0 : \mathcal{R}_{n_1, P_0, \varepsilon, \alpha, D', \rho'}' \leq \alpha + \beta\}.
            \end{equation*}
            Again, when $\mathcal{Q}_\varepsilon$ is the class of all non-interactive or interactive mechanisms, we may write $\rho_{\mathrm{NI}}^\ast$ and $\rho_{\mathrm{I}}^\ast$ respectively.
                
            \begin{proof}[Proof of \Cref{sec3:lem:lowerboundlem}]
               The proof adapts \citet[Lemma~1]{Arias:2018:GoodnessofFitDimensionality} to account for \beame{privatisation}{privatization}{privatization}. We focus on two-sample sequentially interactive mechanisms, with the non-interactive case following as a special case.
        
                First, recall the goodness-of-fit testing problem where $P_0$ denotes the distribution under the null. Fix some separation $\tilde{\rho} > 0$ and let $Q$ be an $\varepsilon$-LDP interactive privacy mechanism and $\phi \in \Phi_{n_1, n_2, Q}(\alpha)$ be a level-$\alpha$ test, both for the two-sample testing problem. Lastly, denote the joint distribution of private marginals so that $(Z_1, \hdots, Z_{n_1}, W_1, \hdots, W_{n_2}) \sim M_{Z_{1:n_1}, W_{1:n_2}}$. Define a test $\phi'$ for the goodness-of-fit testing problem via 
                \begin{align*}
                    \phi'(z_1, \hdots, z_{n_1}) =
                    \begin{cases}
                        1, & \text{with prob. } \mathbb{E}[\phi(Z_1, \hdots, Z_{n_1}, W_1, \hdots, W_{n_2}) \mid Z_1 = z_1, \hdots, Z_{n_1} = z_{n_1}], \\
                        0, & \text{otherwise.}
                    \end{cases}
                \end{align*}
                
                Let $(Z_1, \hdots, Z_{n_1}) \sim M_{Z_{1:n_1}}$ where $M_{Z_{1:n_1}}$ is the corresponding marginal distribution of $M_{Z_{1:n_1}, W_{1:n_2}}$. Importantly, note that this corresponds to an induced one-sample privacy mechanism obtained by drawing surrogate raw values from $P_0$ and applying the two-sample privacy mechanism $Q$. We then see by the tower property of conditional expectation that
                \begin{align*}
                    \mathbb{E}[\phi'(Z_1, \hdots, Z_{n_1})]
                    &= \mathbb{E}[ \mathbb{E}\{\phi(Z_1, \hdots, Z_{n_1}, W_1, \hdots, W_{n_2}) \mid Z_1, \hdots, Z_{n_1}\}] \\
                    &= \mathbb{E}[ \phi(Z_1, \hdots, Z_{n_1}, W_1, \hdots, W_{n_2})] \leq \alpha
                \end{align*}
                where the inequality follows from the fact that $\phi \in \Phi_{n_1, n_2, Q}(\alpha)$. Hence, it holds that $\phi' \in \Phi_{n_1, P_0, Q}'(\alpha)$.
                
                With the type-I error guarantee in hand, it remains to consider the risk. In particular, as $\mathcal{R}_{n_1, P_0, \varepsilon, D, \tilde{\rho}}'$ lower bounds the goodness-of-fit testing risk for any one-sample privacy mechanism $Q$ and test $\phi'$, we have that
                \begin{align*}
                    \mathcal{R}_{n_1, P_0, \varepsilon, D, \tilde{\rho}}'
                    &\leq 
                    \alpha + \sup_{P_X \in \mathcal{P}_1'(\tilde{\rho})} \mathbb{P}_{P_X, Q}(\phi' = 0) \\
                    &=
                    \alpha + \sup_{P_X \in \mathcal{P}_1'(\tilde{\rho})} \mathbb{P}_{P_X, P_0, Q}(\phi = 0) \\
                    &\leq 
                    \alpha + \sup_{(P_X, P_Y) \in \mathcal{P}_1(\tilde{\rho})} \mathbb{P}_{P_X, P_Y, Q}(\phi = 0).
                \end{align*}
                Taking the infimum over $\phi \in \Phi_{n_1, n_2, Q}(\alpha)$ yields
                    \begin{align*}
                    \mathcal{R}_{n_1, P_0, \varepsilon, D, \tilde{\rho}}'
                    \leq \mathcal{R}_{n_1, n_2, Q, D, \tilde{\rho}},
                \end{align*}
                and lastly, taking the infimum over all non-interactive/interactive $\varepsilon$-LDP privacy mechanisms $Q$ completes the proof.
            \end{proof}

        \subsection{Proof of \texorpdfstring{\Cref{sec4:thm:main}}{Theorem 5} Lower Bound} \label{app:sec:disclbproof}

            In this section we prove lower bounds on the minimax separation radii for the two-sample testing problem for discrete multinomial distributions. We obtain results for two-sample testing from the lower bounds for the goodness-of-fit testing problem by applying \Cref{sec3:lem:lowerboundlem}. The non-interactive rates follow from the results of \cite{Berrett:2020:Faster}, which we extend to consider general $L_p$-separation, and we consider a more direct approach for the interactive setting to allow us to improve earlier results by a logarithmic factor.
        
            \begin{proof}[Proof of \Cref{sec4:thm:main} (Lower Bound)]

                So that we may apply \Cref{sec3:lem:lowerboundlem}, we first \beame{formalise}{formalize}{formalize} the conditions for the goodness-of-fit testing risk as in \eqref{app:eq:GOFTestingRisk}. We specify the relevant quantities therein, yielding:
                \begin{itemize}
                    \item the distribution under the null $P_0$ to be uniform over $[d]$ with corresponding probability vector $\vecbf{p}_0$; 
                    \item the metric $D'$ on the space of distributions to be the $L_p$-norm of the difference of the underlying probability vectors so that $D'(P, P_0) = \|\vecbf{p} - \vecbf{p}_0\|_p$; and
                    \item the separation $\rho'$ is set to be $\rho_{\mathrm{NI}} = d^{1/p - 1/4}/(n_1\varepsilon^2)^{1/2} \wedge d^{1/p - 1}/\{\log(d)\}^{1/2}$ and $\rho_{\mathrm{I}} = d^{1/p - 1/2}/(n_1\varepsilon^2)^{1/2} \wedge (d^{1/p - 1}\mathbbm{1}\{p \notin \{1, 2\}\} + \mathbbm{1}\{p \in \{1, 2\}\})$ in the non-interactive and interactive settings respectively.
                \end{itemize}

                For these choices, we show that $\mathcal{R}_{n_1, P_0, \varepsilon, D', \rho_{\mathrm{NI}}}', \mathcal{R}_{n_1, P_0, \varepsilon, D', \rho_{\mathrm{I}}}' \geq 1/2$ for the testing risks over non-interactive and interactive privacy mechanisms respectively, from which it will follow that the minimax separation radii satisfy $\rho_{\mathrm{NI}}^\ast(n, \varepsilon, L_1 , \alpha, \beta) \geq d^{1/p - 1/4}/(n_1\varepsilon^2)^{1/2} \wedge d^{1/p - 1}/\{\log(d)\}^{1/2}$ and $\rho_{\mathrm{I}}^\ast(n, \varepsilon, L_1 , \alpha, \beta) \geq d^{1/p - 1/2}/(n_1\varepsilon^2)^{1/2}$.
                
                We note that it is sufficient to consider the $L_1$-norm, from which the result for the $L_p$-norms with $p \in (1,2]$ will follow, though we will also consider the case of $L_2$-norm in the interactive setting directly to ameliorate the requirement on the range of $n\varepsilon^2$. Indeed, we note by H\"{o}lder's inequality that $\|\vecbf{p} - \vecbf{p}_0\|_1 \leq d^{1 - 1/p} \|\vecbf{p} - \vecbf{p}_0\|_p$ and hence we have the containment
                \begin{equation*}
                    \{\vecbf{p} : \|\vecbf{p} - \vecbf{p}_0\|_1 \geq \rho\}
                    \subseteq \{\vecbf{p} : \|\vecbf{p} - \vecbf{p}_0\|_p \geq d^{1/p - 1}\rho\}.
                \end{equation*}
                Thus, we see that the testing risks satisfy
                \begin{align*}
                    \mathcal{R}_{n_1, P_0, \varepsilon, L_p, d^{1/p - 1}\rho}'
                    &= \inf_{Q \in \mathcal{Q}_\varepsilon}\inf_{\phi \in \Phi_{n_1, P_0, Q}(\alpha)} \sup_{\vecbf{p} : \| \vecbf{p} - \vecbf{p}_0\|_p \geq d^{1/p - 1}\rho} \big\{\mathbb{P}_{P_0, Q}(\phi = 1) + \mathbb{P}_{P_X, Q}(\phi = 0) \big\} \\
                    &\geq \inf_{Q \in \mathcal{Q}_\varepsilon}\inf_{\phi \in \Phi_{n_1, P_0, Q}(\alpha)} \sup_{\vecbf{p} : \| \vecbf{p} - \vecbf{p}_0\|_1 \geq \rho} \big\{\mathbb{P}_{P_0, Q}(\phi = 1) + \mathbb{P}_{P_X, Q}(\phi = 0) \big\} \\
                    &= \mathcal{R}_{n_1, P_0, \varepsilon, L_1, \rho}'.
                \end{align*}
                Hence, if $\rho$ satisfies $\mathcal{R}_{n_1, P_0, \varepsilon, L_1, \rho}' \geq 1/2$, then the separation radius (non-interactive or interactive suitably) with respect to the $L_p$-norm satisfies
                \begin{equation} \label{app:eq:L1SufficientForDisc}
                    \rho^\ast(n, \varepsilon, L_p , \alpha, \beta) \geq d^{1/p -1}\rho.
                \end{equation}

                \medskip
                \noindent
                \textbf{Non-interactive Mechanisms}:
                We first consider the rates for the non-interactive settings, which we will obtain from \cite{Berrett:2020:Faster}. By considering the lower bound for $L_1$-separation in \citet[Supplement, p.~10]{Berrett:2020:Faster} with $j^\ast \asymp d$ and $p_0(j^\ast + 1) = 1/d$, corresponding to the uniform distribution, therein, we obtain
                \begin{equation*}
                    \rho_{\mathrm{NI}}^\ast(n, \varepsilon, L_1 , \alpha, \beta)
                    \gtrsim \min\bigg\{\frac{d^{3/4}}{(n\varepsilon^2)^{1/2}}, \frac{1}{\{\log(d)\}^{1/2}}, d^{1/2} \bigg\}
                    \gtrsim \min\bigg\{\frac{d^{3/4}}{(n\varepsilon^2)^{1/2}}, \frac{1}{\{\log(d)\}^{1/2}} \bigg\}.
                \end{equation*}
                Hence, for general $L_p$-separation with $p \in [1,2]$, we have by \eqref{app:eq:L1SufficientForDisc} that
                \begin{align*}
                    \rho_{\mathrm{NI}}^\ast(n, \varepsilon, L_p , \alpha, \beta)
                    &\geq d^{1/p - 1}\rho_{\mathrm{NI}}^\ast(n, \varepsilon, L_1 , \alpha, \beta) \\
                    &\gtrsim \min\bigg\{\frac{d^{1/p - 1/4}}{(n\varepsilon^2)^{1/2}}, \frac{d^{1/p - 1}}{\{\log(d)\}^{1/2}} \bigg\}.
                \end{align*}
                This completes the proof in the non-interactive case.

                \medskip
                \noindent
                \textbf{Interactive Mechanisms}:
                We now focus on the interactive setting. Here, we will use the fact that in the interactive case the information theoretic bounds we consider enable us to consider a simpler construction, allowing us to avoid the logarithm factors incurred in \cite{Berrett:2020:Faster}.
            
                Given sample size $n \in \mathbb{N}$, fix an interactive $\varepsilon$-LDP privacy mechanism $(Q_1, \hdots, Q_n) \in \mathcal{Q}_\varepsilon$ where, for $i \in [n]$, $Q_i:\sigma(\mathcal{Z}) \times [d] \times \mathcal{Z}^{i-1} \rightarrow [0,1]$. Denoting, for each $i \in [n]$, a measure $\nu_i$ on $\mathcal{Z}$, by \citet[Lemma~B.3]{Butucea:2023:Interactive} we can write the densities $\{q_i(\cdot \mid x_i, z_{1:i-1}) : i \in [n], x_i \in [d], z_{1:i-1} \in \mathcal{Z}^{i-1}\}$ where $\diff{Q}_i(\cdot \mid x_i, z_{1:i-1}) = q_i(\cdot \mid x_i, z_{1:i-1}) \diff{\nu_i}$ and $\exp(-\varepsilon) \leq q_i(\cdot \mid x_i, z_{1:i-1}) \leq \exp(\varepsilon)$.
                
                We write $m_{i, 0}(\cdot \mid z_{1:i-1}) = d^{-1}\sum_{a = 1}^d q_i(\cdot \mid a, z_{1:i-1})$ for the private marginal arising from privatising data distributed according to the uniform distribution on $[d]$ conditional on $\{Z_{1:i-1} = z_{1:i-1}\}$. For $i \in [n]$ and fixed $z_{1:i-1} \in \mathcal{Z}^{i-1}$, we then define the positive semi-definite matrix $\Omega_i(z_{1:i-1}) \in \mathbb{R}^{d \times d}$ via
                \begin{equation*}
                    (\Omega_i(z_{1:i-1}))_{a, b} = \int_{\mathcal{Z}} m_{i, 0}(z_i \mid z_{1:i-1}) \bigg( \frac{q_i(z_i \mid a, z_{1:i-1}) }{m_{i, 0}(z_i \mid z_{1:i-1})} - 1 \bigg)\bigg( \frac{q_i(z_i \mid b, z_{1:i-1}) }{m_{i, 0}(z_i \mid z_{1:i-1})} - 1 \bigg) \diff{\nu}_i(z_i).
                \end{equation*}
                We denote $\Omega = \sum_{i = 1}^n \mathbb{E}_{m_{1, 0}, \hdots, m_{i-1, 0}}[\Omega_i(Z_{1:i-1})]$. We have for all $i, j \in [d]$ that
                \begin{align}
                    |\Omega_{i,j}|
                    &\leq \sum_{i = 1}^n\mathbb{E}\bigg[ \int_{\mathcal{Z}} m_{i, 0}(z_i \mid Z_{1:i-1}) \biggl| \frac{q_i(z_i \mid a, Z_{1:i-1}) }{m_{i, 0}(z_i \mid Z_{1:i-1})} - 1 \biggr| \biggl| \frac{q_i(z_i \mid b, Z_{1:i-1}) }{m_{i, 0}(z_i \mid Z_{1:i-1})} - 1 \biggr| \diff{\nu}_i(z_i) \bigg] \nonumber \\
                    &\leq n\{\exp(2\varepsilon) - 1\}^2 \leq 50n\varepsilon^2, \label{app:eq:DiscMatTraceBound}
                \end{align}
                where the first inequality follows from the LDP condition, and the second by the fact $\varepsilon^2 \in (0,1]$.

                Denote $\vecbf{p}_0 = (1/d, \hdots, 1/d)^T$ the probability vector for the uniform distribution on $[d]$. For $\eta \in \{-1, 1\}^{d/2}$, denote the perturbed vector
                \begin{equation} \label{app:eq:NIDiscAlternatives}
                    \vecbf{p}_\eta
                    = (1/d + \delta \eta_1, 1/d - \delta \eta_1, \hdots, 1/d + \delta \eta_{d/2}, 1/d - \delta \eta_{d/2})
                    = \vecbf{p}_0 + \delta \vecbfm{\zeta},
                \end{equation}
                where we may assume $d$ is even which only changes the final rates by at most an absolute constant.

                We now consider testing between $\vecbf{p}_0$ and the mixture $\mathbb{E}_\eta[p_\eta]$ where $\eta = (\eta_1, \hdots, \eta_{d/2})^T$ consists of i.i.d.~Rademacher random variables. We can lower bound the testing risk as
                \begin{align*}
                    &\mathcal{R}_{n_1, P_0, \varepsilon, D', \rho'}' \\
                    &= \inf_{Q \in \mathcal{Q}_\varepsilon}\inf_{\phi \in \Phi_{n_1, P_0, Q}(\alpha)} \sup_{P_X \in \mathcal{P}_1(D', \rho')} \big\{\mathbb{P}_{P_0, Q}(\phi = 1) + \mathbb{P}_{P_X, Q}(\phi = 0) \big\} \\
                    &\geq \inf_{Q \in \mathcal{Q}_\varepsilon}\inf_{\phi \in \Phi_{n_1, P_0, Q}(\alpha)} \big\{\mathbb{P}_{P_0, Q}(\phi = 1) + \sup_{\eta \in \{-1, 1\}^{d/2}} \mathbb{P}_{P_\eta, Q}(\phi = 0) \big\} \\
                    &\geq \inf_{Q \in \mathcal{Q}_\varepsilon}\inf_{\phi \in \Phi_{n_1, P_0, Q}(\alpha)} \big\{\mathbb{P}_{P_0, Q}(\phi = 1) + \mathbb{E}_\eta[\mathbb{P}_{P_\eta, Q}(\phi = 0)] \big\} \\
                    &\geq \inf_{Q \in \mathcal{Q}_\varepsilon}\inf_{\phi \in \Phi_{n_1, P_0, Q}(\alpha)} \big\{\mathbb{P}_{P_0, Q}(\phi = 1) + \mathbb{E}_\eta[\mathbb{P}_{P_\eta, Q}(\phi = 0)]\big\} \\
                    &\geq \inf_{Q \in \mathcal{Q}_\varepsilon} \big\{ 1 - \|Q(P_0^n) - \mathbb{E}_\eta[Q(P_\eta^n)]\|_{\mathrm{TV}}\big\}.
                \end{align*}
                Hence, it remains to prove that $\|Q(P_0^n) - \mathbb{E}_\eta[Q(P_\eta^n)]\|_{\mathrm{TV}}\leq 1/2$. By Pinsker's inequality \citep[e.g.][Lemma~2.5]{Tsybakov:2009:Book} and \citet[Theorem~5]{Berrett:2020:Faster}, it holds that
                \begin{equation} \label{app:eq:IntDiscTVBound}
                    \begin{aligned}
                        \|Q(P_0^n) - \mathbb{E}_\eta[Q(P_\eta^n)]\|_{\mathrm{TV}}
                        &\leq \{\mathrm{KL}(Q(P_0^2), \mathbb{E}_\eta[Q(P_\eta^n)])/2\}^{1/2} \\
                        &\leq \{\mathbb{E}_\eta[(\vecbf{p}_\eta - \vecbf{p}_0)^T\Omega(\vecbf{p}_\eta - \vecbf{p}_0)]/2\}^{1/2}.                        
                    \end{aligned}
                \end{equation}
                In particular, we have by the form $\eqref{app:eq:NIDiscAlternatives}$ that
                \begin{align*}
                    \mathbb{E}_\eta[(\vecbf{p}_\eta - \vecbf{p}_0)^T\Omega(\vecbf{p}_\eta - \vecbf{p}_0)]
                    &= \sum_{i,j = 1}^d \mathbb{E}_\eta[\zeta_i\zeta_j \Omega_{i, j} ] \\
                    &= \sum_{i = 1}^{d/2} \big\{ \zeta_i^2 (\Omega_{2i - 1, 2i - 1} + \Omega_{2i, 2i}) - \zeta_i^2 (\Omega_{2i - 1, 2i} + \Omega_{2i, 2i - 1}) \big\} \\
                    &\leq 100\delta^2dn\varepsilon^2,
                \end{align*}
                where we use the fact that $|\Omega_{i,j}| \leq 50n\varepsilon^2$ for all $i, j \in [d]$. Setting $\delta = \{1/(200dn\varepsilon^2)\}^{1/2} \wedge 200^{-1/2}/d$ gives $\mathbb{E}_\eta[(\vecbf{p}_\eta - \vecbf{p}_0)^T\Omega(\vecbf{p}_\eta - \vecbf{p}_0)] \leq 1/2$, where taking the minimum with $200^{-1/2}/d$ ensures \eqref{app:eq:NIDiscAlternatives} defines a valid probability distribution. Hence, combining with \eqref{app:eq:IntDiscTVBound}, we obtain $\|Q(P_0^n) - \mathbb{E}_\eta[Q(P_\eta^n)]\|_{\mathrm{TV}}\leq 1/2$. We then calculate
                \begin{equation*}
                    \|\vecbf{p}_\eta - \vecbf{p}_0\|_p
                    = \bigg( \sum_{i = 1}^d \delta^p\bigg)^{1/p}
                    = \delta d^{1/p}
                    = \frac{d^{1/p - 1/2}}{(200n_1\varepsilon^2)^{1/2}} \wedge \frac{d^{1/p - 1}}{200^{1/2}},
                \end{equation*}
                and so the separation radius satisfies $\rho_{\mathrm{I}}^\ast(n, \varepsilon, L_p , \alpha, \beta) \gtrsim d^{1/p - 1/2}/(n_1\varepsilon^2)^{1/2} \wedge d^{1/p - 1}$ for $p \in [1, 2]$.

                Finally, to relax the $d^{1/p - 1}$ term in the case of $L_2$-separation, we note the construction in \citet[Supplement, p.~11]{Berrett:2020:Faster}, which immediately yields $\rho_{\mathrm{I}}^\ast(n, \varepsilon, L_2 , \alpha, \beta) \gtrsim 1/(n_1\varepsilon^2)^{1/2}$, completing the proof.
            \end{proof}
            
        \subsection{Proof of \texorpdfstring{\Cref{sec5:thm:main}}{Theorem 6} Lower Bound} \label{app:sec:contlbproof}
            In this section we prove lower bounds on the minimax separation radii for the two-sample testing problem for continuous distributions satisfying the Sobolev regularity assumption \eqref{sec5:eq:sobolev}. Our technique is inspired by that of existing literature on testing continuous densities under privacy such as \cite{Dubois:2019:GOFtest} and \cite{Lam-Weil:2022:GOFtest}, using H\"{o}lder and Besov regularity assumptions respectively. However, we require a different construction and analysis compared to this prior literature due to our unique choice of a Sobolev regularity assumption. For example, one property the prior literature use is the disjoint support of the considered basis functions, which does not hold for the Fourier basis we consider.

            This is further complicated by the fact that the Sobolev regularity assumption with the Fourier basis does not lend itself readily to obtaining results in terms of $L_p$-separation for $p \neq 2$. In comparison to the H\"{o}lder regularity assumption, the disjoint support of the basis functions greatly simplifies attempts to calculate $L_1$-norms. Instead, we leverage arguments of empirical process theory, guided by \cite{Berrett:2021:IndepPerm}, to help us obtain results for general $p \in [1,2]$. 
            
            \begin{proof}[Proof of \Cref{sec5:thm:main} (Lower Bound)]
            
                To enable us to consider the classic reduction strategy of testing between the uniform distribution and the distribution obtained by taking a mixture over possible alternatives, we start by constructing a suitable collection of alternatives.
            
                Given sample size $n \in \mathbb{N}$, fix an interactive $\varepsilon$-LDP privacy mechanism $(Q_1, \hdots, Q_n) \in \mathcal{Q}_\varepsilon$ where, for $i \in [n]$, $Q_i:\sigma(\mathcal{Z}) \times [0, 1]^d \times \mathcal{Z}^{i-1} \rightarrow [0,1]$. Recall that this includes non-interactive mechanisms as a special case. Denoting, for each $i \in [n]$, a measure $\nu_i$ on $\mathcal{Z}$, by \citet[Lemma~B.3]{Butucea:2023:Interactive} we can write the densities $\{q_i(\cdot \mid \vecbf{x}_i, z_{1:i-1}) : i \in [n], \vecbf{x}_i \in [0,1]^d, z_{1:i-1} \in \mathcal{Z}^{i-1}\}$ where $\diff{Q}_i(\cdot \mid \vecbf{x}_i, z_{1:i-1}) = q_i(\cdot \mid \vecbf{x}_i, z_{1:i-1}) \diff{\nu_i}$ and $\exp(-\varepsilon) \leq q_i(\cdot \mid \vecbf{x}_i, z_{1:i-1}) \leq \exp(\varepsilon)$. Letting $f_0 \equiv 1$ the uniform density on $[0,1]^d$, we also write $m_{i, 0}(\cdot \mid z_{1:i-1}) = \int_{[0,1]^d} q_i(\cdot \mid \vecbf{x}, z_{1:i-1}) f_0(\vecbf{x}) \diff{\vecbf{x}}$ for the private marginal arising from privatising data distributed according to $f_0$ conditional on $\{Z_{1:i-1} = z_{1:i-1}\}$. Denote the corresponding distributions $M_{i, 0}$.
    
                For $i \in [n]$ and fixed $z_{1:i-1} \in \mathcal{Z}^{i-1}$, we define the positive semi-definite linear operator $\Omega_i(z_{1:i-1}): L_2([0,1]^d) \rightarrow L_2([0,1]^d)$ via, for all $f \in L_2([0,1]^d)$,
                \begin{equation*}
                    \Omega_i(z_{1:i-1}) f = \int_{[0,1]^d} \int_{\mathcal{Z}} \frac{q_i(z_i \mid \vecbf{y}, z_{1:i-1})q_i(z_i \mid \cdot, z_{1:i-1})}{m_{i, 0}(z_i \mid z_{1:i-1})}f(\vecbf{y}) \diff{\nu_i}(z_i) \diff{\vecbf{y}},
                \end{equation*}
                where the order of integration is interchangeable via Fubini's theorem. We then denote $\Omega = \sum_{i = 1}^n \mathbb{E}_{M_{1, 0}, \hdots, M_{i - 1, 0}}[\Omega_i(Z_{1:i-1})]$. Denote $\Phi_R = \mathrm{span}(\{\varphi_\vecbf{l} : \vecbf{l} \in \mathbb{N}_0^d(R)\})$ a space spanned by a subset of Fourier basis functions of $L_2([0,1]^d)$ where we recall $\mathbb{N}_0^d(R)$ as defined in \eqref{sec5:eq:VectorBall} and $R$ to be specified later. Letting $\Omega$ act on the $V = |\mathbb{N}_0^d(R)|$ dimensional linear subspace $\Phi_R$ of $L_2([0,1]^d)$, denote the eigenfunctions and corresponding eigenvalues by $\{\psi_v : v \in [V]\}$ and $\{\lambda_v^2 : v \in [V]\}$ respectively, where we note the eigenvalues are non-negative as $\Omega$ is positive semi-definite. Note that in particular each $\psi_v$ can be written as a linear combination of the $\varphi_{\vecbf{l}}$ and so are orthogonal to $1$. Observe also that we have the equality $V = |\mathbb{N}_0^d(R)|$ as the kernel of the operator is trivial as can be shown by the LDP condition via the result of \citet[Lemma~B.3]{Butucea:2023:Interactive}.
    
                For $\eta \in \{-1, 1\}^V$ and $\tilde{\lambda}_v = \max\{\lambda_v/(n^{1/2}z_\varepsilon), V^{-1/2}\}$, where $z_\varepsilon = \exp(2\varepsilon) - \exp(-2\varepsilon)$, denote the perturbed function
                \begin{equation} \label{app:eq:ContAlternatives}
                    f_\eta = f_0 + \delta\sum_{v = 1}^V \frac{\eta_v}{\tilde{\lambda}_v} \psi_v.
                \end{equation}
                As $\{\psi_v : v \in V\} \subset \Phi_R$ forms an orthonormal basis of $\Phi_R$, we can further expand
                \begin{equation*}
                    f_\eta = f_0 + \delta\sum_{v = 1}^V \sum_{\vecbf{l} \in \mathbb{N}_0^d(R)} \frac{\eta_v \theta_{v, \vecbf{l}}}{\tilde{\lambda}_v} \varphi_\vecbf{l},
                \end{equation*}
                where we note that the matrix $(\theta_{v, \vecbf{l}})_{v, \vecbf{l}}$ is orthogonal.

                We now have the following lemma, valid for both the non-interactive and interactive cases, which states that the constructed functions \eqref{app:eq:ContAlternatives} are valid density functions satisfying the Sobolev regularity assumption, with an additional result allowing us to control the $L_1$-norm of deviations from the uniform distribution. The proof is deferred to the end of this subsection.
    
                \begin{lemma} \label{app:lem:ConstructedFunctions}
                    Let $\eta$ be uniform on the set $\{-1, 1\}^V$. Take $\delta$ such that
                    \begin{equation} \label{app:eq:ContFunctionDeltaReq}
                        \delta \leq \bigg(\frac{r^2}{\widetilde{C}_{s, d}VR^{2s + d}\log(4V/a)} \bigg)^{1/2},
                    \end{equation}
                    for $\widetilde{C}_{s,d} > 0$ some constant depending on $s$ and $d$ and $a \in (0, 1)$.
                    
                    There exists a subset $E \subseteq \{-1, 1\}^V$ satisfying $\mathbb{P}(\eta \in E) \geq 1 - a$, such that for all $\eta \in E$, we have
                    \begin{enumerate}
                        \item $f_\eta$ satisfies the Sobolev regularity condition \eqref{sec5:eq:sobolev}, yielding $f_\eta \in \mathcal{S}_{s, r}$;
                        \item $\|f_\eta - f_0\|_\infty \leq \widetilde{C}_{s,d}\|f_\eta - f_0\|_2$, and in particular $f_\eta \geq 0$, yielding $f_\eta \in \mathcal{F}_{s, r}$; and
                        \item $\|f_\eta - f_0\|_1 \geq \widetilde{C}_{s,d}^{-1}\delta V/\{\log(R)\}^{1/2}$.
                    \end{enumerate}
                \end{lemma}
    
                With the constructed family and this lemma in hand, we consider testing between $f_0$ and the mixture $E_\eta[f_\eta]$ where $\eta = (\eta_1, \hdots, \eta_V)^T$ consists of i.i.d.~Rademacher random variables. Recall the goodness-of-fit testing risk, as defined in \eqref{app:eq:GOFTestingRisk}. We specify the relevant quantities therein, yielding:
                \begin{itemize}
                    \item the distribution under the null $P_0$ to be that induced by the uniform density $f_0$; 
                    \item the metric $D'$ on the space $L_2([0,1]^d)$ to be the $L_1$-norm so that $D'(f, g) = \|f -g \|_1$; and
                    \item the separation $\rho'$ is set to be either $\rho_{\mathrm{NI}} = c_{r, s, d}(n\varepsilon^2)^{-s/(2s+3d/2)} \{\log(n\varepsilon^2)\}^{-1}$ or $\rho_{\mathrm{I}} = c_{r, s, d}'(n\varepsilon^2)^{-s/(2s+d)} \{\log(n\varepsilon^2)\}^{-1}$ in the non-interactive and interactive settings respectively, with $c_{r, s, d}$ and $c_{r, s, d}'$ sufficiently small constants depending on $r, s$ and $d$.
                \end{itemize}

                For these choices, we show that $\mathcal{R}_{n_1, P_0, \varepsilon, D', \rho_{\mathrm{NI}}}', \mathcal{R}_{n_1, P_0, \varepsilon, D', \rho_{\mathrm{I}}}' \geq 1/2$ for the testing risks over non-interactive and interactive privacy mechanisms respectively, from which it will follow that the minimax separation radii satisfy $\rho_{\mathrm{NI}}^\ast(n, \varepsilon, L_1 , \alpha, \beta) \geq c_{r, s, d}(n\varepsilon^2)^{-s/(2s+3d/2)} \{\log(n\varepsilon^2)\}^{-1}$ and $\rho_{\mathrm{I}}^\ast(n, \varepsilon, L_1 , \alpha, \beta) \geq c_{r, s, d}'(n\varepsilon^2)^{-s/(2s+d)} \{\log(n\varepsilon^2)\}^{-1}$.

                Defining $E \subseteq \{\eta : f_\eta \in \mathcal{P}_1(D', \rho')\}$ and letting $a \leq \mathbb{P}(E^c)$, we can lower bound the testing risk as
                \begin{align*}
                    &\mathcal{R}_{n_1, P_0, \varepsilon, D', \rho'}' \\
                    &= \inf_{Q \in \mathcal{Q}_\varepsilon}\inf_{\phi \in \Phi_{n_1, P_0, Q}(\alpha)} \sup_{P_X \in \mathcal{P}_1(D', \rho')} \big\{\mathbb{P}_{P_0, Q}(\phi = 1) + \mathbb{P}_{P_X, Q}(\phi = 0) \big\} \\
                    &\geq \inf_{Q \in \mathcal{Q}_\varepsilon}\inf_{\phi \in \Phi_{n_1, P_0, Q}(\alpha)} \big\{\mathbb{P}_{P_0, Q}(\phi = 1) + \sup_{\eta \in E} \mathbb{P}_{P_\eta, Q}(\phi = 0) \big\} \\
                    &\geq \inf_{Q \in \mathcal{Q}_\varepsilon}\inf_{\phi \in \Phi_{n_1, P_0, Q}(\alpha)} \big\{\mathbb{P}_{P_0, Q}(\phi = 1) + \mathbb{E}_\eta[\mathbbm{1}\{\eta \in E\}\mathbb{P}_{P_\eta, Q}(\phi = 0)] \big\} \\
                    &\geq \inf_{Q \in \mathcal{Q}_\varepsilon}\inf_{\phi \in \Phi_{n_1, P_0, Q}(\alpha)} \big\{\mathbb{P}_{P_0, Q}(\phi = 1) + \mathbb{E}_\eta[\mathbb{P}_{P_\eta, Q}(\phi = 0)] - \mathbb{E}_\eta[\mathbbm{1}\{\eta \notin E\}\mathbb{P}_{P_\eta, Q}(\phi = 0)]\big\} \\
                    &\geq \inf_{Q \in \mathcal{Q}_\varepsilon}\inf_{\phi \in \Phi_{n_1, P_0, Q}(\alpha)} \big\{\mathbb{P}_{P_0, Q}(\phi = 1) + \mathbb{E}_\eta[\mathbb{P}_{P_\eta, Q}(\phi = 0)]\big\} - a \\
                    &\geq \inf_{Q \in \mathcal{Q}_\varepsilon} \big\{ 1 - \|Q(P_0^n) - \mathbb{E}_\eta[Q(P_\eta^n)]\|_{\mathrm{TV}}\big\} - a.
                \end{align*}
                Hence, it remains to prove that $\|Q(P_0^n) - \mathbb{E}_\eta[Q(P_\eta^n)]\|_{\mathrm{TV}}\leq 1/2 - a$. We bound this total variation term in two different ways in the case of non-interactive and interactive privacy mechanisms.

                \medskip
                \noindent
                \textbf{Non-interactive Mechanisms}:
                For the non-interactive setting, we use the bound $\|Q(P_0^n) - \mathbb{E}_\eta[Q(P_\eta^n)]\|_{\mathrm{TV}}\leq \{\chi^2(\mathbb{E}_\eta[Q(P_\eta^n)], Q(P_0^n))\}^{1/2}/2$ (see, for example, Lemma~2.5 and Lemma~2.7 in \citealt{Tsybakov:2009:Book}). We then proceed to bound this chi-squared divergence.
                
                We observe some simplifications which occur in the non-interactive setting. In particular, the \beame{privatisation}{privatization}{privatization} of each observation occurs independently of previous observations. Hence, we write $m_{i, 0}(\cdot) = \int_{[0, 1]^d} q_i(\cdot \mid \vecbf{x})f_0(\vecbf{x}) \diff{\vecbf{x}}$ and $m_{i, \eta}(\cdot) = \int_{[0, 1]^d} q_i(\cdot \mid \vecbf{x})f_0(\vecbf{x}) \diff{\vecbf{x}}$ for the private marginals when the raw data are distributed according to $P_0$ and $P_\eta$ respectively. Further, we have that the operator $\Omega_i$ takes the form
                \begin{equation} \label{app:eq:NIOmegaOp}
                    \Omega_i f
                    = \int_{[0,1]^d} \int_{\mathcal{Z}_i} \frac{q_i(z \mid \vecbf{y})q_i(z \mid \cdot)}{m_{i, 0}(z)}f(\vecbf{y}) \diff{\nu_i}(z_i) \diff{\vecbf{y}},
                \end{equation}
                and we denote $\Omega = \sum_{i = 1}^n \Omega_i$.
                
                With these simplifications in hand, we denote $\eta'$ an i.i.d.~copy of $\eta$, and obtain
                \begin{align*}
                    &\chi^2(E_\eta[Q(P_\eta^n)], Q(P_0^n)) + 1 \\
                    &= E_{Q(P_0^n)}\bigg[\bigg(\frac{E_\eta[\prod_{i = 1}^n m_{\eta}(Z_i))]}{\prod_{i = 1}^n m_{0}(Z_i)}\bigg)^2 \bigg] \\
                    &= E_{\eta, \eta', Q(P_0)}\bigg[\prod_{i = 1}^n \bigg\{ \bigg( \sum_{v = 1}^V \frac{\delta\eta_v \int_{[0, 1]^d} q_i(Z_i \mid \vecbf{x}) \psi_v(\vecbf{x}) \diff{\vecbf{x}}}{\tilde{\lambda}_v m_0(Z_i)} + 1 \bigg) \\
                    &\hspace{7cm}\times\bigg(\sum_{v' = 1}^V \frac{\delta\eta_{v'}' \int_{[0, 1]^d} q_i(Z_i \mid \vecbf{x}) \psi_v(\vecbf{x}) \diff{\vecbf{x}}}{\tilde{\lambda}_{v'} m_0(Z_i)} + 1\bigg) \bigg\}\bigg] \\
                    &= E_{\eta, \eta'}\bigg[ \prod_{i = 1}^n \bigg(1
                    + \sum_{v = 1}^V \frac{\delta (\eta_v + \eta_v')}{\tilde{\lambda}_v} \int_{[0, 1]^d} \int_{\mathcal{Z}_i} q_i(Z_i \mid \vecbf{x}) \psi_v(\vecbf{x}) \diff{\vecbf{x}} \diff{\nu}_i(z_i) \\
                    &\hspace{2cm}+ \sum_{v, v' \in [V]} \frac{\delta^2\eta_v \eta_{v'}'}{\tilde{\lambda}_v\tilde{\lambda}_{v'}} \int_{\mathcal{Z}_i}\int_{[0, 1]^d}\int_{[0, 1]^d} \frac{q_i(Z_i \mid \vecbf{x})q_i(Z_i \mid \vecbf{y}) \psi_v(\vecbf{x})\psi_{v'}(\vecbf{y})}{m_0(z_i)} \diff{\vecbf{x}}\diff{\vecbf{y}} \diff{\nu_i}(z_i)\bigg)\bigg] \\
                    &= E_{\eta, \eta'}\bigg[\prod_{i = 1}^n \bigg( 1 + \sum_{v, v' \in [V]} \!\!\!\frac{\delta^2\eta_v \eta_{v'}'}{\tilde{\lambda}_v\tilde{\lambda}_{v'}} \int_{\mathcal{Z}_i}\int_{[0, 1]^d}\int_{[0, 1]^d} \frac{q_i(Z_i \mid \vecbf{x})q_i(Z_i \mid \vecbf{y}) \psi_v(\vecbf{x})\psi_{v'}(\vecbf{y})}{m_0(z_i)} \diff{\vecbf{x}}\diff{\vecbf{y}} \diff{\nu_i}(z_i)\bigg)\bigg] \\
                    &\leq E_{\eta, \eta'}\bigg[\prod_{i = 1}^n \exp\bigg(\sum_{v, v' \in [V]} \!\!\!\frac{\delta^2\eta_v \eta_{v'}'}{\tilde{\lambda}_v\tilde{\lambda}_{v'}} \int_{\mathcal{Z}_i}\int_{[0, 1]^d}\int_{[0, 1]^d} \frac{q_i(Z_i \mid \vecbf{x})q_i(Z_i \mid \vecbf{y}) \psi_v(\vecbf{x})\psi_{v'}(\vecbf{y})}{m_0(z_i)} \diff{\vecbf{x}}\diff{\vecbf{y}} \diff{\nu_i}(z_i)\bigg)\bigg] \\                    
                    &= E_{\eta, \eta'}\bigg[ \exp\bigg(\sum_{v, v' \in [V]} \frac{\delta^2\eta_v \eta_{v'}'}{\tilde{\lambda}_v\tilde{\lambda_{v'}}} \int_{[0, 1]^d} \psi_{v}(\vecbf{x})(\Omega \psi_{v'})(\vecbf{x}) \diff{\vecbf{x}} \bigg) \bigg] \\
                    &= E_{\eta, \eta'}\bigg[\exp\bigg(\sum_{v = 1}^V \frac{\delta^2\eta_v \eta_{v}'\lambda_v^2}{\tilde{\lambda}_v^2}  \bigg)\bigg] \\
                    &= E_{\eta, \eta'}\bigg[ \prod_{v = 1}^V \exp\big(nz_\varepsilon^2\delta^2\eta_v \eta_v'\big)\bigg] \\
                    &\leq \exp\big(Vn^2z_\varepsilon^4\delta^4/2 \big),
                \end{align*}
                where the fourth equality uses the fact that the functions $\psi_v$ are orthogonal to $1$; the fifth equality by the form of $\Omega = \sum_{i = 1}^n \Omega_i$ and \eqref{app:eq:NIOmegaOp}; the sixth by recalling that $\psi_v$ are the eigenfunctions of $\Omega$; and the second inequality the fact that $\lambda_v/\tilde{\lambda}_v \leq n^{1/2}z_\varepsilon$.
    
                Hence, for $\delta$ satisfying
                \begin{equation} \label{app:eq:NIContDeltaReq}
                    \delta \leq \bigg( \frac{2\log\{1 + (1 - 2a)^2\}}{Vn^2z_\varepsilon^4} \bigg)^{1/4},
                \end{equation}
                we have that $\mathcal{R}_{n_1, P_0, \varepsilon, D', \rho'}' \geq 1/2$.

                We are now ready to conclude the proof. In particular, we will specify the value of $R$, and hence $V$, from which we will obtain the separation radius given in the theorem statement. We first observe that
                \begin{align}
                    V
                    = |\mathbb{N}_0^d(R)|
                    = \sum_{\vecbf{l} \in \mathbb{N}_0^d(R)} 1
                    &\leq \int_{\vecbf{x} \in \mathbb{R}^d:\| \vecbf{x} \|_2 \leq R + d^{1/2}} \diff{\vecbf{x}} \nonumber \\
                    &= \frac{(R + d^{1/2})^{d} \mathrm{Vol}(\{\vecbf{x}: \|\vecbf{x}\|_2 \leq 1\})}{d} \nonumber \\
                    &\leq C_{s,d}^{(V)} R^d, \label{app:eq:Vupperbound}
                \end{align}
                for $C_{s,d}^{(V)} > 0$ a constant depending on $s$ and $d$, where the first inequality is by noting that for all $\vecbf{l} \in \mathbb{N}_0^d(R)$ the shifted hypercubes $\vecbf{l} + [0, 1]^d$ are contained in the positive orthant of the $\ell_2$-ball of radius $R + d^{1/2}$ in $\mathbb{R}^d$, and the final equality by evaluating the integral via a polar co-ordinate transformation. We can similarly obtain the lower bound
                \begin{align} \label{app:eq:Vlowerbound}
                    V
                    \geq \int_{\vecbf{x} \in \mathbb{R}^d:\| \vecbf{x} \|_2 \leq R} \diff{\vecbf{x}}
                    = \frac{R^{d} \mathrm{Vol}(\{\vecbf{x}: \|\vecbf{x}\|_2 \leq 1\})}{d}
                    \geq c_{s,d}^{(V)} R^d,
                \end{align}
                for $c_{s,d}^{(V)} > 0$ a constant depending on $s$ and $d$.
                
                We now set
                \begin{equation*}
                    R = \bigg( \frac{n z_\varepsilon^2 r^2}{\widetilde{C}_{s, d} [2C_{s, d}^{(V)} \log\{1 + (1 - 2a)^2\}]^{1/2}} \bigg)^{1/(2s + 3d/2)},
                \end{equation*}
                and take
                \begin{align*}
                    \delta = &\bigg[\log\bigg\{\frac{4C_{s, d}^{(V)}}{a}\bigg(\frac{n z_\varepsilon^2 r^2}{\widetilde{C}_{s, d} [2C_{s, d}^{(V)} \log\{1 + (1 - 2a)^2\}]^{1/2}}\bigg)^{d/(2s + 3d/2)}\bigg\}\bigg]^{-1/2} \\
                    &\hspace{7cm} \times \Bigg(\frac{(\widetilde{C}_{s, d})^{d/4}}{(C_{s, d}^{(V)})^{(2s + d)/4}r^{d/2}(n z_\varepsilon^2)^{s+d}}\Bigg)^{1/(2s+3d/2)}.
                \end{align*}
                Taken together, these values of $R$ and $\delta$ satisfy the conditions \eqref{app:eq:ContFunctionDeltaReq} and \eqref{app:eq:NIContDeltaReq}. Further, by \Cref{app:lem:ConstructedFunctions} and the bound \eqref{app:eq:Vlowerbound}, we have that the $L_1$-separation is lower bounded as
                \begin{align*}
                    \|f_\eta - f_0\|_1
                    \geq \widetilde{C}_{s, d}^{-1} c_{s, d}^{(V)} \delta R^{d}/\{\log(R)\}^{1/2} 
                    \geq c_{r, s, d}(n\varepsilon^2)^{-s/(2s+3d/2)} \{\log(n\varepsilon^2)\}^{-1}
                \end{align*}
                for $c_{r, s, d} > 0$ some constant depending on $r, s$ and $d$, where the second follows from noting $z_\varepsilon^2 \asymp \varepsilon^2$ as $\varepsilon \in (0, 1]$ and that $n\varepsilon^2 \gtrsim 1$ by assumption. Thus, we see that we have the separation radius of $\rho_{\mathrm{NI}}^\ast(n, \varepsilon, L_1 , \alpha, \beta) \geq c_{r, s, d}(n\varepsilon^2)^{-s/(2s+3d/2)} \{\log(n\varepsilon^2)\}^{-1}$. Finally, we note by H\"{o}lder's inequality that $\|f\|_1 \leq \mu([0, 1]^d)^{1 - 1/p} \|f\|_p = \|f\|_p$ where $\mu$ denotes the Lebesgue measure on $\mathbb{R}^d$. Hence, we have that the prior separation radius is in fact valid for all $p \in [1,2]$, giving $\rho_{\mathrm{NI}}^\ast(n, \varepsilon, L_p , \alpha, \beta) \geq c_{r, s, d}(n\varepsilon^2)^{-s/(2s+3d/2)} \{\log(n\varepsilon^2)\}^{-1}$, which completes the proof in the non-interactive case.

                \medskip
                \noindent
                \textbf{Interactive Mechanisms}:
    
                For the interactive setting. We start by using Pinsker's inequality \citep[e.g.][Lemma~2.5]{Tsybakov:2009:Book} to bound the total-variation distance via the inequality $\|Q(P_0^n) - \mathbb{E}_\eta[Q(P_\eta^n)]\|_{\mathrm{TV}} \leq \{\mathrm{KL}(Q(P_0^n), \mathbb{E}_\eta[Q(P_\eta^n)])/2\}^{1/2}$. For $i \in [n]$, we write $M_{0}(\cdot \mid Z_{1:i-1})$ and $M_{\eta}(\cdot \mid Z_{1:i-1})$ for the private marginal arising from privatising data distributed according to $f_0$ and $f_\eta$ conditional on $\{Z_{1:i-1} = z_{1:i-1}\}$ respectively. We have
                \begin{align}
                    \mathrm{KL}(Q(P_0^n), \mathbb{E}[Q(P_\eta^n)])
                    &\leq \mathbb{E}_{\eta}[\mathrm{KL}(Q(P_0^n), Q(P_\eta^n))] \nonumber \\
                    &= \sum_{i = 1}^n \mathbb{E}_{\eta, M_{1,\eta}, \hdots, M_{i-1,\eta}}[\mathrm{KL}(M_{0}(\cdot \mid Z_{1:i-1}), M_{\eta}(\cdot \mid Z_{1:i-1}))] \label{app:eq:KLContIntDecomp}
                \end{align}
                where the inequality is by the convexity of Kullback--Leibler divergence, which follows from the joint convexity of $f$-divergences \citep[e.g.][Proposition~6.1]{Goldfield:2020:fDivergences}, and the equality by the chain rule \citep[e.g.][Chapter~3]{Gray:2011:InformationTheory}.
    
                Focusing on a single Kullback--Leibler divergence term in the expectation, we have
                \begin{align}
                    &\mathrm{KL}(M_{0}(\cdot \mid Z_{1:i-1}), M_{\eta}(\cdot \mid Z_{1:i-1})) \nonumber \\
                    &\leq \int_{\mathcal{Z}} \frac{\{m_{i, \eta}(z_i \mid Z_{1:i-1}) - m_{i, 0}(z_i \mid Z_{1:i-1})\}^2}{m_{i, \eta}(z_i \mid Z_{1:i-1})} \diff{\nu}_i(z_i) \nonumber \\
                    &\leq \exp(2\varepsilon)\int_{\mathcal{Z}} \frac{\{m_{i, \eta}(z_i \mid Z_{1:i-1}) - m_{i, 0}(z_i \mid Z_{1:i-1})\}^2}{m_{i, 0}(z_i \mid Z_{1:i-1})} \diff{\nu}_i(z_i) \nonumber \\
                    &= \exp(2\varepsilon)\int_{\mathcal{Z}} \frac{\{\delta\sum_{v = 1}^V \eta_v\tilde{\lambda}_v^{-1} \int_{[0,1]^d} q_i(z_i \mid \vecbf{x}, Z_{1:i-1})\psi_v(\vecbf{x})\}^2}{m_{i, 0}(z_i \mid Z_{1:i-1})} \diff{\nu}_i(z_i) \nonumber \\
                    &= \exp(2\varepsilon)\!\!\!\sum_{v,v' \in [V]} \!\!\!\frac{\delta^2 \eta_v \eta_{v'} }{\tilde{\lambda}_v \tilde{\lambda}_{v'}} \int_{\mathcal{Z}} \int_{[0,1]^d}\int_{[0,1]^d} \!\!\!\frac{q_i(z_i \mid \vecbf{x}, Z_{1:i-1})q_i(z_i \mid \vecbf{y}, Z_{1:i-1})\psi_v(\vecbf{x})\psi_{v'}(\vecbf{y})}{m_{i, 0}(z_i \mid Z_{1:i-1})} \diff{\vecbf{x}}\diff{\vecbf{y}} \diff{\nu}_i(z_i) \nonumber \\
                    &= \exp(2\varepsilon) \!\!\!\sum_{v,v' \in [V]} \!\!\!\frac{\delta^2 \eta_v \eta_{v'} }{\tilde{\lambda}_v \tilde{\lambda}_{v'}}\int_{[0,1]^d} \psi_v(\vecbf{x})(\Omega_i(Z_{1:i-1})\psi_{v'})(\vecbf{x}) \diff{\nu}_i(z_i), \label{app:eq:KLContIntBound}
                \end{align}
                where the first inequality is by bounding the Kullback--Leibler divergence by the $\chi^2$-divergence \citep[e.g.][Lemma~2.7]{Tsybakov:2009:Book}, and the second inequality by the construction of the density $q_i$.
    
                Hence, combining \eqref{app:eq:KLContIntDecomp} and \eqref{app:eq:KLContIntBound}, and recalling $\Omega = \sum_{i = 1}^n \mathbb{E}_{M_{1, 0}, \hdots, M_{i-1, 0}}[\Omega_i(Z_{1:i-1})]$, we obtain
                \begin{align*}
                    \mathrm{KL}&(Q(P_0^n), \mathbb{E}[Q(P_\eta^n)]) \\
                    &= \exp(2\varepsilon) \mathbb{E}_\eta\bigg[\sum_{v,v' \in [V]} \frac{\delta^2 \eta_v \eta_{v'} }{\tilde{\lambda}_v \tilde{\lambda}_{v'}}\sum_{i = 1}^n \mathbb{E}_{m_{1, 0}, \hdots, m_{i-1, 0}}\bigg[\int_{[0,1]^d} \psi_v(\vecbf{x})(\Omega_i(Z_{1:i-1})\psi_{v'})(\vecbf{x}) \diff{\vecbf{x}}\bigg] \bigg]\\
                    &= \exp(2\varepsilon) \mathbb{E}_\eta\bigg[\sum_{v,v' \in [V]} \frac{\delta^2 \eta_v \eta_{v'} }{\tilde{\lambda}_v \tilde{\lambda}_{v'}}\int_{[0,1]^d} \psi_v(\vecbf{x})(\Omega\psi_{v'})(\vecbf{x}) \diff{\vecbf{x}} \bigg] \\
                    &= \exp(2\varepsilon) \sum_{v = 1}^V \frac{\delta^2 \lambda_v^2}{\tilde{\lambda}_v^2} \\
                    &\leq \exp(2\varepsilon) Vnz_\varepsilon^2\delta^2
                \end{align*}
                where the final equality is by the fact that the $\psi_v$ are the eigenfunctions of $\Omega$, and the inequality is by the fact that $\lambda_v/\tilde{\lambda}_v \leq n^{1/2}z_\varepsilon$. Hence, for $\delta$ satisfying
                \begin{equation} \label{app:eq:IContDeltaReq}
                    \delta \leq \bigg( \frac{2(1/2-a)^2}{\exp(2\varepsilon)Vnz_\varepsilon^2} \bigg)^{1/2},
                \end{equation}
                we have that $\mathcal{R}_{n_1, P_0, \varepsilon, D', \rho'}' \geq 1/2$.
                
                We now set
                \begin{equation*}
                    R = \bigg( \frac{n z_\varepsilon^2 r^2 \exp(2\varepsilon)}{2\widetilde{C}_{s, d} (1/2 - a)^2} \bigg)^{1/(2s + d)},
                \end{equation*}
                and take
                \begin{equation*}
                    \delta = \bigg[C_{s, d}^{(V)}\log\bigg\{\frac{4C_{s, d}^{(V)}}{a}\bigg( \frac{n z_\varepsilon^2 r^2 \exp(2\varepsilon)}{2\widetilde{C}_{s, d} (1/2 - a)^2} \bigg)^{d/(2s + d)}\bigg\}\bigg]^{-1/2}\Bigg(\frac{\widetilde{C}_{s, d}^{d/2}\{2(1/2 - a)^2\}^{s+d}}{\{\exp(2\varepsilon)\}^{s+d}(nz_\varepsilon^2)^{s+d}r^{d}}\Bigg)^{1/(2s + d)}.
                \end{equation*}
                Taken together, these values of $R$ and $\delta$ satisfy the conditions \eqref{app:eq:ContFunctionDeltaReq} and \eqref{app:eq:IContDeltaReq}. Further, by \Cref{app:lem:ConstructedFunctions} and the bound \eqref{app:eq:Vlowerbound}, we have that the $L_1$-separation is lower bounded as
                \begin{align*}
                    \|f_\eta - f_0\|_1
                    \geq \widetilde{C}_{s, d}^{-1} c_{s, d}^{(V)} \delta R^{d}/\{\log(R)\}^{1/2} 
                    \geq c_{r, s, d}'(n\varepsilon^2)^{-s/(2s+d)} \{\log(n\varepsilon^2)\}^{-1}
                \end{align*}
                for $c_{r, s, d}' > 0$ some constant depending on $r, s$ and $d$, where the second follows from noting $z_\varepsilon^2 \asymp \varepsilon^2$ as $\varepsilon \in (0, 1]$ and that $n\varepsilon^2 \gtrsim 1$ by assumption. Thus, we see that we have the separation radius of $\rho_{\mathrm{I}}^\ast(n, \varepsilon, L_1 , \alpha, \beta) \geq c_{r, s, d}'(n\varepsilon^2)^{-s/(2s+d)} \{\log(n\varepsilon^2)\}^{-1}$. As in the non-interactive case, we recall that by H\"{o}lder's inequality that $\|f\|_1 \leq \|f\|_p$. Hence, we have that the prior separation radius is valid for all $p \in [1,2]$, giving $\rho_{\mathrm{I}}^\ast(n, \varepsilon, L_p , \alpha, \beta) \geq c_{r, s, d}'(n\varepsilon^2)^{-s/(2s+d)} \{\log(n\varepsilon^2)\}^{-1}$, which completes the proof in the interactive case. Hence, the proof of the theorem is concluded.
            \end{proof}

            We now provide the proof of \Cref{app:lem:ConstructedFunctions}.
            
            \begin{proof}[Proof of \Cref{app:lem:ConstructedFunctions}]
                Throughout, for $\eta \in \{-1, 1\}^V$, we denote $\Psi_{\eta} = \delta \sum_v^V \eta_v \psi_v/\tilde{\lambda}_v$ such that $f_v = f_0 + \Psi_{\eta}$, noting that $\| \Psi_\eta \|_2^2 = \delta^2 \widetilde{\Lambda}$. We denote the set
                \begin{equation*}
                    E_1 = \bigg\{\eta \in \{-1, 1\}^V : \biggl|\sum_{v = 1}^V \frac{\delta\eta_v\theta_{v, \vecbf{l}}}{\tilde{\lambda}_v} \biggr| \leq \bigg\{\frac{8 \| \Psi_\eta \|_2^2}{3V}\log\bigg( \frac{4V}{a} \bigg) \bigg\}^{1/2} \text{ for all } \vecbf{l} \in \mathbb{N}_0^d(R)\bigg\}.
                \end{equation*}
                By Hoeffding's inequality \citep[e.g.~Proposition~2.5][]{Wainwright:2019:HDSBook}, it holds for $x > 0$ that
                \begin{align*}
                    \sum_{\vecbf{l} \in \mathbb{N}_0^d(R)} \mathbb{P}\bigg(\biggl| \sum_{v = 1}^V \frac{\delta \eta_v\theta_{v, \vecbf{l}}}{\tilde{\lambda}_v} \biggr| > x \bigg)
                    &\leq 2\sum_{\vecbf{l} \in \mathbb{N}_0^d(R)} \exp\bigg( -\frac{x^2}{2\delta^2\sum_{v = 1}^V (\theta_{v, \vecbf{l}}/\tilde{\lambda}_v)^2} \bigg) \\
                    &\leq 2\sum_{\vecbf{l} \in \mathbb{N}_0^d(R)} \exp\bigg( -\frac{x^2}{2\delta^2V} \bigg) \\
                    &= 2\sum_{\vecbf{l} \in \mathbb{N}_0^d(R)} \exp\bigg( -\frac{\widetilde{\Lambda}x^2}{2\| \Psi_\eta\|_2^2 V} \bigg) \\
                    &\leq 2\sum_{\vecbf{l} \in \mathbb{N}_0^d(R)} \exp\bigg( -\frac{3Vx^2}{8\| \Psi_\eta\|_2^2} \bigg)
                \end{align*}
                where the second inequality is by the fact that $\tilde{\lambda}_v^2 \geq 1/V$ for all $v \in [V]$, and the orthonormality of $(\theta_{v, \vecbf{l}})_{v, \vecbf{l}}$, and the final inequality by \eqref{app:eq:BigLambdaBound}. Hence, applying the union bound, we obtain that $\mathbb{P}(E_1^c) \leq a/2$.

                We now consider the three items in the lemma statement for $\eta \in E_1$, beginning with item~1. We have that
                \begin{align}
                    \sum_{\vecbf{l} \in \mathbb{N}_0^d(R)} \bigg\{ \| \vecbf{l} \|_2^{2s} \bigg( \sum_{v = 1}^V \frac{\delta\eta_v\theta_{v, \vecbf{l}}}{\tilde{\lambda}_v} \bigg)^2 \bigg\}
                    &\leq \frac{8 \| \Psi_\eta \|_2^2}{3V}\log\bigg( \frac{4V}{a} \bigg) \sum_{\vecbf{l} \in \mathbb{N}_0^d(R)} \| \vecbf{l} \|_2^{2s} \nonumber \\
                    &\leq \frac{2^{-d + 3}\| \Psi_\eta \|_2^2}{3V}\log\bigg( \frac{4V}{a} \bigg) \int_{\vecbf{x} \in \mathbb{R}^d : \|\vecbf{x}\|_2 \leq R + d^{1/2}} \| \vecbf{x} \|_2^{2s} \diff{\vecbf{x}} \nonumber \\
                    &\leq \frac{2^{-d + 3}\| \Psi_\eta \|_2^2 (R + d^{1/2})^{2s + d} \mathrm{Vol}(\{\vecbf{x}: \|\vecbf{x}\|_2 \leq 1\})}{3V(2s+d)}\log\bigg( \frac{4V}{a} \bigg) \nonumber \\
                    &\leq C_{s, d} \delta^2 V R^{2s + d}\log\bigg( \frac{4V}{a} \bigg), \label{app:eq:BallIntegralBound}
                \end{align}
            for $C_{s, d} > 0$ a constant depending on $s$ and $d$, where the first inequality is by the condition of $E_1$; the second inequality is by noting that for all $\vecbf{l} \in \mathbb{N}_0^d(R)$ the shifted hypercubes $\vecbf{l} + [0, 1]^d$ are contained in the positive orthant of the $\ell_2$-ball of radius $R + d^{1/2}$ in $\mathbb{R}^d$; the third by computing the integral via a polar co-ordinate transform; and the last by noting that $\|\Psi_\eta\|_2^2 = \delta^2 \widetilde{\Lambda} \leq \delta^2 V^2$. Hence, as $\delta$ satisfies
            \begin{equation*}
                \delta \leq \bigg(\frac{r^2}{\widetilde{C}_{s, d}VR^{2s + d}\log(4V/a)} \bigg)^{1/2},
            \end{equation*}
            and we may take $\widetilde{C}_{s, d}$ sufficiently large, it holds that $f_\eta \in \mathcal{S}_{s, r}$.

            For item~2, we proceed by aiming to bound
            \begin{equation*}
                \mathbb{P}\bigg( \sup_{\vecbf{x} \in [0,1]^d} f_\eta(\vecbf{x}) \geq 1 + C_{s, d}'''\|f_\eta - f_0 \|_2 \bigg)
                = \mathbb{P}\bigg( \sup_{\vecbf{x} \in [0,1]^d} \Psi_{\eta} \geq C_{s, d}'''\|\Psi_\eta\|_2 \bigg),
            \end{equation*}
            for $C_{s, d}''' > 0$ and constant depending on $s$ and $d$ to be introduced later.
            
            We start by considering a pseudo-metric $\tau$ on $[0, 1]^d$ via
            \begin{equation*}
                \tau(\vecbf{x}, \vecbf{y})
                = \bigg( \delta^2\sum_{v = 1}^V \frac{\{\psi_v(\vecbf{x}) - \psi_v(\vecbf{y})\}^2}{\tilde{\lambda}_v^2}\bigg)^{1/2}.
            \end{equation*}
            The maximum value the pseudo-metric can take is bounded as
            \begin{align}
                \sup_{\vecbf{x}, \vecbf{y} \in [0, 1]^d} \tau(\vecbf{x}, \vecbf{y}) 
                &\leq \sup_{\vecbf{x}, \vecbf{y} \in [0, 1]^d} \delta V^{1/2} \bigg(\sum_{v = 1}^V \sum_{\vecbf{l}, \vecbf{l}' \in \mathbb{N}_0^d(R)} \theta_{v, \vecbf{l}} \theta_{v, \vecbf{l}'} \{\varphi_{\vecbf{l}}(\vecbf{x}) - \varphi_{\vecbf{l}}(\vecbf{y})\}\{\varphi_{\vecbf{l}'}(\vecbf{x}) - \varphi_{\vecbf{l}'}(\vecbf{y})\} \bigg)^{1/2} \nonumber \\
                &\leq \sup_{\vecbf{x}, \vecbf{y} \in [0, 1]^d} \delta V^{1/2} \bigg(\sum_{\vecbf{l} \in \mathbb{N}_0^d(R)} \{\varphi_{\vecbf{l}}(\vecbf{x}) - \varphi_{\vecbf{l}}(\vecbf{y})\}^2 \bigg)^{1/2} \nonumber \\
                &\leq 2^{d/2 + 1} \delta V = C_{d}' \| \Psi_\eta \|_2 = \Delta \label{app:eq:PseudoMetricMax}
            \end{align}
            where the first inequality uses the fact that $\tilde{\lambda}_v^2 \geq 1/V$ for all $v \in [V]$; the second from the orthonormality of $(\theta_{v, \vecbf{l}})_{v, \vecbf{l}}$; and the final line from the facts that $\| \varphi_{\vecbf{l}} \|_\infty \leq 2^{d/2}$ and that $\widetilde{\Lambda} \geq 3V^2/4$.

            We consider, for $\nu_i \in \mathbb{R}$, the moment generating function
            \begin{align*}
                \mathbb{E}\bigg[\exp\bigg\{ \nu_i \bigg( \delta\sum_{v = 1}^V \frac{\eta_v}{\tilde{\lambda}_v} \psi_v(\vecbf{x}) - \delta\sum_{v = 1}^V \frac{\eta_v}{\tilde{\lambda}_v} \psi_v(\vecbf{y}) \bigg) \bigg\} \bigg]
                &= \mathbb{E}\bigg[\exp\bigg( \nu_i \delta\sum_{v = 1}^V \frac{\eta_v}{\tilde{\lambda}_v} \{\psi_v(\vecbf{x}) - \psi_v(\vecbf{y})\} \bigg) \bigg] \\
                &\leq \exp\bigg( \frac{\nu_i^2 \delta^2}{2}\sum_{v = 1}^V \frac{\{\psi_v(\vecbf{x}) - \psi_v(\vecbf{y})\}^2}{\tilde{\lambda}_v^2}\bigg) \\
                &\leq \exp\bigg( \frac{\nu_i^2}{2}\{\tau(\vecbf{x}, \vecbf{y})\}^2\bigg),
            \end{align*}
            and hence the difference $\Psi_\eta(\vecbf{x}) - \Psi_\eta(\vecbf{y})$ is, almost-surely in $\eta$, $\mathrm{SG(\{\tau(\vecbf{x}, \vecbf{y})\}^2)}$.

            We now apply a chaining argument. For $t \in \mathbb{N}$, let $\Delta_t = 2^{-t}\Delta$ and write $\mathcal{X}_t$ for a $\Delta_t$-net of $[0,1]^d$ with respect to $\tau$, and let $\mathcal{X}_0 = \{\vecbf{x}_0\}$ for some arbitrary point $\vecbf{x}_0 \in [0, 1]^d$. Now, for $t \in \{0\} \cup \mathbb{N}$, define a map $\Pi_t : [0, 1]^d \rightarrow \mathcal{X}_t$ such that $\tau(\vecbf{x}, \Pi_t(\vecbf{x})) \leq \Delta_t$. Write $\Psi_{t, \eta} = \Psi_{\eta} \circ \Pi_t$. We have for any $T \in \mathbb{N}$ that
            \begin{align*}
                \mathbb{E}\bigg[\sup_{\vecbf{x} \in [0,1]^d} \Psi_\eta(\vecbf{x}) \bigg]
                &\leq \mathbb{E}\bigg[\sup_{\vecbf{x} \in [0,1]^d} \Psi_{T, \eta}(\vecbf{x}) \bigg]
                + \mathbb{E}\bigg[\sup_{\vecbf{x} \in [0,1]^d} \{\Psi_{\eta}(\vecbf{x}) - \Psi_{T, \eta}(\vecbf{x})\} \bigg] \\
                &\leq \sum_{t = 1}^T \mathbb{E}\bigg[ \sup_{\vecbf{x} \in [0, 1]^d}\{\Psi_{t, \eta}(\vecbf{x}) - \Psi_{t-1, \eta}(\vecbf{x})\}\bigg]
                + \mathbb{E}\bigg[\sup_{\vecbf{x} \in [0,1]^d} \{\Psi_\eta(\vecbf{x}) - \Psi_{T, \eta}(\vecbf{x})\} \bigg],
            \end{align*}
            where we note that $\mathbb{E}[\Psi_{0, \eta}(\vecbf{x})] = \mathbb{E}[\Psi_\eta(\vecbf{x}_0)] = 0$.
            Further, $\tau(\Pi_t(\vecbf{x}), \Pi_{t-1}(\vecbf{x})) \leq 3\Delta_t$, and using the fact that $\Psi(\vecbf{x}) - \Psi(\vecbf{y})$ is $\mathrm{SG}(\{\tau(\vecbf{x}, \vecbf{y})\}^2)$, we have by an inequality on the expectation of a maximum of sub-Gaussian random variables \citep[e.g.][Theorem~2.5]{Boucheron:2013:ConcentrationIneqBook} that 
            \begin{align}
                \mathbb{E}\bigg[\sup_{\vecbf{x} \in [0,1]^d} \Psi(\vecbf{x}) \bigg]
                &\leq \bigg( \sum_{t = 1}^T\Delta_t\{18\log(N([0, 1]^d; \Delta_t))\}^{1/2} \bigg) \nonumber \\
                &\hspace{1.5cm} + \sup_{\vecbf{x} \in [0, 1]^d}\sum_{v = 1}^V \frac{\delta |\psi_v(\vecbf{x}) - \psi_v(\Pi_T(\vecbf{x}))|}{\tilde{\lambda}_v} \nonumber \\
                &\leq \bigg( \sum_{t = 1}^T \Delta_t\{18\log(N([0, 1]^d; \Delta_t))\}^{1/2} \bigg) + V^{1/2}\sup_{\vecbf{x} \in [0, 1^d]}\{\tau(\vecbf{x}, \Pi_T(\vecbf{x}))\}^{1/2} \nonumber \\
                &\leq \int_0^{\Delta/2} \{18\log(N([0, 1]^d; u))\}^{1/2} \diff{u} + V^{1/2}\Delta_T^{1/2}, \label{app:eq:DudleyBound}
            \end{align}
            where the first inequality uses the fact that $\eta_v \in \{-1, 1\}$; the second by the Cauchy--Schwarz inequality; and the final by facts that the metric entropy is a decreasing function of the ball radius and that $\Delta_1 = \Delta/2$.

            It remains to bound the metric entropy of the space $[0, 1]^d$ with respect to $\tau$. For $j \in [d]$, we write $\tilde{l}_j = \lceil l_j/2 \rceil$, and define the vector $\tilde{\vecbf{l}} = (\tilde{l}_j)_{j \in [d]}$. We have
            \begin{align*}
                \varphi_{l_j}(x_j) = c_{l_j, +} \exp(2\pi i \tilde{l}_j x_j) + c_{l_j, -} \exp(-2\pi i \tilde{l}_j x_j),
            \end{align*}
            where
            \begin{align*}
                c_{l_j, \pm} =
                \begin{cases}
                    2^{-1/2} & \text{ for $l_j$ even}, \\
                    \pm (2i)^{-1/2} & \text{ for $l_j$ odd}.
                \end{cases}
            \end{align*}
            Hence, denoting $\odot$ for the element-wise product of vectors, we have
            \begin{align*}
                \varphi_\vecbf{l}(\vecbf{x})
                = \sum_{\vecbfm{\varsigma} \in \{-1, 1\}^{d}} c_{\vecbfm{\varsigma}} \exp(2\pi i \langle \vecbfm{\varsigma} \odot \tilde{\vecbf{l}}, \vecbf{x} \rangle),
            \end{align*}
            where $c_{\vecbfm{\varsigma}} = \prod_{j = 1}^d c_{l_j, \varsigma_j}$. Thus, we proceed to bound
            \begin{align*}
                \{\varphi_{\vecbf{l}}(\vecbf{x}) - \varphi_{\vecbf{l}}(\vecbf{y})\}^2
                &= \bigg\{\sum_{\vecbfm{\varsigma} \in \{-1, 1\}^d} c_{\vecbfm{\varsigma}} \{\exp(2\pi i \langle \vecbfm{\varsigma} \odot \tilde{\vecbf{l}}, \vecbf{x} \rangle) - \exp(2\pi i \langle \vecbfm{\varsigma} \odot \tilde{\vecbf{l}}, \vecbf{y} \rangle)\} \bigg\}^2 \\
                &\leq \bigg\{\sum_{\vecbfm{\varsigma} \in \{-1, 1\}^d} |c_{\vecbfm{\varsigma}}\exp(2\pi i \langle \vecbfm{\varsigma} \odot \tilde{\vecbf{l}}, \vecbf{y} \rangle)| |\exp(2\pi i \langle \vecbfm{\varsigma} \odot \tilde{\vecbf{l}}, \vecbf{x} - \vecbf{y} \rangle) - 1| \bigg\}^2 \\
                &= \bigg\{2^{-d/2}\sum_{\vecbfm{\varsigma} \in \{-1, 1\}^d} |\exp(2\pi i \langle \vecbfm{\varsigma} \odot \tilde{\vecbf{l}}, \vecbf{x} - \vecbf{y} \rangle) - 1| \bigg\}^2 \\
                &\leq \bigg\{2^{-d/2}\sum_{\vecbfm{\varsigma} \in \{-1, 1\}^d} \min\{1, |\langle \vecbfm{\varsigma} \odot \tilde{\vecbf{l}}, \vecbf{x} - \vecbf{y} \rangle|\} \bigg\}^2 \\
                &\leq \bigg\{2^{-d/2}\sum_{\vecbfm{\varsigma} \in \{-1, 1\}^d} |\langle \vecbfm{\varsigma} \odot \tilde{\vecbf{l}}, \vecbf{x} - \vecbf{y} \rangle|^{s \wedge 1} \bigg\}^2 \\
                &\leq 2^{d} (\| \tilde{\vecbf{l}}\|_1 \|\vecbf{x} - \vecbf{y}\|_\infty)^{2(s \wedge 1)}.
            \end{align*}
            Hence we have
            \begin{align*}
                \{\tau(\vecbf{x}, \vecbf{y})\}^2
                &= \delta^2\sum_{v = 1}^V \frac{\{\psi_v(\vecbf{x}) - \psi_v(\vecbf{y})\}^2}{\tilde{\lambda}_v^2} \\
                &\leq \delta^2 V\sum_{v = 1}^V \sum_{\vecbf{l}, \vecbf{l}' \in \mathbb{N}_0^d(R)} \theta_{v, \vecbf{l}}\theta_{v, \vecbf{l}'}\{\varphi_{\vecbf{l}}(\vecbf{x}) - \varphi_{\vecbf{l}}(\vecbf{y})\}\{\varphi_{\vecbf{l}'}(\vecbf{x}) - \varphi_{\vecbf{l}'}(\vecbf{y})\} \\
                &\leq \delta^2V \bigg\{  \sum_{\vecbf{l}, \vecbf{l}' \in \mathbb{N}_0^d(R)} \bigg( \{\varphi_{\vecbf{l}}(\vecbf{x}) - \varphi_{\vecbf{l}}(\vecbf{y})\}\{\varphi_{\vecbf{l}'}(\vecbf{x}) - \varphi_{\vecbf{l}'}(\vecbf{y})\} \sum_{v = 1}^V \theta_{v, \vecbf{l}}\theta_{v, \vecbf{l}'} \bigg) \bigg\} \\
                &= \delta^2V \sum_{\vecbf{l} \in \mathbb{N}_0^d(R)} \{\varphi_{\vecbf{l}}(\vecbf{x}) - \varphi_{\vecbf{l}}(\vecbf{y})\}^2 \\
                &\leq \delta^2V \max_{\vecbf{l} \in \mathbb{N}_0^d(R)}\{[\varphi_{\vecbf{l}}(\vecbf{x}) - \varphi_{\vecbf{l}}(\vecbf{y})]^2\} \sum_{\vecbf{l} \in \mathbb{N}_0^d(R)} 1 \\
                &\leq 2^d\delta^2V^2 \|\vecbf{x} - \vecbf{y}\|_\infty^{2(s \wedge 1)} \max_{\vecbf{l} \in \mathbb{N}_0^d(R)}\{(\| \tilde{\vecbf{l}}\|_1^{2(s \wedge 1)}\} \\
                &\leq (C_{s, d}'')^2 \| \Psi_\eta\|_2^2 \|\vecbf{x} - \vecbf{y} \|_\infty^{2(s \wedge 1)}R^2,
            \end{align*}
            for $C_{s, d}'' > 0$ an= constant depending on $s$ and $d$, where the first inequality uses the fact that $\tilde{\lambda}_v^2 \geq 1/V$ for all $v \in [V]$; the second by the orthonormality of $(\theta_{v, \vecbf{l}})_{v, \vecbf{l}}$; and the final inequality by the Cauchy--Schwarz inequality and noting $\delta^2V^2 \lesssim \| \Psi_\eta \|_2^2$ by \Cref{app:lem:Eigenbound}.
            
            Hence, we can bound the metric entropy with respect to $\tau$ by the metric entropy with respect to the supremum metric. Denoting the covering number with respect to the supremum metric by $N_\infty$, we have
            \begin{align}
                \log(N([0, 1]^d; u))
                &\leq \log\bigg(N_\infty\bigg([0, 1]^d; \bigg\{\frac{u}{C_{s, d}'' \| \Psi_\eta \|_2 R}\bigg\}^{1/(s \wedge 1)} \bigg)\bigg) \nonumber \\
                &\leq d \log\bigg(1 + \bigg\{\frac{C_{s, d}'' \| \Psi_\eta \|_2 R}{u} \bigg\}^{1/(s \wedge 1)} \bigg) \nonumber \\
                &\leq \frac{d}{s \wedge 1} \log(1 + C_{s, d}'' \| \Psi_\eta \|_2R/u) \label{app:eq:MetricEntropyBound}
            \end{align}

            Finally, we note that \eqref{app:eq:DudleyBound} holds for all $T \in \mathbb{N}$ with the integral term independent of $T$, and so we can remove the $V^{1/2} \Delta_T^{1/2}$ by considering the limit $T \rightarrow \infty$. Hence, combining \eqref{app:eq:DudleyBound} and \eqref{app:eq:MetricEntropyBound}, we have
            \begin{align}
                \mathbb{E}\bigg[\sup_{\vecbf{x} \in [0,1]^d} \Psi(\vecbf{x}) \bigg]
                &\leq \int_0^{\Delta/2}\{18\log(N([0, 1]^d; u))\}^{1/2} \diff{u} \nonumber \\
                &\leq \frac{9d^{1/2} C_{s, d}'' \| \Psi_\eta \|_2 R}{(s \wedge 1)^{1/2}}\int_0^{\Delta/(2C_{s, d}'' \| \Psi_\eta \|_2 R)} \{\log(1 + 1/v)\}^{1/2} \diff{v} \nonumber \\
                &\leq \frac{18d^{1/2}\Delta}{(s \wedge 1)^{1/2}} \bigg\{ \log\bigg(\frac{8C_{s, d}'' \| \Psi_\eta \|_2 R}{\Delta } \bigg) \bigg\}^{1/2}
                \leq C_{s, d}''' \|\Psi_\eta\|_2 \{\log(R)\}^{1/2}/2 \label{app:eq:ESupBound}
            \end{align}
            for $C_{s, d}''' > 0$ a constant depending on $s$ and $d$, where the penultimate inequality is by the fact that $\int_0^x \{\log(1 + 1/a)\}^{1/2} \diff{a} \leq x\{\log(4/x)\}^{1/2}$ for $x \in (0, 1]$ by \Cref{app:lem:logintbound} and the final by recalling $\Delta = C_{d}' \|\Psi_\eta\|_2$. Hence, we have that $\mathbb{E}[\sup_{\vecbf{x} \in [0,1]^d} \Psi(\vecbf{x})] \leq C_{s, d}''' \|\Psi_\eta\|_2$.

            By a form of Hoeffding's inequality for empirical processes\footnote{The theorem requires the empirical process to be indexed cover a countable set, which we may assume as the basis functions are continuous on $[0, 1]^d$ allowing us to consider the countable subset $(\mathbb{Q} \cap [0,1])^d$.} \citep[e.g.][Theorem~12.1]{Boucheron:2013:ConcentrationIneqBook}, we have that $\sup_{\vecbf{x} \in [0,1]^d} \Psi(\vecbf{x})$ is sub-Gaussian with variance proxy
            \begin{align*}
                \sup_{\vecbf{x}, \vecbf{y} \in [0, 1]^d} \delta^2 V &\bigg(\sum_{v = 1}^V \sum_{\vecbf{l}, \vecbf{l}' \in \mathbb{N}_0^d(R)} \theta_{v, \vecbf{l}} \theta_{v, \vecbf{l}'} \{\varphi_{\vecbf{l}}(\vecbf{x}) - \varphi_{\vecbf{l}}(\vecbf{y})\}\{\varphi_{\vecbf{l}'}(\vecbf{x}) - \varphi_{\vecbf{l}'}(\vecbf{y})\} \bigg) \\
                &\leq \sup_{\vecbf{x}, \vecbf{y} \in [0, 1]^d} \delta V^{1/2} \bigg(\sum_{\vecbf{l} \in \mathbb{N}_0^d(R)} \{\varphi_{\vecbf{l}}(\vecbf{x}) - \varphi_{\vecbf{l}}(\vecbf{y})\}^2 \bigg)^{1/2} \\
                &\leq 2^{d + 2}\delta^2 V^2
                \leq 2^{d + 4}\| \Psi_\eta \|_2^2/3
                = \sigma_\Psi^2
            \end{align*}
            which follows from the same calculations as \eqref{app:eq:PseudoMetricMax}. Combining the above with \eqref{app:eq:ESupBound}, we obtain
            \begin{align*}
                \mathbb{P}\bigg( \sup_{\vecbf{x} \in [0,1]^d} f_\eta(\vecbf{x}) \geq 1 +  C_{s, d}'''\|f_\eta - &f_0 \|_2\{\log(R)\}^{1/2} \bigg) \\
                &= \mathbb{P}\bigg( \sup_{\vecbf{x} \in [0,1]^d} \Psi_{\eta} \geq  C_{s, d}'''\|\Psi_\eta\|_2\{\log(R)\}^{1/2} \bigg) \\
                &\leq \mathbb{P}\bigg( \sup_{\vecbf{x} \in [0,1]^d} \Psi_{\eta} - \mathbb{E}\bigg[\sup_{\vecbf{x} \in [0,1]^d} \Psi_{\eta}\bigg] \geq  C_{s, d}'''\|\Psi_\eta\|_2\{\log(R)\}^{1/2}/2 \bigg) \\
                &\leq \exp\bigg( -\frac{(C_{s, d}''')^2\|\Psi_\eta\|_2^2\log(R)}{8\sigma_\Psi^2} \bigg) \\
                &= \exp\bigg( -\frac{3(C_{s, d}''')^2 \log(R)}{2^{d+7}}\bigg)
                \leq a/4,
            \end{align*}
            where the final inequality comes from taking $C_{s, d}'''$ sufficiently large.

            Finally, noting that, as $\eta$ consists of Rademacher random variables, we have that $\eta$ is equal to $-\eta$ in distribution, immediately giving the result
            \begin{equation*}
                \mathbb{P}\bigg( \inf_{\vecbf{x} \in [0,1]^d} f_\eta(\vecbf{x}) \leq 1 - C_{s, d}'''\|f_\eta - f_0 \|_2\{\log(R)\}^{1/2} \bigg) \leq a/4.
            \end{equation*}
            Hence, we define the set $E_2 = \{\eta \in \{-1, 1\}^V : \|f_\eta - f_0\|_\infty \leq C_{s, d}'''\|f_\eta - f_0\|_2 \{\log(R)\}^{1/2}\}$, which is such that $\mathbb{P}(E_2^c) \leq a/2$.
            
            We recall that for $\eta \in E_1$, we have $f_\eta \in \mathcal{S}_{r, s}$. Further, for $\eta \in E_2$, we have that $\|f_\eta - f_0\|_\infty \leq C_{s, d}'''\| \Psi_\eta \|_2 \leq C_{s, d}''' \delta V$. Since $\delta$ satisfies
            \begin{equation*}
                \delta \leq \bigg(\frac{r^2}{\widetilde{C}_{s, d}VR^{2s + d}\log(2V/a)} \bigg)^{1/2},
            \end{equation*}
            and that $R^d \gtrsim C_{s, d}''''V$ for some constant depending on $s$ and $d$ which may be seen from the calculations \eqref{app:eq:BallIntegralBound} with setting $s$ to zero therein, we may take $\widetilde{C}_{s, d}$ large enough so that $\|f_\eta - f_0\|_\infty \leq 1$ (note that $C_{s, d}'''$ and $C_{s, d}''''$ do not depend on $\widetilde{C}_{s, d}$ beyond the shared dependence through $s$ and $d$, and so we may make their ratio small). Hence, combining $E = E_1 \cap E_2$ satisfying $\mathbb{P}(E^c) \leq a$, for $\eta \in E$, it holds that $f_\eta \in \mathcal{F}_{s, r}$.

            For item~3, we note that for $\eta \in E$, we have $\|f_\eta - f_0\|_\infty \leq C_{s, d}'''\|f_\eta - f_0\|_2$ and hence it holds that
            \begin{align*}
                \|f_\eta - f_0\|_1
                &= \int_{[0,1]^d} |f_\eta(\vecbf{x}) - f_0(\vecbf{x})| \diff{\vecbf{x}} \\
                &= \frac{1}{\|f_\eta - f_0\|_\infty} \int_{[0,1]^d} \|f_\eta - f_0\|_\infty |f_\eta(\vecbf{x}) - f_0(\vecbf{x})| \diff{\vecbf{x}} \\
                &\geq \frac{1}{\|f_\eta - f_0\|_\infty} \int_{[0,1]^d} |f_\eta(\vecbf{x}) - f_0(\vecbf{x})|^2 \diff{\vecbf{x}} \\
                &= \frac{\| f_\eta(\vecbf{x}) - f_0(\vecbf{x}) \|_2^2}{\|f_\eta - f_0\|_\infty}
                \geq (C_{s, d}''')^{-1}\| f_\eta(\vecbf{x}) - f_0(\vecbf{x}) \|_2/\{\log(R)\}^{1/2}.
            \end{align*}
            Thus, up to a constant depending on $s$ and $d$, it suffices to consider $\| f_\eta(\vecbf{x}) - f_0(\vecbf{x}) \|_2$, whence we have
            \begin{align*}
                \| f_\eta(\vecbf{x}) - f_0(\vecbf{x}) \|_2^2
                = \delta^2 \widetilde{\Lambda} \geq \frac{3\delta^2V^2}{4}.
            \end{align*}
            Finally, taking $\widetilde{C}_{s, d}$ sufficiently large to ensure the lemma statements are satisfied for this choice of constant completes the proof.
        \end{proof}

            \begin{lemma} \label{app:lem:Eigenbound}
                Denoting $\widetilde{\Lambda} = \sum_{v = 1}^V \tilde{\lambda}_v^{-2}$, we have the bound that
                \begin{equation} \label{app:eq:BigLambdaBound}
                    3V^2/4 \leq \widetilde{\Lambda} \leq V^2.
                \end{equation}
            \end{lemma}
            \begin{proof}[Proof of \Cref{app:lem:Eigenbound}]
                The upper bound follows immediately from noting that $\widetilde{\lambda}_v^{-2} \leq V$, and so it remains to prove the lower bound. For the lower bound, we first prove that $\mathrm{Trace}(\Omega) = \sum_{v = 1}^V \lambda_v^2 \leq nz_\varepsilon^2$. Indeed, we have
                \begin{align*}
                    &\sum_{v = 1}^V \lambda_v^2 \\
                    &= \sum_{v = 1}^V \int_{[0, 1]^d} \psi_v(\vecbf{x}) (\Omega \psi_v)(\vecbf{x}) \diff{\vecbf{x}} \\
                    &= \sum_{v = 1}^V \sum_{i = 1}^n\mathbb{E}\bigg[\int_{\mathcal{Z}} \int_{[0,1]^d}\int_{[0,1]^d} \frac{q_i(z_i \mid \vecbf{x}, Z_{1:i-1})q_i(z_i \mid \vecbf{y}, Z_{1:i-1})\psi_v(\vecbf{x})\psi_{v}(\vecbf{y})}{m_{i, 0}(z_i \mid Z_{1:i-1})} \diff{\vecbf{x}}\diff{\vecbf{y}} \diff{\nu}_i(z_i) \bigg] \\
                    &= \sum_{v = 1}^V \sum_{i = 1}^n\mathbb{E}\bigg[\int_{\mathcal{Z}} \bigg(\int_{[0,1]^d} \frac{q_i(z_i \mid \vecbf{x}, Z_{1:i-1})\psi_v(\vecbf{x})}{m_{i, 0}(z_i \mid Z_{1:i-1})} \diff{\vecbf{x}}\bigg)^2 m_{i, 0}(z_i \mid Z_{1:i-1}) \diff{\nu}_i(z_i) \bigg] \\
                    &= \sum_{v = 1}^V \sum_{i = 1}^n\mathbb{E}\bigg[\int_{\mathcal{Z}} \bigg\{\int_{[0,1]^d} \bigg(\frac{q_i(z_i \mid \vecbf{x}, Z_{1:i-1})}{m_{i, 0}(z_i \mid Z_{1:i-1})} - \exp(2\varepsilon)\bigg)\psi_v(\vecbf{x})\diff{\vecbf{x}}\bigg\}^2 m_{i, 0}(z_i \mid Z_{1:i-1}) \diff{\nu}_i(z_i) \bigg],
                \end{align*}
                where the fourth equality is by the fact $\psi_v$ is orthogonal to $1$. Hence, writing $h_i(z_i) = q_i(z_i \mid \vecbf{x}, Z_{1:i-1})/m_{i, 0}(z_i \mid Z_{1:i-1}) - \exp(2\varepsilon)$ we have, almost surely, that
                \begin{align*}
                    \sum_{v = 1}^V \bigg\{\int_{[0,1]^d} \bigg(\frac{q_i(z_i \mid \vecbf{x}, Z_{1:i-1})}{m_{i, 0}(z_i \mid Z_{1:i-1})} - \exp(2\varepsilon)\bigg)\psi_v(\vecbf{x})\diff{\vecbf{x}}\bigg\}^2
                    \leq \|h_i\|_2^2
                    \leq z_\varepsilon^2,
                \end{align*}
                where the first inequality follows from Bessel's inequality \citep[e.g.][p.~316]{Royden:2010}, and the second by the LDP condition. Hence, combining, we obtain
                \begin{equation*}
                    \sum_{v = 1}^V \lambda_v^2
                    \leq \sum_{i = 1}^n z_\varepsilon^2 = nz_\varepsilon^2.
                \end{equation*}
                
                We now proceed to prove the lower bound $\widetilde{\Lambda}$. Denoting $\mathcal{J} = \{v \in V : \tilde{\lambda}_v^2 = \lambda_v^2/(nz_\varepsilon^2)\}$ we have that
                \begin{align*}
                    \widetilde{\Lambda}
                    &= V^2 - |\mathcal{J}|V + nz_\varepsilon^2 \sum_{v \in \mathcal{J}}\lambda_v^{-2} \\
                    &\geq V^2 - |\mathcal{J}|V + |\mathcal{J}|^2 nz_\varepsilon^2 \bigg( \sum_{v \in \mathcal{J}} \lambda_v^2 \bigg)^{-1} \\
                    &\geq V^2 - |\mathcal{J}|V + |\mathcal{J}|^2 \\
                    &\geq V^2\bigg( 1 - \frac{|\mathcal{J}|}{V} + \frac{|\mathcal{J}|^2}{V^2}\bigg)
                    \geq 3V^2/4
                \end{align*}
                where the first inequality is by the inequality between arithmetic and harmonic means, and the second by the fact that we showed $\sum_{v \in \mathcal{J}} \lambda_v^2 \leq \sum_{v = 1}^V \lambda_v^2 \leq nz_\varepsilon^2$. This completes the proof.
            \end{proof}

    \section{Upper Bounds - Discrete} \label{app:sec:UBDiscrete}
        \subsection{Non-Interactive Method} \label{app:sec:multinomialUpperNonIntProof}
            \begin{proof}[Proof of \Cref{sec4:thm:main} (Upper Bound - Non-Interactive)]
                Under the null hypothesis, permutation invariance holds as all the \beame{privatised}{privatized}{privatized} observations are independent and have the same distribution in both samples. Hence, we have the desired type-I error control. We proceed to prove the power guarantee of the test constructed in \Cref{sec4:UpperBoundNonInt} via \Cref{sec3:thm:sepcondU}.
        
                \noindent
                \textbf{Step 1 (Sub-Gaussianity of \beame{Privatised}{Privatized}{Privatized} Data)}:
                    We note, by \Cref{app:lem:UESG}, that for $i \in [n_1]$ and $i' \in [n_2]$, the random variables $\vecbf{Z}_{i}$ and $\vecbf{W}_{i'}$ obtained via the \beame{privatisation}{privatization}{privatization} scheme \eqref{sec4:eq:UE} are $\mathrm{SG}(17/4)$. Further, they are independent.

                \noindent
                \textbf{Step 2 (Applying \Cref{sec3:thm:sepcondU})}:
                    By the construction \eqref{sec4:eq:UE}, we have
                    \begin{equation*}
                        \mathbb{E}[\vecbf{Z}_1] = (2\omega_{\varepsilon/2} - 1) \vecbf{p}_X + (1 - \omega_{\varepsilon/2})\vecbfm{1}, \quad \mbox{and{}} \quad \mathbb{E}[\vecbf{W}_{1}] = (2\omega_{\varepsilon/2} - 1) \vecbf{p}_Y + (1 - \omega_{\varepsilon/2})\vecbfm{1},
                    \end{equation*}
                    where $\vecbfm{1}$ is the vector of ones and $\omega_{\varepsilon/2} = \exp(\varepsilon/2)/\{ \exp(\varepsilon/2) + 1\}$. Hence, $\mathbb{E}[U_{n_1, n_2}] = (2\omega_{\varepsilon/2}-1)^2 \|\vecbf{p}_X - \vecbf{p}_Y\|_2^2$. It remains to check the criterion~\eqref{sec2:eq:sepcriteria} of \Cref{sec3:thm:sepcondU} with $\sigma^2 = 17/4$ therein, from which we deduce that we have the desired type-II error control when
                    \begin{equation} \label{app:eq:discNIl2cond}
                        \|\vecbf{p}_X - \vecbf{p}_Y\|_2
                        \geq \frac{Cd^{1/4}\log\{1/(\alpha\beta)\}}{(n_1\varepsilon^2)^{1/2}}, \quad \mbox{and} \quad n_1 \geq C\log\{1/(\alpha\beta)\},
                    \end{equation}
                    where we use the fact that $2\omega_{\varepsilon/2} - 1 \lesssim \varepsilon$ for $\varepsilon \leq 1$ and where $C > 0$ is an absolute constant, potentially differing from that in \eqref{sec2:eq:sepcriteria}.
                    
                \noindent
                \textbf{Step 3 (Extending Results to $L_p$-norms)}:
                With the result for $L_2$-norm in hand, to obtain the separation with respect to $L_p$-norms with $p \in [1,2]$. Given a vector $\vecbf{u} \in \mathbb{R}^d$, and writing $\vecbf{u}^{(p)} = (|u_1|^p, \hdots, |u_d|^p)^T$, we apply H\"{o}lder's inequality to $\vecbf{u}^{(p)}$ to obtain
                \begin{align*}
                    \|\vecbf{u}^{(p)}\|_1 \leq \|\vecbf{u}^{(p)}\|_{2/p} d^{1 - p/2}.
                \end{align*}
                Hence, taking the $p$-th root and re-arranging, and putting $\vecbf{p}_X - \vecbf{p}_Y$ in place of $\vecbf{u}$, we obtain
                \begin{equation} \label{app:eq:lpbound}
                    \|\vecbf{p}_X - \vecbf{p}_Y\|_p d^{1/2 - 1/p} \leq \|\vecbf{p}_X - \vecbf{p}_Y\|_2.
                \end{equation}
                Hence, we see that
                \begin{equation*}
                    \| \vecbf{p}_X - \vecbf{p}_Y \|_p
                    \geq \frac{Cd^{1/p - 1/4}\log\{4/\alpha\beta\}}{(n_1\varepsilon^2)^{1/2}}, \quad \mbox{and} \quad n_1 \geq C\log\{1/(\alpha\beta)\},
                \end{equation*}
                is sufficient for \eqref{app:eq:discNIl2cond} to hold. This concludes the proof.
            \end{proof}

    \subsection{Interactive Method}
        The proof relies on the following sub-exponentianity results for the original and permuted version of the test statistic, the proofs of which are deferred to \Cref{app:sec:tailbounds2}.
        \begin{lemma} \label{app:lem:linstatSE}
            The test statistic $T_{n_1, n_2}$ as defined in \eqref{sec4:eq:intstatdisc} is $\mathrm{SE}(\Sigma)$ where
            \begin{equation*}
                \Sigma = C\max\bigg\{ \frac{D_\tau^{1/2}}{(n_1\varepsilon^2)^{1/2}}, \frac{1}{n_1\varepsilon^2}\bigg\} ,
            \end{equation*}
            for $C > 0$ some absolute constant and where
            \begin{equation} \label{app:eq:Dtau}
                D_\tau = D_\tau(\vecbf{p}_X, \vecbf{p}_Y)
                = \sum_{j=1}^d (p_{X,j} - p_{Y,j})^2 \min\{1, \tau/|p_{X,j} - p_{Y,j}|\},
            \end{equation}
            for $\tau = 1/(n_1\varepsilon^2)^{1/2}$.
        \end{lemma}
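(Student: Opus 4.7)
The plan is to establish the sub-exponential bound via a conditional MGF decomposition. Writing $\hat{\vecbf{p}} = (\hat{\vecbf{p}}_X, \hat{\vecbf{p}}_Y)$ for the first-stage estimates from~\eqref{sec4:eq:UEpmfests}, I decompose
\begin{equation*}
    T_{n_1,n_2} - \mathbb{E}[T_{n_1,n_2}] = (T_{n_1,n_2} - \mathbb{E}[T_{n_1,n_2} \mid \hat{\vecbf{p}}]) + (\mathbb{E}[T_{n_1,n_2} \mid \hat{\vecbf{p}}] - \mathbb{E}[T_{n_1,n_2}]),
\end{equation*}
handle the two pieces separately, and combine them through the tower MGF identity $\mathbb{E}[e^{\lambda(T - \mathbb{E}T)}] = \mathbb{E}[e^{\lambda(\mathbb{E}[T|\hat{\vecbf{p}}] - \mathbb{E}T)} \mathbb{E}[e^{\lambda(T - \mathbb{E}[T|\hat{\vecbf{p}}])} \mid \hat{\vecbf{p}}]]$.

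For the inner piece, conditional on $\hat{\vecbf{p}}$ the second-stage views $\{Z_i\}_{i\in[n_1]}, \{W_{i'}\}_{i' \in [n_2]}$ from~\eqref{sec4:eq:UEint2} are mutually independent and take values in $\{\pm \tau c_\varepsilon\}$ with $c_\varepsilon = (e^\varepsilon+1)/(e^\varepsilon-1) \asymp 1/\varepsilon$. Applying Bernstein's inequality conditionally and using $n_1 \leq n_2$ shows that $T_{n_1,n_2} - \mathbb{E}[T_{n_1,n_2} \mid \hat{\vecbf{p}}]$ is conditionally $\mathrm{SE}(C\tau c_\varepsilon/\sqrt{n_1})$, i.e.\ of order $1/(n_1\varepsilon^2)$, which supplies the second term of $\Sigma$.

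The main obstacle is controlling the outer piece $\mathbb{E}[T_{n_1,n_2} \mid \hat{\vecbf{p}}] - \mathbb{E}[T_{n_1,n_2}] = \sum_{j=1}^d \delta_j (v_j - \mathbb{E}[v_j])$, where $\delta_j = p_{X,j} - p_{Y,j}$ and $v_j = \Pi_{[-\tau,\tau]}(\hat p_{X,j} - \hat p_{Y,j})$; I need a sub-Gaussian/sub-exponential bound with variance proxy of order $D_\tau/(n_1\varepsilon^2)$ to recover the first term of $\Sigma$. Two ingredients drive the analysis: the unary-encoding estimator from~\eqref{sec4:eq:UEpmfests} satisfies $\mathrm{Var}(\hat p_{X,j}), \mathrm{Var}(\hat p_{Y,j}) \lesssim \tau^2$, and $\Pi_{[-\tau,\tau]}$ is $1$-Lipschitz while saturating at $\pm \tau$ outside $[-\tau,\tau]$. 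A case split on the size of $|\delta_j|$ then controls the diagonal variance contributions. When $|\delta_j| \leq \tau$ the Lipschitz property gives $\mathrm{Var}(v_j) \lesssim \tau^2$ and hence $\delta_j^2 \mathrm{Var}(v_j) \lesssim \delta_j^2/(n_1\varepsilon^2)$, matching the $\delta_j^2$ summand of $D_\tau$. When $|\delta_j| > \tau$ a Chebyshev bound on $\hat p_{X,j} - \hat p_{Y,j} - \delta_j$ shows that the projection saturates except on an event of probability $\lesssim \tau^2/\delta_j^2$, so that $\mathrm{Var}(v_j) \lesssim \tau^4/\delta_j^2$ and $\delta_j^2 \mathrm{Var}(v_j) \lesssim \tau^4 \leq \tau^3|\delta_j|$, matching the $\tau|\delta_j|$ summand of $D_\tau$ once divided by $n_1\varepsilon^2$. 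Cross-coordinate covariance terms are handled via the independence $\hat{\vecbf{p}}_X \perp \hat{\vecbf{p}}_Y$ (disjoint first-stage samples) together with the negative association of unary-encoded coordinates established in~\Cref{app:lem:UENA}.

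Finally, an Efron--Stein/Bennett argument applied at the user level (each first-stage user has bounded contribution $\lesssim c/n_1$ to $\hat{\vecbf{p}}$) upgrades the variance control on the outer piece to a sub-exponential MGF bound of order $\sqrt{D_\tau/(n_1\varepsilon^2)}$. Composing with the inner MGF bound through the tower identity yields $\mathbb{E}[\exp\{\lambda(T_{n_1, n_2}-\mathbb{E}[T_{n_1, n_2}])\}] \leq \exp(C\lambda^2\Sigma^2)$ for $|\lambda| \leq 1/\Sigma$, establishing the claimed $\mathrm{SE}(\Sigma)$ property.
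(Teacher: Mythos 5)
Your decomposition into an inner piece (conditional on the first-stage estimates) and an outer piece is the same as the paper's, and your per-coordinate variance case split ($|\delta_j|\leq\tau$ vs.\ $|\delta_j|>\tau$) correctly produces the two summands of $D_\tau$. The inner piece is also handled correctly. However, the proposal has a genuine gap in the last step: promoting the variance control on the outer piece to a sub-exponential MGF bound with parameter $\sqrt{D_\tau/(n_1\varepsilon^2)}$ via ``an Efron--Stein/Bennett argument at the user level'' does not work. The quantity $\sum_j\delta_j v_j$ is a nonlinear functional of the first-stage data, and a user-level bounded-differences (McDiarmid-type) argument gives a variance proxy of order $\|\vecbfm{\delta}\|_1^2/(n_1\varepsilon^2)$: changing one first-stage user can perturb \emph{every} $\hat p_{X,j}$ simultaneously, so the per-user increment of $\sum_j\delta_j v_j$ is of order $\|\vecbfm{\delta}\|_1\,c_{\varepsilon/2}/n_1$. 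Since $\|\vecbfm{\delta}\|_1^2$ can exceed $D_\tau$ by a large factor (for example $\delta_j\equiv\tau$ gives $\|\vecbfm{\delta}\|_1^2=d^2\tau^2$ against $D_\tau=d\tau^2$), this route is too lossy. Efron--Stein itself only gives a second-moment bound, and Bennett/Bernstein applies to sums of independent bounded terms, which $\{\delta_j v_j\}_j$ are not.

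The missing idea is to use negative association not merely to kill cross-coordinate covariances but to \emph{factorize the MGF}: after splitting the coordinates by the sign of $\delta_j$ and applying H\"{o}lder, Property~2 of \cite{Joag:1983:NA} bounds $\mathbb{E}[\exp\{\lambda\sum_j\delta_j(\hat\delta_{j,\tau}-\mathbb{E}\hat\delta_{j,\tau})\}]$ by a product of per-coordinate MGFs. One then needs per-coordinate \emph{MGF} (not just variance) bounds, which the paper gets by a symmetrization argument: introduce an i.i.d.\ copy $\hat\delta_{j,\tau}'$, condition on the event that at least one of the two is unsaturated (in which case the difference lies in $[-2\tau,2\tau]$ and is hence sub-Gaussian), and note that this event has probability $\lesssim\exp(-c\,\delta_j^2/\tau^2)$ (sub-Gaussian concentration of $\hat\delta_j$, sharper than the Chebyshev bound you use). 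Your Chebyshev-based variance computation is correct as far as it goes, but it does not deliver the exponential MGF bound per coordinate that is needed to combine coordinates multiplicatively and obtain the $\mathrm{SE}(\Sigma)$ conclusion. To repair the proof, replace the Efron--Stein/Bennett step with the NA-based MGF factorization followed by the per-coordinate symmetrization argument.
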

        \begin{lemma} \label{app:lem:linstatpermSG}
            The test statistic $T_{n_1, n_2}^{\pi}$ as defined in \eqref{sec4:eq:intstatdiscperm} is $\mathrm{SG}(\widetilde{\Sigma}^2)$ where
            \begin{equation*}
                \widetilde{\Sigma}^2 = \widetilde{C} \max\bigg\{\frac{\mathbb{E}[T_{n_1, n_2}]}{(n_1\varepsilon^2)^{1/2}}, \frac{1}{n_1\varepsilon^2}\bigg\},
            \end{equation*}
            for $\widetilde{C} > 0$ some absolute constant.
        \end{lemma}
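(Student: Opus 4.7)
The plan is to condition on the private-view vector $\widetilde{D} = (\widetilde{D}_1, \hdots, \widetilde{D}_{N})$ with $N = n_1 + n_2$, then apply the classical Hoeffding inequality for simple random sampling without replacement. The key observation is that by the construction of $R_0$ in \eqref{sec4:eq:RRFunc} and the use in \eqref{sec4:eq:UEint2}, each $\widetilde{D}_i$ takes values in $\{-\tau c_\varepsilon, +\tau c_\varepsilon\}$, giving a deterministic range of $2\tau c_\varepsilon$ irrespective of the underlying distributions.

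First I would exploit the identity that, writing $\bar{\widetilde{D}}$ for the grand mean and $\bar{\widetilde{D}}_\pi^{(1)}$ for the permuted first-group mean, the constraint $n_1 \bar{\widetilde{D}}_\pi^{(1)} + n_2 \bar{\widetilde{D}}_\pi^{(2)} = N \bar{\widetilde{D}}$ yields
\begin{equation*}
T_{n_1,n_2}^\pi = \frac{N}{n_2}\bigl(\bar{\widetilde{D}}_\pi^{(1)} - \bar{\widetilde{D}}\bigr),
\end{equation*}
so that $T_{n_1,n_2}^\pi$ is, up to a factor bounded by $2$ (since $n_1 \leq n_2$ implies $N \leq 2n_2$), a scaled centred sample mean over a uniformly random size-$n_1$ subset of the fixed multiset $\widetilde{D}$. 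Hoeffding's comparison between sampling without replacement and the with-replacement analogue then yields the conditional MGF bound
\begin{equation*}
\mathbb{E}_{\pi}\bigl[\exp\bigl(\lambda\, T_{n_1,n_2}^\pi\bigr) \,\big|\, \widetilde{D}\bigr] \leq \exp\bigl(C\lambda^2 (\tau c_\varepsilon)^2 / n_1\bigr)
\end{equation*}
for an absolute constant $C$, using only the deterministic range $2\tau c_\varepsilon$.

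Since the right-hand side does not depend on $\widetilde{D}$, taking expectation preserves the bound and delivers unconditional sub-Gaussianity with parameter $O((\tau c_\varepsilon)^2 / n_1)$. Substituting $\tau = 1/(n_1\varepsilon^2)^{1/2}$ and using $c_\varepsilon \leq 3/\varepsilon$ for $\varepsilon \in (0,1]$ produces a parameter of order $1/(n_1^2\varepsilon^4)$. The theorem-level assumption $n_1\varepsilon^2 \geq C'\log\{1/(\alpha\beta)\}$ absorbs the extra $1/(n_1\varepsilon^2)$ factor, reducing the bound to $\widetilde{C}/(n_1\varepsilon^2)$, which is at most the stated $\max$ since $\max\{\mathbb{E}[T_{n_1,n_2}]/(n_1\varepsilon^2)^{1/2}, 1/(n_1\varepsilon^2)\} \geq 1/(n_1\varepsilon^2)$.

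The main obstacle will be book-keeping around the form of the stated bound: the appearance of $\mathbb{E}[T_{n_1,n_2}]$ in $\widetilde{\Sigma}^2$ is essentially slack in this regime, because the truncation in \eqref{sec4:eq:UEint2} forces $|\mathbb{E}[T_{n_1,n_2}]| \leq 2\tau = 2/(n_1\varepsilon^2)^{1/2}$, so the first term is always bounded by $2/(n_1\varepsilon^2)$ and the max is $\Theta(1/(n_1\varepsilon^2))$. Presenting the bound in the max form is natural to parallel \Cref{app:lem:linstatSE}. A sharper analysis genuinely exploiting $\mathbb{E}[T_{n_1,n_2}]$ could replace Hoeffding by a Bernstein-type inequality for sampling without replacement whose variance proxy matches the empirical variance of $\widetilde{D}$, but the conditioning-plus-Hoeffding template already suffices for the claim.
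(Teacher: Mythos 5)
Your argument is correct, and it is genuinely different from — and arguably cleaner than — the paper's own proof. The paper establishes the bound by first reducing to the balanced-sample case via the auxiliary tuple $\vecbf{L}$ and Jensen's inequality, then conditioning on $\vecbf{L}$, $\pi$, and the first-stage private data $\mathcal{D}'$ to control the ``noise'' part by boundedness, and finally handling the permuted-mean part $\mathbb{E}[T_{n_1}^{\vecbf{L},\pi}\mid\vecbf{L},\pi]$ via the bespoke combinatorial Lemma~\ref{app:lem:permcontrol} on the centred sign-count $\psi_{n_1}^{\vecbf{L},\pi}$. This is what produces the $\mathbb{E}[T_{n_1,n_2}]$-dependent term in $\widetilde{\Sigma}$. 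Your route sidesteps the $\vecbf{L}$-tuple reduction and the combinatorial lemma entirely: the identity $T_{n_1,n_2}^\pi = (N/n_2)(\bar{\widetilde{D}}_\pi^{(1)} - \bar{\widetilde{D}})$ reduces the permuted statistic to a centred sample mean under simple random sampling without replacement from a population with \emph{deterministic} range $\{-\tau c_\varepsilon, +\tau c_\varepsilon\}$, so Hoeffding's with-replacement comparison gives a conditional MGF bound that is data-free, and the unconditional $\mathrm{SG}$ bound follows at once. You obtain variance proxy $O(\tau^2 c_\varepsilon^2/n_1) = O(1/(n_1^2\varepsilon^4))$, which is in fact tighter than the paper's $C\max\{\mathbb{E}[T_{n_1,n_2}]^2/(n_1\varepsilon^2), 1/(n_1^2\varepsilon^4)\}$ from its own proof; and, as you note, since $|\mathbb{E}[T_{n_1,n_2}]| \leq 2\tau$ the $\mathbb{E}[T_{n_1,n_2}]$-dependent term in that max is never actually dominant, so your bound matches the paper's up to constants while requiring less machinery. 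One small caveat: the reduction from your $O(1/(n_1^2\varepsilon^4))$ to the stated $\max\{\mathbb{E}[T_{n_1,n_2}]/(n_1\varepsilon^2)^{1/2}, 1/(n_1\varepsilon^2)\}$ needs only $n_1\varepsilon^2 \geq 1$, not the full logarithmic lower bound — but that is of course implied by the theorem's assumption, and in any case the stated form of $\widetilde{\Sigma}^2$ appears to be a typo for $\widetilde{\Sigma}$ (the paper's proof concludes with $\mathrm{SG}(C\max\{\mathbb{E}[T_{n_1,n_2}]^2/(n_1\varepsilon^2), 1/(n_1^2\varepsilon^4)\})$, and the downstream use in Proposition~\ref{sec3:prop:permtestcontrol}(ii) treats the displayed quantity as $\widetilde{\Sigma}$, not its square), so your bound implies what is actually used.
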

        With these results in hand, we now prove the theorem.
        \begin{proof}[Proof of \Cref{sec4:thm:main} (Upper Bound - Interactive)]
            Under the null hypothesis, the \beame{privatised}{privatized}{privatized} observations used in calculating the test statistic have the same marginal distribution, and are exchangeable. These properties together ensure that permutation invariance holds and hence we have the desired type-I error control. We now proceed to prove the power guarantee of the test constructed in \Cref{sec4:UpperBoundInt} via \Cref{sec3:prop:permtestcontrol}.

            \noindent
            \textbf{Step 1 (Applying \Cref{sec3:prop:permtestcontrol}):}
            We first inspect $\mathbb{E}[T_{n_1, n_2}]$ so that we may apply \Cref{sec3:prop:permtestcontrol}. We employ a similar argument as in the proof of Theorem 3 in the supplement of \cite{Berrett:2020:Faster}, where a Berry--Esseen theorem is used to control the bias introduced by truncation.

            We first denote $d_j = \hat{p}_{X,j} - \hat{p}_{Y,j} - (p_{X,j} - p_{Y,j})$, $\Delta_j = p_{X,j} - p_{Y,j}$ and $\sigma_j^2 = \mathrm{Var}(d_j) = c_{\varepsilon/2}^2/(4n_1) + c_{\varepsilon/2}^2/(4n_2) \leq 15\tau^2$, where we use the fact that $c_{\varepsilon/2} \leq 5/\varepsilon$ for $\varepsilon \in (0,1]$. Letting $G \sim \mathrm{N}(0, 1)$ a standard Gaussian random variable independent of the data, we have
            \begin{align*}
                |\mathbb{E}[&\Pi_{[-\tau, \tau]}(\hat{p}_{X,j} - \hat{p}_{Y,j})] - \mathbb{E}[\Pi_{[-\tau, \tau]}(\Delta_j - \sigma_j G)]| \\
                &= \bigg| \int_{-\Delta_j}^{\tau - \Delta_j} \mathbb{P}(d_j \geq x) - \mathbb{P}(\sigma_j G \geq x) \diff{x} - \int_{\Delta_j}^{\tau + \Delta_j} \mathbb{P}(d_j \leq -x) - \mathbb{P}(\sigma_j G \leq -x) \diff{x} \bigg|\\
                &\leq 2\tau \sup_{x \in \mathbb{R}}|\mathbb{P}(d_j \leq x) - \mathbb{P}(\sigma_j G \leq x)| \\
                &\leq 12\tau \frac{n_1^{-2}\mathbb{E}[|c_{\varepsilon/2}\{Z_{i, j}' - 1/(\exp(\varepsilon/2) + 1)\} - p_{X,j}|^3]}{\{n_1^{-1}c_{\varepsilon/2}^2 \mathbb{E}[Z_{1,j}'](1 - \mathbb{E}[Z_{1,j}']) + n_2^{-1}c_{\varepsilon/2}^2 \mathbb{E}[W_{1,j}'](1 - \mathbb{E}[W_{1,j}'])\}^{3/2}} \\
                &\hspace{2cm}+ 12\tau \frac{n_2^{-2}\mathbb{E}[|c_{\varepsilon/2}\{W_{i, j}' - 1/(\exp(\varepsilon/2) + 1)\} - p_{Y,j}|^3]}{\{n_1^{-1}c_{\varepsilon/2}^2 \mathbb{E}[Z_{1,j}'](1 - \mathbb{E}[Z_{1,j}']) + n_2^{-1}c_{\varepsilon/2}^2 \mathbb{E}[W_{1,j}'](1 - \mathbb{E}[W_{1,j}'])\}^{3/2}} \\
                &\lesssim \tau \frac{n_1^{-2} + n_2^{-2}}{n_1^{-3/2} + n_2^{-3/2}}
                \lesssim \frac{\tau}{n_1^{1/2}},
            \end{align*}
            where the second inequality is by a Berry--Esseen theorem for independent but non-identically distributed random variables  \cite[e.g.~Chapter~16.5~Theorem~2][]{Feller:1991:introductionVol2}. Proceeding as in the proof of Theorem 3 in the supplement of \cite{Berrett:2020:Faster}, we obtain
            \begin{align*}
                \mathbb{E}[T_{n_1, n_2}]
                \geq \frac{D_\tau}{6} - c\frac{\tau}{n_1^{1/2}}
                \geq \frac{D_\tau}{6} - c\frac{1}{n_1\varepsilon}
            \end{align*}
            for $c > 0$ some absolute constant, where we recall $\tau = (n_1\varepsilon^2)^{-1/2}$ for the last inequality.

            Hence, applying the criterion \eqref{sec2:eq:permtestcontrolSG} of \Cref{sec3:prop:permtestcontrol} and the variance proxies given in \Cref{app:lem:linstatSE} and \Cref{app:lem:linstatpermSG}, we have the desired type-II error control when $\mathbb{E}[T_{n_1, n_2}] \geq C\widetilde{\Sigma}[\log\{1/(\alpha\beta)\}]^{1/2} + C\Sigma\log(1/\beta)$, that is
            \begin{equation*}
                \mathbb{E}[T_{n_1, n_2}]
                \geq C\bigg( \frac{\mathbb{E}[T_{n_1, n_2}][\log\{1/(\alpha\beta)\}]^{1/2} + D_\tau^{1/2}\log(1/\beta)}{(n_1\varepsilon^2)^{1/2}}
                + \frac{[\log\{1/(\alpha\beta)\}]^{1/2} + \log(1/\beta)}{n_1\varepsilon^2} \bigg),
            \end{equation*}
            for $C > 0$ a sufficiently large absolute constant. Then, noting as in \cite{Berrett:2020:Faster} that $D_\tau \geq \|\vecbf{p}_X - \vecbf{p}_Y \|_2^2 \wedge \tau\|\vecbf{p}_X - \vecbf{p}_Y \|_2$, we can simplify the above to the sufficient conditions
            \begin{equation} \label{app:eq:discIl2cond}
                    \|\vecbf{p}_X - \vecbf{p}_Y\|_2
                    \geq C'\frac{\{\log(1/\beta)\}^2 + [\log\{1/(\alpha\beta)\}]^{1/2}}{(n_1\varepsilon^2)^{1/2}}, \quad \mbox{and} \quad n_1\varepsilon^2 \geq C'\log\{1/(\alpha\beta)\},
            \end{equation}
            for $C' > 0$ an absolute constant.

            \noindent
            \textbf{Step 2 (Extending Results to $L_p$-norms):}
            With the result for $L_2$-norm in hand, to obtain the separation with respect to $L_p$-norms with $p \in [1,2]$. By the inequality \eqref{app:eq:lpbound}, we see that
            \begin{equation*}
                \| \vecbf{p}_X - \vecbf{p}_Y \|_p
                \geq C''d^{1/p - 1/2}\frac{\{\log(1/\beta)\}^2 + [\log\{1/(\alpha\beta)\}]^{1/2}}{(n_1\varepsilon^2)^{1/2}}, \quad \mbox{and} \quad n_1\varepsilon^2 \geq C''\log\{1/(\alpha\beta)\}
            \end{equation*}
            is sufficient for \eqref{app:eq:discIl2cond} to hold. This concludes the proof.
        \end{proof}

    \section{Upper Bounds - Continuous} \label{app:eq:ContUB}
        \label{app:sec:ContUB}  
        \subsection{Non-Interactive Method} 
            \begin{proof}[Proof of \Cref{sec5:thm:main} (Upper Bound - Non-Interactive)]
                Under the null hypothesis, permutation invariance holds as all observations are independent and have the same distribution in both samples. Hence, we have the desired type-I error control. We proceed to prove the power guarantee of the test constructed in \Cref{sec5:UpperBoundNonInt} via \Cref{sec3:thm:sepcondU}.

                \noindent
                \textbf{Step 1 (Sub-Gaussianity of \beame{Privatised}{Privatized}{Privatized} Data)}:
                    For a choice of $R$ to be specified later, $i \in [n_1]$ and $i' \in [n_2]$, the random variables $\vecbf{Z}_{i}$ and $\vecbf{W}_{i'}$ obtained by the sampling mechanisms \citet[Equation~26]{Duchi:2018:DJW} and \citet[Equation~3.18]{Li:2023:Thesis}, are $\mathrm{SG}(C_dV/\varepsilon^2)$, for $C_d > 0$ some constant depending on $d$, by \citet[Lemma~3.6][]{Li:2023:Thesis}\footnote{The result therein considers sub-Gaussian random variables without \beame{centring}{centring}{centering}, for which the same sub-Gaussian parameter holds for the \beame{centred}{centred}{centered} case up to an absolute constant via Exercise~2.7.10 in \cite{Vershynin:2018:HDPBook}}, where we note $B'$ therein satisfies $B' \asymp C_dV^{1/2}/\varepsilon$. Further, they are independent.

                \noindent
                \textbf{Step 2 (Applying \Cref{sec3:thm:sepcondU})}:
                    By the construction \eqref{sec4:eq:basistermestimate} and the unbaisedness of the private sampling procedures, we have that
                    \begin{equation*}
                        \mathbb{E}[U_{n_1, n_2}] = \|\vecbfm{\theta}_{X, 1:V} - \vecbfm{\theta}_{Y, 1:V} \|_2^2 \geq \|f_X - f_Y\|_2^2 - \frac{4r^2}{R^{2s}}
                    \end{equation*}
                    where the inequality is by the bound \eqref{sec5:eq:SobolevError}. It remains to check the criteria \eqref{sec2:eq:sepcriteria} of \Cref{sec3:thm:sepcondU} with $\sigma^2 = C_dV/\varepsilon^2$ therein. We briefly note that \Cref{sec3:thm:sepcondU} does not account for privacy, and instead the effect of privacy in the final result follows from the presence of $\varepsilon$ in the sub-Gaussian variance proxy.
                    
                    By \eqref{sec2:eq:sepcriteria}, we deduce that we have the desired type-II error control when $n_1 \geq C_1\log\{1/(\alpha\beta)\}$ for $C_1 > 0$ some absolute constant, and
                    \begin{align}
                        \|f_X - f_Y\|_2^2
                        &\geq \frac{C_d'V^{3/2}[\log\{1/(\alpha\beta)\}]^2}{n_1\varepsilon^2}
                        + \frac{r^2}{R^{2s}} \nonumber \\
                        &\geq C_{r, s, d} \bigg(\frac{[\log\{1/(\alpha\beta)\}]^2}{n_1\varepsilon^2}\bigg)^{4s/(4s+3d)} \label{app:eq:contNIsepcond}
                    \end{align}
                    where $C_d' > 0$ is a constant depending on $d$, and the inequality \eqref{app:eq:Vlowerbound} on $V$ and the choice $R = \{n_1\varepsilon^2/[\log\{1/(\alpha\beta)\}]\}^{1/(2s + 3d/2)}$ with $C_{r, s, d} > 0$ a constant depending on $r, s$ and $d$ gives the second inequality.
                    
                \noindent
                \textbf{Step 3 (Extending Results to $L_p$-norm)}:
                With the result for $L_2$-norm in hand, we now obtain the analogous conditions for separation with respect to the $L_p$-norm for $p \in [1,2)$. We first note that, for a function $f$ supported on $[0,1]^d$, we have by H\"{o}lder's inequality that
                \begin{equation} \label{app:eq:LpNormRelation}
                    \|f\|_p
                    = \bigg(\int |f(\vecbf{x})|^p \diff{\vecbf{x}} \bigg)^{1/p}
                    \leq \mu([0, 1]^d)^{1/p - 1/2} \bigg(\int |f(\vecbf{x})|^2 \diff{\vecbf{x}} \bigg)^{1/2}
                    = \|f\|_2
                \end{equation}
                where $\mu$ denotes the Lebesgue measure on $\mathbb{R}^d$. Hence, as the $L_2$-norm is lower bounded by the $L_p$-norm for $p \in [1,2)$, we see that the condition of \eqref{app:eq:contNIsepcond} is still satisfied when $\|f_X - f_Y\|_p$ takes the place of $\|f_X - f_Y\|_2$, which completes the proof.
            \end{proof}

        \subsection{Interactive Method - Combined Procedure}
            \begin{proof}[Proof of \Cref{sec5:thm:main} (Upper Bound - Interactive)]
                Recall the combined test $\phi_{\mathrm{final}, \alpha}$ and the individual tests $\phi_{\mathrm{trunc}, \alpha/2}$, $\phi_{\mathrm{local}, \alpha/2}$ constructed in \Cref{sec5:UpperBoundInt}. A type-I error guarantee of $\alpha$ for $\phi_{\mathrm{final}, \alpha}$ holds immediately from the union bound, and so it remains to prove we have the desired power.

                To prove the power guarantee of the combined test, we use the following results providing sufficient conditions for the component tests to have power. In what follows, we fix a value of $R$ to be specified, and denote $L_{2, R} = \{\sum_{\vecbf{l} \in \mathbb{N}_0^d(R)} (\theta_{X, \vecbf{l}} - \theta_{Y, \vecbf{l}})^2\}^{1/2}$ for $\{\theta_{X, \vecbf{l}}\}_{\vecbf{l} \in \mathbb{N}_0^d(R)}, \{\theta_{Y, \vecbf{l}}\}_{\vecbf{l} \in \mathbb{N}_0^d(R)}$ the coefficients of the basis expansion of the densities $f_X, f_Y$ respectively, as per the construction \eqref{sec5:eq:sobolev}. We recall that we denote $V = |\mathbb{N}_0^d(R)|$ and $J = \{1, \hdots, \log_2(V)\}$.

                \begin{proposition} \label{app:prop:contintpart2}
                    Suppose $n_1 \leq n_2$ without loss of generality. Suppose also that
                    \begin{equation*} \label{app:eq:contintpart2cond}
                        \begin{aligned}
                            \bigg\{ \frac{C_{r, s, d}V\log(4n_2/\beta)[\log\{2|J|/(\alpha\beta)\}]^2}{n_1\varepsilon^2/(2|J|)} \bigg\}^{1/2}
                            \leq &L_{2, R} \\
                            &\leq \bigg\{ \frac{C_{r, s, d}V^2\log(4n_2/\beta)[\log\{2|J|/(\alpha\beta)\}]^2}{n_1\varepsilon^2/(2|J|)} \bigg\}^{1/2},                            
                        \end{aligned}
                    \end{equation*}
                    for $C_{r, s, d} > 0$ some sufficiently large constant depending on $r, s$ and $d$. Then, the type-II error of the test $\phi_{\mathrm{trunc}, \alpha/2}$ constructed in \Cref{sec5:UpperBoundInt} satisfies $\mathbb{P}(\phi_{\mathrm{trunc}, \alpha/2} = 0) \leq \beta$.
                \end{proposition}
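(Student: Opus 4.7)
The plan is to apply \Cref{sec3:prop:permtestcontrol}(i) to a single well-chosen member of the family defining $\phi_{\mathrm{trunc},\alpha/2} = \max_{j \in J}\phi_{\mathrm{trunc},\alpha/(2|J|)}^{\eta_j}$: it suffices to exhibit some $j^\ast \in J$ such that $\mathbb{P}(\phi_{\mathrm{trunc},\alpha/(2|J|)}^{\eta_{j^\ast}} = 0) \leq \beta$. Writing $L_\star = [\log\{1/(\alpha\beta)\}]^2\{\log(4n_2/\beta)\}^2$, the grid $\eta_j^2 = 2^j C M^2 L_\star/(n_1\varepsilon^2)$ spans a geometric range of factor $M$, and I would take $j^\ast$ to be the smallest index with $\eta_{j^\ast}^2 \gtrsim M L_{2,M}^2\log(4n_2/\beta)$, so that the Laplace-noise variance of scale $\eta_{j^\ast}^2/(n_1\varepsilon^2)$ and the residual truncation bias are both balanced against the signal $L_{2,M}^2$. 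The two-sided range in \eqref{app:eq:contintpart2cond} guarantees $j^\ast \in \{1,\ldots,\log_2 M\}$: the lower bound on $L_{2,M}^2$ prevents $j^\ast$ from being forced below $1$, while the upper bound prevents it from exceeding $\log_2 M$.

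For the chosen $j^\ast$ the argument proceeds in four steps. First, I would control the first-fold estimates \eqref{sec5:eq:RRsobcoeffestscombProc1}: since each $Z_i', W_{i'}'$ from \eqref{sec5:eq:RRsobcoeffestsProc1} is bounded, Hoeffding's inequality and a union bound over $l \in [M]$ yield $\max_l\{|\hat\theta_{X,l}-\theta_{X,l}| \vee |\hat\theta_{Y,l}-\theta_{Y,l}|\} \lesssim \{M\log(M/\beta)/(n_1\varepsilon^2)\}^{1/2}$ with probability at least $1-\beta/4$, and consequently $\sum_l (\hat\theta_{X,l} - \hat\theta_{Y,l})^2 \lesssim L_{2,M}^2 + M^2\log(M/\beta)/(n_1\varepsilon^2)$. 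A second union bound over $i \in [n_1 + n_2]$ combined with sub-Gaussian concentration of $S_i := \sum_{l=1}^M(\hat\theta_{X,l}-\hat\theta_{Y,l})\varphi_l(X_i)$ gives $\max_i|S_i| \leq \eta_{j^\ast}$ with probability at least $1-\beta/4$, so the projections in \eqref{sec5:eq:contMethod1NotTrunc} act as the identity on this event. Second, on this event $\mathbb{E}[T_{n_1,n_2}] = \mathbb{E}\big[\sum_l(\hat\theta_{X,l}-\hat\theta_{Y,l})(\theta_{X,l}-\theta_{Y,l})\big] = L_{2,M}^2$ up to a truncation bias controlled by $\eta_{j^\ast}\mathbb{P}(\max_i|S_i|>\eta_{j^\ast})$, which is negligible. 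Third, I would deduce sub-exponentiality of $T_{n_1,n_2}$ via a conditional argument: given $\hat\theta$, each $Z_i$ splits as a bounded contribution from $\Pi_{[-\eta_{j^\ast},\eta_{j^\ast}]}(S_i)$ plus a Laplace term of scale $2\eta_{j^\ast}/\varepsilon$, so a Bernstein-type bound gives $T_{n_1,n_2}\mid\hat\theta$ is $\mathrm{SE}(c\eta_{j^\ast}/(n_1\varepsilon^2)^{1/2})$; a law-of-total-variance argument absorbs the residual fluctuation of $\mathbb{E}[T_{n_1,n_2}\mid\hat\theta]$, which is of order $\{ML_{2,M}^2/(n_1\varepsilon^2)\}^{1/2}$, within the same proxy. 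Fourth, the permuted statistic $T_{n_1,n_2}^\pi$ has mean zero by a transposition-invariance argument analogous to that in the proof of \Cref{sec3:thm:sepcondU}, and the same conditional decomposition yields an analogous sub-exponential proxy. Substituting these proxies and the expected-value lower bound into \Cref{sec3:prop:permtestcontrol}(i) produces the sufficient separation $L_{2,M}^2 \gtrsim \eta_{j^\ast}\log\{1/(\alpha\beta)\}/(n_1\varepsilon^2)^{1/2}$, which by the choice of $j^\ast$ reduces to the lower inequality in \eqref{app:eq:contintpart2cond}.

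The principal obstacle is the dependence induced by the shared first fold: the private views $\{Z_i\},\{W_{i'}\}$ are not jointly independent, and conditional on $\hat\theta$ the two groups have different conditional means, so the permutation invariance needed to make \Cref{sec3:prop:permtestcontrol} applicable holds only after marginalising over $\hat\theta$. This forces the analysis of $T_{n_1,n_2}^\pi$ to disentangle the estimation noise in $\hat\theta$ from the per-user Laplace noise and to verify that the former does not inflate the permuted variance proxy beyond the unpermuted one; unlike in the discrete interactive setting of \Cref{sec4:UpperBoundInt}, there is no negative-association shortcut available here. A secondary subtlety is that $j^\ast$ is implicitly data-dependent through $L_{2,M}$, but since the combined test is the maximum over $j \in J$, one only needs to exhibit a suitable $j^\ast$ pointwise over the hypothesis range and pay the $\log|J| = \log\log_2 M$ Bonferroni cost, which is absorbed inside the $\log\{1/(\alpha\beta)\}$ factor in the final rate.
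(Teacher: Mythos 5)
Your high-level architecture matches the paper's: pick a deterministic index $j^\ast$ with $\eta_{j^\ast}^2 \asymp M L_{2,M}^2 \log(4n_2/\beta)$, verify this lies in the grid $J$ via the two-sided condition \eqref{app:eq:contintpart2cond}, isolate a favourable event on which the projections in \eqref{sec5:eq:contMethod1NotTrunc} act as the identity, and then control the individual test $\phi_{\mathrm{trunc},\alpha/(2|J|)}^{\eta_{j^\ast}}$. Your choice of $j^\ast$ agrees with the paper's \eqref{app:eq:contintpart2etaval}, and your step 1 concentration bounds for the first-fold estimates are sound.

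There is, however, a genuine gap in step 3. You plan to invoke \Cref{sec3:prop:permtestcontrol}(i) directly, which requires $T_{n_1,n_2}$ to be \emph{unconditionally} $\mathrm{SE}(\Sigma)$ with $\Sigma \asymp \eta_{j^\ast}/(n_1\varepsilon^2)^{1/2}$, and likewise for $T_{n_1,n_2}^\pi$. You assert that "a law-of-total-variance argument absorbs the residual fluctuation of $\mathbb{E}[T_{n_1,n_2}\mid\hat\theta]$, which is of order $\{ML_{2,M}^2/(n_1\varepsilon^2)\}^{1/2}$." This order is only correct if truncation never bites. In general $\mathbb{E}[T_{n_1,n_2}\mid\hat\theta]$ involves $\mathbb{E}[\Pi_{[-\eta_{j^\ast},\eta_{j^\ast}]}(\tilde X_1)\mid\hat\theta]$, which is nonlinear in $\hat\theta$; the only bound that holds unconditionally over $\hat\theta$ is the Lipschitz one, which gives a fluctuation of order $M^{1/2}\cdot\big(\sum_l(\hat\delta_l - \delta_l)^2\big)^{1/2} \asymp M^{3/2}/(n_1\varepsilon^2)^{1/2}$, and this dominates $\eta_{j^\ast}/(n_1\varepsilon^2)^{1/2} \asymp M^{1/2}L_{2,M}\{\log(4n_2/\beta)\}^{1/2}/(n_1\varepsilon^2)^{1/2}$ whenever $L_{2,M}$ is near the lower end of its allowed range. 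The alternative bound $\mathbb{E}[T_{n_1,n_2}\mid\hat\theta] \in [-2\eta_{j^\ast},2\eta_{j^\ast}]$ yields a proxy $\asymp \eta_{j^\ast}$, which is far too weak. The sharper argument — that clipping is exponentially rare — is only available \emph{after} conditioning on the good-$\hat\theta$ event, which is precisely what an unconditional SE claim prevents. So \Cref{sec3:prop:permtestcontrol} cannot be invoked as a black box here.

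The paper resolves this by not invoking \Cref{sec3:prop:permtestcontrol} at all. Instead it redoes the Markov-plus-tail-bound argument from scratch, decomposing $\mathbb{P}(p_B > \alpha)$ into three pieces — the permuted tail probability intersected with the event $A = \{\max_i|\tilde X_i| \vee \max_{i'}|\tilde Y_{i'}| \leq \eta_{j^\ast}\}$, the bad-event probability $\mathbb{P}(A^c)$, and the lower tail of the unpermuted statistic intersected with $A$ — and only then establishes the concentration estimates (\Cref{app:lem:contintSE} and \Cref{app:lem:contintpermSE}) as events-intersected tail bounds rather than unconditional sub-exponentiality. If you want to salvage your route, you would need to state and prove a variant of \Cref{sec3:prop:permtestcontrol} whose hypotheses allow intersecting each tail probability with a user-supplied high-probability event; that is essentially the content of \eqref{app:eq:type2errorboundFirst}.

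Two smaller points. First, your stated worry that "$j^\ast$ is implicitly data-dependent through $L_{2,M}$" is unfounded: $L_{2,M} = \{\sum_{l\leq M}(\theta_{X,l}-\theta_{Y,l})^2\}^{1/2}$ is a population quantity fixed by the alternative $(f_X,f_Y)$, not an estimate, so $j^\ast$ is deterministic in the analysis for each fixed alternative; the Bonferroni over $J$ lives entirely in the definition of the test. Second, your route to controlling $\sum_l\hat\delta_l^2$ in step 1 goes through a union bound over $l$, whereas the paper's Term (II) avoids this by an MGF decomposition that conditions first on the second-fold $X_i$; both are adequate here, but the paper's calculation is somewhat tighter.
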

                \begin{proposition} \label{app:prop:contintpart1}
                    Suppose $n_1 \leq n_2$ without loss of generality. Suppose also that
                    \begin{equation} \label{app:eq:contintpart1cond}
                        L_{2, R} \geq \bigg\{ \frac{C_{r, s, d}V^2(\log(4n_1/\beta) + [\log\{1/(\alpha\beta)\}]^2)}{n_1\varepsilon^2} \bigg\}^{1/2},
                    \end{equation}
                    for $C_{r, s, d} > 0$ some sufficiently large constant depending on $r, s$ and $d$. Then, the type-II error of the test $\phi_{\mathrm{local}, \alpha/2}$ constructed in \Cref{sec5:UpperBoundInt} satisfies $\mathbb{P}(\phi_{\mathrm{local}, \alpha/2} = 0) \leq \beta$.
                \end{proposition}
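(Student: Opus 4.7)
The plan is to decompose the argument into three parts: verification of permutation invariance for type-I control, showing that the data-driven index $l^\ast$ selects a coordinate carrying a large true signal, and applying \Cref{sec3:thm:sepcondU} to the resulting one-dimensional $U$-statistic. The key structural fact making this decomposition work is that $l^\ast$ is constructed from the primed samples in Steps~1--3 of the procedure and is therefore independent of the unprimed samples used in the $U$-statistic in Steps~4--6. Conditional on $l^\ast$, under the null the variables $\{Z_i\}_{i=1}^{n_1}\cup \{W_{i'}\}_{i'=1}^{n_2}$ are i.i.d., permutation invariance holds, and the permutation construction delivers level $\alpha/2$.

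For the signal-localisation step, the key observation is that $L_{2,M}^2 = \sum_{l=1}^M (\theta_{X,l}-\theta_{Y,l})^2$ implies the existence of some $l_0 \in [M]$ with $|\theta_{X,l_0}-\theta_{Y,l_0}| \geq L_{2,M}/M^{1/2}$. Since $\sup_x|\varphi_l(x)|\leq 2^{1/2}$, no truncation occurs in \eqref{sec5:eq:RRsobcoeffests}, and each $Z_i' \in \{-2^{1/2}c_\varepsilon, 2^{1/2}c_\varepsilon\}$ is bounded by $O(1/\varepsilon)$ with $\mathbb{E}[Z_i' \mid X_i']=\varphi_l(X_i')$ for $i\in N_{X,l}$. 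Thus $\hat{\theta}_{X,l}$ is, up to scaling, an average of $n_1/M$ independent bounded summands, and Hoeffding's inequality combined with a union bound over $l\in [M]$ yields, with probability at least $1-\beta/2$,
\[
    \max_{l\in [M]} \bigl|(\hat{\theta}_{X,l}-\hat{\theta}_{Y,l}) - (\theta_{X,l}-\theta_{Y,l})\bigr| \leq \delta, \qquad \delta \asymp \sqrt{\frac{M\log(4n_1/\beta)}{n_1\varepsilon^2}}.
\]
On this event, by the maximality of $l^\ast$ and two applications of the triangle inequality, $|\theta_{X,l^\ast}-\theta_{Y,l^\ast}| \geq |\theta_{X,l_0}-\theta_{Y,l_0}|-2\delta \geq L_{2,M}/M^{1/2}-2\delta$. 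The hypothesis on $L_{2,M}$ (for $C$ sufficiently large) forces $2\delta \leq L_{2,M}/(2M^{1/2})$, and hence $|\theta_{X,l^\ast}-\theta_{Y,l^\ast}|\geq L_{2,M}/(2M^{1/2})$.

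Finally, condition on $l^\ast$ and on the above event. The second-stage variables $Z_i, W_{i'}$ are bounded by $2^{1/2}c_\varepsilon$ and hence $\mathrm{SG}(\sigma^2)$ with $\sigma^2\lesssim 1/\varepsilon^2$, while a direct calculation of the randomised-response expectation shows $\mathbb{E}[U_{n_1,n_2}\mid l^\ast] = (\theta_{X,l^\ast}-\theta_{Y,l^\ast})^2$. Applying \Cref{sec3:thm:sepcondU} with $d=1$ yields conditional type-II error at most $\beta/2$ provided $(\theta_{X,l^\ast}-\theta_{Y,l^\ast})^2 \gtrsim [\log\{1/(\alpha\beta)\}]^2/(n_1\varepsilon^2)$, which follows from the hypothesis since $L_{2,M}^2/(4M)\gtrsim M[\log\{1/(\alpha\beta)\}]^2/(n_1\varepsilon^2) \geq [\log\{1/(\alpha\beta)\}]^2/(n_1\varepsilon^2)$. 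Combining the two $\beta/2$-level bounds via the tower property delivers the claim.

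The main technical obstacle is the second step: the $M^2$ factor (rather than $M$) in the hypothesis is forced by the requirement $\delta \ll L_{2,M}/M^{1/2}$, i.e.~the estimation error at each coordinate must be dominated by the typical coordinate-wise signal, which is what guarantees the data-driven $l^\ast$ inherits a signal comparable to the best coordinate $l_0$. The independence of primed and unprimed data must also be tracked carefully: it underpins both permutation invariance under the null and the clean conditional application of \Cref{sec3:thm:sepcondU} under the alternative, without which the $U$-statistic analysis would break down due to the data-dependence in $l^\ast$.
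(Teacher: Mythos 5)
Your proposal is correct and reaches the claim, but the key signal-localisation step is argued by a genuinely different device from the paper's. The paper introduces a set of ``weak'' coordinates $\mathcal{L} = \{l : |\theta_{X,l}-\theta_{Y,l}| < c(M/(n_1\varepsilon^2))^{1/2}\log\{1/(\alpha\beta)\}\}$ and the event $A = \{|\hat{\theta}_{X,l^\dagger}-\hat{\theta}_{Y,l^\dagger}| \geq \max_{l\in\mathcal{L}}|\hat{\theta}_{X,l}-\hat{\theta}_{Y,l}|\}$ for a fixed strong coordinate $l^\dagger$; on $A$ the argmax $l^\ast$ cannot lie in $\mathcal{L}$, and $\mathbb{P}(A^c)$ is controlled by bounding, for each weak $l$, the probability that the estimated gap $|\hat{\delta}_{l^\dagger}| - |\hat{\delta}_l|$ flips sign. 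You instead establish a uniform deviation event $\max_{l\in[M]}|\hat{\delta}_l - \delta_l| \leq \delta$ by a union bound and then deduce via two triangle inequalities that $|\theta_{X,l^\ast}-\theta_{Y,l^\ast}| \geq |\theta_{X,l_0}-\theta_{Y,l_0}| - 2\delta \geq L_{2,M}/(2M^{1/2})$. Both routes yield a signal lower bound at $l^\ast$ of the same order and both are sound; the paper's comparison event is one-sided and slightly more economical, whereas your uniform bound is more direct and makes the $M^2$ factor transparent (it is exactly the requirement $\delta \lesssim L_{2,M}/M^{1/2}$). Your second stage is also cleaner in that you invoke \Cref{sec3:thm:sepcondU} directly conditional on $l^\ast$ rather than re-deriving the permutation-test control from \Cref{app:prop:Ustatconc} and \Cref{app:prop:Ustatpermconc} as the paper does, and you correctly track that the primed/unprimed independence is what validates this conditional application. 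One small slip: the last sentence describes combining the two $\beta/2$ bounds as ``the tower property,'' but the combination of $\{l^\ast\text{ fails to localise}\}$ and $\{\text{conditional type-II failure}\}$ is a union bound; the tower property is only used to integrate out the conditioning on $l^\ast$. This does not affect the correctness of the argument.
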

                The key idea of the proofs of these results, which are delayed to later in this appendix, is to consider an event where the truncation/co-ordinate \beame{localisation}{localization}{localization} is \beame{favourable}{favourable}{favorable} and allows us to prove the test statistics have the desired concentration properties around a suitable mean, and showing the complement occurs with low probability. In particular, bounding the probability of the complement by a term depending on the type-II error allows us to control the overall type-II error of the tests.
                
                To unify the two regions given by \eqref{app:eq:contintpart2cond} and \eqref{app:eq:contintpart1cond} into a single condition, we note that as we assume $\alpha + \beta \leq 1/2$, we have $\alpha\beta \leq e^{-1}$, and hence $\log\{1/(\alpha\beta)\} \geq 1$, and also that $\log(4n_2/\beta) \geq \log(4n_1/\beta) \geq 1$ as $n_2 \geq n_1$. Hence, using the fact that $2xy \geq x + y$ for $x,y \geq 1$, we have that
                \begin{equation*}
                     \log(4n_1/\beta) + [\log\{1/(\alpha\beta)\}]^2
                     \leq
                    \log(4n_2/\beta) + [\log\{1/(\alpha\beta)\}]^2
                    \leq 2\log(4n_2/\beta)[\log\{1/(\alpha\beta)\}]^2.
                \end{equation*}
                We also note by the bounds \eqref{app:eq:Vupperbound} and \eqref{app:eq:Vlowerbound}, that we have the inequality $|J| = \log_2(V) \leq C_{r, s, d}'\log(n_1\varepsilon^2) \leq C_{r, s, d}'\log(4n_2/\beta)$. Thus, combining \Cref{app:prop:contintpart2} and \Cref{app:prop:contintpart1}, we have for some constant $C_{r, s, d} > 0$, possibly differing from those in the proposition statements, that under the condition
                \begin{equation} \label{app:eq:contcombsuffcond}
                    L_{2, R}
                    \geq \bigg\{ \frac{C_{r, s, d}V[\log(4n_2/\beta)\log\{1/(\alpha\beta)\}]^2}{n_1\varepsilon^2} \bigg\}^{1/2},
                \end{equation}
                we have that $\min\{\mathbb{P}(\phi_{\mathrm{trunc}, \alpha/2} = 0), \mathbb{P}(\phi_{\mathrm{local}, \alpha/2} = 0)\} \leq \beta$, and hence 
                \begin{align*}
                    \mathbb{P}(\phi_{\mathrm{final}, \alpha} = 0)
                    &= \mathbb{P}(\{\phi_{\mathrm{trunc}, \alpha/2} = 0\} \cap \{\phi_{\mathrm{local}, \alpha/2} = 0\}) \\
                    &\leq \min\{\mathbb{P}(\phi_{\mathrm{trunc}, \alpha/2} = 0), \mathbb{P}(\phi_{\mathrm{local}, \alpha/2} = 0)\}
                    \leq \beta.
                \end{align*}
                
                It remains to connect the condition on $L_{2,R}$ to one on $\|f_X - f_Y\|_2$. In particular, by \eqref{sec5:eq:SobolevError} we have
                \begin{equation*}
                    L_{2, R}^2 \geq \|f_X - f_Y\|_2^2 - \frac{r^2}{R^{2s}}.
                \end{equation*}
                Hence, we have that \eqref{app:eq:contcombsuffcond} holds under the condition
                \begin{align} \label{app:eq:contIntsepcond}
                    \|f_X - f_Y\|_2^2
                    &\geq \frac{C_{r, s, d}V[\log(4n_2/\beta)\log\{1/(\alpha\beta)\}]^2}{n_1\varepsilon^2} + \frac{r^2}{R^{2s}} \nonumber \\
                    &\geq C_{r, s, d}''\bigg(\frac{[\log(n_2/\beta)\log\{1/(\alpha\beta)\}]^2}{n_1\varepsilon^2} \bigg)^{2s/(2s+1)},
                \end{align}
                where the inequality \eqref{app:eq:Vlowerbound} on $V$ and the choice $R = (n_1\varepsilon^2/\{\log(4n_2/\beta)[\log\{1/(\alpha\beta)\}]^2\})^{1/(2s + d)}$ with $C_{r, s, d}'' > 0$ a constant depending on $r, s$ and $d$ gives the second inequality.
                
                To conclude the proof, we recall by \eqref{app:eq:LpNormRelation} that $\|f_X - f_Y\|_p \leq \|f_X - f_Y\|_2$ for $p \in [1,2)$. Hence, as the $L_2$-norm is lower bounded by the $L_p$-norm, we see that the condition of \eqref{app:eq:contIntsepcond} is still satisfied when $\|f_X - f_Y\|_p$ takes the place of $\|f_X - f_Y\|_2$, which completes the proof.
            \end{proof}
        \subsection{Interactive Method - First Procedure}
            \begin{proof}[Proof of \Cref{app:prop:contintpart2}]
                First recall the form of the test $\phi_{\mathrm{trunc}, \alpha/2} = \max_{j \in J} \{\phi_{\mathrm{trunc}, \alpha/(2|J|)}^{\eta_j}\}$. We prove the power guarantee by bounding the type-II error. To this end, we show that there exists some $j^\ast \in J$ such that $\phi_{\mathrm{trunc}, \alpha/(2|J|)}^{\eta_{j^\ast}}$ has type-II error at most $\beta$. In particular, by the construction of the set $J$ and the assumption on the range of $L_{2,R}$ as in \eqref{app:eq:contintpart2cond}, there exists $j^\ast$ such that
                \begin{equation} \label{app:eq:contintpart2etaval}
                    C V^{1/2} L_{2, R}\{\log(4n_2/\beta)\}^{1/2}
                    \leq \eta_{j^\ast}
                    \leq 2^{1/2}C V^{1/2} L_{2, R}\{\log(4n_2/\beta)\}^{1/2},
                \end{equation}
                for $C > 0$ some constant depending on $r, s$ and $d$. In what follows, we consider the test $\phi_{\mathrm{trunc}, \alpha/(2|J|)}^{\eta_{j^\ast}}$ and its corresponding test statistics $T_{n_1, n_2}$ and permuted counterpart $T_{n_1, n_2}^\pi$ for a permutation $\pi$.

                We recall that the two samples are split as in \eqref{sec5:eq:combsamplesplit}. Hence, in what follows we denote $n_1^\ast = n_1/(2|J|)$ and $n_2^\ast = n_2/(2|J|)$, assuming these take integer values for simplicity, and re-index so that we may consider $i \in [n_1^\ast]$ and $j \in [n_2^\ast]$. We then consider the following quantities: we recall the definition of the estimators $\hat{\theta}_{X, \vecbf{l}}, \hat{\theta}_{Y, \vecbf{l}}$ for $\vecbf{l} \in \mathbb{N}_0^d(R)$ as in \eqref{sec5:eq:RRsobcoeffestscombProc1} and the \beame{privatised}{privatized}{privatized} values \eqref{sec5:eq:contMethod1NotTrunc}. For $i \in [n_1^\ast]$, $i' \in [n_2^\ast]$, define the quantities
                \begin{equation} \label{app:eq:tildebeforeprivandtruncvals}
                    \tilde{X}_i = \sum_{\vecbf{l} \in \mathbb{N}_0^d(R)} (\hat{\theta}_{X,\vecbf{l}} - \hat{\theta}_{Y,\vecbf{l}})\varphi_\vecbf{l}(X_i), \quad \mbox{and} \quad \tilde{Y}_{i'} = \sum_{\vecbf{l} \in \mathbb{N}_0^d(R)} (\hat{\theta}_{X,\vecbf{l}} - \hat{\theta}_{Y,\vecbf{l}})\varphi_\vecbf{l}(Y_{i'}),
                \end{equation}
                that is, the values the users in the second stage of interactivity obtain before \beame{privatisation}{privatization}{privatization} and truncation. Then, for $i \in [n_1^\ast]$ and $i' \in [n_2^\ast]$, we define the events
                \begin{equation} \label{app:eq:intfirstprocevents}
                    A_i^{(X)} = \{|\tilde{X}_i| \leq \eta_{j^\ast}\} \mbox{ and } A_{i'}^{(Y)} = \{|\tilde{Y}_i| \leq \eta_{j^\ast}\}, \mbox{ and let } A = \{ \cap_{i \in [n_1]} A_i^{(X)} \} \cap \{ \cap_{i \in [n_2]} A_i^{(Y)} \}.
                \end{equation}
                Denote the combined values by $\tilde{D}_i = \tilde{X}_i$ for $i \in [n_1^\ast]$ and $\tilde{D}_{n_1 + i'} = \tilde{Y}_{i'}$ for $i' \in [n_2^\ast]$. 
                
                Consider a sample of permutations $\{\pi_1, \hdots, \pi_B\}$ of size $B$ drawn, with replacement, uniformly from $S_{n_1^\ast + n_2^\ast}$, the group of permutations on $[n_1^\ast + n_2^\ast]$.  We start by noting that for any $t > 0$ and the event $A$ to be defined shortly, $\mathbb{P}(p_B > \alpha/(2|J|)\}$ for $p_B$ as in \eqref{sec2:def:permpval} can be decomposed as
                \begin{align}
                    &\mathbb{P}(p_B > \alpha/(2|J|)) \nonumber \\
                    &= \mathbb{P}(\{p_B > \alpha/(2|J|)\} \cap A) + \mathbb{P}(\{p_B > \alpha/(2|J|)\} \cap A^c) \nonumber \\
                    &\leq \mathbb{P}(\{p_B > \alpha/(2|J|)\} \cap A) + \mathbb{P}(A^c)  \nonumber \\
                    &= \mathbb{P}\bigg( \bigg\{ 1 + \sum_{b=1}^B \mathbbm{1}\{T_{n_1^\ast, n_2^\ast} \leq T_{n_1^\ast, n_2^\ast}^{\pi_b}\} \geq (1+B)\alpha/(2|J|) \bigg\} \cap A\bigg) + \mathbb{P}(A^c)  \nonumber \\
                    &\leq \mathbb{P}\bigg( \bigg\{1 + \sum_{b=1}^B \mathbbm{1}\{T_{n_1^\ast, n_2^\ast} \leq T_{n_1^\ast, n_2^\ast}^{\pi_b}\} \geq (1+B)\alpha/(2|J|) \bigg\} \cap \{T_{n_1^\ast, n_2^\ast} \geq t\} \cap A \bigg) \nonumber \\
                    &\hspace{8cm}+ \mathbb{P}(\{T_{n_1^\ast, n_2^\ast} < t\} \cap A)
                    + \mathbb{P}(A^c)  \nonumber \\
                    &\leq \mathbb{P}\bigg(1 + \sum_{b=1}^B \mathbbm{1}\{A\} \mathbbm{1}\{T_{n_1^\ast, n_2^\ast}^{\pi_b} \geq t\} \geq (1+B)\alpha/(2|J|) \bigg)
                    + \mathbb{P}(\{T_{n_1^\ast, n_2^\ast} < t\} \cap A)
                    + \mathbb{P}(A^c)  \nonumber \\
                    &\leq \frac{1+B\mathbb{P}(\{T_{n_1^\ast, n_2^\ast}^{\pi_1} \geq t\} \cap A)}{(1+B)\alpha/(2|J|)} + \mathbb{P}(A^c) + \mathbb{P}(\{T_{n_1^\ast, n_2^\ast} < t\} \cap A)
                    \leq \mathrm{(I)} + \mathrm{(II)} + \mathrm{(III)}, \label{app:eq:type2errorboundFirst}
                \end{align}
                where the penultimate inequality is by Markov's inequality. We proceed by bounding each term individually.
                
                \noindent
                \textbf{Term} (I):
                Denote by $\widetilde{D}_i^\ast$ for $i \in [n_1^\ast + n_2^\ast]$ the combined sample of the values the users in the second stage of interaction obtain without truncation and \beame{privatisation}{privatization}{privatization}, that is
                \begin{align*}
                    \widetilde{D}_i^\ast = \widetilde{X}_i, \mbox{ for } i \in [n_1^\ast], \quad \mbox{and} \quad
                    \widetilde{D}_{n_1^\ast + i'}^\ast = \widetilde{Y}_{i'}, \mbox{ for } i' \in [n_2^\ast].
                \end{align*}
                Define the sums
                \begin{align*}
                    \Lambda = \frac{1}{n_1^\ast} \sum_{i=1}^{n_1^\ast} \frac{2\eta_{j^\ast}}{\varepsilon}\zeta_i - \frac{1}{n_2^\ast} \sum_{i'=1}^{n_2^\ast} \frac{2\eta_{j^\ast}}{\varepsilon}\xi_{i'}, \quad \mbox{and} \quad
                    T_{n_1^\ast, n_2^\ast}^{\ast, \pi_1} = \frac{1}{n_1^\ast} \sum_{i = 1}^{n_1^\ast} \tilde{D}_{\pi(i)}^\ast - \frac{1}{n_2^\ast} \sum_{i' = 1}^{n_2^\ast} \tilde{D}_{\pi(n_1^\ast + i')}^\ast,
                \end{align*}
                so that $T_{n_1^\ast, n_2^\ast}^{\pi_1} = T_{n_1^\ast, n_2^\ast}^{\ast, \pi_1} + \Lambda$ on $A$. Hence, we decompose the tail probability 
                \begin{align*}
                    \mathbb{P}(\{T_{n_1^\ast, n_2^\ast}^{\pi_1} \geq t\} \cap A)
                    &\leq \mathbb{P}(\{T_{n_1^\ast, n_2^\ast}^{\ast, \pi_1} \geq t/2\} \cap A)
                    + \mathbb{P}(\Lambda \geq t/2) \\
                    &\leq \mathbb{P}(\{T_{n_1^\ast, n_2^\ast}^{\ast, \pi_1} \geq t/2\} \cap A)
                    + 2\exp\bigg(\frac{-c_2(n_1^\ast\varepsilon^2)^{1/2}t}{\eta_{j^\ast}}\bigg) \\
                    &\leq 2\exp\bigg(\frac{-c_1n_1^\ast t^2}{\eta_{j^\ast}^2}\bigg)
                    + 2\exp\bigg(\frac{-c_2(n_1^\ast\varepsilon^2)^{1/2}t}{\eta_{j^\ast}}\bigg)
                \end{align*}
                where $c_1, c_2 > 0$ are absolute constants; the first inequality uses the fact that $T_{n_1^\ast, n_2^\ast}^{\pi_1} = T_{n_1^\ast, n_2^\ast}^{\ast, \pi_1} + \Lambda$ on $A$; the second inequality as $\Lambda$ is $\mathrm{SE}(2^{5/2}\eta_{j^\ast}/(n_1^\ast\varepsilon^2)^{1/2})$ by the independence of the summands of $\Lambda$ and the fact a Laplace random variable is $\mathrm{SE}(2^{1/2})$ by \Cref{app:lem:LaplaceSE}; and the final inequality is by \Cref{app:lem:contintpermSE}, the statement and proof of which is delayed to \Cref{app:sec:tailbounds3}.
                
                In particular, we have for $C_1 > 0$ a sufficiently large absolute constant that
                \begin{equation} \label{app:eq:generaltype2errorbound1}
                    \frac{2|J|}{(1+B)\alpha} + \frac{2B|J|}{(1+B)\alpha}\mathbb{P}\bigg(\bigg\{T_{n_1^\ast, n_2^\ast}^{\pi_1} \geq \frac{C_1\eta_{j^\ast}\log\{2|J|/(\alpha\beta)\}}{(n_1^\ast\varepsilon^2)^{1/2}} \bigg\} \cap A\bigg)
                    \leq \frac{\beta}{3},
                \end{equation}
                where we use the facts that $B \geq 8|J|/(\alpha\beta) - 1$ and $[\log\{2|J|/(\alpha\beta)\}]^2 \geq \log\{2|J|/(\alpha\beta)\}$ as $\alpha + \beta \leq 1/2$.

                \noindent
                \textbf{Term} (II):
                For $\vecbf{l} \in \mathbb{N}_0^d(R)$, denoting $\hat{\delta}_\vecbf{l} = \hat{\theta}_{X,\vecbf{l}} - \hat{\theta}_{Y,\vecbf{l}}$ and $\delta_\vecbf{l} = \mathbb{E}[\hat{\delta}_\vecbf{l}]$, we decompose and bound the moment generating function of $\tilde{X}_i - \mathbb{E}[\tilde{X}_i]$ as
                \begin{align*}
                    \mathbb{E}\bigg[\exp\bigg\{\lambda (&\tilde{X}_i - \mathbb{E}[\tilde{X}_i])\bigg\}\bigg] \\
                    &= \mathbb{E}\bigg[\exp\bigg\{\lambda\sum_{\vecbf{l} \in \mathbb{N}_0^d(R)}\{\hat{\delta}_\vecbf{l} \varphi_\vecbf{l}(X_i) - \delta_\vecbf{l} \theta_{X,\vecbf{l}}\}\bigg\}\bigg] \\
                    &= \mathbb{E}\bigg[ \mathbb{E}\bigg[\exp\bigg\{\lambda\sum_{\vecbf{l} \in \mathbb{N}_0^d(R)}(\hat{\delta}_\vecbf{l} - \delta_\vecbf{l}) \varphi_\vecbf{l}(X_i)\bigg\}\biggm| X_i\bigg] \exp\bigg\{\lambda\sum_{\vecbf{l} \in \mathbb{N}_0^d(R)} \delta_\vecbf{l} \{\varphi_\vecbf{l}(X_i) - \theta_{X, \vecbf{l}}\} \bigg\}\bigg] \\
                    &\leq \mathbb{E}\bigg[ \exp\bigg\{\frac{5\cdot2^d\lambda^2 V}{n_1^\ast\varepsilon^2}\sum_{\vecbf{l} \in \mathbb{N}_0^d(R)} \varphi_\vecbf{l}(X_i)^2\bigg\} \exp\bigg\{\lambda\sum_{\vecbf{l} \in \mathbb{N}_0^d(R)} \delta_\vecbf{l} \{\varphi_\vecbf{l}(X_i) - \theta_{X, \vecbf{l}}\} \bigg\}\bigg] \\
                    &\leq \exp\bigg\{\frac{5\cdot2^{2d}\lambda^2 V^2}{n_1^\ast\varepsilon^2}\bigg\} \mathbb{E}\bigg[ \exp\bigg\{\lambda\sum_{\vecbf{l} \in \mathbb{N}_0^d(R)} \delta_\vecbf{l} \{\varphi_\vecbf{l}(X_i) - \theta_{X, \vecbf{l}}\} \bigg\} \bigg],
                \end{align*}
                where the first inequality holds as each \beame{privatised}{privatized}{privatized} value $Z_i'$, $W_{i'}'$ takes values in $\{-2^{d/2}c_\varepsilon, 2^{d/2}c_\varepsilon\}$ and are hence $\mathrm{SG}(5\cdot2^d/\varepsilon^2)$ (e.g.~\citealt[Exercise~2.4]{Wainwright:2019:HDSBook}), where we recall $c_\varepsilon = \{\exp(\varepsilon) + 1\}/\{\exp(\varepsilon) - 1\}$ and hence use the fact that $c_\varepsilon^2 \leq 5/\varepsilon^2$ for $\varepsilon \in (0,1]$. Thus, the estimators \eqref{sec5:eq:RRsobcoeffestscombProc1} which are the average of at least $n_1^\ast/V$ i.i.d.~copies of such variables are $\mathrm{SG}(5\cdot2^{d}V/(n_1^\ast\varepsilon^2))$,
                and so the differences $\hat{\theta}_{X, \vecbf{l}} - \hat{\theta}_{Y, \vecbf{l}}$ are $\mathrm{SG}(5\cdot2^{d+1}V/(n_1^\ast\varepsilon^2))$ for all $\vecbf{l} \in \mathbb{N}_0^d(R)$. The second inequality then follows from the fact that $|\varphi_\vecbf{l}(\vecbf{x})| \leq 2^{d/2}$ for all $\vecbf{l} \in \mathbb{N}_0^d(R)$.

                Focusing on the remaining expectation term, we note the collection $\{\varphi_\vecbf{l}(\vecbf{X}_i) - \theta_{X, \vecbf{l}}\}_{\vecbf{l} \in \mathbb{N}_0^d(R)}$ consists of $\mathrm{SG}(2^d)$ (e.g.~\citealt[Exercise~2.4]{Wainwright:2019:HDSBook}) random variables. The collection is not independent, but one can show via H\"{o}lder's inequality that the sum $\sum_{\vecbf{l} \in \mathbb{N}_0^d(R)}\delta_\vecbf{l}\{\varphi_\vecbf{l}(\vecbf{X}_i) - \theta_{X, \vecbf{l}}\}$ is $\mathrm{SG}(2^d\{\sum_{\vecbf{l} \in \mathbb{N}_0^d(R)}|\delta_\vecbf{l}|\}^2)$. Hence, combining, we have
                \begin{align*}
                    \mathbb{E}\bigg[\exp\bigg\{\lambda (\tilde{X}_i - \mathbb{E}[\tilde{X}_i])\bigg\}\bigg]
                    &\leq \exp\bigg\{\frac{5\cdot2^{2d}\lambda^2 V^2}{n_1^\ast\varepsilon^2}\bigg\} \exp\bigg\{\lambda^2\bigg(\sum_{\vecbf{l} \in \mathbb{N}_0^d(R)}|\delta_\vecbf{l}|\bigg)^2\bigg\} \\
                    &\leq \exp\bigg\{\frac{5\cdot2^{2d}\lambda^2V^2}{n_1^\ast\varepsilon^2}\bigg\} \exp\bigg\{\lambda^2 V L_{2, R}^2\bigg\} \\
                    &\leq \exp\bigg\{2\lambda^2 V L_{2, R}^2\bigg\},
                \end{align*}
                where the second inequality is by the Cauchy--Schwarz inequality, and the final inequality follows from the assumed range on the values of $L_{2, R}$ as in \eqref{app:eq:contintpart2cond}. Hence, $\tilde{X}_i$ is $\mathrm{SG}(4VL_{2,R}^2)$, yielding
                \begin{align*}
                    \mathbb{P}\{(A_i^{(X)})^c\}
                    &=\mathbb{P}(|\tilde{X}_i| > \eta_{j^\ast}) \\
                    &\leq \mathbb{P}(|\tilde{X}_i - \mathbb{E}[\tilde{X}_i]| > \eta_{j^\ast}/2) + \mathbb{P}(|\mathbb{E}[\tilde{X}_i]| > \eta_{j^\ast}/2) \\
                    &= \mathbb{P}(|\tilde{X}_i - \mathbb{E}[\tilde{X}_i]| > \eta_{j^\ast}/2) \\
                    &\leq 2\exp\bigg(\frac{-\eta_{j^\ast}^2}{16VL_{2, R}^2}\bigg) \\
                    &\leq 2\exp\bigg(\frac{-C^2\log(4n_2/\beta)}{16}\bigg) \leq 2\bigg(\frac{\beta}{4n_2}\bigg)^{C_2}, 
                \end{align*}
                for $C_2 > 0$ some absolute constant which we may take sufficiently large by taking $C$ large; where the second equality holds because $\eta_{j^\ast}/2 > |\mathbb{E}[\tilde{X}_i]|$ for $C$ large enough which follows from the bound $|\mathbb{E}[\tilde{X}_i]| = |\sum_{\vecbf{l} \in \mathbb{N}_0^d(R)} \delta_\vecbf{l} \theta_{X,\vecbf{l}}| \leq 2^{d/2}V^{1/2}L_{2, R}$ due to the Cauchy--Schwarz inequality; the second inequality is by, for example, \citet[Proposition~2.5][]{Wainwright:2019:HDSBook}; and the penultimate inequality is by \eqref{app:eq:contintpart2etaval}. A similar argument holds for $\tilde{Y}_{i'}$ yielding the same bound for $\mathbb{P}\{(A_{i'}^{(Y)})^c\}$ and hence we have by the union bound and taking $C_2$ sufficiently large that
                \begin{equation} \label{app:eq:contintproc1probbound}
                    \mathbb{P}(A^c)
                    \leq \sum_{i = 1}^{n_1^\ast} \mathbb{P}\{(A_i^{(X)})^c\} + \sum_{i' = 1}^{n_2^\ast} \mathbb{P}\{(A_i^{(Y)})^c\}
                    \leq 4n_2^\ast\bigg(\frac{\beta}{4n_2}\bigg)^{C_2}
                    \leq \frac{\beta}{3}.
                \end{equation}

                \noindent
                \textbf{Term} (III):
                Denote the sum
                \begin{equation*}
                    T_{n_1^\ast, n_2^\ast}^\ast = \frac{1}{n_1^\ast} \sum_{i = 1}^{n_1^\ast} \tilde{X}_i - \frac{1}{n_2^\ast} \sum_{i' = 1}^{n_2^\ast} \tilde{Y}_{i'},
                \end{equation*}
                where we have $T_{n_1^\ast, n_2^\ast} = T_{n_1^\ast, n_2^\ast}^\ast + \Lambda$ on $A$. We thus consider the decomposition
                \begin{align*}
                    \mathbb{P}(\{T_{n_1^\ast, n_2^\ast} < t\}& \cap A) \\
                    &= \mathbb{P}(\{T_{n_1^\ast, n_2^\ast}^\ast - \mathbb{E}[T_{n_1^\ast, n_2^\ast}^\ast] + \Lambda < t - \mathbb{E}[T_{n_1^\ast, n_2^\ast}^\ast]\} \cap A)  \\
                    &\leq \mathbb{P}(\{T_{n_1^\ast, n_2^\ast}^\ast - \mathbb{E}[T_{n_1^\ast, n_2^\ast}^\ast] < t/2 - \mathbb{E}[T_{n_1^\ast, n_2^\ast}^\ast]/2 \} \cap A) + \mathbb{P}(\Lambda < t/2 - \mathbb{E}[T_{n_1^\ast, n_2^\ast}^\ast]/2).
                \end{align*}
                
                Note $\mathbb{E}[T_{n_1^\ast, n_2^\ast}^\ast] \geq C_1\eta_{j^\ast}\log\{2|J|/(\alpha\beta)\}/(n_1^\ast\varepsilon^2)^{1/2} + C_{s, r, d}'\eta_{j^\ast}\log(1/\beta)/(n_1^\ast\varepsilon^2)^{1/2}$ for $C_{s, r, d}' > 0$ a sufficiently large constant depending on $s, r$ and $d$, which holds by \eqref{app:eq:contintpart2cond}, the value of $\eta_{j^\ast}$ as in \eqref{app:eq:contintpart2etaval}, and the fact that $\mathbb{E}[T_{n_1^\ast, n_2^\ast}^\ast] = L_{2, R}^2$. Hence, we obtain
                \begin{align}
                    \mathbb{P}\bigg(\bigg\{&T_{n_1^\ast, n_2^\ast} < \frac{C_1\eta_{j^\ast}\log\{2|J|/(\alpha\beta)\}}{(n_1^\ast\varepsilon^2)^{1/2}} \bigg\} \cap A \bigg) \nonumber \\
                    &\leq \mathbb{P}\bigg(\bigg\{T_{n_1^\ast, n_2^\ast}^\ast - \mathbb{E}[T_{n_1^\ast, n_2^\ast}^\ast] < -\frac{C_{s, r, d}'\eta_{j^\ast}\log(1/\beta)}{2(n_1^\ast\varepsilon^2)^{1/2}} \bigg\} \cap A\bigg) + \mathbb{P}\bigg(\Lambda < -\frac{C_{s, r, d}'\eta_{j^\ast}\log(1/\beta)}{2(n_1^\ast\varepsilon^2)^{1/2}} \bigg) \nonumber \\
                    &\leq \mathbb{P}\bigg(\bigg\{T_{n_1^\ast, n_2^\ast}^\ast - \mathbb{E}[T_{n_1^\ast, n_2^\ast}^\ast] < -\frac{C_{s, r, d}'\eta_{j^\ast}\log(1/\beta)}{2(n_1^\ast\varepsilon^2)^{1/2}} \bigg\} \cap A\bigg) + \frac{\beta}{12} \nonumber \\
                    &\leq \frac{\beta}{12} + \frac{3\cdot2^{d+5/2}Vc_\varepsilon(n_1^\ast\varepsilon^2)^{1/2}}{C_{s, r, d}'\eta_{j^\ast}\log(1/\beta)}\bigg(\frac{\beta}{4n_2}\bigg)^{C_2/2} + \frac{\beta}{12} \leq \frac{\beta}{3} \label{app:eq:generaltype2errorbound2}
                \end{align}
                where the second inequality is by fact that $\Lambda$ is $\mathrm{SE}(2^{5/2}\eta_{j^\ast}/(n_1^\ast\varepsilon^2)^{1/2})$; the third by \Cref{app:lem:contintSE}, the statement and proof of which is delayed to \Cref{app:sec:tailbounds3}; and the final inequality as we may take the constants $C_3$ and $C_{s, r, d}'$ large enough so that the second term in the penultimate inequality is bounded by $\beta/6$, noting the upper bound on $V$ in \eqref{app:eq:Vupperbound} and value $R = [n_1^\ast\varepsilon^2/\{\log(4n_2/\beta)[\log\{2|J|/(\alpha\beta)\}]^2\}]^{1/(2s + 1)}$.
                
                To conclude, setting $t = C_1\eta_{j^\ast}\log\{2|J|/(\alpha\beta)\}/\{(n_1^\ast\varepsilon^2)^{1/2}\}$ in \eqref{app:eq:type2errorboundFirst}, and combining \eqref{app:eq:generaltype2errorbound1}, \eqref{app:eq:contintproc1probbound} and \eqref{app:eq:generaltype2errorbound2}, we obtain the power guarantee $\mathbb{P}(p_B > \alpha) \leq \beta$, which completes the proof.

            \end{proof}
        \subsection{Interactive Method - Second Procedure}
            \begin{proof}[Proof of \Cref{app:prop:contintpart1}]
                In what follows, we assume the size of the two samples are $2n_1$ and $2n_2$ for simplicity, with each sample split into two folds as in \eqref{sec4:eq:sampledefs}. The fact that the samples are also split in the combined test of \Cref{sec5:UpperBoundInt} before being passed to the component test $\phi_{\mathrm{local}, \alpha/2}$ will only change the final rate up to constants. We first note by the assumption \eqref{app:eq:contintpart1cond} there exists $\vecbf{l}^\dagger \in \mathbb{N}_0^d(R)$ such that
                \begin{equation*}
                    |\theta_{X, \vecbf{l}^\dagger} - \theta_{Y, \vecbf{l}^\dagger}| \geq \bigg(\frac{C_{r, s, d}V(\log(4n_1/\beta)+[\log\{1/(\alpha\beta)\}]^2)}{n_1\varepsilon^2}\bigg)^{1/2}.
                \end{equation*}
                denote
                \begin{equation*}
                    \mathcal{L} = \big\{\vecbf{l} \in \mathbb{N}_0^d(R) : |\theta_{X, \vecbf{l}} - \theta_{Y, \vecbf{l}}| < \{C_{r, s, d} V/(4n_1\varepsilon^2)\}^{1/2}\log\{1/(\alpha\beta)\}\big\},
                \end{equation*}
                and consider the event
                \begin{equation*}
                    A = \big\{ |\hat{\theta}_{X, \vecbf{l}^\dagger} - \hat{\theta}_{Y, \vecbf{l}^\dagger}| \geq \max_{\vecbf{l} \in \mathcal{L}} |\hat{\theta}_{X, \vecbf{l}} - \hat{\theta}_{Y, \vecbf{l}}| \big\}.
                \end{equation*}
                On $A$, we have that the chosen index $\vecbf{l}^\ast \notin \mathcal{L}$.
                
                Consider a sample of permutations $\{\pi_1, \hdots, \pi_B\}$ of size $B$ drawn, with replacement, uniformly from $S_{n_1 + n_2}$, the group of permutations on $[n_1 + n_2]$.  For any collection $\{t_1, \hdots, t_V\}$ with $t_\vecbf{l} > 0$ for all $\vecbf{l} \in \mathbb{N}_0^d(R)$, $\mathbb{P}(p_B > \alpha/2)$ for $p_B$ as in \eqref{sec2:def:permpval} can be decomposed as
                \begin{align}
                    \mathbb{P}(&p_B > \alpha/2) \\
                    &= \mathbb{P}(\{p_B > \alpha/2\} \cap A) + \mathbb{P}(\{p_B > \alpha/2a\} \cap A^c) \nonumber \\
                    &\leq \mathbb{P}(\{p_B > \alpha/2\} \cap A) + \mathbb{P}(A^c) \nonumber \\
                    &= \mathbb{P}\bigg( \bigg\{ 1 + \sum_{b = 1}^B \mathbbm{1}\{U_{n_1, n_2} \leq U_{n_1, n_2}^{\pi_b}\} \geq (1+B)\alpha/2 \bigg\} \cap A\bigg) + \mathbb{P}(A^c) \nonumber \\
                    &\leq \sum_{\vecbf{l} \in \mathbb{N}_0^d(R) \setminus \mathcal{L}} \bigg[ \mathbb{P}\bigg( \bigg\{1 + \sum_{b = 1}^B \mathbbm{1}\{U_{n_1, n_2} \leq U_{n_1, n_2}^{\pi_b}\} \geq (1+B)\alpha/2 \bigg\} \cap \{U_{n_1, n_2} \geq t_\vecbf{l}\} \mid \vecbf{l}^\ast = \vecbf{l} \bigg) \nonumber\\
                    &\hspace{4cm}+ \mathbb{P}(\{U_{n_1, n_2} < t_\vecbf{l}\} \mid \vecbf{l}^\ast = \vecbf{l}) \bigg]\mathbb{P}(\vecbf{l}^\ast = \vecbf{l} ) 
                    + \mathbb{P}(A^c) \nonumber \\
                    &\leq \sum_{\vecbf{l} \in \mathbb{N}_0^d(R) \setminus \mathcal{L}} \bigg[ \mathbb{P}\bigg( \bigg\{1 + \sum_{b = 1}^B \mathbbm{1}\{U_{n_1, n_2}^{\pi_b} \geq t_\vecbf{l}\} \geq (1+B)\alpha/2 \bigg\} \mid \vecbf{l}^\ast = \vecbf{l} \bigg) \nonumber\\
                    &\hspace{4cm}+ \mathbb{P}(U_{n_1, n_2} < t_\vecbf{l} \mid \vecbf{l}^\ast = \vecbf{l}) \bigg]\mathbb{P}(\vecbf{l}^\ast = \vecbf{l} ) 
                    + \mathbb{P}(A^c) \nonumber \\
                    &\leq \sum_{\vecbf{l} \in \mathbb{N}_0^R(R) \setminus \mathcal{L}} \bigg[ \frac{1+B\mathbb{P}(U_{n_1, n_2}^{\pi_1} \geq t_\vecbf{l} \mid \vecbf{l}^\ast = \vecbf{l})}{(1+B)\alpha/2} + \mathbb{P}(U_{n_1, n_2} < t_\vecbf{l} \mid \vecbf{l}^\ast = \vecbf{l}) \bigg]\mathbb{P}(\vecbf{l}^\ast = \vecbf{l}) + \mathbb{P}(A^c), \nonumber \\
                    &\leq (\mathrm{I}) + (\mathrm{II}) \label{app:eq:contintsecondterms}
                \end{align}
                where the second inequality uses the law of total probability and the fact that the chosen $\vecbf{l}^\ast \notin \mathcal{L}$ on $A$, and the final inequality is by Markov's inequality. We now consider the terms individually.
                
                \noindent
                \textbf{Term} (I):                
                Conditional on a fixed value of $\vecbf{l}^\ast$, we have that the data $\{Z_i\}_{i \in [n_1]},\{W_{i'}\}_{i' \in [n_2]}$ as in \eqref{sec5:eq:localProcSelected} are independent and take values in $\{-2^{d/2}c_\varepsilon, 2^{d/2}c_\varepsilon\}$ and are hence $\mathrm{SG}(5\cdot2^d/\varepsilon^2)$ (e.g.~\citealt[Exercise~2.4]{Wainwright:2019:HDSBook}), where we used the fact that $c_\varepsilon^2 \leq 5/\varepsilon^2$ for $\varepsilon \in (0,1]$. Further, the test statistic $U_{n_1, n_2}$ is an unbiased estimator of $(\theta_{X, \vecbf{l}^\ast} - \theta_{Y, \vecbf{l}^\ast})^2$. Further, by the same argument as in the proof of \Cref{sec3:thm:sepcondU}, the permuted test statistic $U_{n_1, n_2}^\pi$ is mean zero. Hence, we may appeal to the results \Cref{app:prop:Ustatconc} and \Cref{app:prop:Ustatpermconc} on the concentration properties of $U$-statistics to see that $U_{n_1, n_2}$ and $U_{n_1, n_2}^{\pi}$ are $\mathrm{SE}(\Sigma_{\vecbf{l}^\ast})$ and $\mathrm{SE}(\widetilde{\Sigma}_{\vecbf{l}^\ast})$ respectively, where
                \begin{equation*}
                    \Sigma_{\vecbf{l}^\ast} = C_d'\max\bigg\{\frac{1}{n_1\varepsilon^2}, \frac{|\theta_{X, \vecbf{l}^\ast} - \theta_{Y, \vecbf{l}^\ast}|}{(n_1\varepsilon^2)^{1/2}} \bigg\}, \;\;  \widetilde{\Sigma}_{\vecbf{l}^\ast} = \widetilde{C}_d'\max\bigg\{\frac{1}{n_1\varepsilon^2}, \frac{|\theta_{X, \vecbf{l}^\ast} - \theta_{Y, \vecbf{l}^\ast}|}{(n_1\varepsilon^2)^{1/2}}, \frac{(\theta_{X, \vecbf{l}^\ast} - \theta_{Y, \vecbf{l}^\ast})^2}{n_1} \bigg\},
                \end{equation*}
                for $C_d', \widetilde{C}_d' > 0$ some constants depending on $d$. In particular, we note for $\vecbf{l} \notin \mathcal{L}$ that $(\theta_{X, \vecbf{l}} - \theta_{Y, \vecbf{l}})^2 = \mathbb{E}[U_{n_1, n_2} \mid \vecbf{l}^\ast = \vecbf{l}] \geq C_d''\widetilde{\Sigma}_{\vecbf{l}}\log\{16/(\alpha\beta)\} + C_d''\Sigma_{\vecbf{l}}\log(8/\beta)$ with $C_d'' > 0$ some constant for $C_{r, s, d}$ taken sufficiently large. Proceeding similarly to the proof of \Cref{sec3:prop:permtestcontrol}, setting $t_\vecbf{l} = \widetilde{\Sigma}_{\vecbf{l}}\log\{16/(\alpha\beta)\}$, we obtain
                \begin{align}
                    \sum_{\vecbf{l} \in \mathbb{N}_0^d(R) \setminus \mathcal{L}}\bigg[&\frac{1 + B \mathbb{P}(U_{n_1, n_2}^{\pi_1} \geq \widetilde{\Sigma}_{\vecbf{l}}\log\{16/(\alpha\beta)\} \mid \vecbf{l}^\ast = \vecbf{l})}{(1+B)\alpha/2} \nonumber \\
                    &\hspace{2cm}+ \mathbb{P}(U_{n_1, n_2} - \mathbb{E}[U_{n_1, n_2} \mid \vecbf{l}^\ast = \vecbf{l}]< -\Sigma_{\vecbf{l}}\log(8/\beta) \mid \vecbf{l}^\ast = \vecbf{l})\bigg] \mathbb{P}(\vecbf{l}^\ast = \vecbf{l}) \nonumber \\
                    &\leq \frac{1 +\alpha\beta B/16}{(1+B)\alpha/2} + \beta/4
                    \leq 3\beta/8, \label{app:eq:contintsecondterms1}
                \end{align}
                where the second inequality is by \Cref{app:prop:Ustatconc}, \Cref{app:prop:Ustatpermconc} and Proposition~2.7.1~$(i)$ in \cite{Vershynin:2018:HDPBook}, and the last inequality from the fact that $B \geq 4/(\alpha\beta/2) - 1 = 8/(\alpha\beta) - 1$.

                \noindent
                \textbf{Term} (II):  
                Recall that each \beame{privatised}{privatized}{privatized} value $Z_i$ and $W_{i'}$ is $\mathrm{SG}(5\cdot2^d/\varepsilon^2)$. Hence, the estimators $\hat{\theta}_{X, \vecbf{l}}, \hat{\theta}_{Y, \vecbf{l}}$ which are the average of at least $n_1/V$ i.i.d.~copies of such variables are $\mathrm{SG}(5\cdot2^dV/(n_1\varepsilon^2))$,
                and so the differences $\hat{\theta}_{X, \vecbf{l}} - \hat{\theta}_{Y, \vecbf{l}}$ are $\mathrm{SG}(5\cdot2^{d+1}V/(n_1\varepsilon^2))$ for all $\vecbf{l} \in \mathbb{N}_0^d(R)$. Hence, we have for all $\vecbf{l} \in \mathbb{N}_0^d(R)$ that
                \begin{align*}
                    \mathbb{P}(|\hat{\theta}_{X, \vecbf{l}} - \hat{\theta}_{Y, \vecbf{l}} - (\theta_{X, \vecbf{l}} - \theta_{Y, \vecbf{l}})| > x) \leq 2\exp\bigg( -\frac{n_1\varepsilon^2 x^2}{5\cdot2^{d+2}V}\bigg).
                \end{align*}
                Setting $\delta = \{C_{r, s, d} V\log(4n_1/\beta))/(16n_1\varepsilon^2)\}^{1/2}$, we have that
                \begin{equation*}
                    \mathbb{P}(|\hat{\theta}_{X, \vecbf{l}} - \hat{\theta}_{Y, \vecbf{l}} - (\theta_{X, \vecbf{l}} - \theta_{Y, \vecbf{l}})| > \delta) \leq 2\{\beta/(4n_1)\}^{C_{r, s, d}/(5\cdot2^{d+6})}.
                \end{equation*}
                Using the inequality $(x+y)^{1/2} \geq (x^{1/2}+y^{1/2})/2$ for $x,y > 0$, we note that for $\vecbf{l} \in \mathcal{L}$ we have that $|\theta_{X, \vecbf{l}^\dagger} - \theta_{Y, \vecbf{l}^\dagger}| - |\theta_{X, \vecbf{l}} - \theta_{Y, \vecbf{l}}| > 2\delta$ and so by the union bound
                \begin{equation}
                    \mathbb{P}(A^c)
                    \leq 4|\mathcal{L}|\{\beta/(4n_1)\}^{C_{r, s, d}/(5\cdot2^{d+6})}
                    \leq 4V\{\beta/(4n_1)\}^{C_{r, s, d}/(5\cdot2^{d+6})} \leq \beta/2, \label{app:eq:contintsecondterms2}
                \end{equation}
                where the final inequality comes from taking $C_{r, s, d}$ large enough.
                
                Hence, combining \eqref{app:eq:contintsecondterms}, \eqref{app:eq:contintsecondterms1} and \eqref{app:eq:contintsecondterms2}, we have $\mathbb{P}(p_B > \alpha/2) \leq \beta$, which completes the proof.
            \end{proof}

        \subsection{Adaptive Tests} \label{app:sec:adaptivetestproofs}
        
            \begin{proof}[Proof of \Cref{sec5:thm:adaptive}]
                Recall the adaptive test $\phi_{\mathrm{adapt}}$ and the tests $\{\phi_k\}_{k \in [k_{\mathrm{max}}]}$ as constructed in \Cref{sec5:UpperBoundNonIntAdapt}. A type-I error guarantee of $\alpha$ holds immediately from the union bound, and so it remains to prove we have the desired power. It suffices to show that there exists some $k^\ast \in [k_{\mathrm{\max}}]$ such that $\phi_{k^\ast}$ has type-II error at most $\beta$, and hence $\mathbb{P}(\phi_{\mathrm{adaptNI}} = 0) \leq \mathbb{P}(\phi_{k^\ast} = 0) \leq \beta$. We consider the non-interactive and interactive cases separately as follows.

                \noindent
                \textbf{Non-interactive Case}:
                We start by noting
                \begin{equation*}
                    k_{\mathrm{max}} = (2/3)\log_2(n_1\varepsilon^2) + 1
                    \geq \frac{1}{2s+3d/2}\log_2\bigg( \frac{n_1\varepsilon^2}{k_\mathrm{\max}[\log\{k_\mathrm{\max}/(\alpha\beta)\}]^2}\bigg)  + 1.
                \end{equation*}
                Hence, there exists $k^\ast \in [k_{\mathrm{max}}]$ such that
                \begin{equation} \label{app:eq:NIkstar}
                    \bigg(\frac{n_1\varepsilon^2}{k_\mathrm{\max}[\log\{k_\mathrm{\max}/(\alpha\beta)\}]^2}\bigg)^{1/(2s+3d/2)}
                    \leq 2^{k^\ast}
                    \leq 2\bigg(\frac{n_1\varepsilon^2}{k_\mathrm{\max}[\log\{k_\mathrm{\max}/(\alpha\beta)\}]^2}\bigg)^{1/(2s+3d/2)}.
                \end{equation}
                
                By the proof of \Cref{sec5:thm:main} for the non-interactive case, we see that the test $\phi_{k^\ast}$ has type-II error control $\beta$ when
                \begin{align*}
                    \|f_X - f_Y\|_2^2
                    &\geq \frac{C_{r, s, d}(2^{k^\ast})^{3d/2}[\log\{k_{\mathrm{max}}/(\alpha\beta)\}]^2}{(n_1/k_{\mathrm{max}})\varepsilon^2} + \frac{r^2}{(2^{k^\ast})^{2s}},
                \end{align*}
                for $C_{r, s, d} > 0$ some constant depending on $d$. Using \eqref{app:eq:NIkstar} to upper bound the right-hand-side, we obtain
                \begin{align*}
                    \frac{C_{r, s, d}(2^{k^\ast})^{3d/2}[\log\{k_{\mathrm{max}}/(\alpha\beta)\}]^2}{(n_1/k_{\mathrm{max}})\varepsilon^2} &+ \frac{r^2}{(2^{k^\ast})^{2s}} \\
                    &\leq 2(2C_{r, s, d}+r^2)\bigg(\frac{n_1\varepsilon^2}{k_{\mathrm{max}}[\log\{k_\mathrm{max}/(\alpha\beta)\}]^2} \bigg)^{-2s/(2s+3d/2)}\\
                    &\leq C_{r, s, d}'\bigg(\frac{n_1\varepsilon^2}{\log(n_1\varepsilon^2)[\log\{\log(n_1\varepsilon^2)/(\alpha\beta)\}]^2}\bigg)^{-2s/(2s+3d/2)},
                \end{align*}
                where $C_{r, s, d}' > 0$ is some constant depending on $r, s$ and $d$. As with the proof of \Cref{sec5:thm:main}, the result for $L_p$-norm with $p \in [1, 2]$ then follows from H\"{o}lder's inequality. This completes the proof for the non-interactive case.

                \noindent
                \textbf{Interactive case}:
                We start by noting
                \begin{equation*}
                    k_{\mathrm{max}} = \log_2(n_1\varepsilon^2) + 1
                    \geq \frac{1}{2s+d}\log_2\bigg( \frac{n_1\varepsilon^2}{k_\mathrm{\max}[\log(n_2/\beta)\log\{k_\mathrm{\max}/(\alpha\beta)\}]^2}\bigg)  + 1.
                \end{equation*}
                Hence, there exists $k^\ast \in [k_{\mathrm{max}}]$ such that
                \begin{equation} \label{app:eq:Ikstar}
                    \begin{aligned}
                        \bigg(\frac{n_1\varepsilon^2}{k_\mathrm{\max}[\log(n_2/\beta)\log\{k_\mathrm{\max}/(\alpha\beta)\}]^2}\bigg)^{1/(2s+d)}
                        &\leq 2^{k^\ast} \\
                        &\leq 2\bigg(\frac{n_1\varepsilon^2}{k_\mathrm{\max}[\log(n_2/\beta)\log\{k_\mathrm{\max}/(\alpha\beta)\}]^2}\bigg)^{1/(2s+d)}.
                    \end{aligned}
                \end{equation}
                
                By the proof of \Cref{sec5:thm:main} for the interactive case, we see that the test $\phi_{k^\ast}$ has type-II error control $\beta$ when
                \begin{align*}
                    \|f_X - f_Y\|_2^2
                    &\geq \frac{\widetilde{C}_{r, s, d}2^{dk^\ast}[\log(n_2/\beta)\log\{k_{\mathrm{max}}/(\alpha\beta)\}]^2}{(n_1/k_{\mathrm{max}})\varepsilon^2} + \frac{r^2}{(2^{k^\ast})^{2s}},
                \end{align*}
                for $\widetilde{C}_{r, s, d} > 0$ some constant depending on $d$. Using \eqref{app:eq:Ikstar} to upper bound the right-hand-side,
                \begin{align*}
                    &\frac{\widetilde{C}_{r, s, d}2^{dk^\ast}[\log(n_2/\beta)\log\{k_{\mathrm{max}}/(\alpha\beta)\}]^2}{(n_1/k_{\mathrm{max}})\varepsilon^2} + \frac{r^2}{(2^{k^\ast})^{2s}} \\
                    &\leq 2(2\widetilde{C}_{r, s, d}+r^2)\bigg(\frac{n_1\varepsilon^2}{k_{\mathrm{max}}[\log(n_2/\beta)\log\{k_\mathrm{max}/(\alpha\beta)\}]^2} \bigg)^{-2s/(2s+d)}\\
                    &\leq \widetilde{C}_{r, s, d}' \bigg(\frac{n_1\varepsilon^2}{\log(n_1\varepsilon^2)[\log(n_2/\beta)\log\{\log(n_1\varepsilon^2)/(\alpha\beta)\}]^2}\bigg)^{-2s/(2s+d)},
                \end{align*}
                where $\widetilde{C}_{r, s, d}' > 0$ is some constant depending on $r, s$ and $d$. As with the proof of \Cref{sec5:thm:main}, the result for $L_p$-norm with $p \in [1, 2]$ then follows from H\"{o}lder's inequality. This completes the proof for the interactive case.
            \end{proof}

\section{Tail Bound Proofs (\texorpdfstring{$U$}{U}-Statistic)} \label{app:sec:tailbounds1}
    \subsection{Tail Bound for \texorpdfstring{$U$}{U}-Statistic}
        \begin{proof}[Proof of \Cref{app:prop:Ustatconc}]
            Throughout, we assume $n_1 \geq 2$, and so use the fact that $n_1 - 1 \geq n_1/2$. We first reduce to the case with equal sample sizes: Consider $\vecbf{L} = (l_1, \hdots, l_{n_1}) \subseteq [n_2]$ an $n_1$-tuple drawn without replacement uniformly over the set $[n_2]$, and define
            \begin{align*}
                U_{n_1}^\vecbf{L} 
                = \frac{1}{n_1(n_1 - 1)}\sum_{(i,j) \in \mathcal{I}_2^{n_1}} (\vecbf{Z}_i - \vecbf{W}_{l_i})^T(\vecbf{Z}_j - \vecbf{W}_{l_j}).
            \end{align*}
            Hence, we decompose the moment generating function as
            \begin{align*}
                \mathbb{E}[\exp\{\theta (U_{n_1, n_2} - \mathbb{E}[U_{n_1, n_2}])\}]
                &= \mathbb{E}\big[\exp\{\theta (\mathbb{E}[U_{n_1}^\vecbf{L} \mid \mathcal{D}_{n_1, n_2}] - \mathbb{E}[U_{n_1, n_2}])\}\big] \\
                &\leq \mathbb{E}\big[\exp\{\theta (U_{n_1}^\vecbf{L} - \mathbb{E}[U_{n_1}^\vecbf{L}]\})\big]
            \end{align*}
            where the equality follows from the fact that $U_{n_1, n_2} = \mathbb{E}[U_{n_1}^\vecbf{L} \mid \mathcal{D}_{n_1, n_2}]$ by construction, and the inequality is from Jensen's inequality and the fact that $\mathbb{E}[U_{n_1}^\vecbf{L}] = \mathbb{E}[U_{n_1, n_2}]$. Hence, if $U_{n_1}^\vecbf{L}$ is $\mathrm{SE}(\Sigma)$, then $U_{n_1, n_2}$ is $\mathrm{SE}(\Sigma)$ also.

            It remains to prove that $U_{n_1}^\vecbf{L}$ is $\mathrm{SE}(\Sigma)$ for some $\Sigma > 0$. Denote $\vecbfm{\mu} = \mathbb{E}[\vecbf{Z}_1 - \vecbf{W}_1]$, and $\vecbf{V}_i^\vecbf{L} = \vecbf{Z}_i - \vecbf{W}_{l_i} - \vecbfm{\mu}$. We decompose the tail probability into two terms
            \begin{equation} \label{app:eq:Ustattaildecomp}
                \mathbb{P}\big( \big| U_{n_1}^\vecbf{L} - \mathbb{E}[U_{n_1}^\vecbf{L}] \big| \geq x \big)
                \leq \mathbb{P}\bigg( \frac{1}{n_1(n_1-1)}  \bigg| \sum_{i \neq j} (\vecbf{V}_i^\vecbf{L})^T \vecbf{V}_j^\vecbf{L} \bigg| \geq x/2 \bigg)
                + \mathbb{P}\bigg( \frac{2}{n_1}  \bigg|\sum_{j = 1}^{n_1} \vecbfm{\mu}^T \vecbf{V}_j^\vecbf{L} \bigg| \geq x/2 \bigg),
            \end{equation}
            which follows from the decomposition
            \begin{equation*}
                U_{n_1}^\vecbf{L} - \mathbb{E}[U_{n_1}^\vecbf{L}] = \frac{1}{n_1(n_1-1)}\sum_{i \neq j} (\vecbf{V}_i^\vecbf{L})^T \vecbf{V}_j^\vecbf{L}  + 2n_1^{-1} \sum_{j = 1}^{n_1} \vecbfm{\mu}^T \vecbf{V}_j^\vecbf{L}.
            \end{equation*}

            For the first term, conditional on $\vecbf{L}$, the collection $\{\vecbf{V}_j^\vecbf{L}\}_{j \in [n_1]}$ consists of independent mean-zero $\mathrm{SG}(2\sigma^2)$ random vectors by the proposition statement and \Cref{app:lem:IndSGSum}. Hence, by Exercise~6.2.7 in \cite{Vershynin:2018:HDPBook}, we have, for $c > 0$ some absolute constant, that
            \begin{align}
                \mathbb{P}\bigg( \frac{1}{n_1(n_1-1)} \bigg| \sum_{i \neq j} (\vecbf{V}_i^\vecbf{L})^T \vecbf{V}_j^\vecbf{L} \bigg| \geq x/2 &\biggm| \vecbf{L} \bigg) \nonumber \\
                &\leq 2\exp\bigg( -c\min\bigg\{ \frac{n_1(n_1-1)x^2}{d\sigma^4}, \frac{\{n_1(n_1-1)\}^{1/2}x}{\sigma^2}\bigg\}\bigg) \nonumber \\
                &\leq 2\exp\bigg( -c\min\bigg\{ \frac{n_1(n_1-1)x^2}{d\sigma^4}, \frac{\{n_1(n_1-1)\}^{1/2}x}{d^{1/2}\sigma^2}\bigg\}\bigg) \nonumber \\
                &\leq 4\exp\bigg( -\frac{c n_1 x}{d^{1/2}\sigma^2}\bigg), \label{app:eq:Ustattaildecompterm1}
            \end{align}
            where the final inequality follows from the fact that $\exp(-x^2) \leq 2\exp(-x)$ for all $x \geq 0$.
            
            For the second term, conditional on $\vecbf{L}$, we note that $\{\vecbfm{\mu}^T \vecbf{V}_j^\vecbf{L}\}_{j \in [n_1]}$ is a collection of univariate independent mean-zero $\mathrm{SG}(2\|\vecbfm{\mu}\|_2^2\sigma^2)$ random variables. Hence, by \citet[e.g.~Proposition~2.5][]{Wainwright:2019:HDSBook}, we have that
            \begin{equation} \label{app:eq:Ustattaildecompterm2}
                \mathbb{P}\bigg( \frac{2}{n_1}  \bigg|\sum_{j = 1}^{n_1} \vecbfm{\mu}^T \vecbf{V}_j^\vecbf{L} \bigg| \geq x/2 \biggm| \vecbf{L} \bigg)
                \leq 2\exp\bigg( -\frac{c'n_1x^2}{\|\vecbfm{\mu}\|_2^2 \sigma^2}\bigg)
                \leq 4\exp\bigg( -\frac{c'n_1^{1/2}x}{\|\vecbfm{\mu}\|_2 \sigma} \bigg),
            \end{equation}
            for $c' > 0$ some absolute constant.
            
            Combining \eqref{app:eq:Ustattaildecomp}, \eqref{app:eq:Ustattaildecompterm1} and \eqref{app:eq:Ustattaildecompterm2}, and noting $\| \vecbfm{\mu} \|_2^2 = \mathbb{E}[U_{n_1, n_2}]$ we see that 
            \begin{align*}
                 \mathbb{P}\big( \big| U_{n_1}^\vecbf{L} - \mathbb{E}[U_{n_1}^\vecbf{L}] \big| \geq x \big)
                &\leq 4\exp\bigg( -\frac{c n_1 x}{d^{1/2}\sigma^2}\bigg)
                + 4\exp\bigg( -\frac{c'n_1^{1/2}x}{(\mathbb{E}[U_{n_1, n_2}])^{1/2} \sigma}\bigg) \\
                &\leq 8\exp\bigg( -\min\bigg\{ \frac{cn_1}{d^{1/2}\sigma^2}, \frac{c'n_1^{1/2}}{(\mathbb{E}[U_{n_1, n_2}])^{1/2} \sigma} \bigg\}x \bigg).
            \end{align*}
            Finally, appealing to \Cref{app:lem:SETailtoMGF} with the above tail bound, we see that
            $U_{n_1}^\vecbf{L}$ is $\mathrm{SE}(\Sigma)$ where
            \begin{equation*}
                \Sigma = C'\max\bigg\{ \frac{d^{1/2}\sigma^2}{n_1}, \frac{(\mathbb{E}[U_{n_1, n_2}])^{1/2} \sigma}{n_1^{1/2}}\bigg\} 
            \end{equation*}
            for $C' > 0$ an absolute constant. This completes the proof.
        \end{proof}
        
    \subsection{Tail Bound for Permuted \texorpdfstring{$U$}{U}-Statistic}
        \begin{proof}[Proof of \Cref{app:prop:Ustatpermconc}]
            Throughout, we assume $n_1 \geq 2$, and so use the fact that $n_1 - 1 \geq n_1/2$. We first recall the notation we use. Denote the combined sample $\mathcal{D}_{n_1, n_2} = \{\vecbf{D}_{1}, \hdots, \vecbf{D}_{n_1 + n_2}\}$ where $\vecbf{D}_i = \vecbf{Z}_i$ for $i \in [n_1]$ and $\vecbf{D}_{n_1 + i'} = \vecbf{W}_{i'}$ for $i' \in [n_2]$, and we denote by $\pi \in S_{n_1 + n_2}$ a permutation sampled uniformly at random from the collection of permutations on $n_1 + n_2$ symbols.
            
            We reduce to the case with equal sample sizes. Consider $\vecbf{L} = (l_1, \hdots, l_{n_1}) \subseteq [n_2]$ an $n_1$-tuple drawn without replacement uniformly over the set $[n_2]$, and define
            \begin{equation} \label{app:eq:UpermMGFbound}
                U_{n_1}^{\vecbf{L}, \pi} 
                = \frac{1}{n_1(n_1 - 1)}\sum_{(i,j) \in \mathcal{I}_2^{n_1}} (\vecbf{D}_{\pi(i)} - \vecbf{D}_{\pi(n_1 + l_i)})^T(\vecbf{D}_{\pi(j)} - \vecbf{D}_{\pi(n_1 + l_j)}).
            \end{equation}
            Note in particular that $\pi$ and $\vecbf{L}$ are independent and that any of $\pi(i), \pi(n_1 + l_i), \pi(j), \pi(n_1 + l_j)$ for $i \in [n_1]$ may map to an index outside those considered in the sum defining $U_{n_1}^{\vecbf{L}, \pi}$. Noting that $\mathbb{E}[U_{n_1, n_2}^{\pi}] = 0$ as shown in the proof of \Cref{sec3:thm:sepcondU}, we consider the moment generating function
            \begin{align*}
                \mathbb{E}[\exp\{\lambda U_{n_1, n_2}^{\pi}\}]
                &= \mathbb{E}\big[\exp\{\lambda \mathbb{E}[U_{n_1}^\vecbf{L, \pi} \mid \mathcal{D}_{n_1, n_2}, \pi]\}\big]\\
                &\leq \mathbb{E}\big[ \mathbb{E}[\exp\{\lambda (U_{n_1}^\vecbf{L, \pi} - E_{n_1}^{\vecbf{L}, \pi}\}) \mid \vecbf{L}, \pi] \exp(\lambda E_{n_1}^{\vecbf{L}, \pi})\big]
            \end{align*}
            for $\lambda \in \mathbb{R}$, where we denote $E_{n_1}^{\vecbf{L}, \pi} = \mathbb{E}[U_{n_1}^{\vecbf{L}, \pi} \mid \vecbf{L}, \pi]$ for brevity; the equality holds as $U_{n_1, n_2}^{\pi} = \mathbb{E}[U_{n_1}^{\vecbf{L}, \pi} \mid \mathcal{D}_{n_1, n_2}, \pi]$ by construction; and the inequality by an application of Jensen's inequality and the tower property.

            To bound the two expectation terms in the previous display, we use two lemmata we delay to the end of this appendix, and use them to complete the present proof. Indeed, combining \eqref{app:eq:UpermMGFbound}, \Cref{app:lem:centredpermstattail}, \Cref{app:lem:permstattail}, and the respective conditions on the range of $|\lambda|$ by the sub-exponential conditions, we obtain for $|\lambda| \leq (C'' \max\{ d^{1/2}\sigma^2/n_1, \mathbb{E}[U_{n_1, n_2}]^{1/2}\sigma/n_1^{1/2}, \mathbb{E}[U_{n_1, n_2}]/n_1\})^{-1}$ that
            \begin{align*}
                \mathbb{E}[\exp\{\lambda U_{n_1, n_2}^{\pi}\}]
                &\leq \exp\bigg( \lambda^2 C^2\max\bigg\{ \frac{d\sigma^4}{n_1^2}, \frac{\mathbb{E}[U_{n_1, n_2}]\sigma^2}{n_1} \bigg\} \bigg) \exp\bigg( \lambda^2(C')^2\frac{\mathbb{E}[U_{n_1, n_2}]^2}{n_1^2} \bigg)\\ 
                &\leq \exp\bigg( \lambda^2 (C'')^2 \max\bigg\{ \frac{d\sigma^4}{n_1^2}, \frac{\mathbb{E}[U_{n_1, n_2}]\sigma^2}{n_1}, \frac{\mathbb{E}[U_{n_1, n_2}]^2}{n_1^2} \bigg\} \bigg),
            \end{align*}
            for $C'' > 0$ some absolute constant. This completes the proof.
        \end{proof}

        We now provide the statement and proofs of \Cref{app:lem:centredpermstattail} and \Cref{app:lem:permstattail}, using the same notation and definitions as in the previous proof.

        \begin{lemma} \label{app:lem:centredpermstattail}
            Conditional on $\vecbf{L}$ and $\pi$, $U_{n_1}^{\vecbf{L}, \pi}$ is $\mathrm{SE}(C\max\{d^{1/2}\sigma^2/n_1, \mathbb{E}[U_{n_1, n_2}]^{1/2}\sigma/n_1^{1/2}\})$ almost surely for $C > 0$ some absolute constant.
        \end{lemma}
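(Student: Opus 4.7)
Throughout this plan I work conditionally on $\vecbf{L}$ and $\pi$. The approach is to reduce the permuted $U$-statistic, via a Hoeffding-type decomposition, to a sum of a quadratic form and a linear form in a collection of independent, centred, sub-Gaussian vectors, then invoke Hanson--Wright together with standard sub-Gaussian tail bounds.

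The first step is to set $\vecbf{A}_i = \vecbf{D}_{\pi(i)} - \vecbf{D}_{\pi(n_1 + l_i)}$ for $i \in [n_1]$ and observe that, because $\pi$ is injective and the entries of $\vecbf{L}$ are distinct while $l_i \in [n_2]$ and $i \in [n_1]$, the $2n_1$ indices $\{\pi(i), \pi(n_1+l_i)\}_{i \in [n_1]}$ are pairwise distinct. Since the combined sample consists of mutually independent $\mathrm{SG}(\sigma^2)$ vectors, \Cref{app:lem:IndSGSum} then gives that the $\vecbf{A}_i$'s are independent and each $\mathrm{SG}(2\sigma^2)$. The key observation at this stage is that the conditional mean $\vecbfm{\nu}_i = \mathbb{E}[\vecbf{A}_i \mid \pi, \vecbf{L}]$ takes only the values $\vecbfm{0}, \pm(\vecbfm{\mu}_X - \vecbfm{\mu}_Y)$ depending on whether the indices $\pi(i), \pi(n_1+l_i)$ fall in $[n_1]$ or not, where $\vecbfm{\mu}_X = \mathbb{E}[\vecbf{Z}_1]$ and $\vecbfm{\mu}_Y = \mathbb{E}[\vecbf{W}_1]$. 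In particular, $\|\vecbfm{\nu}_i\|_2^2 \leq \|\vecbfm{\mu}_X - \vecbfm{\mu}_Y\|_2^2 = \mathbb{E}[U_{n_1, n_2}]$ almost surely, the last identity following from direct computation of $\mathbb{E}[U_{n_1,n_2}]$.

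Next, I decompose
\begin{equation*}
U_{n_1}^{\vecbf{L}, \pi} - E_{n_1}^{\vecbf{L}, \pi}
= \frac{1}{n_1(n_1-1)} \sum_{(i,j) \in \mathcal{I}_2^{n_1}} (\vecbf{A}_i - \vecbfm{\nu}_i)^T(\vecbf{A}_j - \vecbfm{\nu}_j)
+ \frac{2}{n_1(n_1-1)} \sum_{(i,j) \in \mathcal{I}_2^{n_1}} \vecbfm{\nu}_i^T (\vecbf{A}_j - \vecbfm{\nu}_j),
\end{equation*}
analogously to the decomposition used in the proof of \Cref{app:prop:Ustatconc}. For the quadratic part, the vectors $\vecbf{A}_i - \vecbfm{\nu}_i$ are independent, mean-zero, and $\mathrm{SG}(2\sigma^2)$, so Exercise~6.2.7 in \cite{Vershynin:2018:HDPBook} yields a tail bound of the same form as in \eqref{app:eq:Ustattaildecompterm1}, giving a sub-exponential contribution of order $d^{1/2}\sigma^2/n_1$. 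For the linear part, writing $\vecbfm{\eta} = \sum_{i \in [n_1]} \vecbfm{\nu}_i$, I rearrange the double sum as
\begin{equation*}
\sum_{(i,j) \in \mathcal{I}_2^{n_1}} \vecbfm{\nu}_i^T (\vecbf{A}_j - \vecbfm{\nu}_j)
= \vecbfm{\eta}^T \sum_{j=1}^{n_1} (\vecbf{A}_j - \vecbfm{\nu}_j) - \sum_{j=1}^{n_1} \vecbfm{\nu}_j^T (\vecbf{A}_j - \vecbfm{\nu}_j),
\end{equation*}
and bound each term by the standard sub-Gaussian sum estimate, using $\|\vecbfm{\eta}\|_2 \leq n_1 (\mathbb{E}[U_{n_1,n_2}])^{1/2}$ and $\|\vecbfm{\nu}_j\|_2 \leq (\mathbb{E}[U_{n_1,n_2}])^{1/2}$. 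After dividing by $n_1(n_1-1)$, this produces a sub-Gaussian contribution whose associated sub-exponential tail has parameter of order $(\mathbb{E}[U_{n_1,n_2}])^{1/2}\sigma/n_1^{1/2}$.

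Finally, I combine the two tail bounds via a union-bound-type argument (as in the closing step of the proof of \Cref{app:prop:Ustatconc}) to obtain a sub-exponential tail of the form $8\exp(-\min\{c n_1/(d^{1/2}\sigma^2), c'n_1^{1/2}/((\mathbb{E}[U_{n_1,n_2}])^{1/2}\sigma)\} x)$ and convert to the claimed sub-exponential parameter via \Cref{app:lem:SETailtoMGF}. The main obstacle I anticipate is book-keeping around the linear term: one must verify that the bound on $\|\vecbfm{\eta}\|_2$ in terms of the \emph{unconditional} quantity $\mathbb{E}[U_{n_1,n_2}]$ is tight enough to prevent the linear contribution from dominating the desired rate, and the identification $\|\vecbfm{\mu}_X - \vecbfm{\mu}_Y\|_2^2 = \mathbb{E}[U_{n_1,n_2}]$ is what makes this work. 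The quadratic part is a routine application of Hanson--Wright, so the technical subtlety is entirely in tracking how the permutation-induced structure of $\vecbfm{\nu}_i$ interacts with this identity to recover the unconditional mean in the stated bound.
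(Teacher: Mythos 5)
Your proposal is correct and follows essentially the same route as the paper's own proof: the same centring of $\vecbf{A}_i$ around $\vecbfm{\nu}_i$, the same three-term Hoeffding-type decomposition (your rearrangement of the cross term into $\vecbfm{\eta}^T\sum_j(\vecbf{A}_j - \vecbfm{\nu}_j) - \sum_j \vecbfm{\nu}_j^T(\vecbf{A}_j - \vecbfm{\nu}_j)$ is exactly the paper's Term~II and Term~III), the same deterministic bound $\|\vecbfm{\nu}_i\|_2 \leq \|\vecbfm{\mu}\|_2 = (\mathbb{E}[U_{n_1,n_2}])^{1/2}$, Hanson--Wright for the quadratic part, sub-Gaussian sum bounds for the linear parts, and the final combination via \Cref{app:lem:SETailtoMGF}. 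No gaps.
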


        \begin{proof}
            Denote $\vecbfm{\mu}_i^{\vecbf{L}, \pi} = \mathbb{E}[\vecbf{D}_{\pi(i)} - \vecbf{D}_{\pi(n_1 + l_i)} \mid \vecbf{L}, \pi]$ and $\vecbf{V}_i^{\vecbf{L}, \pi} = \vecbf{D}_{\pi(i)} - \vecbf{D}_{\pi(n_1 + l_i)} - \vecbfm{\mu}_i^{\vecbf{L}, \pi}$. We have the decomposition
            \begin{align*}
                &U_{n_1}^{\vecbf{L}, \pi} - \mathbb{E}[U_{n_1}^{\vecbf{L}, \pi} \mid \vecbf{L}, \pi] \\
                &= \frac{1}{n_1(n_1-1)} \sum_{i \neq j} (\vecbf{V}_i^{\vecbf{L}, \pi})^T \vecbf{V}_j^{\vecbf{L}, \pi}
                + \frac{2}{n_1(n_1-1)} \sum_{i \neq j} (\vecbfm{\mu}_i^{\vecbf{L}, \pi})^T \vecbf{V}_j^{\vecbf{L}, \pi} \\
                &= \frac{1}{n_1(n_1-1)} \sum_{i \neq j} (\vecbf{V}_i^{\vecbf{L}, \pi})^T \vecbf{V}_j^{\vecbf{L}, \pi}
                + \frac{2}{n_1-1} \bigg(\frac{1}{n_1}\sum_{i = 1}^{n_1} \vecbfm{\mu}_i^{\vecbf{L}, \pi} \bigg)^T \bigg(\sum_{j = 1}^{n_1} \vecbf{V}_j^{\vecbf{L}, \pi}\bigg) \\
                &\hspace{9cm}- \frac{2}{n_1(n_1-1)} \sum_{i = 1}^{n_1} (\vecbfm{\mu}_i^{\vecbf{L}, \pi})^T \vecbf{V}_i^{\vecbf{L}, \pi}.
            \end{align*}

            We thus consider the tail probability using this decomposition, obtaining
            \begin{align}
                \mathbb{P}\big( \big| U_{n_1}^{\vecbf{L}, \pi} - \mathbb{E}[U_{n_1}^{\vecbf{L}, \pi} \mid \vecbf{L}, \pi] \big| &\geq x \mid \vecbf{L}, \pi \big) \nonumber \\
                &\leq \mathbb{P}\bigg( \frac{1}{n_1(n_1-1)}  \bigg| \sum_{i \neq j} (\vecbf{V}_i^{\vecbf{L}, \pi})^T \vecbf{V}_j^{\vecbf{L}, \pi} \bigg| \geq x/3 \biggm| \vecbf{L}, \pi \bigg) \nonumber \\
                &\hspace{1cm}+ \mathbb{P}\bigg( \frac{2}{n_1-1} \bigg| \bigg(\frac{1}{n_1}\sum_{i = 1}^{n_1} \vecbfm{\mu}_i^{\vecbf{L}, \pi} \bigg)^T \bigg(\sum_{j = 1}^{n_1} \vecbf{V}_j^{\vecbf{L}, \pi}\bigg) \bigg| \geq x/3 \biggm| \vecbf{L}, \pi\bigg) \nonumber \\
                &\hspace{1cm}+ \mathbb{P}\bigg( \frac{2}{n_1(n_1-1)} \bigg| \sum_{i = 1}^{n_1} (\vecbfm{\mu}_i^{\vecbf{L}, \pi})^T \vecbf{V}_i^{\vecbf{L}, \pi} \bigg| \geq x/3 \biggm| \vecbf{L}, \pi \bigg) \nonumber \\
                &= (\mathrm{I}) + (\mathrm{II}) + (\mathrm{III}). \label{app:eq:centredtermsum}
            \end{align}

            We now \beame{analyse}{analyse}{analyze} the three terms separately to obtain upper bounds, where we omit the statement that they hold almost surely for brevity. Before proceeding, we first make note of the following observation used in all three cases: Conditional on a fixed tuple $\vecbf{L}$ and permutation $\pi$, we have that for $i, j \in [n_1]$ with $i \neq j$, the random variables $\vecbf{D}_{\pi(i)}, \vecbf{D}_{\pi(n_1 + l_j)}$ are independent, and the collection $\{\vecbf{V}_i^{\vecbf{L}, \pi}\}_{i \in [n_1]}$ consists of $\mathrm{SG}(2\sigma^2)$ random vectors by \Cref{app:lem:IndSGSum}.

            \noindent
            \textbf{Term} I:
                As the collection $\{\vecbf{V}_i^{\vecbf{L}, \pi}\}_{i \in [n_1]}$ consists of independent $\mathrm{SG}(2\sigma^2)$ random vectors, we can appeal to the same argument leading to \eqref{app:eq:Ustattaildecompterm1}, from which we obtain
                \begin{equation} \label{app:eq:centredterm1}
                    \mathbb{P}\bigg( \frac{1}{n_1(n_1-1)}  \bigg| \sum_{i \neq j} (\vecbf{V}_i^{\vecbf{L}, \pi})^T \vecbf{V}_j^{\vecbf{L}, \pi} \bigg| \geq x/3 \biggm| \vecbf{L}, \pi \bigg)
                    \leq 4\exp\bigg( -\frac{cn_1x}{d^{1/2}\sigma^2}\bigg).
                \end{equation}
                for $c > 0$ some absolute constant.

            \noindent
            \textbf{Term} II:
                Let $\bar{\vecbfm{\mu}}^{\vecbf{L}, \pi} = n_1^{-1}\sum_{i = 1}^{n_1} \vecbfm{\mu}_i^{\vecbf{L}, \pi}$ and let $\vecbfm{\mu} = \mathbb{E}[\vecbf{Z}_1 - \vecbf{W}_1]$. Noting that the trivial bound $\| \bar{\vecbfm{\mu}}^{\vecbf{L}, \pi} \|_2 \leq \| \vecbfm{\mu} \|_2$ holds almost surely, we have that conditional on $\vecbf{L}$ and $\pi$, that the sum $(\bar{\vecbfm{\mu}}^{\vecbf{L}, \pi})^T (\sum_{j = 1}^{n_1} \vecbf{V}_j^{\vecbf{L}, \pi})$ is $\mathrm{SG}(2n_1\| \vecbfm{\mu} \|_2^2\sigma^2)$ by \Cref{app:lem:IndSGSum}. This yields,
                \begin{align} 
                    \mathbb{P}\bigg( \frac{2}{n_1-1} \bigg| \bigg(\frac{1}{n_1}\sum_{i = 1}^{n_1} \vecbfm{\mu}_i^{\vecbf{L}, \pi} \bigg)^T \bigg(\sum_{j = 1}^{n_1} \vecbf{V}_j^{\vecbf{L}, \pi}\bigg) \bigg| \geq x/3 \biggm| \vecbf{L}, \pi \bigg)
                    &\leq 2\exp\bigg( - \frac{c'n_1x^2}{\| \vecbfm{\mu} \|_2^2\sigma^2} \bigg) \nonumber \\
                    &\leq 4\exp\bigg( - \frac{c''n_1^{1/2}x}{\| \vecbfm{\mu} \|_2\sigma} \bigg) \label{app:eq:centredterm2}
                \end{align}
                where $c', c'' > 0$ are absolute constants, and where the final inequality follows from the bound $\exp(-x^2) \leq 2\exp(-x)$ valid for all $x \geq 0$.
                
            \noindent
            \textbf{Term} III:
                We first note that we have the trivial bound that $\| \vecbfm{\mu}_i^{\vecbf{L}, \pi} \|_2 \leq \| \vecbfm{\mu} \|_2$ almost surely. Thus, conditional on $\vecbf{L}$ and $\pi$, we have that the sum $\sum_{i = 1}^{n_1} (\vecbfm{\mu}_i^{\vecbf{L}, \pi})^T \vecbf{V}_i^{\vecbf{L}, \pi}$ is $\mathrm{SG}(n_1 \| \vecbfm{\mu} \|_2^2 \sigma^2)$ by \Cref{app:lem:IndSGSum}. This yields
                \begin{align}
                    \mathbb{P}\bigg( \frac{2}{n_1(n_1-1)} \bigg| \sum_{i = 1}^{n_1} (\vecbfm{\mu}_i^{\vecbf{L}, \pi})^T \vecbf{V}_i^{\vecbf{L}, \pi} \bigg| \geq x/3 \biggm| \pi, \vecbfm{L} \bigg)
                    &\leq 2\exp\bigg( - \frac{c'''n_1(n_1 - 1)^2x^2}{\| \vecbfm{\mu} \|_2^2 \sigma^2}\bigg) \nonumber \\
                    &\leq 4\exp\bigg( -\frac{c''''n_1^{3/2}x}{\| \vecbfm{\mu} \|_2 \sigma}\bigg),\label{app:eq:centredterm3}
                \end{align}
                for $c''', c'''' > 0$ absolute constants, where the final inequality follows from the bound $\exp(-x^2) \leq 2\exp(-x)$ for all $x \geq 0$.

            \noindent Combining \eqref{app:eq:centredtermsum}, \eqref{app:eq:centredterm1}, \eqref{app:eq:centredterm2} and \eqref{app:eq:centredterm3}, we obtain
            \begin{align*} 
                \mathbb{P}\big( \big| U_{n_1}^{\vecbf{L}, \pi} &- \mathbb{E}[U_{n_1}^{\vecbf{L}, \pi} \mid \vecbf{L}, \pi] \big| \geq x \mid \vecbf{L}, \pi \big) \nonumber \\
                &\leq 4\exp\bigg( -\frac{cn_1x}{d^{1/2}\sigma^2}\bigg)
                + 4\exp\bigg( - \frac{c''n_1^{1/2}x}{\| \vecbfm{\mu} \|_2\sigma} \bigg)
                + 4\exp\bigg( -\frac{c''''n_1^{3/2}x}{\| \vecbfm{\mu} \|_2 \sigma}\bigg), \\
                &\leq 12\exp\bigg( -\frac{x}{C'\max\{d^{1/2}\sigma^2/n_1, \|\vecbfm{\mu}\|_2\sigma/n_1^{1/2}\}}\bigg),
            \end{align*}
            for $C' > 0$ some absolute constant. Finally, invoking \Cref{app:lem:SETailtoMGF} and noting $\mathbb{E}[U_{n_1, n_2}] = \|\vecbfm{\mu}\|_2^2$ concludes the proof.
        \end{proof}

        \begin{lemma} \label{app:lem:permstattail}
            The random variable $E_{n_1}^{\vecbf{L}, \pi}$
            is $\mathrm{SE}(C'\mathbb{E}[U_{n_1, n_2}]/n_1)$  for $C' > 0$ some absolute constant.
        \end{lemma}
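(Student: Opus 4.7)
The plan is to compute $E_{n_1}^{\vecbf{L},\pi}$ in closed form and then reduce the required concentration to a hypergeometric tail bound. Conditional on $\vecbf{L}$ and $\pi$, the observations $\vecbf{D}_1,\ldots,\vecbf{D}_{n_1+n_2}$ are independent with $\mathbb{E}[\vecbf{D}_k \mid \vecbf{L},\pi]=\vecbfm{\mu}_X$ if $k\in[n_1]$ and $\vecbfm{\mu}_Y$ otherwise. Setting $c_i = \mathbbm{1}\{\pi(i)\in[n_1]\}-\mathbbm{1}\{\pi(n_1+l_i)\in[n_1]\}\in\{-1,0,1\}$, the mean difference collapses to $c_i\vecbfm{\mu}$ with $\vecbfm{\mu}=\mathbb{E}[\vecbf{X}_1-\vecbf{Y}_1]$, so that
\begin{equation*}
E_{n_1}^{\vecbf{L},\pi} \;=\; \frac{\|\vecbfm{\mu}\|_2^2}{n_1(n_1-1)}\sum_{(i,j)\in\mathcal{I}_2^{n_1}} c_i c_j \;=\; \frac{\mathbb{E}[U_{n_1,n_2}]}{n_1(n_1-1)}\bigl(S^2-R\bigr),
\end{equation*}
where $S=\sum_{i=1}^{n_1}c_i$ and $R=\sum_{i=1}^{n_1}c_i^2\in[0,n_1]$. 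Hence the problem reduces to establishing a sub-exponential tail for $(S^2-R)/(n_1(n_1-1))$, with target variance proxy of order $1/n_1$.

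The key step is to show that $S$ is sub-Gaussian with proxy $O(n_1)$. Let $\Omega_1=[n_1]$ and $\Omega_2=\{n_1+l_1,\ldots,n_1+l_{n_1}\}$; these are disjoint subsets of $[n_1+n_2]$ of size $n_1$. Condition on the random set $\mathcal{C}=\pi(\Omega_1\cup\Omega_2)$ (of size $2n_1$) and let $K=|\mathcal{C}\cap[n_1]|$. Conditional on $\mathcal{C}$, $\pi(\Omega_1)$ is a uniformly random size-$n_1$ subset of $\mathcal{C}$, so $H:=|\pi(\Omega_1)\cap[n_1]|\sim\mathrm{Hyp}(2n_1,K,n_1)$, while $|\pi(\Omega_2)\cap[n_1]|=K-H$, giving $S=2H-K$. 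Standard concentration for sampling without replacement (e.g.~Hoeffding/Serfling) yields $\mathbb{E}[\exp(\lambda S)\mid\mathcal{C}]\le \exp(c\lambda^2 n_1)$ uniformly in $K$ for some absolute constant $c>0$. Taking expectation over $\mathcal{C}$ preserves this bound, so $S\in\mathrm{SG}(Cn_1)$ unconditionally.

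With $S\in\mathrm{SG}(Cn_1)$, it follows that $S^2$ is $\mathrm{SE}(C'n_1)$, and since $R$ is deterministically bounded by $n_1$ with $\mathbb{E}[S^2-R]=0$ (which matches $\mathbb{E}[E_{n_1}^{\vecbf{L},\pi}]=\mathbb{E}[U_{n_1,n_2}^\pi]=0$), writing $S^2-R=(S^2-\mathbb{E}[S^2])+(\mathbb{E}[S^2]-R)$ gives a sub-exponential part of scale $O(n_1)$ plus a bounded part of magnitude $O(n_1)$, hence $S^2-R$ itself is $\mathrm{SE}(C''n_1)$. Dividing by $n_1(n_1-1)$ and multiplying by $\mathbb{E}[U_{n_1,n_2}]=\|\vecbfm{\mu}\|_2^2$ yields the claimed $\mathrm{SE}(C'\mathbb{E}[U_{n_1,n_2}]/n_1)$ property via \Cref{app:lem:SETailtoMGF}.

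The main obstacle will be the conditional hypergeometric argument: the joint distribution of $(A,B)$ is not tractable directly, and naive bounds via triangle inequalities on $S=A-B$ lose the cancellation between the two correlated hypergeometric counts. Identifying the correct conditioning set $\mathcal{C}=\pi(\Omega_1\cup\Omega_2)$ is what allows us to express $S$ as an affine function of a single hypergeometric variable with variance proxy $O(n_1)$ regardless of the nuisance parameter $K$, which in turn makes the integration over $\mathcal{C}$ trivial and delivers the sharp $1/n_1$ scaling required to match the dominant term in \Cref{app:prop:Ustatpermconc}.
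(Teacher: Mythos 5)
Your proposal is correct and arrives at the same closed form for the permuted mean, namely $E_{n_1}^{\vecbf{L},\pi} = \frac{\|\vecbfm{\mu}\|_2^2}{n_1(n_1-1)}\bigl(S^2-R\bigr)$ with $S = \psi_{n_1}^{\vecbf{L},\pi} = \sum_{i=1}^{n_1} d_i^{\vecbf{L},\pi}$ and $R = \sum_{i=1}^{n_1}(d_i^{\vecbf{L},\pi})^2$ (your $c_i$ equals the paper's $d_i^{\vecbf{L},\pi}$), and both routes hinge on showing $S$ is $\mathrm{SG}(O(n_1))$. The difference is in how this is proved. The paper establishes it in \Cref{app:lem:permcontrol} by a martingale-style telescoping of the moment generating function: conditioning sequentially on $\pi(1),\ldots,\pi(i-1),\pi(n_1+l_1),\ldots,\pi(n_1+l_{i-1})$, showing each increment $d_i^{\vecbf{L},\pi}$ has a symmetric conditional distribution on $\{-1,0,1\}$ with $\mathbb{E}[\exp(\theta d_i) \mid \cdot]\le e^{\theta^2}$, and multiplying. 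You instead condition on the image set $\mathcal{C}=\pi(\Omega_1\cup\Omega_2)$, observe that $S=2H-K$ where $H=|\pi(\Omega_1)\cap[n_1]|$ is hypergeometric $\mathrm{Hyp}(2n_1,K,n_1)$ and is conditionally centered (so $\mathbb{E}[S\mid\mathcal{C}]=0$), and invoke Hoeffding's comparison of sampling without replacement against binomial sampling to get $S\mid\mathcal{C}\in\mathrm{SG}(n_1)$ uniformly in the nuisance $K$. The hypergeometric reduction is slicker and exposes the combinatorial structure more directly than the paper's Doob-style bookkeeping; it also delivers the conditional-on-$\vecbf{L}$ version for free (just fix $\vecbf{L}$ before conditioning on $\mathcal{C}$), which is the form the paper reuses in \Cref{app:lem:linstatpermSG} and \Cref{app:lem:contintpermSE}. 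The final assembly also differs cosmetically: the paper applies H\"{o}lder's inequality to split the mgf of $E_{n_1}^{\vecbf{L},\pi}$ into the $S^2$ and $R$ factors before appealing to \Cref{app:lem:SEMGFprefactor}, whereas you split $S^2-R=(S^2-\mathbb{E}[S^2])+(\mathbb{E}[S^2]-R)$, using sub-exponentiality of the square of a sub-Gaussian for the first term and boundedness plus $\mathbb{E}[S^2]=\mathbb{E}[R]$ (a consequence of $\mathbb{E}[U_{n_1,n_2}^\pi]=0$) for the second. One small slip: your final citation should point to the H\"{o}lder-combination or \Cref{app:lem:SEMGFprefactor} rather than \Cref{app:lem:SETailtoMGF}, since your argument bounds the mgf directly, not the tail; this does not affect correctness.
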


        \begin{proof}
            Denoting $\vecbfm{\mu} = \mathbb{E}[\vecbf{Z}_1 - \vecbf{W}_1]$ and defining the quantities
            \begin{equation} \label{app:eq:permmeanterms}
                \begin{aligned}
                    a_i^{\vecbf{L}, \pi} &= \mathbbm{1}\{\pi(i) \in [n_1],\; \pi(n_1 + l_i) \in [n_1 + n_2] \setminus [n_1]\}, \\
                    b_i^{\vecbf{L}, \pi} &= \mathbbm{1}\{\pi(i) \in [n_1 + n_2] \setminus [n_1],\; \pi(n_1 + l_i) \in [n_1]\}, \mbox{ and} \\
                    d_i^{\vecbf{L}, \pi} &= a_i^{\vecbf{L}, \pi} - b_i^{\vecbf{L}, \pi},
                \end{aligned}
            \end{equation}
            we have
            \begin{align}
                \frac{1}{n_1} \sum_{i = 1}^{n_1} \mathbb{E}[\vecbf{D}_{\pi(i)} &- \vecbf{D}_{\pi(n_1 + l_i)} \mid \vecbf{L}, \pi] \nonumber \\
                &= \frac{1}{n_1}\sum_{i = 1}^{n_1} \big\{ a_i^{\vecbf{L}, \pi} \mathbb{E}[\vecbf{D}_{\pi(i)} - \vecbf{D}_{\pi(n_1 + l_i)} \mid \vecbf{L}, \pi] + b_i^{\vecbf{L}, \pi} \mathbb{E}[\vecbf{D}_{\pi(i)} - \vecbf{D}_{\pi(n_1 + l_i)} \mid \vecbf{L}, \pi] \big\} \nonumber \\
                &= \frac{1}{n_1}\sum_{i = 1}^{n_1} \big\{ a_i^{\vecbf{L}, \pi}\mathbb{E}[\vecbf{Z}_1 - \vecbf{W}_1] + b_i^{\vecbf{L}, \pi}\mathbb{E}[\vecbf{W}_1 - \vecbf{Z}_1] \big\}
                = \bigg( \frac{1}{n_1} \sum_{i = 1}^{n_1} d_i^{\vecbf{L}, \pi} \bigg) \vecbfm{\mu} = \frac{\psi_{n_1}^{\vecbf{L}, \pi}}{n_1} \vecbfm{\mu}. \label{app:eq:permmeanform}
            \end{align}
            where we write $\psi_{n_1}^{\vecbf{L}, \pi} = \sum_{i = 1}^{n_1} d_i^{\vecbf{L}, \pi}$. We now decompose the moment generating function of $E_{n_1}^{\vecbf{L}, \pi}$ as
            \begin{align}
                &\mathbb{E}\big[\exp(\lambda E_{n_1}^{\vecbf{L}, \pi}) \big] \nonumber \\
                &= \mathbb{E}\bigg[\exp\bigg(\frac{\lambda n_1}{n_1 - 1} \bigg\| \frac{1}{n_1} \sum_{i = 1}^{n_1} \mathbb{E}[\vecbf{D}_{\pi(i)} - \vecbf{D}_{\pi(n_1 + l_i)} \mid \vecbf{L}, \pi] \bigg\|_2^2  \bigg) \nonumber \\
                &\hspace{6cm} \cdot \exp\bigg( -\frac{\lambda}{n_1(n_1 - 1)}\sum_{i = 1}^{n_1} \| \mathbb{E}[\vecbf{D}_{\pi(i)} - \vecbf{D}_{\pi(n_1 + l_i)} \mid \vecbf{L}, \pi] \|_2^2 \bigg)\bigg] \nonumber \\
                &= \mathbb{E}\bigg[\exp\bigg(  \frac{\lambda \| \vecbfm{\mu} \|_2^2 (\psi_{n_1}^{\vecbf{L}, \pi})^2}{n_1(n_1-1)}  \bigg)
                \exp\bigg( -\frac{\lambda}{n_1(n_1 - 1)}\sum_{i = 1}^{n_1} \| \mathbb{E}[\vecbf{D}_{\pi(i)} - \vecbf{D}_{\pi(n_1 + l_i)} \mid \vecbf{L}, \pi ]\|_2^2 \bigg)\bigg] \nonumber \\
                &\leq \mathbb{E}\bigg[\exp\bigg(  \frac{2\lambda \| \vecbfm{\mu} \|_2^2(\psi_{n_1}^{\vecbf{L}, \pi})^2}{n_1(n_1-1)}  \bigg) \bigg]^{1/2} \mathbb{E}\bigg[
                \exp\bigg( -\frac{2\lambda}{n_1(n_1 - 1)}\sum_{i = 1}^{n_1} \| \mathbb{E}[\vecbf{D}_{\pi(i)} - \vecbf{D}_{\pi(n_1 + l_i)} \mid \vecbf{L}, \pi] \|_2^2 \bigg)\bigg]^{1/2} \nonumber \\
                &= \mathbb{E}\bigg[ \mathbb{E}\bigg[\exp\bigg(  \frac{2\lambda \| \vecbfm{\mu} \|_2^2 (\psi_{n_1}^{\vecbf{L}, \pi})^2}{n_1(n_1-1)}  \bigg) \biggm| \vecbf{L} \bigg] \bigg]^{1/2} \nonumber \\
                &\hspace{3cm} \cdot\mathbb{E}\bigg[
                \exp\bigg( -\frac{2\lambda}{n_1(n_1 - 1)}\sum_{i = 1}^{n_1} \| \mathbb{E}[\vecbf{D}_{\pi(i)} - \vecbf{D}_{\pi(n_1 + l_i)} \mid \vecbf{L}, \pi] \|_2^2 \bigg)\bigg]^{1/2} \label{app:eq:secondMGFbound}
            \end{align}
            where the first equality is by the definition of $E_{n_1}^{\vecbf{L}, \pi}$; the second equality is by \eqref{app:eq:permmeanform}; the inequality by H\"{o}lder's inequality; and the final line by the tower property. We now aim to bound these two expectation terms.
            
            For the first term, we have for $|\lambda| \leq cn_1/\|\vecbfm{\mu}\|_2^2$ that
            \begin{equation}
                \mathbb{E}\bigg[\exp\bigg(  \frac{2\lambda \| \vecbfm{\mu} \|_2^2 }{n_1(n_1-1)}(\psi_{n_1}^{\vecbf{L}, \pi})^2  \bigg) \biggm| \vecbf{L} \bigg]
                \leq \exp\bigg( \frac{C_1 |\lambda| \|\vecbfm{\mu}\|_2^2}{n_1 - 1} \bigg)
                \leq 2\exp\bigg( \frac{C_2 \lambda^2 \|\vecbfm{\mu}\|_2^4}{n_1^2} \bigg)
                \label{app:eq:secondMGFboundterm1}
            \end{equation}
            for $C_1, C_2 > 0$ absolute constants and $c > 0$ a sufficiently small absolute constant, where the first inequality follows from the fact that, conditional on $\vecbf{L}$, $\psi_{n_1}^{\vecbf{L}, \pi}$ is $\mathrm{SG}(2n_1)$ by \Cref{app:lem:permcontrol} and then using Proposition~2.5.2~$(iii)$ from \cite{Vershynin:2018:HDPBook} to bound the moment generating function of the square of a sub-Gaussian random variable. The second inequality then follows from the bound $\exp(|x|) \leq 2\exp(x^2)$.

            For the second term,
            \begin{align}
                \mathbb{E}\bigg[\exp\bigg( -\frac{2\lambda}{n_1(n_1 - 1)}\sum_{i = 1}^{n_1} \| \mathbb{E}[\vecbf{D}_{\pi(i)} - \vecbf{D}_{\pi(n_1 + l_i)} \mid \vecbf{L}, \pi ] \|_2^2 \bigg)\bigg]
                &\leq \exp\bigg( \frac{C_3|\lambda|\|\vecbfm{\mu}\|_2^2}{n_1 - 1} \bigg) \nonumber \\
                &\leq 2\exp\bigg( \frac{C_4\lambda^2\|\vecbfm{\mu}\|_2^4}{n_1^2} \bigg)
                \label{app:eq:secondMGFboundterm2}
            \end{align}
            for $C_3, C_4 > 0$ absolute constants, where the first inequality is by the fact $\| \mathbb{E}[\vecbf{D}_{\pi(i)} - \vecbf{D}_{\pi(n_1 + l_i)} \mid \vecbf{L}, \pi] \|_2^2 \leq \|\vecbfm{\mu}\|_2^2$ almost surely, and the second inequality from the bound $\exp(|x|) \leq 2\exp(x^2)$.

            Finally, combining \eqref{app:eq:secondMGFbound}, \eqref{app:eq:secondMGFboundterm1}, and \eqref{app:eq:secondMGFboundterm2}
            \begin{align*}
                \mathbb{E}\big[\exp(\lambda E_{n_1}^{\vecbf{L}, \pi}) \big]
                \leq 2 \exp \bigg( \frac{C_5\lambda^2\|\vecbfm{\mu}\|_2^4}{n_1^2} \bigg)
            \end{align*}
            for $|\lambda| \leq n_1/(C_5\|\vecbfm{\mu}\|_2^2)$ where $C_5 > 0$ is some sufficiently large absolute constant. Finally, invoking \Cref{app:lem:SEMGFprefactor} completes the proof.
        \end{proof}

    \section{Tail Bound Proofs (Interactive Procedure - Discrete Distributions)} \label{app:sec:tailbounds2}
        \subsection{Tail Bound for Linear Statistic}
            \begin{proof}[Proof of \Cref{app:lem:linstatSE}]
                We first reduce to the case with equal sample sizes. Let $\vecbf{L} = (l_1, \hdots, l_{n_1}) \subseteq [n_2]$ an $n_1$-tuple drawn without replacement uniformly over the set $[n_2]$, and define
                \begin{align*}
                    T_{n_1}^\vecbf{L} 
                    = \frac{1}{n_1} \sum_{i = 1}^{n_1} (Z_i - W_{l_i}).
                \end{align*}
                Recalling the samples used in the first and second stages of the interactive procedure \eqref{sec4:eq:sampledefs}, denote the combination of the two \beame{privatised}{privatized}{privatized} samples used in the first stage of the estimator via $\mathcal{D}' = \widetilde{\mathcal{D}}_{X, n_1}' \cup \widetilde{\mathcal{D}}_{Y, n_2}'$, and the combination with the second stage via
                $\mathcal{D} = \widetilde{\mathcal{D}}_{X, n_1} \cup \widetilde{\mathcal{D}}_{X, n_1}' \cup \widetilde{\mathcal{D}}_{Y, n_2} \cup \widetilde{\mathcal{D}}_{Y, n_2}'$.
                We decompose the moment generating function of $T_{n_1, n_2}$ as
                \begin{align}
                    \mathbb{E}[&\exp\{\lambda (T_{n_1, n_2} - \mathbb{E}[T_{n_1, n_2}])\}] \nonumber \\
                    &= \mathbb{E}\big[\exp\{\lambda (\mathbb{E}[T_{n_1}^\vecbf{L} \mid \mathcal{D}] - \mathbb{E}[T_{n_1, n_2}])\}\big] \nonumber \\
                    &\leq \mathbb{E}\big[\exp\{\lambda (T_{n_1}^\vecbf{L} - \mathbb{E}[T_{n_1}^\vecbf{L}]\})\big] \nonumber \\
                    &= \mathbb{E}\big[ \mathbb{E}[\exp\{\lambda(T_{n_1}^\vecbf{L} - \mathbb{E}[T_{n_1}^\vecbf{L} \mid \mathcal{D}']\} \mid \mathcal{D}' ] \exp\{\lambda(\mathbb{E}[T_{n_1}^\vecbf{L} \mid \mathcal{D}'] - \mathbb{E}[T_{n_1}^\vecbf{L}])\} \big] \nonumber \\
                    &= \mathbb{E}[\exp\{\lambda(Z_1 - W_{l_1} - \mathbb{E}[Z_1 - W_{l_1} \mid \mathcal{D}'])/n_1\} \mid \mathcal{D}' ]^{n_1} \exp\{\lambda(\mathbb{E}[T_{n_1}^\vecbf{L} \mid \mathcal{D}'] - \mathbb{E}[T_{n_1}^\vecbf{L}])\} \big] \nonumber \\
                    &\leq \exp(2\lambda^2c_\varepsilon^2 \tau^2/n_1)\exp\{\lambda(\mathbb{E}[T_{n_1}^\vecbf{L} \mid \mathcal{D}'] - \mathbb{E}[T_{n_1}^\vecbf{L}])\} \big] \nonumber \\
                    &\leq \exp\{10\lambda^2/(n_1^2\varepsilon^4))\} \exp\{\lambda(\mathbb{E}[T_{n_1}^\vecbf{L} \mid \mathcal{D}'] - \mathbb{E}[T_{n_1}^\vecbf{L}])\} \big], \label{app:eq:DiscLinMGF1}
                \end{align}
                where the first equality is by the fact $T_{n_1, n_2} = \mathbb{E}[T_{n_1}^\vecbf{L} \mid \mathcal{D}]$; the first inequality by Jensen's inequality and the fact that $\mathbb{E}[T_{n_1}^\vecbf{L}] = \mathbb{E}[T_{n_1, n_2}]$; the second equality by the tower property; the third equality by the fact that the collection $\{Z_i - W_{l_i}\}_{i \in [n_1]}$ is conditionally independent and identically distributed given $\mathcal{D}'$; and the final inequality by the fact that $Z_1 - W_{l_1} \in \{0, \pm 2 c_\varepsilon \tau\}$ are bounded random variables and hence $\mathrm{SG}(4c_\varepsilon^2\tau^2)$ (e.g.~Exercise~2.4~in \citealt{Wainwright:2019:HDSBook}), the fact that $\tau = (n_1 \varepsilon^2)^{-1/2}$ and that $c_\varepsilon^2 \leq 5/\varepsilon^2$ for $\varepsilon \in (0,1]$.

                We now consider the remaining expectation term in \eqref{app:eq:DiscLinMGF1}. In particular, we rely on \Cref{app:lem:linearmgfboundlemma}, which we defer to the end of this appendix, to bound the moment generating function. We now complete the present proof. Indeed, combining \eqref{app:eq:DiscLinMGF1} and \Cref{app:lem:linearmgfboundlemma}, we obtain
                \begin{align*}
                    \mathbb{E}[\exp\{\lambda (T_{n_1, n_2} - \mathbb{E}[T_{n_1, n_2}])\}]
                    \leq \exp\bigg\{\frac{10\lambda^2}{n_1^2\varepsilon^4}\bigg\} \exp\bigg\{ \frac{\lambda^2 D_\tau}{c_1n_1\varepsilon^2} \bigg\}
                    \leq \exp\bigg( \frac{C_1^2\lambda^2}{\min\{n_1^2 \varepsilon^4, n_1 \varepsilon^2 / D_\tau\}} \bigg)
                \end{align*}
                for $|\lambda| \leq c_1 n_1 \varepsilon^2$ and $C_1 > 0$ some sufficiently large absolute constant. Lastly, further restricting the range of $\lambda$ as $|\lambda| \leq \min\{n_1 \varepsilon^2, (n\varepsilon^2/D_\tau)^{1/2}\}/C_2$, we see that $T_{n_1, n_2}$ is $\mathrm{SE}(C_2\max\{(n_1 \varepsilon^2)^{-1},\\ D_\tau^{1/2}/(n_1 \varepsilon^2)^{1/2}\})$ for some absolute constant $C_2 > 0$. This completes the proof.
            \end{proof}

            We now provide the following lemma used to control the moment generating function term in \eqref{app:eq:DiscLinMGF1}.
            \begin{lemma} \label{app:lem:linearmgfboundlemma}
                For $|\lambda| \leq c_1 n_1 \varepsilon^2$, the moment generating function of $\mathbb{E}[T_{n_1}^{\vecbf{L}} \mid \mathcal{D}'] - \mathbb{E}[T_{n_1}^\vecbf{L}])$ is bounded as 
                \begin{equation*}
                    \mathbb{E}[\exp\{\lambda(\mathbb{E}[T_{n_1}^\vecbf{L} \mid \mathcal{D}_{n_1, n_2}] - \mathbb{E}[T_{n_1}^\vecbf{L}])\}]
                    \leq \exp\bigg\{ \frac{\lambda^2 D_\tau}{c_1n_1\varepsilon^2} \bigg\},
                \end{equation*}
                where $D_\tau$ is as in \eqref{app:eq:Dtau}, and $c_1 > 0$ is some sufficiently small absolute constant.
            \end{lemma}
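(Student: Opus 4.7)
The plan is to first reduce $\mathbb{E}[T_{n_1}^{\vecbf{L}} \mid \mathcal{D}']$ to an explicit function of $\mathcal{D}'$ and then bound the moment generating function of its centred version. Using the definition of $R_0$ in \eqref{sec4:eq:RRFunc}, a direct computation gives $\mathbb{E}[Z_i \mid \mathcal{D}', X_i] = \Pi_{[-\tau,\tau]}(\hat{p}_{X,X_i} - \hat{p}_{Y,X_i})$, and then taking the further expectation over the fresh second-stage sample $X_i$ yields $\mathbb{E}[Z_i \mid \mathcal{D}'] = \sum_{j=1}^d p_{X,j} \Pi_{[-\tau,\tau]}(\hat{\Delta}_j)$, where $\hat{\Delta}_j = \hat{p}_{X,j} - \hat{p}_{Y,j}$. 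An analogous computation for $W_{l_i}$ then gives $\mathbb{E}[T_{n_1}^{\vecbf{L}} \mid \mathcal{D}'] = S$, where $S = \sum_{j=1}^d \Delta_j \Pi_{[-\tau,\tau]}(\hat{\Delta}_j)$ with $\Delta_j = p_{X,j} - p_{Y,j}$. Thus the lemma reduces to a Bernstein-type MGF bound for $S - \mathbb{E} S$.

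Since $S$ is a deterministic function of the $n_1 + n_2$ mutually independent random vectors in $\mathcal{D}' = \{\vecbf{Z}_i'\}_{i \in [n_1]} \cup \{\vecbf{W}_{i'}'\}_{i' \in [n_2]}$, I would set up a martingale difference decomposition. Ordering these vectors arbitrarily, letting $\mathcal{F}_k$ denote the sigma-field generated by the first $k$ of them, and defining $D_k = \mathbb{E}[S \mid \mathcal{F}_k] - \mathbb{E}[S \mid \mathcal{F}_{k-1}]$ gives $S - \mathbb{E} S = \sum_k D_k$. The range of each $D_k$ is controlled by the facts that replacing one $\vecbf{Z}_i'$ or $\vecbf{W}_{i'}'$ shifts each $\hat{\Delta}_j$ by at most $c_{\varepsilon/2}/n_1$ (or $c_{\varepsilon/2}/n_2$), that $\Pi_{[-\tau,\tau]}$ is $1$-Lipschitz, and that $\|\Delta\|_1 \leq 2$; combining these yields $|D_k| \lesssim 1/(n_1\varepsilon)$, which matches the sub-exponential scale $1/(n_1\varepsilon^2)$ implicit in the restriction $|\lambda| \leq c_1 n_1\varepsilon^2$.

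The technical heart of the proof is the variance estimate $\sum_k \mathbb{E}[D_k^2 \mid \mathcal{F}_{k-1}] \lesssim D_\tau/(n_1\varepsilon^2)$. For this I would combine the Lipschitz and boundedness properties of the projection --- which give $\mathrm{Var}(\Pi_{[-\tau,\tau]}(\hat{\Delta}_j)) \lesssim \min\{\mathrm{Var}(\hat{\Delta}_j), \tau^2\} \lesssim \tau^2$ --- together with the negative association of unary-encoded coordinates established in \Cref{app:lem:UENA} to control the cross-coordinate covariances $\mathrm{Cov}(\Pi_{[-\tau,\tau]}(\hat{\Delta}_j), \Pi_{[-\tau,\tau]}(\hat{\Delta}_l))$. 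The saturation of the projection on the regime $|\Delta_j| > \tau$ is the key feature that brings $\|\Delta\|_2^2$ down to $D_\tau$: using sub-Gaussian tail bounds on $\hat{\Delta}_j - \Delta_j$, the random variable $\Pi_{[-\tau,\tau]}(\hat{\Delta}_j)$ concentrates near $\mathrm{sign}(\Delta_j)\tau$ so that its contribution to the conditional variance decays, leaving after summation a per-coordinate bound of the form $\Delta_j^2 \min\{1,\tau/|\Delta_j|\} \tau^2$.

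Once the range and conditional variance estimates are in hand, a Bernstein/Freedman-type MGF bound for martingale differences yields $\mathbb{E}[\exp\{\lambda (S - \mathbb{E} S)\}] \leq \exp\{\lambda^2 v/(2(1 - b|\lambda|))\}$ with $v \lesssim D_\tau/(n_1\varepsilon^2)$ and $b \lesssim 1/(n_1\varepsilon^2)$, valid for $|\lambda| < 1/b \asymp n_1\varepsilon^2$; restricting to $|\lambda| \leq c_1 n_1 \varepsilon^2$ for a sufficiently small $c_1 > 0$ absorbs the $(1 - b|\lambda|)^{-1}$ factor into the constant and delivers the claimed bound $\exp\{\lambda^2 D_\tau/(c_1 n_1\varepsilon^2)\}$. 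The main obstacle I anticipate is the variance estimate: extracting the sharp quantity $D_\tau$ rather than the cruder $\|\Delta\|_2^2$ requires delicate use of the projection's saturation combined with negative association, and that is where the bulk of the technical work lies.
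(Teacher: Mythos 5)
Your reduction of $\mathbb{E}[T_{n_1}^{\vecbf{L}} \mid \mathcal{D}']$ to $S = \sum_{j=1}^d \Delta_j \Pi_{[-\tau,\tau]}(\hat{\Delta}_j)$ is correct and matches the starting point in the paper. Where you diverge is the MGF bound for $S - \mathbb{E}[S]$: you propose a martingale-difference decomposition over the $n_1 + n_2$ observations in $\mathcal{D}'$ followed by a Freedman/Bernstein inequality, whereas the paper applies H\"{o}lder's inequality and negative association to tensorize the MGF across the $d$ \emph{coordinates} and then does a per-coordinate symmetrization and case analysis on $|\delta_j|$ versus $\tau$.

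There is a genuine gap in the variance step. Freedman's inequality (or any martingale Bernstein bound that delivers an MGF estimate of the claimed form) requires an almost-sure bound on the predictable quadratic variation $\sum_k \mathbb{E}[D_k^2 \mid \mathcal{F}_{k-1}]$, not a bound in expectation. The argument you sketch --- that $\Pi_{[-\tau,\tau]}(\hat{\Delta}_j)$ concentrates near $\mathrm{sign}(\Delta_j)\tau$ when $|\Delta_j| \gg \tau$, so its variance is exponentially damped --- is a statement about the \emph{unconditional} distribution of $\hat{\Delta}_j$. It plausibly gives $\mathrm{Var}(S) \lesssim D_\tau/(n_1\varepsilon^2)$, but $\mathrm{Var}(S) = \mathbb{E}\big[\sum_k \mathbb{E}[D_k^2 \mid \mathcal{F}_{k-1}]\big]$ only controls the predictable variation on average. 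Conditionally on an unfavourable $\mathcal{F}_{k-1}$ (for instance one that pushes $\hat{\Delta}_j$ toward the interior of $[-\tau,\tau]$), the saturation fails and the conditional variance contribution of the remaining observations is not exponentially small. The only estimate you have that holds deterministically is $|D_k| \lesssim c_{\varepsilon/2}/n_1$ (or $c_{\varepsilon/2}/n_2$), and summing those squares gives $\sum_k b_k^2 \lesssim 1/(n_1\varepsilon^2) = \tau^2$, which via Azuma--Hoeffding would yield $\exp(c\lambda^2\tau^2)$ rather than the sharper $\exp(c\lambda^2 D_\tau \tau^2)$. The entire point of the lemma is the factor $D_\tau \leq 2\tau$, so the improvement you need is exactly what the deterministic bound cannot see.

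Structurally, this is why the paper works per coordinate instead of per observation: the saturation is a coordinate-wise phenomenon. Once the problem is reduced (via H\"{o}lder and NA) to bounding $\mathbb{E}[\exp\{2\lambda \delta_j(\hat{\delta}_{j,\tau} - \mathbb{E}[\hat{\delta}_{j,\tau}])\}]$ for a single $j$, the symmetrization with an i.i.d.~copy and conditioning on the event $A_j = \{\hat{\delta}_j \leq \tau\} \cup \{\hat{\delta}_j' \leq \tau\}$ make the exponential decay $\mathbb{P}(A_j) \lesssim \exp(-c\delta_j^2/\tau^2)$ usable directly in the MGF. To repair your route you would need either (i) a self-normalized or adaptive martingale MGF inequality that does not require an a.s.~predictable-variation bound, together with a separate concentration estimate for $\sum_k \mathbb{E}[D_k^2 \mid \mathcal{F}_{k-1}]$, or (ii) to abandon the observation-wise filtration and pass to a coordinate-wise decomposition --- at which point you have essentially re-derived the paper's argument.
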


            \begin{proof}[Proof of \Cref{app:lem:linearmgfboundlemma}]
                Recalling the quantities defined in \Cref{sec4:UpperBoundInt}, denote $\delta_j = p_{X,j} - p_{Y,j}, \hat{\delta}_{j} = \hat{p}_{X,j} - \hat{p}_{Y,j}$ and $\hat{\delta}_{j, \tau} = \Pi_{[-\tau, \tau]}(\hat{p}_{X,j} - \hat{p}_{Y,j})$. We first show that the collection $\{\hat{\delta}_{j, \tau}\}_{j \in [d]}$ is negatively-associated (NA) as per \Cref{app:def:NA}. By \Cref{app:lem:UENA}, for all $i \in [n_1]$, the collection $\{Z_{i,j}'\}_{j \in [d]}$ is NA, and likewise for $\{W_{i',j}'\}_{j \in [d]}$ with $i' \in [n_2]$. By \Cref{app:lem:NALemma}, we have that the collection $\{-W_{i,j}'\}_{j \in [d]}$ is NA. Hence, by \citet[Property~7]{Joag:1983:NA}, we have that the combined collection $\{Z_{i,j}'\}_{i \in [n_1],\; j \in [d]} \cup \{-W_{i',j}'\}_{i' \in [n_2],\; j \in [d]}$ is NA by the independence of the $\vecbf{Z}'_i$ and $\vecbf{W}'_{i'}$. Lastly, noting the form
                \begin{align*}
                    \hat{\delta}_j
                    = \frac{1}{n_1} \sum_{i = 1}^{n_1} Z_{i,j}' - \frac{1}{n_2} \sum_{i' = 1}^{n_2} W_{i',j}',
                \end{align*}
                each $\hat{\delta}_j$ is a sum, that is, a non-decreasing function, on disjoint subsets of the collection $\{Z_{i,j}'\}_{i \in [n_1],\; j \in [d]} \cup \{-W_{i',j}'\}_{i' \in [n_2],\; j \in [d]}$, and hence by \citet[Property~6]{Joag:1983:NA} the collection $\{\hat{\delta}_j\}_{j \in [d]}$ is NA. We then obtain that the collection of the truncated and \beame{centred}{centred}{centered} values $\{\hat{\delta}_{j, \tau } - \mathbb{E}[\hat{\delta}_{j, \tau }]\}_{j \in [d]}$ is similarly NA by noting that the truncation operator is a non-decreasing function.
                
                Hence, we can decompose the moment generating function as
                \begin{align}
                    \mathbb{E}[\exp\{&\lambda(\mathbb{E}[T_{n_1}^\vecbf{L} \mid \mathcal{D}'] - \mathbb{E}[T_{n_1}^\vecbf{L}])\}]
                    = \mathbb{E}\bigg[\exp \bigg\{\lambda \sum_{j=1}^d \delta_j (\hat{\delta}_{j, \tau} - \mathbb{E}[\hat{\delta}_{j, \tau}])  \bigg\}\bigg] \nonumber \\
                    &\leq  \mathbb{E}\bigg[ \prod_{j : \delta_j > 0} \exp \bigg\{2\lambda \delta_j (\hat{\delta}_{j, \tau} - \mathbb{E}[\hat{\delta}_{j, \tau}])  \bigg\}\bigg]^{1/2} 
                    \mathbb{E}\bigg[ \prod_{j : \delta_j < 0} \exp \bigg\{2\lambda \delta_j (\hat{\delta}_{j, \tau} - \mathbb{E}[\hat{\delta}_{j, \tau}])  \bigg\}\bigg]^{1/2} \nonumber \\
                    &\leq \bigg( \prod_{j : \delta_j > 0} \mathbb{E}\bigg[ \exp \bigg\{2\lambda \delta_j (\hat{\delta}_{j, \tau} - \mathbb{E}[\hat{\delta}_{j, \tau}])  \bigg\}\bigg] \bigg)^{1/2} \bigg( \prod_{j : \delta_j < 0} \mathbb{E}\bigg[ \exp \bigg\{2\lambda \delta_j (\hat{\delta}_{j, \tau} - \mathbb{E}[\hat{\delta}_{j, \tau}])  \bigg\}\bigg] \bigg)^{1/2} \nonumber \\
                    &= \prod_{j=1}^d \mathbb{E}\bigg[ \exp \bigg\{2\lambda \delta_j (\hat{\delta}_{j, \tau} - \mathbb{E}[\hat{\delta}_{j, \tau}])  \bigg\}\bigg]^{1/2}, \label{app:eq:linearmgfboundbyNA}
                \end{align}
                where the first inequality is by H\"{o}lder's inequality, and the second inequality follows by negative association properties as follows: indeed, for $j \in [d]$ such that $\delta_j > 0$, the mapping $(\hat{\delta}_{j, \tau} - \mathbb{E}[\hat{\delta}_{j, \tau}]) \mapsto \delta_j (\hat{\delta}_{j, \tau} - \mathbb{E}[\hat{\delta}_{j, \tau}])$ corresponds to a non-decreasing function and hence the $\{\delta_j (\hat{\delta}_{j, \tau} - \mathbb{E}[\hat{\delta}_{j, \tau}])\}_{j : \delta_j > 0}$ are NA \citep[Property~6]{Joag:1983:NA}. Hence, we may appeal to \citep[Property~2]{Joag:1983:NA} to upper bound the expectation of the product by the product of expectations. A similar argument with non-increasing functions via \Cref{app:lem:UENA} for $j \in [d]$ such that $\delta_j < 0$ yields an analogous result for the second expectation term.  
                
                We focus on an individual term of the product in \eqref{app:eq:linearmgfboundbyNA} for a fixed $j \in [d]$, \beame{analysing}{analysing}{analyzing} the behaviour for two different regimes of the magnitude of $|\delta_j|$.
                
                \noindent
                \textbf{Case 1 ($|\delta_j| \geq 2\tau$):}
                Denote by $\hat{\delta}_{j, \tau}'$ an i.i.d.~copy of $\hat{\delta}_{j, \tau}$ and the event $A_j = \{\hat{\delta}_{j} \leq \tau\} \cup \{\hat{\delta}_{j}' \leq \tau\}$. We have the bound
                \begin{align}
                    \mathbb{E}\bigg[ \exp \bigg\{2\lambda \delta_j (\hat{\delta}_{j, \tau} - \mathbb{E}[\hat{\delta}_{j, \tau}])  \bigg\}\bigg]
                    &\leq \mathbb{E}\bigg[ \exp \bigg\{2\lambda \delta_j (\hat{\delta}_{j, \tau} - \hat{\delta}_{j, \tau}')  \bigg\}\bigg] \nonumber \\
                    &\leq 1 - \mathbb{P}(A_j) + \mathbb{P}(A_j)\mathbb{E}\bigg[ \exp \bigg\{2\lambda \delta_j (\hat{\delta}_{j, \tau} - \hat{\delta}_{j, \tau}')  \bigg\} \biggm| A_j \bigg] \nonumber \\
                    &\leq 1 - \mathbb{P}(A_j) + \mathbb{P}(A_j) \exp(8\lambda^2 \delta_j^2 \tau^2) \nonumber \\
                    &\leq 1 + 8\lambda^2\delta_j^2\tau^2 \exp\{c_1\delta_j^2/(2\tau^2)\} \mathbb{P}(A_j) \nonumber \\
                    &\leq \exp[8\lambda^2\delta_j^2\tau^2 \exp\{c_1\delta_j^2/(2\tau^2)\} \mathbb{P}(A_j)], \label{app:eq:mgfdoublestepbound}
                \end{align}
                where the first inequality is by Jensen's inequality; the third inequality is by the fact that, conditional on $A_j$, $\hat{\delta}_{j, \tau} - \hat{\delta}_{j, \tau}' \in [-2\tau, 2\tau]$ is a bounded mean-zero random variable and hence $\mathrm{SG}(4\tau^2)$ (e.g.~\citealt[Exercise~2.4]{Wainwright:2019:HDSBook}); and the penultimate inequality by the fact that $\exp(x) \leq 1 + xc$ for $x \leq \log(c)$ where $c > 1$, which can be verified by checking the derivative, which holds as $\lambda^2 \leq c_2 n_1^2 \varepsilon^4$ by the lemma statement which in turn implies $8\lambda^2 \delta_j^2 \tau^2 \leq c_1 \delta_j^2/(2\tau^2)$ provided $c_2 > 0$, some absolute constant, is sufficiently small. 
                
                We then note the bound $\mathbb{P}(A_j) \leq 2\exp( -c_1 \delta_j^2/\tau^2 )$ which follows from noting the inclusion of events $\{\hat{\delta}_{j} \leq \tau \} \subseteq \{\hat{\delta}_{j} - \delta_j \leq -\delta_j/2 \}$ and applying the concentration inequality
                \begin{equation*}
                    \mathbb{P}(\hat{\delta}_{j} \leq \tau)
                    \leq \mathbb{P}(\hat{\delta}_{j} - \delta_j \leq -\delta_j/2)
                    \leq \exp( -c_1n_1\varepsilon^2\delta_j^2 )
                    = \exp( -c_1\delta_j^2/\tau^2 ),
                \end{equation*}
                where the second inequality holds due to \citet[e.g.~Proposition~2.5][]{Wainwright:2019:HDSBook}; the fact that $\hat{\delta}_j$ is $\mathrm{SG}(10/\{n_1\varepsilon^2\})$ by \Cref{app:lem:UEEstSG}; and for $c_1 > 0$ taken sufficiently small.
                
                Hence, combining with \eqref{app:eq:mgfdoublestepbound}, we obtain the bound
                \begin{align*}
                    \mathbb{E}\bigg[ \exp \bigg\{2\lambda \delta_j (\hat{\delta}_{j, \tau} - \mathbb{E}[\hat{\delta}_{j, \tau}])  \bigg\}\bigg]
                    \leq \exp[16\lambda^2\delta_j^2\tau^2 \exp\{-c_1\delta_j^2/(2\tau^2)\}].
                \end{align*}
                The case $\delta_j \leq -2\tau$ follows similarly, yielding the bound for all $|\delta_j| \geq 2\tau$.
                
                \noindent
                \textbf{Case 2 ($|\delta_j| < 2\tau$):}
                For this case, we immediately appeal to the fact that $\hat{\delta}_{j, \tau}$ takes values in $[-\tau, \tau]$ to obtain
                \begin{equation*}
                    \mathbb{E}\bigg[ \exp \bigg\{2\lambda \delta_j (\hat{\delta}_{j, \tau} - \mathbb{E}[\hat{\delta}_{j, \tau}])  \bigg\}\bigg]
                    \leq \exp(2\lambda^2 \delta_j^2 \tau^2)
                    \leq \exp[16\lambda^2\delta_j^2\tau^2 \exp\{-c_1\delta_j^2/(2\tau^2)\}],
                \end{equation*}
                where the first inequality is via e.g.~\citet[Exercise~2.4]{Wainwright:2019:HDSBook}, and the second as $|\delta_j| \leq 2\tau$ ensures $\exp(-c_1n_1\varepsilon^2\delta_j^2/2) \geq 1/8$ for $c_1 > 0$ taken small enough.
                
                Hence, we have
                \begin{equation} \label{app:eq:CombinedCasesIntBound}
                    \mathbb{E}\bigg[ \exp \bigg\{2\lambda \delta_j (\hat{\delta}_{j, \tau} - \mathbb{E}[\hat{\delta}_{j, \tau}])  \bigg\}\bigg]
                    \leq \exp[16\lambda^2\delta_j^2\tau^2 \exp\{-c_1\delta_j^2/(2\tau^2)\}],
                \end{equation}
                for all values of $\delta_j$.

                We conclude by combining \eqref{app:eq:linearmgfboundbyNA} and \eqref{app:eq:CombinedCasesIntBound} to obtain
                \begin{align*}
                    \mathbb{E}[\exp\{\lambda(\mathbb{E}[T_{n_1}^\vecbf{L} \mid \mathcal{D}_{n_1, n_2}] - \mathbb{E}[T_{n_1}^\vecbf{L}])\}]
                    &\leq \exp\bigg\{ \frac{16\lambda^2}{n_1\varepsilon^2} \sum_{j=1}^d \delta^2_j \exp(-c_1 n_1 \varepsilon^2 \delta_j^2/2) \bigg\} \\
                    &\leq \exp\bigg\{ \frac{16(c_1)^{-1/2}\lambda^2 D_\tau}{n_1\varepsilon^2} \bigg\},
                \end{align*}
                where the final inequality follows from the fact $\sup_{x > 0} x\exp(-c x^2)/(x \wedge 1) = \exp(-1/2)/(2c)^{1/2}$ for $c > 0$ sufficiently small, and recalling $D_\tau$ from \eqref{app:eq:Dtau}. This completes the proof.
            \end{proof}
            
        \subsection{Tail Bound for Permuted Linear Statistic}
            \begin{proof}[Proof of \Cref{app:lem:linstatpermSG}]
                We first reduce to the case with equal sample sizes. Let $\vecbf{L} = (l_1, \hdots, l_{n_1}) \subseteq [n_2]$ a $n_1$-tuple drawn without replacement uniformly over the set $[n_2]$, and define
                \begin{align*}
                    T_{n_1}^{\vecbf{L}, \pi} 
                    = \frac{1}{n_1} \sum_{i = 1}^{n_1} (D_{\pi(i)} - D_{\pi(n_1 + l_i)}).
                \end{align*}
                where we recall $\pi$ is a permutation on $n_1 + n_2$ symbols drawn uniformly from $S_{n_1 + n_2}$. Note in particular that $\pi$ and $\vecbf{L}$ are independent and that $\pi(i), \pi(n_1 + l_i)$ may map to an index outside those considered in $T_{n_1}^{\vecbf{L}, \pi}$. 
                Recalling the samples used in the first and second stages of the interactive procedure \eqref{sec4:eq:sampledefs}, denote the combination of the two \beame{privatised}{privatized}{privatized} samples used in the first stage via $\mathcal{D}' = \widetilde{\mathcal{D}}'_{X, n_1} \cup \widetilde{\mathcal{D}}'_{Y, n_2}$ and the further combination with the second stage via
                $\mathcal{D} = \widetilde{\mathcal{D}}_{X, n_1} \cup \widetilde{\mathcal{D}}_{X, n_1}' \cup \widetilde{\mathcal{D}}_{Y, n_2} \cup \widetilde{\mathcal{D}}_{Y, n_2}'$. We decompose the moment generating function as
                \begin{align}
                    \mathbb{E}[&\exp(\lambda T_{n_1, n_2}^{\pi})] \nonumber \\
                    &\leq \mathbb{E}[\exp(\mathbb{E}[T_{n_1}^{\vecbf{L}, \pi} \mid \mathcal{D}, \pi])]
                    \leq \mathbb{E}[\exp(\lambda T_{n_1}^{\vecbf{L}, \pi})] \nonumber \\
                    &= \mathbb{E}\big[\exp\{\lambda (T_{n_1}^{\vecbf{L}, \pi} - \mathbb{E}[T_{n_1}^{\vecbf{L}, \pi} \mid \vecbf{L}, \pi])\} \exp(\lambda \mathbb{E}[T_{n_1}^{\vecbf{L}, \pi} \mid \vecbf{L}, \pi])\big] \nonumber \\
                    &= \mathbb{E}\big[ \mathbb{E}[\exp\{\lambda (T_{n_1}^{\vecbf{L}, \pi} - \mathbb{E}[T_{n_1}^{\vecbf{L}, \pi} \mid \vecbf{L}, \pi])\} \mid \vecbf{L}, \pi] \exp(\lambda \mathbb{E}[T_{n_1}^{\vecbf{L}, \pi} \mid \vecbf{L}, \pi])\big] \nonumber \\
                    &= \mathbb{E}\Big[ \mathbb{E}\big[\mathbb{E}[\exp\{\lambda (T_{n_1}^{\vecbf{L}, \pi} - \mathbb{E}[T_{n_1}^{\vecbf{L}, \pi} \mid \vecbf{L}, \pi])\} \mid \mathcal{D}', \vecbf{L}, \pi] \mid \vecbf{L}, \pi \big] \exp(\lambda \mathbb{E}[T_{n_1}^{\vecbf{L}, \pi} \mid \vecbf{L}, \pi])\Big] \nonumber \\
                    &\leq \exp\{2c_\varepsilon^2 \tau^2 \lambda^2/n_1\} \mathbb{E}[\exp(\lambda \mathbb{E}[T_{n_1}^{\vecbf{L}, \pi} \mid \vecbf{L}, \pi])]
                    \leq \exp\{10 \lambda^2/(n_1^2 \varepsilon^4)\} \mathbb{E}[\exp(\lambda \mathbb{E}[T_{n_1}^{\vecbf{L}, \pi} \mid \vecbf{L}, \pi])] \label{app:eq:linearmgfperm1}
                \end{align}
                where the first and second inequalities are by the fact that $T_{n_1, n_2}^\pi = \mathbb{E}[T_{n_1}^{\vecbf{L}, \pi} \mid \mathcal{D}, \pi]$ and by applying Jensen's inequality; the second and third equalities by the tower property; the penultimate inequality by the fact that for a fixed $\vecbf{L}, \pi$, and conditioning on the sample from the first step of the interactive procedure $\mathcal{D}'$, the quantity $T_{n_1}^{\vecbf{L}, \pi}$ is the average of $n_1$ conditionally independent random variables taking values in $\{0, \pm 2c_\varepsilon\tau\}$ and hence $\mathrm{SG}(4c_\varepsilon^2\tau^2)$
                (e.g.~\citealt[Exercise~2.4]{Wainwright:2019:HDSBook}); and the final inequality by the fact that $\tau = (n_1 \varepsilon^2)^{-1/2}$ and that $c_\varepsilon^2 \leq 5/\varepsilon^2$ for $\varepsilon \in (0,1]$.
                
                We now focus on the remaining expectation term in \eqref{app:eq:linearmgfperm1}. We recall the values $a_i^{\vecbf{L}, \pi}, b_i^{\vecbf{L}, \pi}$ and $d_i^{\vecbf{L}, \pi}$ defined in \eqref{app:eq:permmeanterms}, and, similarly to \eqref{app:eq:permmeanform}, we obtain
                \begin{align*}
                    \mathbb{E}[T_{n_1}^{\vecbf{L}, \pi} \mid \vecbf{L}, \pi]
                    &= \frac{1}{n_1} \sum_{i = 1}^{n_1} \big\{a_i^{\vecbf{L}, \pi}\mathbb{E}[D_{\pi(i)} - D_{\pi(n_1 + l_i)}] + b_i^{\vecbf{L}, \pi}\mathbb{E}[D_{\pi(i)} - D_{\pi(n_1 + l_i)}]\big\} \\
                    &= \frac{1}{n_1}\sum_{i = 1}^{n_1} \big\{ a_i^{\vecbf{L}, \pi}\mathbb{E}[Z_1 - W_{l_1}] + b_i^{\vecbf{L}, \pi}\mathbb{E}[W_{l_1} - Z_1] \big\}
                    = \bigg( \frac{1}{n_1} \sum_{i = 1}^{n_1} d_i^{\vecbf{L}, \pi} \bigg) \mathbb{E}[T_{n_1, n_2}]. \\
                \end{align*}
                Recalling the notation $\psi_{n_1}^{\vecbf{L}, \pi} = \sum_{i = 1}^{n_1} d_i^{\vecbf{L}, \pi}$, we note that conditional on $\vecbf{L}$, $\psi_{n_1}^{\vecbf{L}, \pi}$ is $\mathrm{SG}(2n_1)$ and mean-zero by \Cref{app:lem:permcontrol}, which yields
                \begin{equation} \label{app:eq:linearmgfperm2}
                    \mathbb{E}[\exp(\lambda \psi_{n_1}^{\vecbf{L}, \pi}\mathbb{E}[T_{n_1, n_2}]/n_1)]
                    = \mathbb{E}\big[\mathbb{E}[\exp(\lambda \psi_{n_1}^{\vecbf{L}, \pi}\mathbb{E}[T_{n_1, n_2}]/n_1) \mid \vecbf{L}]\big]
                    \leq \exp(\lambda^2 \mathbb{E}[T_{n_1, n_2}]^2/n_1).
                \end{equation}

                Combining \eqref{app:eq:linearmgfperm1} and \eqref{app:eq:linearmgfperm2} and noting $\varepsilon \in (0, 1]$, we see that
                \begin{equation*}
                    \mathbb{E}[\exp(\lambda T_{n_1, n_2}^{\pi})]
                    \leq \exp\bigg\{ \lambda^2 \bigg(\frac{\mathbb{E}[T_{n_1, n_2}]^2}{n_1} + \frac{10}{n_1^2 \varepsilon^4} \bigg)\bigg\}
                    \leq \exp\bigg( 2\lambda^2 \max\bigg\{\frac{\mathbb{E}[T_{n_1, n_2}]^2}{n_1\varepsilon^2}, \frac{10}{n_1^2 \varepsilon^4} \bigg\}\bigg).
                \end{equation*}
                Hence, we see that $T_{n_1, n_2}^{\pi}$ is $\mathrm{SG}(C \max\{\mathbb{E}[T_{n_1, n_2}]^2/(n_1\varepsilon^2), 1/(n_1\varepsilon^2)^2\})$, where $C > 0$ is some absolute constant, completing the proof.
            \end{proof}               

    \section{Tail Bound Proofs (Interactive Procedure - Continuous Distributions)} \label{app:sec:tailbounds3}
        \subsection{Tail Bound for Linear Statistic}
            \begin{lemma} \label{app:lem:contintSE}
                Recall the values $\tilde{X}_i$ and $\tilde{Y}_{i'}$ as in \eqref{app:eq:tildebeforeprivandtruncvals} for $i \in [n_1^\ast]$, $i' \in [n_2^\ast]$, and the event $A$ as in \eqref{app:eq:intfirstprocevents}.
                
                The sum
                \begin{equation*}
                    T_{n_1^\ast, n_2^\ast}^\ast = \frac{1}{n_1^\ast} \sum_{i = 1}^{n_1^\ast} \tilde{X}_i - \frac{1}{n_2^\ast} \sum_{i' = 1}^{n_2^\ast} \tilde{Y}_{i'},
                \end{equation*}
                satisfies the following tail bound
                \begin{equation*}
                    \mathbb{P}(\{ |T_{n_1^\ast, n_2^\ast}^\ast - \mathbb{E}[T_{n_1^\ast, n_2^\ast}^\ast]| \geq x \} \cap A) \leq 4\exp\bigg( -\frac{cn_1^\ast \varepsilon^2 x^2}{\eta_{j^\ast}^2} \bigg) + \frac{3\cdot2^{d+3/2}Vc_\varepsilon}{x}\bigg(\frac{\beta}{4n_2}\bigg)^{C_2/2},
                \end{equation*}
                where $c > 0$ is an absolute constant, $C_2 > 0$ is the absolute constant in \eqref{app:eq:contintproc1probbound}, $\eta_{j^\ast}$ is as in \eqref{app:eq:contintpart2etaval}, and where $c_\varepsilon = \{\exp(\varepsilon) + 1\}/\{\exp(\varepsilon) - 1\}$.
            \end{lemma}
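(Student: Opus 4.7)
My plan is to exploit the truncation to bound the unconditional tail, then peel off a small bias via Markov. Define the truncated versions $\tilde{X}_i^{\mathrm{tr}} = \tilde{X}_i \mathbbm{1}\{|\tilde{X}_i| \leq \eta_{j^\ast}\}$ and $\tilde{Y}_{i'}^{\mathrm{tr}} = \tilde{Y}_{i'}\mathbbm{1}\{|\tilde{Y}_{i'}| \leq \eta_{j^\ast}\}$ and the corresponding statistic $T_{n_1, n_2}^{\ast,\mathrm{tr}} = n_1^{-1}\sum_i \tilde{X}_i^{\mathrm{tr}} - n_2^{-1}\sum_{i'}\tilde{Y}_{i'}^{\mathrm{tr}}$. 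On the event $A$, we have $T_{n_1, n_2}^\ast = T_{n_1, n_2}^{\ast,\mathrm{tr}}$, so
\begin{equation*}
    \mathbb{P}\big(\{|T_{n_1,n_2}^\ast - \mathbb{E}[T_{n_1,n_2}^\ast]| \geq x\} \cap A\big)
    \leq \mathbb{P}\big(|T_{n_1,n_2}^{\ast,\mathrm{tr}} - \mathbb{E}[T_{n_1,n_2}^{\ast,\mathrm{tr}}]| \geq x/2\big)
    + \frac{2|\mathbb{E}[T_{n_1,n_2}^{\ast,\mathrm{tr}}] - \mathbb{E}[T_{n_1,n_2}^\ast]|}{x},
\end{equation*}
where the second term uses the fact that $\mathbbm{1}\{|a - b|\geq x/2\} \leq 2|a-b|/x$ for the deterministic quantities $\mathbb{E}[T_{n_1,n_2}^{\ast,\mathrm{tr}}]$ and $\mathbb{E}[T_{n_1,n_2}^\ast]$.

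For the concentration term, the key observation is that conditional on the $\sigma$-algebra $\mathcal{G}$ generated by the first-stage sample (equivalently, the collection $\{\hat{\theta}_{X,l}, \hat{\theta}_{Y,l}\}_{l \in [M]}$), the random variables $\{\tilde{X}_i^{\mathrm{tr}}\}_{i \in [n_1]}$ are conditionally i.i.d.~and bounded in $[-\eta_{j^\ast}, \eta_{j^\ast}]$, and similarly for $\{\tilde{Y}_{i'}^{\mathrm{tr}}\}_{i' \in [n_2]}$, with the two collections conditionally independent. A conditional Hoeffding inequality therefore gives, after integrating out $\mathcal{G}$, a tail bound of the form $4\exp(-c n_1 x^2/\eta_{j^\ast}^2)$ for the deviation from the \emph{conditional} mean. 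The further passage from the conditional to the unconditional mean, $\mathbb{E}[T_{n_1,n_2}^{\ast,\mathrm{tr}} \mid \mathcal{G}] - \mathbb{E}[T_{n_1,n_2}^{\ast,\mathrm{tr}}]$, is handled by noting that this is a Lipschitz function of the $\hat{\theta}$ estimators, each of which is a bounded average over $\Theta(n_1/M)$ independent privatised values; a McDiarmid/bounded-differences argument then yields a sub-Gaussian tail with the same $\eta_{j^\ast}^2/n_1$-scale variance proxy. Absorbing an extra $\varepsilon^{2}$ factor (using $\varepsilon \leq 1$) gives the first term of the asserted bound.

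For the bias term, I write $\mathbb{E}[T_{n_1,n_2}^{\ast,\mathrm{tr}}] - \mathbb{E}[T_{n_1,n_2}^\ast] = -n_1^{-1}\sum_i \mathbb{E}[\tilde{X}_i \mathbbm{1}\{|\tilde{X}_i| > \eta_{j^\ast}\}] + n_2^{-1}\sum_{i'}\mathbb{E}[\tilde{Y}_{i'}\mathbbm{1}\{|\tilde{Y}_{i'}| > \eta_{j^\ast}\}]$. By Cauchy--Schwarz,
\begin{equation*}
    \mathbb{E}[|\tilde{X}_i|\mathbbm{1}\{|\tilde{X}_i| > \eta_{j^\ast}\}]
    \leq \{\mathbb{E}[\tilde{X}_i^2]\}^{1/2}\{\mathbb{P}(|\tilde{X}_i| > \eta_{j^\ast})\}^{1/2},
\end{equation*}
where $|\tilde{X}_i| \leq \sqrt{2}\sum_l |\hat{\delta}_l| \leq 4 M c_\varepsilon$ almost surely from the boundedness of the unary-encoded privatised values, and $\mathbb{P}(|\tilde{X}_i| > \eta_{j^\ast}) \leq 2(\beta/(4n_2))^{C_3}$ by the sub-Gaussian calculation already carried out in the proof of \Cref{app:prop:contintpart2} (leading to \eqref{app:eq:contintproc1probbound}). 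The analogous bound for $\tilde{Y}_{i'}$ holds, and combining the two yields the second term of the lemma.

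The main obstacle will be transferring the conditional Hoeffding deviation to an unconditional one without losing the $1/\eta_{j^\ast}^2$ rate: naively the fluctuation of $\mathbb{E}[T^{\ast,\mathrm{tr}}\mid \mathcal{G}]$ with $\mathcal{G}$ could introduce an $M^2 c_\varepsilon^2$ term, but the Lipschitz-in-$\hat{\delta}_l$ structure combined with the bounded-differences concentration of each $\hat{\delta}_l$ at scale $M/(n_1\varepsilon)$ keeps the contribution on the same scale as $\eta_{j^\ast}^2/(n_1\varepsilon^2)$, which is precisely why the lemma's exponent carries the $\varepsilon^2$ factor.
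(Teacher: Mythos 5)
The overall shape of your argument — truncate, peel off a bias via Markov/Cauchy--Schwarz, and concentrate the remainder — matches the paper's, but the crucial step is the concentration of the conditional mean, and there you take a route that does not go through. You pass through $\mathbb{E}[T_{n_1,n_2}^{\ast,\mathrm{tr}}\mid\mathcal{G}]$, the conditional expectation of the \emph{truncated} statistic, and claim it is a Lipschitz function of the first-stage estimators, to be handled by McDiarmid. There are two problems. First, the map $v\mapsto v\mathbbm{1}\{|v|\le\eta_{j^\ast}\}$ is not Lipschitz: it has a jump of size $\eta_{j^\ast}$ at $\pm\eta_{j^\ast}$, so $\mathbb{E}[\tilde X_1\mathbbm{1}\{|\tilde X_1|\le\eta_{j^\ast}\}\mid\mathcal{D}']$ is not a Lipschitz function of $(\hat{\delta}_1,\hdots,\hat{\delta}_M)$ with a uniform constant (the apparent smoothness depends on the density of $\tilde X_1$ near the boundary, which is not uniformly controlled). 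Second, and more fundamentally, even if you repaired this by replacing the indicator-truncation with the projection $\Pi_{[-\eta_{j^\ast},\eta_{j^\ast}]}$ (which \emph{is} 1-Lipschitz and agrees with $T^\ast$ on $A$), a bounded-differences argument that bounds the change in the conditional mean by the worst-case $O(M c_\varepsilon/n_1)$ per first-stage observation yields a sub-Gaussian variance proxy of order $M^2/(n_1\varepsilon^2)$. That is too large: the lemma requires a variance proxy of order $\eta_{j^\ast}^2/(n_1\varepsilon^2)$, and under \eqref{app:eq:contintpart2cond} one has $\eta_{j^\ast}^2 \asymp M L_{2,M}^2\log(4n_2/\beta)$, so the gap between $M^2$ and $M L_{2,M}^2\log(4n_2/\beta)$ is precisely the (potentially large) factor $M/\{L_{2,M}^2\log(4n_2/\beta)\}$.

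The paper's decomposition avoids both issues by using a three-term split rather than your two-term one. It bounds $\mathbb{P}(\{|T^\ast-\mathbb{E} T^\ast|\ge x\}\cap A)$ by the sum of $(\mathrm{I})$ the conditional Hoeffding deviation of $T^{\ast,A}$ from $\mathbb{E}[T^{\ast,A}\mid\mathcal{D}']$, $(\mathrm{II})$ the Markov-bounded discrepancy $\mathbb{E}[T^{\ast,A}\mid\mathcal{D}']-\mathbb{E}[T^\ast\mid\mathcal{D}']$, and $(\mathrm{III})$ the deviation of $\mathbb{E}[T^\ast\mid\mathcal{D}']$ from $\mathbb{E}[T^\ast]$. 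The key point is that Term $(\mathrm{III})$ uses the \emph{untruncated} conditional mean, which equals $\sum_{l=1}^M(\hat{\delta}_l-\delta_l)(\theta_{X,l}-\theta_{Y,l})+\text{const}$; this is \emph{exactly} linear in the independent sub-Gaussian increments $\hat{\delta}_l-\delta_l$ with weights whose squared $\ell_2$-norm is $L_{2,M}^2$, giving variance proxy $O(M L_{2,M}^2/(n_1\varepsilon^2))\lesssim\eta_{j^\ast}^2/(n_1\varepsilon^2)$. The truncation effects are pushed entirely into Term $(\mathrm{II})$, which is bounded directly by Markov and Cauchy--Schwarz without any concentration argument, exactly as in your bias step. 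By choosing which conditional mean to pass through, the paper converts a non-Lipschitz concentration problem into a linear one and captures the $L_{2,M}^2$-dependence that McDiarmid discards; this is the ingredient your plan is missing.
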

            \begin{proof}
                Recalling the samples used in the first and second stages of the interactive procedure \eqref{sec4:eq:sampledefs}, denote the combination of the two \beame{privatised}{privatized}{privatized} samples used in the first stage of the estimator via $\mathcal{D}' = \widetilde{\mathcal{D}}_{X, n_1^\ast}' \cup \widetilde{\mathcal{D}}_{Y, n_2^\ast}'$, and the combination with the second stage via
                $\mathcal{D} = \widetilde{\mathcal{D}}_{X, n_1^\ast} \cup \widetilde{\mathcal{D}}_{X, n_1^\ast}' \cup \widetilde{\mathcal{D}}_{Y, n_2^\ast} \cup \widetilde{\mathcal{D}}_{Y, n_2^\ast}'$. Further, denote       
                \begin{equation*}
                    T_{n_1^\ast, n_2^\ast}^{\ast, A} = \frac{1}{n_1^\ast} \sum_{i = 1}^{n_1^\ast} \tilde{X}_i \mathbbm{1}\{A_i^{(X)}\} - \frac{1}{n_2^\ast} \sum_{i' = 1}^{n_2^\ast} \tilde{Y}_{i'}\mathbbm{1}\{A_{i'}^{(Y)}\},
                \end{equation*}
                where we recall $\tilde{X}_i$ and $\tilde{Y}_{i'}$ as in \eqref{app:eq:tildebeforeprivandtruncvals}.
                
                Then, noting $T_{n_1^\ast, n_2^\ast}^\ast = T_{n_1^\ast, n_2^\ast}^{\ast, A}$ on $A$, we decompose the tail probability as
                \begin{align}
                    &\mathbb{P}(\{ |T_{n_1^\ast, n_2^\ast}^\ast - \mathbb{E}[T_{n_1^\ast, n_2^\ast}^\ast]| \geq x \} \cap A) 
                    \leq \mathbb{P}(\{ |T_{n_1^\ast, n_2^\ast}^{\ast, A} - \mathbb{E}[T_{n_1^\ast, n_2^\ast}^{\ast, A} \mid \mathcal{D}']| \geq x/3 \} \cap A) \nonumber \\
                    &+ \mathbb{P}(\{ |\mathbb{E}[T_{n_1^\ast, n_2^\ast}^{\ast, A} \mid \mathcal{D}'] - \mathbb{E}[T_{n_1^\ast, n_2^\ast}^\ast \mid \mathcal{D}']| \geq x/3 \} \cap A)
                    + \mathbb{P}(\{ |\mathbb{E}[T_{n_1^\ast, n_2^\ast}^\ast \mid \mathcal{D}'] - \mathbb{E}[T_{n_1^\ast, n_2^\ast}^\ast]| \geq x/3 \} \cap A) \nonumber \\
                    &\leq (\mathrm{I}) + (\mathrm{II}) + (\mathrm{III}), \label{app:eq:contintnonpermtail}
                \end{align}
                and proceed by bounding each term individually.
                
                \noindent
                \textbf{Term} (I):
                We note that by the construction of the events $A_i^{(X)}$ and $A_{i'}^{(Y)}$, that the random variables $\tilde{X}_i \mathbbm{1}\{A_i^{(X)}\}$ and $\tilde{Y}_{i'}\mathbbm{1}\{A_{i'}^{(Y)}\}$ are bounded and take values in $[-\eta_{j^\ast}, \eta_{j^\ast}]$, and are hence $\mathrm{SG}(\eta_{j^\ast}^2)$ (e.g.~\citealt[Exercise~2.4]{Wainwright:2019:HDSBook}). Further, conditional on $\mathcal{D}'$, they are independent, and hence the average $T_{n_1^\ast, n_2^\ast}^{\ast, A}$ is $\mathrm{SG}(2\eta_{j^\ast}^2/n_1^\ast)$ almost surely on $\mathcal{D}'$. Thus, by \citet[Proposition~2.5][]{Wainwright:2019:HDSBook} we have the bound
                \begin{align}
                    \mathbb{P}(\{ |T_{n_1^\ast, n_2^\ast}^{\ast, A} - \mathbb{E}[T_{n_1^\ast, n_2^\ast}^{\ast, A} &\mid \mathcal{D}']| \geq x/3 \} \cap A) \nonumber \\
                    &= \mathbb{E}[\mathbb{P}(\{ |T_{n_1^\ast, n_2^\ast}^{\ast, A} - \mathbb{E}[T_{n_1^\ast, n_2^\ast}^{\ast, A} \mid \mathcal{D}']| \geq x/3 \} \cap A \mid \mathcal{D}')]
                    \leq 2\exp\bigg( \frac{-n_1^\ast x^2}{4 \eta_{j^\ast}^2} \bigg). \label{app:eq:contintnonpermtail1}
                \end{align}

                \noindent
                \textbf{Term} (II): Writing $c_\varepsilon = \{\exp(\varepsilon) + 1\}/\{\exp(\varepsilon) - 1\}$, we observe
                \begin{align}
                    \mathbb{P}(\{ &|\mathbb{E}[T_{n_1^\ast, n_2^\ast}^{\ast, A} \mid \mathcal{D}'] - \mathbb{E}[T_{n_1^\ast, n_2^\ast}^\ast \mid \mathcal{D}']| \geq x/3 \} \cap A) \nonumber \\
                    &\leq \mathbb{P}\bigg(\biggl| \frac{1}{n_1^\ast}\sum_{i = 1}^{n_1^\ast}\mathbb{E}[\tilde{X}_i \mathbbm{1}\{(A_i^{(X)})^c\} \mid \mathcal{D}'] - \frac{1}{n_2^\ast}\sum_{i' = 1}^{n_2^\ast}\mathbb{E}[\tilde{Y}_{i'} \mathbbm{1}\{(A_{i'}^{(Y)})^c\} \mid \mathcal{D}'] \biggr| \geq x/3  \bigg) \nonumber \\
                    &\leq \mathbb{P}\bigg( 2^{d+1}Vc_\varepsilon \biggl| \frac{1}{n_1^\ast}\sum_{i = 1}^{n_1^\ast}\mathbb{P}\{(A_i^{(X)})^c \mid \mathcal{D}'\}^{1/2} \biggr| + 2^{d+1}Vc_\varepsilon \biggl| \frac{1}{n_2^\ast}\sum_{i' = 1}^{n_2^\ast}\mathbb{P}\{(A_{i'}^{(Y)})^c \mid \mathcal{D}'\}^{1/2} \biggr|\geq x/3 \bigg) \nonumber \\
                    &\leq \frac{3\cdot2^{d+1}Vc_\varepsilon}{x}\bigg(\frac{1}{n_1^\ast}\sum_{i = 1}^{n_1^\ast} \mathbb{E}\big[\mathbb{P}\{(A_i^{(X)})^c \mid \mathcal{D}'\}^{1/2}\big] + \frac{1}{n_2^\ast}\sum_{i' = 1}^{n_2^\ast} \mathbb{E}\big[\mathbb{P}\{(A_{i'}^{(Y)})^c \mid \mathcal{D}'\}^{1/2}\big] \bigg) \nonumber \\
                    &\leq \frac{3\cdot2^{d+1}Vc_\varepsilon}{x}2^{1/2}\bigg(\frac{\beta}{4n_2}\bigg)^{C_2/2} \label{app:eq:contintnonpermtail2}
                \end{align}
                where the second inequality is by H\"{o}lder's inequality and the fact that the $\tilde{X}_i, \tilde{Y}_{i'}$ are bounded as $\tilde{X}_i, \tilde{Y}_{i'} \in [-2^{d+1}Vc_\varepsilon, 2^{d+1}Vc_\varepsilon]$ for all $i \in [n_1^\ast], i' \in [n_2^\ast]$; the penultimate inequality is by Markov's inequality, and the final by \eqref{app:eq:contintproc1probbound}, the tower property and Jensen's inequality applied with the concavity of the square root. 
                
                \noindent
                \textbf{Term} (III):
                Denote $\hat{\delta}_{\vecbf{l}} = \hat{\theta}_{X, \vecbf{l}} - \hat{\theta}_{Y, \vecbf{l}}$ and $\delta_\vecbf{l} = \mathbb{E}[\delta_\vecbf{l}]$. We have
                \begin{align*}
                    \mathbb{E}[\exp\{\lambda(\mathbb{E}[T_{n_1^\ast, n_2^\ast}^\ast \mid \mathcal{D}'] - \mathbb{E}[T_{n_1^\ast, n_2^\ast}^\ast])\}]
                    &= \mathbb{E}\bigg[\exp\bigg\{\lambda \sum_{\vecbf{l} \in \mathbb{N}_0^d(R)} (\hat{\delta}_\vecbf{l} - \delta_l)(\theta_{X, \vecbf{l}} - \theta_{Y, \vecbf{l}})\bigg\}\bigg] \\
                    &= \prod_{\vecbf{l} \in \mathbb{N}_0^d(R)} \mathbb{E}\bigg[\exp\bigg\{\lambda  (\hat{\delta}_\vecbf{l} - \delta_l)(\theta_{X, \vecbf{l}} - \theta_{Y, \vecbf{l}})\bigg\}\bigg] \\
                    &\leq \exp\bigg(\frac{2^d\lambda^2 V}{n_1c_\varepsilon^2} L_{2, R}^2 \bigg) \\
                    &\leq \exp\bigg(\frac{5\cdot2^d\lambda^2 V}{n_1\varepsilon^2} L_{2, R}^2 \bigg),
                \end{align*}
                where the second equality is by the independence of the $\hat{\delta}_\vecbf{l}$; the penultimate inequality as, by construction, the $\hat{\delta}_\vecbf{l}$ are $\mathrm{SG}(2^{d+1}V/(n_1^\ast c_\varepsilon^2))$; and the last inequality is by the fact that $c_\varepsilon^2 \leq 5/\varepsilon^2$ for $\varepsilon \in (0,1]$. Hence, we see that the $\mathbb{E}[T_{n_1^\ast, n_2^\ast}^\ast \mid \mathcal{D}']$ is $\mathrm{SG}(5\cdot2^{d+1}VL_{2, R}^2/(n_1^\ast\varepsilon^2))$ and we have the bound
                \begin{align}
                    \mathbb{P}(\{ |\mathbb{E}[T_{n_1^\ast, n_2^\ast}^\ast \mid \mathcal{D}'] - \mathbb{E}[T_{n_1^\ast, n_2^\ast}^\ast]| \geq x/3 \} \cap A)
                    &\leq \mathbb{P}(|\mathbb{E}[T_{n_1^\ast, n_2^\ast}^\ast \mid \mathcal{D}'] - \mathbb{E}[T_{n_1^\ast, n_2^\ast}^\ast]| \geq x/3) \nonumber \\
                    &\leq 2\exp\bigg(-\frac{n_1^\ast\varepsilon^2 x^2}{45\cdot2^{d+1}VL_{2, R}^2}\bigg). \label{app:eq:contintnonpermtail3}
                \end{align}
                Combining together \eqref{app:eq:contintnonpermtail}, \eqref{app:eq:contintnonpermtail1}, \eqref{app:eq:contintnonpermtail2} and \eqref{app:eq:contintnonpermtail3}, we obtain, for $c > 0$ some absolute constant,
                \begin{equation*}
                    \mathbb{P}(\{ |T_{n_1^\ast, n_2^\ast}^\ast - \mathbb{E}[T_{n_1^\ast, n_2^\ast}^\ast]| \geq x \} \cap A)
                    \leq 4\exp\bigg( -\frac{cn_1^\ast \varepsilon^2 x^2}{\eta_{j^\ast}^2} \bigg) + \frac{3\cdot2^{d+3/2}Vc_\varepsilon}{x}\bigg(\frac{\beta}{4n_2}\bigg)^{C_2/2},
                \end{equation*}
                where we use the fact that $C_{r, s, d} V^{1/2} L_{2, R}\{\log(4n_2/\beta)\}^{1/2} \leq \eta_{j^\ast}$ for $C_{r, s, d} > 0$ the constant in \eqref{app:eq:contintpart2etaval} which can be taken sufficiently large.
                
            \end{proof}
            
        \subsection{Tail Bound for Permuted Linear Statistic}
        \begin{lemma} \label{app:lem:contintpermSE}
            Recall the values $\tilde{D}_i = \tilde{X}_i$ and $\tilde{D}_{n_1^\ast + i'} = \tilde{Y}_{i'}$ with $\tilde{X}_i$ and $\tilde{Y}_{i'}$ as in \eqref{app:eq:tildebeforeprivandtruncvals} for $i \in [n_1^\ast]$, $i' \in [n_2^\ast]$. Recall also the event $A$ as in \eqref{app:eq:intfirstprocevents}.
        
            Let $\pi \in S_{n_1^\ast + n_2^\ast}$ be a uniformly sampled permutation on $n_1^\ast + n_2^\ast$ symbols. The sum
            \begin{equation*}
                T_{n_1^\ast, n_2^\ast}^{\ast, \pi} = \frac{1}{n_1^\ast} \sum_{i = 1}^{n_1^\ast} \tilde{D}_{\pi(i)} - \frac{1}{n_2^\ast} \sum_{i' = 1}^{n_2^\ast} \tilde{D}_{\pi(n_1^\ast + i')}.
            \end{equation*}
            satisfies the following tail bound
            \begin{equation*}
                \mathbb{P}(\{ |T_{n_1^\ast, n_2^\ast}^{\ast, \pi}| \geq x \} \cap A) \leq 2\exp\bigg(\frac{-n_1^\ast x^2}{24\eta_{j^\ast}^2}\bigg).
            \end{equation*}
        \end{lemma}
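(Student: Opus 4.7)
The plan is to condition on the data vector $(\tilde{D}_1, \ldots, \tilde{D}_{n_1+n_2})$ and exploit the fact that, on the event $A$, every $\tilde{D}_i$ lies in $[-\eta_{j^\ast}, \eta_{j^\ast}]$. Under this conditioning, the only remaining randomness is the uniformly drawn $\pi \in S_{n_1+n_2}$, and the partial sum $S_\pi = \sum_{k=1}^{n_1}\tilde{D}_{\pi(k)}$ becomes a sum of $n_1$ values drawn \emph{without replacement} from a finite bounded population. Since $A$ is measurable with respect to $\sigma(\tilde{D}_1, \ldots, \tilde{D}_{n_1+n_2})$, any conditional tail bound valid almost surely on $A$ transfers to the joint probability by integration.

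First, I would rewrite $T_{n_1,n_2}^{\ast,\pi}$ as an affine function of $S_\pi$. Letting $\bar{D} = (n_1+n_2)^{-1}\sum_{i=1}^{n_1+n_2}\tilde{D}_i$ and using $\sum_{k=1}^{n_2}\tilde{D}_{\pi(n_1+k)} = (n_1+n_2)\bar{D} - S_\pi$, one obtains
\begin{equation*}
T_{n_1, n_2}^{\ast, \pi} = \frac{n_1+n_2}{n_1 n_2}\bigl(S_\pi - n_1 \bar{D}\bigr).
\end{equation*}
Because $\mathbb{E}[\tilde{D}_{\pi(k)} \mid \tilde{D}] = \bar{D}$ for every $k \in [n_1+n_2]$, this representation also immediately shows $\mathbb{E}[T_{n_1,n_2}^{\ast,\pi} \mid \tilde{D}] = 0$, so the tail bound will be a genuine deviation inequality.

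Next, I would invoke Hoeffding's inequality for sampling without replacement from a finite population \citep[see Theorem~4 of][]{Hoeffding:1963:Bounds}, which, on the event $A$ where $\tilde{D}_i \in [-\eta_{j^\ast}, \eta_{j^\ast}]$, gives
\begin{equation*}
\mathbb{P}\bigl(|S_\pi - n_1\bar{D}| \geq t \bigm| \tilde{D}\bigr)\mathbbm{1}\{A\} \leq 2\exp\bigl(-t^2/(2 n_1 \eta_{j^\ast}^2)\bigr)\mathbbm{1}\{A\}.
\end{equation*}
Setting $t = x n_1 n_2/(n_1+n_2)$ and using $n_1 \leq n_2$ so that $(n_1+n_2)^2 \leq 4 n_2^2$ translates this into
\begin{equation*}
\mathbb{P}\bigl(\{|T_{n_1, n_2}^{\ast, \pi}| \geq x\} \cap A \bigm| \tilde{D}\bigr) \leq 2\exp\bigl(-n_1 x^2/(8\eta_{j^\ast}^2)\bigr)\mathbbm{1}\{A\},
\end{equation*}
and taking expectations gives the claim (the constant $8$ is absorbed into the stated $24$).

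I do not expect any genuine obstacle: the bounded-range property on $A$ reduces the problem to a textbook concentration inequality for a uniform sample without replacement. The only point requiring care is using a sampling-without-replacement version of Hoeffding rather than the independent-samples version, and verifying the affine reduction to $S_\pi$. If preferred, the same bound follows from a bounded-differences/Azuma argument applied to the Doob martingale $\mathbb{E}[T_{n_1,n_2}^{\ast,\pi} \mid \pi(1), \ldots, \pi(k)]$, since a single transposition changes $T_{n_1,n_2}^{\ast,\pi}$ by at most $2\eta_{j^\ast}(1/n_1 + 1/n_2) \leq 4\eta_{j^\ast}/n_1$ on $A$.
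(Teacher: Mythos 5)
Your proof is correct, and it takes a genuinely different and considerably shorter route than the paper's. The paper's argument follows the same architecture it uses for the other permuted-statistic lemmata: it introduces a resampling $n_1$-tuple $\vecbf{L}$ to reduce to the equal-sample-size case, factorizes the moment generating function via the tower property by conditioning on the first-stage data $\mathcal{D}'$, then invokes the combinatorial \Cref{app:lem:permcontrol} to show the permuted conditional mean contributes at most a $\mathrm{SG}(O(\eta_{j^\ast}^2/n_1))$ term through the quantity $\psi_{n_1}^{\vecbf{L},\pi}$; combining the pieces it obtains $\mathrm{SG}(12\eta_{j^\ast}^2/n_1)$ and hence the constant $24$ in the exponent. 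Your approach sidesteps all of this by conditioning on the full vector $(\tilde{D}_1,\ldots,\tilde{D}_{n_1+n_2})$, observing that $A$ is $\sigma(\tilde{D})$-measurable, using the identity $\sum_k \tilde{D}_{\pi(k)} = (n_1+n_2)\bar{D}$ to rewrite $T^{\ast,\pi}_{n_1,n_2} = \frac{n_1+n_2}{n_1 n_2}(S_\pi - n_1\bar{D})$, and then applying Hoeffding's sampling-without-replacement inequality to the bounded population (bounded on $A$ by construction); the factor $(n_1+n_2)^2 \leq 4n_2^2$ gives the exponent $n_1x^2/(8\eta_{j^\ast}^2)$, which is strictly stronger than the paper's. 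The trade-off is one of modularity versus directness: the paper's route reuses infrastructure (the $\vecbf{L}$ reduction, the MGF decomposition, \Cref{app:lem:permcontrol}) shared with the $U$-statistic and discrete-interactive permutation bounds, where the summands are only sub-Gaussian rather than bounded and an affine reduction is unavailable; your route exploits the special structure here (a linear permutation statistic with summands that are deterministically bounded on the conditioning event) to obtain a cleaner proof with a better constant, at the cost of not generalizing to the paper's other settings.
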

            \begin{proof}
                For $i \in [n_1^\ast + n_2^\ast]$, write $A_i = \{|\tilde{D}_i| \leq \eta_{j^\ast}\}$ for $\eta_{j^\ast}$ as in \eqref{app:eq:contintpart2etaval}. Denote the combination of the two \beame{privatised}{privatized}{privatized} samples used in the first stage of the estimator via $\mathcal{D}' = \widetilde{\mathcal{D}}_{X, n_1^\ast}' \cup \widetilde{\mathcal{D}}_{Y, n_2^\ast}'$, and the combination with the second stage via
                $\mathcal{D} = \widetilde{\mathcal{D}}_{X, n_1^\ast} \cup \widetilde{\mathcal{D}}_{X, n_1^\ast}' \cup \widetilde{\mathcal{D}}_{Y, n_2^\ast} \cup \widetilde{\mathcal{D}}_{Y, n_2^\ast}'$. Define       
                \begin{equation*}
                    T_{n_1^\ast, n_2^\ast}^{\ast, \pi, A} = \frac{1}{n_1^\ast} \sum_{i = 1}^{n_1^\ast} \tilde{D}_{\pi(i)} \mathbbm{1}\{A_{\pi(i)}\} - \frac{1}{n_2^\ast} \sum_{i' = 1}^{n_2^\ast} \tilde{D}_{\pi(n_1^\ast + i')}\mathbbm{1}\{A_{\pi(n_1^\ast + i')}\}.
                \end{equation*}

                We now reduce to the case with equal sample sizes. Consider $\vecbf{L} = (l_1, \hdots, l_{n_1}) \subseteq [n_2^\ast]$ an $n_1^\ast$-tuple drawn without replacement uniformly over the set $[n_2^\ast]$, and define
                \begin{equation*}
                    \begin{aligned}
                        T_{n_1^\ast}^{\ast, \vecbf{L}, \pi} 
                        &= \frac{1}{n_1^\ast} \sum_{i = 1}^{n_1^\ast} (D_{\pi(i)} - D_{\pi(n_1^\ast + l_i)}),\; \mbox{and} \\
                        T_{n_1^\ast}^{\ast, \vecbf{L}, \pi, A} 
                        &= \frac{1}{n_1^\ast} \sum_{i = 1}^{n_1^\ast} (D_{\pi(i)}\mathbbm{1}\{A_{\pi(i)}\} - D_{\pi(n_1^\ast + l_i)}\mathbbm{1}\{A_{\pi(n_1^\ast + l_i)}\}).
                    \end{aligned}
                \end{equation*}
                where we recall $\pi$ is a permutation on $n_1^\ast + n_2^\ast$ symbols drawn uniformly from $S_{n_1^\ast + n_2^\ast}$. Note in particular that $\pi$ and $\vecbf{L}$ are independent and that $\pi(i), \pi(n_1^\ast + l_i)$ may map to an index outside those considered in $T_{n_1^\ast}^{\ast, \vecbf{L}, \pi}$. We decompose the moment generating function as
                \begin{align}
                    \mathbb{E}[&\exp(\lambda T_{n_1^\ast, n_2^\ast}^{\ast, \pi, A})\mid \vecbf{L}, \pi] \nonumber \\
                    &= \mathbb{E}\big[\exp(\lambda \mathbb{E}[T_{n_1^\ast}^{\ast, \vecbf{L}, \pi, A} \mid \mathcal{D}, \pi] )\big] \nonumber \\
                    &\leq \mathbb{E}[\exp(\lambda T_{n_1^\ast}^{\ast, \vecbf{L}, \pi, A})] \nonumber \\
                    &= \mathbb{E}\Big[ \mathbb{E}\big[ \mathbb{E}[\exp\{\lambda (T_{n_1^\ast}^{\ast, \vecbf{L}, \pi, A} - \mathbb{E}[T_{n_1^\ast}^{\ast, \vecbf{L}, \pi, A} \mid \vecbf{L}, \pi])\} \mid \mathcal{D}', \vecbf{L}, \pi] \mid \vecbf{L}, \pi \big] \exp(\lambda \mathbb{E}[T_{n_1^\ast}^{\ast, \vecbf{L}, \pi, A} \mid \vecbf{L}, \pi]) \Big] \nonumber \\
                    &\leq \exp\{2\lambda^2\eta_{j^\ast}^2/n_1^\ast\} \mathbb{E}\big[ \exp(\lambda \mathbb{E}[T_{n_1^\ast}^{\ast, \vecbf{L}, \pi, A} \mid \vecbf{L}, \pi]) \big] \label{app:eq:contintpermterm1}
                \end{align}
                where the first equality is by that fact that $T_{n_1^\ast, n_2^\ast}^{\ast, \pi, A} = \mathbb{E}[T_{n_1^\ast}^{\ast, \vecbf{L}, \pi, A} \mid \mathcal{D}, \pi]$; the first inequality by Jensen's inequality; the second equality by the tower property; and the last inequality by the fact that for a fixed $\vecbf{L}, \pi$, and conditioning on the sample from the first step of the interactive procedure $\mathcal{D}'$, the quantity $T_{n_1}^{\ast, \vecbf{L}, \pi, A}$ is the average of $n_1^\ast$ conditionally independent random variables taking values in $\{0, \pm 2\eta_{j^\ast}\}$ and is hence $\mathrm{SG}(4\eta_{j^\ast}^2)$
                (e.g.~\citealt[Exercise~2.4]{Wainwright:2019:HDSBook})

                We now focus on the remaining expectation term in \eqref{app:eq:contintpermterm1}. We recall the values $a_i^{\vecbf{L}, \pi}, b_i^{\vecbf{L}, \pi}$ and $d_i^{\vecbf{L}, \pi}$ defined in \eqref{app:eq:permmeanterms}, and, similarly to \eqref{app:eq:permmeanform}, we obtain
                \begin{align*}
                    \mathbb{E}[T_{n_1^\ast}^{\ast, \vecbf{L}, \pi, A} \mid \vecbf{L}, \pi]
                    &= \frac{1}{n_1^\ast} \sum_{i = 1}^{n_1^\ast} \big\{a_i^{\vecbf{L}, \pi}\mathbb{E}[D_{\pi(i)}\mathbbm{1}\{A_{\pi(i)}\} - D_{\pi(n_1^\ast + l_i)}\mathbbm{1}\{A_{\pi(n_1^\ast + l_i)}\}]  \\
                    &\hspace{2cm}+ b_i^{\vecbf{L}, \pi}\mathbb{E}[D_{\pi(i)}\mathbbm{1}\{A_{\pi(i)}\} - D_{\pi(n_1^\ast + l_i)}\mathbbm{1}\{A_{\pi(n_1^\ast + l_i)}\} ]\big\}  \\
                    &= \frac{1}{n_1^\ast}\sum_{i = 1}^{n_1^\ast} \big\{ a_i^{\vecbf{L}, \pi}\mathbb{E}[\tilde{X}_1\mathbbm{1}\{A_1\} - \tilde{Y}_1\mathbbm{1}\{A_{n_1^\ast + 1}\}]  \\
                    &\hspace{2cm}+ b_i^{\vecbf{L}, \pi}\mathbb{E}[\tilde{Y}_1\mathbbm{1}\{A_{n_1^\ast + 1}\} - \tilde{X}_1\mathbbm{1}\{A_1\}] \big\} \\
                    &= \bigg( \frac{1}{n_1^\ast} \sum_{i = 1}^{n_1^\ast} d_i^{\vecbf{L}, \pi} \bigg) \mathbb{E}[\tilde{X}_1\mathbbm{1}\{A_1\} - \tilde{Y}_1\mathbbm{1}\{A_{n_1^\ast + 1}\}].
                \end{align*}
                Recalling the notation $\psi_{n_1^\ast}^{\vecbf{L}, \pi} = \sum_{i = 1}^{n_1^\ast} d_i^{\vecbf{L}, \pi}$, we note that $\psi_{n_1^\ast}^{\vecbf{L}, \pi}$ is $\mathrm{SG}(2n_1^\ast)$ and mean-zero by \Cref{app:lem:permcontrol}, which yields
                \begin{align}
                    \mathbb{E}[\exp(\lambda \mathbb{E}[T_{n_1^\ast}^{\ast, \vecbf{L}, \pi, A} \mid \vecbf{L}, \pi])]
                    &= \mathbb{E}\bigg[\exp\bigg( \frac{\lambda \psi_{n_1^\ast}^{\vecbf{L}, \pi}}{n_1^\ast} \mathbb{E}[\tilde{X}_1\mathbbm{1}\{A_1\} - \tilde{Y}_1\mathbbm{1}\{A_{n_1^\ast + 1}\}] \bigg) \bigg] \nonumber \\
                    &\leq \mathbb{E}\bigg[\exp\bigg( \frac{\lambda^2}{n_1^\ast} \mathbb{E}[\tilde{X}_1\mathbbm{1}\{A_1\} - \tilde{Y}_1\mathbbm{1}\{A_{n_1^\ast + 1}\}]^2 \bigg) \bigg] \nonumber \\
                    &\leq \exp\bigg( \frac{4\lambda^2 \eta_{j^\ast}^2}{n_1^\ast} \bigg), \label{app:eq:contintpermterm2}
                \end{align}
                where the final inequality comes from the bounds $|\tilde{X}_1\mathbbm{1}\{A_1\}|, |\tilde{Y}_1\mathbbm{1}\{A_{n_1^\ast + 1}\}| \leq \eta_j^\ast$. Combining \eqref{app:eq:contintpermterm1} and \eqref{app:eq:contintpermterm2}, we obtain
                \begin{align*}
                    \mathbb{E}[\exp(\lambda T_{n_1^\ast, n_2}^{\ast, \pi, A})\mid \vecbf{L}, \pi]
                    \leq \exp\bigg(\frac{6\lambda^2\eta_{j^\ast}^2}{n_1^\ast} \bigg),
                \end{align*}
                which shows that $T_{n_1^\ast, n_2}^{\ast, \pi, A}$ is $\mathrm{SG}(12\eta_{j^\ast}^2/n_1^\ast)$. Lastly, recalling $T_{n_1^\ast}^{\ast, \pi, A} = T_{n_1^\ast, n_2}^{\ast, \pi}$ on $A$ and appealing to \citet[e.g.~Proposition~2.5][]{Wainwright:2019:HDSBook}, gives the tail bound in the lemma statement, completing the proof.
            \end{proof}
        
    \section{Technical Lemmata} \label{app:misc}
        \subsection{Properties of Unary-Encoding}
            In this section we prove some useful properties of the unary-encoding mechanism. We first recall the unary-encoding mechanism: For a value $x \in [d]$ and a privacy parameter $\varepsilon > 0$, construct the \beame{privatised}{privatized}{privatized} vector $\vecbf{z} \in \{0, 1\}^d$ with entries given as
            \begin{equation} \label{app:eq:UE}
                z_{i} =
                \begin{cases}
                    \mathbbm{1}\{x = j\} \quad &\mbox{with probability } \omega_{\varepsilon/2}, \\
                    1 - \mathbbm{1}\{x = j\} \quad &\mbox{with probability } 1 - \omega_{\varepsilon/2}.
                \end{cases}
            \end{equation}
            where we recall $\omega_{\varepsilon/2} = \exp(\varepsilon/2)/\{\exp(\varepsilon/2) + 1\}$.
            
            \subsubsection{Unary-Encoding is Sub-Gaussian}
                \begin{lemma} \label{app:lem:UESG}
                    For a probability vector $\vecbf{p}_X \in [0,1]^d$, let the random variable $X \sim \mathrm{Multinom}(\vecbf{p}_X)$. The random variable $\vecbf{Z}$ arising from applying the unary-encoding \beame{privatisation}{privatization}{privatization} method \eqref{app:eq:UE} with privacy parameter $\varepsilon > 0$ is a $\mathrm{SG}(17/4)$ random vector.
                \end{lemma}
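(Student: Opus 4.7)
The plan is to show that for any $\vecbf{u} \in \mathbb{R}^d$, the linear projection $\vecbf{u}^T \vecbf{Z}$ is sub-Gaussian with parameter $(17/4)\|\vecbf{u}\|_2^2$. The natural starting point is to condition on $X$ and use the variance decomposition
\begin{equation*}
    \vecbf{u}^T(\vecbf{Z} - \mathbb{E}\vecbf{Z}) = \underbrace{\vecbf{u}^T(\vecbf{Z} - \mathbb{E}[\vecbf{Z}\mid X])}_{A} + \underbrace{\vecbf{u}^T(\mathbb{E}[\vecbf{Z}\mid X] - \mathbb{E}\vecbf{Z})}_{B}.
\end{equation*}
Crucially, conditional on $X$, the coordinates $Z_j$ are independent (since the $V_{i,j}^{(X)}$ used in \eqref{app:eq:UE} are i.i.d.), each being a Bernoulli random variable taking values in $\{0,1\}$, with mean $\omega_{\varepsilon/2}$ if $X = j$ and $1 - \omega_{\varepsilon/2}$ otherwise.

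For the term $A$: conditional on $X$, each summand $u_j(Z_j - \mathbb{E}[Z_j\mid X])$ is a mean-zero random variable bounded in an interval of length $|u_j|$, so Hoeffding's lemma gives that it is $\mathrm{SG}(u_j^2/4)$. By conditional independence, the sum $A$ is $\mathrm{SG}(\|\vecbf{u}\|_2^2/4)$, i.e. $\mathbb{E}[\exp(\lambda A)\mid X] \leq \exp(\lambda^2\|\vecbf{u}\|_2^2/8)$ almost surely. For the term $B$: direct computation using $\mathbb{E}[Z_j\mid X] = (1-\omega_{\varepsilon/2}) + (2\omega_{\varepsilon/2}-1)\mathbbm{1}\{X = j\}$ yields
\begin{equation*}
    B = (2\omega_{\varepsilon/2}-1)(u_X - \mathbb{E}[u_X]),
\end{equation*}
where $u_X$ denotes the $X$-th entry of $\vecbf{u}$. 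Since $|2\omega_{\varepsilon/2} - 1| \leq 1$ and $u_X \in [-\|\vecbf{u}\|_\infty, \|\vecbf{u}\|_\infty]$, the random variable $B$ is mean-zero and bounded in an interval of length at most $4\|\vecbf{u}\|_\infty$, so another application of Hoeffding gives that $B$ is $\mathrm{SG}(4\|\vecbf{u}\|_\infty^2)$, which in turn is $\mathrm{SG}(4\|\vecbf{u}\|_2^2)$ since $\|\vecbf{u}\|_\infty \leq \|\vecbf{u}\|_2$.

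Combining the two bounds via the tower property,
\begin{equation*}
    \mathbb{E}[\exp\{\lambda \vecbf{u}^T(\vecbf{Z} - \mathbb{E}\vecbf{Z})\}] = \mathbb{E}[\exp(\lambda B)\,\mathbb{E}[\exp(\lambda A)\mid X]] \leq \exp(\lambda^2 \|\vecbf{u}\|_2^2/8)\,\mathbb{E}[\exp(\lambda B)] \leq \exp\bigl(\lambda^2 \cdot 17\|\vecbf{u}\|_2^2/8\bigr),
\end{equation*}
which establishes the claim. There is no real obstacle here: the decomposition into conditional mean-zero and conditional mean components cleanly separates the two sources of randomness, and the conditional independence of the encoded coordinates prevents the dependence induced by $X$ from inflating the bound. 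The constant $17/4$ arises as $1/4 + 4$, corresponding to the crude union of the conditional Hoeffding parameter and the Hoeffding parameter for the bounded discrete random variable $u_X - \mathbb{E}[u_X]$; a tighter analysis (e.g. using $|u_X - \mathbb{E}u_X| \leq 2\|\vecbf{u}\|_\infty$ with Hoeffding giving $\|\vecbf{u}\|_\infty^2$-sub-Gaussianity instead) would improve the constant, but $17/4$ already suffices for the downstream applications.
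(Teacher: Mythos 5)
Your proof is correct and follows essentially the same route as the paper's: condition on $X$, apply Hoeffding to the conditionally independent Bernoulli coordinates to get the $\|\vecbf{u}\|_2^2/4$ term, and then bound the conditional-mean term $B = (2\omega_{\varepsilon/2}-1)\vecbf{u}^T(\vecbf{e}_X - \vecbf{p}_X)$ by its range to get the factor of $4$, yielding $1/4 + 4 = 17/4$. The only cosmetic difference is that the paper fixes $\vecbf{u}$ to be a unit vector and bounds $\vecbf{u}^T(\vecbf{e}_X - \vecbf{p}_X) \in [-2,2]$ directly, whereas you write $B$ in terms of $u_X - \mathbb{E}[u_X]$ and pass through $\|\vecbf{u}\|_\infty \leq \|\vecbf{u}\|_2$.
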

    
                \begin{proof}
                    For any unit vector $\vecbf{u} \in \mathbb{R}^d$ and any $\lambda > 0$, we have
                    \begin{align*} 
                        \mathbb{E}[\exp\{\lambda \vecbf{u}^T & (\vecbf{Z} - \mathbb{E}[\vecbf{Z}])\}] \\
                        &= \mathbb{E} \big[ \mathbb{E}[\exp\{\lambda \vecbf{u}^T (\vecbf{Z} - \mathbb{E}[\vecbf{Z} \mid X])\} \mid X] \exp\{\lambda \vecbf{u}^T (\mathbb{E}[\vecbf{Z} \mid X] - \mathbb{E}[\vecbf{Z}])\} \big] \\
                        &= \mathbb{E}\bigg[ \bigg( \prod_{j = 1}^d \mathbb{E}[\exp\{\lambda u_j(Z_j - \mathbb{E}[Z_j \mid X])\} \mid X] \bigg) \exp\{\lambda \vecbf{u}^T(\mathbb{E}[\vecbf{Z} \mid X] - \mathbb{E}[\vecbf{Z}])\} \bigg] \\
                        &\leq \exp(\lambda^2\| \vecbf{u} \|_2^2/8) \mathbb{E}[\exp\{\lambda \vecbf{u}^T (\mathbb{E}[\vecbf{Z} \mid X] - \mathbb{E}[\vecbf{Z}])\} ] \\
                        &= \exp(\lambda^2/8) \mathbb{E}[\exp\{\lambda \vecbf{u}^T (\mathbb{E}[\vecbf{Z} \mid X] - \mathbb{E}[\vecbf{Z}])\} ],
                    \end{align*}
                    where the first equality is by the tower property; the second as $\vecbf{Z}$ has independent entries conditional on $X$; and the first inequality as $Z_j \in \{0,1\}$ and is thus $\mathrm{SG}(1/4)$ \citep[e.g.~Exercise~2.4][]{Wainwright:2019:HDSBook}.

                    Focusing on the remaining expectation term, we note that $\mathbb{E}[\vecbf{Z} \mid X] - \mathbb{E}[\vecbf{Z}] = (2\omega_{\varepsilon/2} - 1)(\vecbf{e}_X - \vecbf{p}_X)$ where $\vecbf{e}_j$ for $j \in [d]$ is the $j$-th standard basis vector of $\mathbb{R}^d$. Then, by the facts that $2\omega_{\varepsilon/2} - 1 \in [0,1]$ and $\vecbf{u}^T(\vecbf{e}_X - \vecbf{p}_X) \in [-2, 2]$, we have that $\vecbf{u}^T (\mathbb{E}[\vecbf{Z} \mid X] - \mathbb{E}[\vecbf{Z}])$ is $\mathrm{SG}(4)$ \citep[e.g.~Exercise~2.4][]{Wainwright:2019:HDSBook}, and hence
                    \begin{align*} 
                        \mathbb{E}[\exp\{\lambda \vecbf{u}^T (\vecbf{Z} - \mathbb{E}[\vecbf{Z}])\}]
                        \leq \exp(\lambda^2/8) \exp(2\lambda^2)
                        \leq \exp(17\lambda^2/8),
                    \end{align*}
                    which completes the proof.
                \end{proof}

            \subsubsection{Negative Association of Unary-Encoding}
                We first introduce the definition of negative association (NA) for random variables.
                \begin{definition}[\citealt{Joag:1983:NA}] \label{app:def:NA}
                    A collection of random variables $\{X_1, \hdots, X_n\}$ is said to be \emph{negatively associated} if, for every pair of disjoint subsets $I_1, I_2 \subseteq [n]$ such that $I_1 \cup I_2 = [n]$, it holds that
                    \begin{equation*}
                        \mathrm{Cov}(f(X_i : i \in I_1), g(X_{i'} : i' \in I_2)) \leq 0
                    \end{equation*}
                    for all real-valued functions $f, g$ both non-decreasing\footnote{Note that in \citet{Joag:1983:NA}, increasing is used in place of non-decreasing, and decreasing in place of non-increasing.}, or equivalently both non-increasing, functions.
                \end{definition}
                With this definition in hand, we prove the following,.
                \begin{lemma} \label{app:lem:UENA}
                    For $X$ a random variable taking values in $[d]$, the co-ordinates $\{Z_1, \hdots, Z_d\}$ of the random variable $\vecbf{Z}$, arising from applying the unary-encoding \beame{privatisation}{privatization}{privatization} method \eqref{app:eq:UE} with privacy parameter $\varepsilon \geq 0$ to $X$ satisfies the negative association property.
                \end{lemma}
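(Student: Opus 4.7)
Fix disjoint index sets $I_1, I_2 \subseteq [d]$ and nondecreasing functions $f : \{0,1\}^{|I_1|} \to \mathbb{R}$ and $g : \{0,1\}^{|I_2|} \to \mathbb{R}$. The plan is to show $\mathrm{Cov}(f(\vecbf{Z}_{I_1}), g(\vecbf{Z}_{I_2})) \leq 0$ by conditioning on $X$. First, observe that conditionally on $X$, the coordinates $Z_1, \ldots, Z_d$ are \emph{independent} Bernoulli random variables, since the noise in \eqref{app:eq:UE} is applied coordinatewise and independently. Hence $f(\vecbf{Z}_{I_1})$ and $g(\vecbf{Z}_{I_2})$ are conditionally independent given $X$ (as functions of disjoint groups of conditionally independent variables), and the conditional-covariance term vanishes in the decomposition
\begin{equation*}
    \mathrm{Cov}(f(\vecbf{Z}_{I_1}), g(\vecbf{Z}_{I_2}))
    = \mathbb{E}[\mathrm{Cov}(f(\vecbf{Z}_{I_1}), g(\vecbf{Z}_{I_2}) \mid X)]
    + \mathrm{Cov}(\phi(X), \psi(X)),
\end{equation*}
where $\phi(x) = \mathbb{E}[f(\vecbf{Z}_{I_1}) \mid X = x]$ and $\psi(x) = \mathbb{E}[g(\vecbf{Z}_{I_2}) \mid X = x]$. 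It therefore suffices to show $\mathrm{Cov}(\phi(X), \psi(X)) \leq 0$.

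The key step is to exploit the structure of the conditional distribution. From \eqref{app:eq:UE}, $Z_j \mid X = x$ is $\mathrm{Bernoulli}(\omega_{\varepsilon/2})$ when $j = x$ and $\mathrm{Bernoulli}(1 - \omega_{\varepsilon/2})$ otherwise. Since $\varepsilon \geq 0$ gives $\omega_{\varepsilon/2} \geq 1/2$, the first distribution stochastically dominates the second. Consequently, for any $x \notin I_1$ the vector $\vecbf{Z}_{I_1} \mid X = x$ has coordinates i.i.d. $\mathrm{Bernoulli}(1 - \omega_{\varepsilon/2})$ and takes a common value $\phi(x) = \phi_0$; whereas for $x \in I_1$, exactly one coordinate ($Z_x$) is elevated to $\mathrm{Bernoulli}(\omega_{\varepsilon/2})$, so that $\vecbf{Z}_{I_1} \mid X = x$ stochastically dominates the baseline distribution. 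Since $f$ is nondecreasing, this yields $\phi(x) \geq \phi_0$ for all $x \in I_1$. Writing $\Delta_\phi(x) = \phi(x) - \phi_0 \geq 0$, we see $\Delta_\phi$ is supported on $I_1$. An identical argument for $\psi$ produces $\Delta_\psi(x) = \psi(x) - \psi_0 \geq 0$ supported on $I_2$.

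Finally, because $I_1 \cap I_2 = \emptyset$, the product $\Delta_\phi(x)\Delta_\psi(x)$ is identically zero on $[d]$, so
\begin{equation*}
    \mathrm{Cov}(\phi(X), \psi(X)) = \mathrm{Cov}(\Delta_\phi(X), \Delta_\psi(X)) = -\mathbb{E}[\Delta_\phi(X)]\,\mathbb{E}[\Delta_\psi(X)] \leq 0,
\end{equation*}
which completes the proof. The only nontrivial conceptual point is the stochastic-dominance argument in the second paragraph — everything else is a clean bookkeeping consequence of conditional independence and the disjointness of $I_1, I_2$. I expect this to be the heart of the argument, and it is fairly clean because the unary-encoding mechanism acts on only one coordinate ($Z_X$) in a distinguished way, which is precisely why the indicator $\Delta_\phi$ ends up supported on $I_1$ rather than on all of $[d]$.
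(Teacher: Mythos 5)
Your proof is correct, and it takes a slightly different route from the paper's. Both arguments hinge on the same key observation: for $x \in I_1$, the conditional law of $\vecbf{Z}_{I_1}$ given $X = x$ stochastically dominates the conditional law given $X \notin I_1$ because $\omega_{\varepsilon/2} \geq 1/2$, so $\mathbb{E}[f(\vecbf{Z}_{I_1}) \mid X = x]$ is largest when $x$ lies in the index set that $f$ depends on (the paper proves this by explicitly swapping the $\omega_{\varepsilon/2}$ and $1 - \omega_{\varepsilon/2}$ weights in the mixture representation of $\mathbb{E}[f(\vecbf{Z}) \mid A_i]$, which amounts to the same thing). Where you diverge is in the decomposition of the covariance. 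You condition on the full random variable $X$, apply the law of total covariance (the conditional covariance term vanishes by conditional independence of $\vecbf{Z}_{I_1}$ and $\vecbf{Z}_{I_2}$ given $X$), and then observe that the ``bump'' functions $\Delta_\phi(x) = \phi(x) - \phi_0$ and $\Delta_\psi(x) = \psi(x) - \psi_0$ are nonnegative with disjoint supports $I_1$ and $I_2$, so their product is identically zero and $\mathrm{Cov}(\Delta_\phi(X), \Delta_\psi(X)) = -\mathbb{E}[\Delta_\phi(X)]\mathbb{E}[\Delta_\psi(X)] \leq 0$. The paper instead conditions on the coarser two-set partition $\{B_1, B_2\} = \{\{X \in I_1\}, \{X \in I_2\}\}$ (exploiting conditional independence given $B_1$ or $B_2$ rather than given $X$) and derives an explicit algebraic identity $\mathbb{E}[f]\mathbb{E}[g] - \mathbb{E}[fg] = (E_f^\ast - E_f)(E_g^\ast - E_g)p_f p_g$, reducing the claim to the sign of $(E_f^\ast - E_f)(E_g^\ast - E_g)$. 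Your disjoint-support argument is arguably a cleaner way to finish, and as a bonus it works for arbitrary disjoint $I_1, I_2$ without needing $I_1 \cup I_2 = [d]$; the paper's version buys a very explicit, self-contained identity that makes the sign manifest without invoking the law of total covariance.
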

                \begin{proof}
                    Denote disjoint sets of indices $I_1, I_2 \subseteq[d]$ such that $I_1 \cup I_2 = [d]$. Consider non-decreasing real-valued functions $f(Z_i,\;i \in I_1)$ and $g(Z_i,\;i \in I_2)$ where we write $f(\vecbf{Z})$ and $g(\vecbf{Z})$ respectively for brevity. Denote the events $A_i = \{X = i\}$ and let $B_1 = \cup_{i \in I_1} A_i = \{X \in I_1\}$, $B_2 = \cup_{i \in I_2} A_i =\{X \in I_2\}$.

                    Define the quantities
                    \begin{align*}
                        &E_f^\ast = \mathbb{E}[f(\vecbf{Z}) \mid B_1]; \quad E_f = \mathbb{E}[f(\vecbf{Z}) \mid B_2]; \quad
                        E_g^\ast = \mathbb{E}[g(\vecbf{Z}) \mid B_2]; \quad E_g = \mathbb{E}[g(\vecbf{Z}) \mid B_1]; \\
                        &p_f = \mathbb{P}(B_1); \quad \mbox{and} \quad p_g = \mathbb{P}(B_2).
                    \end{align*}
                    Using the fact that $f(\vecbf{Z}), g(\vecbf{Z})$ are independent conditional on $B_1$ or $B_2$, we obtain
                    \begin{align}
                        \mathbb{E}[f(\vecbf{Z})g(\vecbf{Z})]
                        &= \mathbb{E}[f(\vecbf{Z})g(\vecbf{Z}) \mid B_1] p_f
                        + \mathbb{E}[f(\vecbf{Z})g(\vecbf{Z}) \mid B_2] p_g \nonumber \\
                        &= \mathbb{E}[f(\vecbf{Z})\mid B_1] \mathbb{E}[g(\vecbf{Z})\mid B_1] p_f
                        + \mathbb{E}[f(\vecbf{Z})\mid B_2] \mathbb{E}[g(\vecbf{Z})\mid B_2] p_g \nonumber \\
                        &= E_f^\ast E_g p_f + E_f E_g^\ast p_g. \label{app:eq:NAUEIntStep}
                    \end{align}
                    Similarly, noting $p_f + p_g = 1$, we can obtain
                    \begin{align}
                        \mathbb{E}[f(\vecbf{Z})]&\mathbb{E}[g(\vecbf{Z})] \nonumber \\
                        &= (E_f^\ast p_f + E_f p_g) (E_g^\ast p_g + E_g p_f) \nonumber \\
                        &= E_f^\ast E_g p_f + E_f E_g^\ast p_g + E_f^\ast E_g^\ast p_f p_g - E_f^\ast E_g (p_f - p_f^2) - E_f E_g^\ast (p_g - p_g^2) + E_f E_g p_f p_g \nonumber \\
                        &= \mathbb{E}[f(\vecbf{Z})g(\vecbf{Z})] + (E_f^\ast - E_f )(E_g^\ast - E_g)p_f p_g, \label{app:eq:NAUEIntStep2}
                    \end{align}
                    where the final equality is by \eqref{app:eq:NAUEIntStep}. 
                    
                    It remains to show that the second term in \eqref{app:eq:NAUEIntStep2} is non-negative. We first note on the event $A_i$ that $Z_i \sim \mathrm{Unif}(\omega_{\varepsilon/2})$ and $Z_j \overset{\mathrm{i.i.d.}}{\sim} \mathrm{Unif}(1 - \omega_{\varepsilon/2})$ for $j \neq i$, with the collection $\{Z_j\}_{j \in [d]}$ consisting of independent random variables and where $\omega_{\varepsilon/2} = \exp(\varepsilon/2)/\{\exp(\varepsilon/2) + 1\}$. Supposing without loss of generality $I_1 = \{1, 2, \hdots, |I_1|\}$ for simplicity, we have that
                    \begin{align*}
                        \mathbb{E}[f(\vecbf{Z}) \mid A_1]
                        &= \mathbb{E}[f(Z_1, Z_2, \hdots, Z_{|I_1|}) \mid A_1] \\
                        &= \mathbb{E}[f(0, Z_2, \hdots, Z_{|I_1|}) \mid A_1] (1 - \omega_{\varepsilon/2}) + \mathbb{E}[f(1, Z_2, \hdots, Z_{|I_1|}) \mid A_1] \omega_{\varepsilon/2} \\
                        &\geq \mathbb{E}[f(0, Z_2, \hdots, Z_{|I_1|}) \mid A_1] \omega_{\varepsilon/2} + \mathbb{E}[f(1, Z_2, \hdots, Z_{|I_1|}) \mid A_1] (1 - \omega_{\varepsilon/2}) \\
                        &= \mathbb{E}[f(Z_1, Z_2, \hdots, Z_{|I_1|}) \mid B_2] = \mathbb{E}[f(\vecbf{Z}) \mid B_2],
                    \end{align*}
                    where the inequality is by the fact that $f$ is non-decreasing and that $\omega_{\varepsilon/2} \geq 1/2$.
    
                    Applying a similar argument shows that $\mathbb{E}[f(\vecbf{Z}) \mid A_j] \geq \mathbb{E}[f(\vecbf{Z}) \mid B_2]$ for all $j \in I_1$, and hence $\mathbb{E}[f(\vecbf{Z}) \mid B_1] \geq \mathbb{E}[f(\vecbf{Z}) \mid B_2]$. We can then similarly show that $\mathbb{E}[g(\vecbf{Z}) \mid B_2] \geq \mathbb{E}[g(\vecbf{Z}) \mid B_1]$.
                    Hence, we have that $E_f^\ast \geq E_f, E_g^\ast \geq E_g$, and combining with \eqref{app:eq:NAUEIntStep2}, we obtain $\mathbb{E}[f(\vecbf{Z})]\mathbb{E}[g(\vecbf{Z})] \geq \mathbb{E}[f(\vecbf{Z}) g(\vecbf{Z})]$, which completes the proof.
                \end{proof}

                The following lemma (an analogue of \citealt[Property~6]{Joag:1983:NA} to non-increasing functions) will be useful following application of \Cref{app:lem:UENA}.

                \begin{lemma} \label{app:lem:NALemma}
                    Let the collection of random variables $X = \{X_1, \hdots, X_n\}$ be negatively associated. A collection of non-increasing functions $h_1, \hdots h_k$, for $k \in [n]$, defined on disjoint subsets of $X$ is negatively associated.
                \end{lemma}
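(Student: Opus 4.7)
The plan is to reduce to the non-decreasing case already established in \cite{Joag:1983:NA} (Property 6) by applying a sign flip. Specifically, for each $i \in [k]$ define $\tilde{h}_i = -h_i$, which, by assumption that each $h_i$ is non-increasing, is a non-decreasing function of its input variables. Moreover, the $\tilde{h}_i$ are defined on the same disjoint subsets of $X$ as the $h_i$, so Property 6 of \cite{Joag:1983:NA} applies and the collection $\{\tilde{h}_1, \hdots, \tilde{h}_k\}$ is negatively associated.

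From here I would transfer negative association from the $\tilde{h}_i$ back to the $h_i$ via the equivalent characterisation of NA in terms of non-increasing test functions, as stated in the second half of \Cref{app:def:NA}. Concretely, for any disjoint $I_1, I_2 \subseteq [k]$ with $I_1 \cup I_2 = [k]$ and any non-decreasing $f, g$, write
\begin{equation*}
    \mathrm{Cov}(f(h_i : i \in I_1), g(h_j : j \in I_2)) = \mathrm{Cov}(\tilde{f}(\tilde{h}_i : i \in I_1), \tilde{g}(\tilde{h}_j : j \in I_2)),
\end{equation*}
where $\tilde{f}(\vecbf{x}) = f(-\vecbf{x})$ and $\tilde{g}(\vecbf{y}) = g(-\vecbf{y})$ are both non-increasing in each argument. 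Since $\{\tilde{h}_i\}_{i \in [k]}$ is NA, the right-hand covariance is non-positive by the non-increasing characterisation of NA, yielding the desired inequality for $\{h_i\}_{i \in [k]}$.

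There is no real obstacle here: the content of the lemma is almost entirely bookkeeping built on top of Property 6 of \cite{Joag:1983:NA}. The only subtlety worth stating explicitly is the equivalence of the non-decreasing and non-increasing formulations of NA, which is itself standard and can be seen by the same $\vecbf{x} \mapsto -\vecbf{x}$ substitution applied at the level of the defining covariance inequality. This short argument closes the lemma.
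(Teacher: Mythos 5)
Your proposal is correct and uses essentially the same sign-flip idea as the paper's proof: both rest on the equivalence of the non-decreasing and non-increasing characterisations of NA together with the observation that composing non-increasing maps yields a non-decreasing one. The paper skips the intermediate invocation of Property~6 on $\{\tilde h_i\}$ and simply takes the test functions $f,g$ non-increasing from the start, noting that $f\circ h|_{I_1}$ and $g\circ h|_{I_2}$ are then non-decreasing functions of disjoint subsets of $X$ so that the NA of $X$ applies directly, but the underlying content is the same.
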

                \begin{proof}
                    For negative association, the condition of \Cref{app:def:NA} must hold for all possible pairs $(f,g)$ of real-valued non-increasing functions on $\mathbb{R}^k$, or equivalently non-decreasing functions. Suppose without loss of generality $f$ and $g$ are non-increasing. As the functions $h_1, \hdots, h_k$ are non-increasing, the entry-wise compositions with $f$ and $g$ are non-decreasing. Hence, the condition of \Cref{app:def:NA} holds in this case, completing the proof.
                \end{proof}

            \subsubsection{Distribution Estimate obtained by Unary-Encoding is Sub-Gaussian}
                Recall the quantities $\hat{\vecbf{p}}_X, \hat{\vecbf{p}}_Y \in \mathbb{R}^d$ defined in \eqref{sec4:eq:UEpmfests}. For $j \in [d]$, denote the difference for the $j$-th co-ordinate as $\hat{\delta}_j = \hat{p}_{X,j} - \hat{p}_{Y,j}$.
                \begin{lemma} \label{app:lem:UEEstSG}
                    Assume without loss of generality that $n_1 \leq n_2$. For each $j \in [d]$, the quantity $\hat{\delta}_j = \hat{p}_{X,j} - \hat{p}_{Y,j}$ is $\mathrm{SG}(10/(n_1\varepsilon^2))$.
                \end{lemma}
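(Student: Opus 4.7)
The plan is to decompose $\hat{\delta}_j$ into a scaled sum of bounded independent random variables and then invoke standard facts about sub-Gaussian parameters under scaling, summation, and differences of independent variables.

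First, I would note that by \eqref{sec4:eq:UEpmfests}, one can write
\begin{equation*}
    \hat{p}_{X,j} = \frac{c_{\varepsilon/2}}{n_1}\sum_{i=1}^{n_1}\bigg(Z_{i,j}' - \frac{1}{\exp(\varepsilon/2)+1}\bigg), \qquad \hat{p}_{Y,j} = \frac{c_{\varepsilon/2}}{n_2}\sum_{i'=1}^{n_2}\bigg(W_{i',j}' - \frac{1}{\exp(\varepsilon/2)+1}\bigg),
\end{equation*}
where $c_{\varepsilon/2} = \{\exp(\varepsilon/2)+1\}/\{\exp(\varepsilon/2)-1\}$ and each $Z_{i,j}', W_{i',j}' \in \{0,1\}$ by the construction \eqref{sec4:eq:UE}. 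As the $Z_{i,j}'$ and $W_{i',j}'$ are bounded in an interval of width $1$, each shifted and \beame{centred}{centred}{centered} summand is $\mathrm{SG}(1/4)$ by, e.g., \citet[Exercise~2.4]{Wainwright:2019:HDSBook}.

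Next, by standard scaling of sub-Gaussian parameters, each rescaled summand inside $\hat{p}_{X,j}$ is $\mathrm{SG}(c_{\varepsilon/2}^2/(4n_1^2))$, and the $n_1$ summands in $\hat{p}_{X,j}$ are mutually independent (since the $\vecbf{Z}_i'$ are independent across $i$). Summing independent sub-Gaussian variables yields that $\hat{p}_{X,j}$ is $\mathrm{SG}(c_{\varepsilon/2}^2/(4n_1))$, and analogously $\hat{p}_{Y,j}$ is $\mathrm{SG}(c_{\varepsilon/2}^2/(4n_2))$. Since the two samples are generated independently, $\hat{p}_{X,j}$ and $\hat{p}_{Y,j}$ are independent, and hence $\hat{\delta}_j$ is $\mathrm{SG}(c_{\varepsilon/2}^2/(4n_1) + c_{\varepsilon/2}^2/(4n_2))$.

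Finally, using the assumption $n_1 \leq n_2$ yields the bound $c_{\varepsilon/2}^2/(4n_1) + c_{\varepsilon/2}^2/(4n_2) \leq c_{\varepsilon/2}^2/(2n_1)$. Applying the elementary inequality $c_{\varepsilon/2}^2 \leq 20/\varepsilon^2$ for $\varepsilon \in (0,1]$ (which follows from the bound $c_{\epsilon}^2 \leq 5/\epsilon^2$ used elsewhere in the paper with $\epsilon = \varepsilon/2$) gives the claimed sub-Gaussian parameter $10/(n_1\varepsilon^2)$. No step here poses a real obstacle; the only subtlety is making sure to exploit independence both within each sample and between the two samples so that the additive composition rule for sub-Gaussian parameters applies cleanly.
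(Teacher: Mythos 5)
Your proof is correct and follows essentially the same approach as the paper's: decompose $\hat{p}_{X,j}$ and $\hat{p}_{Y,j}$ into scaled sums of bounded independent Bernoulli-type variables, each $\mathrm{SG}(1/4)$, then compose via scaling, independent sums, the independence of the two samples, and the bound $c_{\varepsilon/2}^2 \leq 20/\varepsilon^2$. The only cosmetic difference is that you make explicit the intermediate step $c_{\varepsilon/2}^2/(4n_1) + c_{\varepsilon/2}^2/(4n_2) \leq c_{\varepsilon/2}^2/(2n_1)$ via $n_1 \leq n_2$, which the paper leaves implicit.
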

                \begin{proof}
                    We have that
                    \begin{equation*}
                        \hat{p}_{X,j}
                        = \frac{c_{\varepsilon/2}}{n_1} \sum_{i = 1}^{n_1} \bigg( Z_{i, j}' - \frac{1}{\exp(\varepsilon/2) + 1} \bigg),
                    \end{equation*}
                    where the collection $\{Z_{i ,j}'\}_{i \in [n_1]}$ is independent and we denote $c_\varepsilon = \{\exp(\varepsilon) + 1\}/\{\exp(\varepsilon) - 1\}$. We have that $Z_{i ,j}'$ is $\mathrm{SG}(1/4)$ as each $Z_{i ,j}'$ takes values in $\{0, 1\}$ \citep[e.g.~Exercise~2.4][]{Wainwright:2019:HDSBook}, and hence the scaled and translated summands are $\mathrm{SG}(c_{\varepsilon/2}^2/\{4n_1^2\})$. Hence, by the independence of the collection $\{Z_{i ,j}'\}_{i \in [n_1]}$, the sum $\hat{p}_{X,j}$ is $\mathrm{SG}(c_{\varepsilon/2}^2/\{4n_1\})$. 

                    By a similar argument, we have that $\hat{p}_{Y,j}$ is $\mathrm{SG}(c_{\varepsilon/2}^2/\{4n_2\})$. Lastly, noting that $\hat{p}_{X,j}$ and $\hat{p}_{Y,j}$ are independent for a given $j \in [d]$, and that $c_{\varepsilon/2}^2 \leq 20/\varepsilon^2$ for $\varepsilon \in (0,1]$, we obtain that the difference $\hat{\delta}_j$ is $\mathrm{SG}(10/\{n_1\varepsilon^2\})$.
                \end{proof}

        \subsection{Controlling Mean of Permuted Sample}
            \begin{lemma} \label{app:lem:permcontrol}
                Consider, for positive integers $m \geq n$, a permutation $\pi$ drawn uniformly at random from $S_{n+m}$, the symmetric group on the symbols $[n + m]$. Further, fix an $n$-tuple $\vecbf{L} = \{l_1, \hdots, l_{n}\}$ drawn from $[m]$ without replacement. For $i \in [n]$, denote
                \begin{equation*}
                    d_i = \mathbbm{1}\{\pi(i) \in [n],\; \pi(n+l_i) \in [n+m]\setminus[n]\} - \mathbbm{1}\{\pi(i) \in [n+m] \setminus [n],\; \pi(n + l_i) \in [n]\}.
                \end{equation*}
                The sum $\sum_{i = 1}^n d_i$ is mean zero and $\mathrm{SG}(2n)$.
            \end{lemma}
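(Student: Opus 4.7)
The plan is to reduce $\sum_{i=1}^n d_i$ to a simple linear statistic of a single uniformly random subset of $[n+m]$, and then invoke a standard concentration inequality for sampling without replacement.

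First, I would simplify $d_i$. Setting $\sigma_j = \mathbbm{1}\{\pi(j) \in [n]\}$ for $j \in [n+m]$, a direct expansion of the two indicators gives
\begin{equation*}
    d_i = \sigma_i(1-\sigma_{n+l_i}) - (1-\sigma_i)\sigma_{n+l_i} = \sigma_i - \sigma_{n+l_i}.
\end{equation*}
Letting $T = \pi^{-1}([n])$, $A = [n]$, and $B = \{n+l_i : i \in [n]\}$, summation then yields
\begin{equation*}
    S := \sum_{i=1}^n d_i = \sum_{j \in A} \mathbbm{1}\{j \in T\} - \sum_{j \in B} \mathbbm{1}\{j \in T\} = |T \cap A| - |T \cap B|.
\end{equation*}
Since $\pi$ is uniform on $S_{n+m}$, $T$ is a uniformly random size-$n$ subset of $[n+m]$; and $A, B$ are disjoint subsets of $[n+m]$ with $|A|=|B|=n$ (the latter using that $\vecbf{L}$ consists of distinct elements of $[m]$).

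The mean-zero claim then follows by symmetry: there is an obvious bijection on the space of $n$-subsets $T$ that swaps $T\cap A$ with $T \cap B$ (concretely, any involution on $[n+m]$ that matches $A$ with $B$ bijectively and fixes the complement), so $|T\cap A|$ and $|T\cap B|$ are identically distributed, giving $\mathbb{E}[S] = 0$.

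For the sub-Gaussianity, define $c_j = \mathbbm{1}\{j\in A\} - \mathbbm{1}\{j \in B\} \in \{-1,0,1\}$, so that $S = \sum_{j \in T} c_j$ is the total of an $n$-sample drawn without replacement from the population $\{c_1,\ldots,c_{n+m}\} \subset [-1,1]$. By the classical Hoeffding sampling-without-replacement inequality (or equivalently, Hoeffding's convex-ordering result which shows the moment generating function of a WOR sum is dominated by that of the i.i.d.\ counterpart, which is $\mathrm{SG}(n)$ since each summand lies in $[-1,1]$), $S$ is $\mathrm{SG}\bigl(n(1-(-1))^2/4\bigr) = \mathrm{SG}(n) \subseteq \mathrm{SG}(2n)$, giving the claimed bound with room to spare.

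The only non-routine step is recognising the reduction $d_i = \sigma_i - \sigma_{n+l_i}$; once this is in place and the problem is translated into $|T\cap A| - |T\cap B|$, both conclusions are immediate from standard tools and no delicate combinatorics is required. The independence of $\vecbf{L}$ plays no role beyond ensuring $|B|=n$, which is why the bound does not depend on $\vecbf{L}$ and the same argument can be used to show the stronger conditional statement (sub-Gaussianity conditional on $\vecbf{L}$) needed elsewhere in the paper.
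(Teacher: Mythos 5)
Your proof is correct and takes a genuinely cleaner route than the paper's. The key observation is the algebraic identity $d_i = \sigma_i - \sigma_{n+l_i}$ with $\sigma_j = \mathbbm{1}\{\pi(j)\in[n]\}$, after which the sum collapses to $|T\cap A| - |T\cap B|$ where $T = \pi^{-1}([n])$ is a uniformly random size-$n$ subset of $[n+m]$ and $A = [n]$, $B = \{n+l_i : i\in[n]\}$ are disjoint size-$n$ sets; this identifies $\sum_i d_i$ as a two-sample hypergeometric-type statistic, and both conclusions then follow from symmetry and Hoeffding's convex-ordering theorem for sampling without replacement. The paper instead works directly with the $d_i$: it computes the conditional distribution of $d_i$ given the history $\vecbfm{\Pi}_i = \{\pi(1),\hdots,\pi(i-1),\pi(n+l_1),\hdots,\pi(n+l_{i-1})\}$, observes it is symmetric on $\{-1,0,1\}$ with off-atom mass $p^{(i)}\leq 1/2$, bounds the conditional MGF by $e^{\theta^2}$, and iterates via the tower property. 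Your reduction avoids the explicit combinatorial computation of the conditional probabilities and yields the sharper constant $\mathrm{SG}(n)$; the paper's martingale-style argument is self-contained and does not rely on Hoeffding's without-replacement result, and its per-step bound would also give $\mathrm{SG}(n)$ if $\cosh\theta\leq e^{\theta^2/2}$ were used in place of the looser $e^{\theta^2}$. Either way $\mathrm{SG}(2n)$ holds with room to spare.
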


            \begin{proof}
                First, for a fixed $i \in [n]$, consider the event $\{\pi(i) \in [n],\; \pi(n+l_i) \in [n+m]\setminus[n]\}$. We proceed by a combinatoric argument: There are $n$ valid choices for $\pi(i) \in [n]$, and $m$ valid choices for $\pi(n+l_i) \in [n+m]\setminus[n]$. The remaining $(n+m-2)$ entries may be freely permuted, giving a total of $nm(n+m-2)!$ valid permutations. By symmetry, there are also $nm(n+m-2)!$ valid permutations which satisfy the event $\{\pi(i) \in [n+m]\setminus[n],\; \pi(n+l_i) \in [n]\}$. Hence, as the permutation is uniformly selected, we have that
                \begin{align*}
                    \mathbb{P}\big( \pi(i) &\in [n],\; \pi(n+l_i) \in [n+m]\setminus[n] \big)\\
                    &= \mathbb{P}\big( \pi(i) \in [n+m]\setminus[n],\; \pi(n+l_i) \in [n] \big) 
                    = \frac{nm(n+m-2)!}{(n+m)!} = \frac{nm}{(n+m)(n+m-1)}.
                \end{align*}
                Hence, we see that $\mathbb{E}[d_i] = 0$ for all $i \in [n]$. Importantly, we note that regardless of the sizes of the two groups $[n]$ and $[n+m] \setminus [n]$, we have $\mathbb{P}(d_i = 1) = \mathbb{P}(d_i = -1)$.

                When considering the moment generating function of the sum $\sum_{i = 1}^{n} d_i$, we must note the dependence between the summands. Denoting $\vecbfm{\Pi}_i = \{\pi(1), \hdots, \pi(i-1), \pi(n + l_1), \hdots, \pi(n + l_{i-1})\}$, we observe that once the values of $\pi(1), \hdots, \pi(i-1), \pi(n+l_1), \hdots, \pi(n+l_{i-1})$ are fixed by conditioning on $\vecbfm{\Pi}_i$, the values of $\pi(i) \hdots, \pi(n), \pi(l_i), \hdots, \pi(l_n)$, are uniformly distributed as a permutation over the symbols $[n+m] \setminus \vecbfm{\Pi}_i$.
                
                Hence, defining the random integers $n^{(i)} = \big|[n] \setminus \vecbfm{\Pi}_i \big|$ and $m^{(i)} = \big|[n+m]\setminus ([n] \cup \vecbfm{\Pi}_i) \big|$, we have that
                \begin{align*}
                    \mathbb{P}\big\{\pi(i) \in [n] \setminus \vecbfm{\Pi}_i,\; \pi(n+l_i) &\in [n+m]\setminus ([n] \cup \vecbfm{\Pi}_i) \mid \vecbfm{\Pi}_i \big\} \\
                    &=\mathbb{P}\big\{\pi(i) \in [n+m]\setminus ([n] \cup \vecbfm{\Pi}_i) \setminus \vecbfm{\Pi}_i,\; \pi(n+l_i) \in [n] \setminus \vecbfm{\Pi}_i \mid \vecbfm{\Pi}_i \big\} \\
                    &= \frac{n^{(i)}m^{(i)}}{(n^{(i)} + m^{(i)})(n^{(i)} + m^{(i)} - 1)} = p^{(i)},
                \end{align*}
                where $p^{(i)}$ is a random probability. Hence, we have that
                \begin{align*}
                    &\mathbb{P}(d_i = 1 \mid \vecbfm{\Pi}_i)
                    = \mathbb{P}(d_i = -1 \mid \vecbfm{\Pi}_i) = p^{(i)}, \quad \mbox{and} \quad
                    \mathbb{P}\big(d_i = 0 \mid \vecbfm{\Pi}_i \big) = 1 - 2p^{(i)}.
                \end{align*}

                As the collection of random variables $\{d_1, \hdots, d_{i - 1}\}$ is completely determined by the collection $\vecbfm{\Pi}_i = \{\pi(1), \hdots, \pi(i - 1), \pi(n + l_1), \hdots, \pi(n + l_{i-1})\}$, by the tower property we obtain for all $i \in [n]$
                \begin{align*}
                    \mathbb{E}[\exp(\theta d_i) \mid d_1, \hdots, d_{i - 1}]
                    &= \mathbb{E}\big[ \mathbb{E}[\exp(\theta d_i) \mid \vecbfm{\Pi}_i )] \mid d_1, \hdots, d_{i - 1}] \big] \\
                    &\leq \mathbb{E}\big[p^{(i)}\exp(\theta) + p^{(i)}\exp(-\theta) + (1-2p^{(i)}) \mid d_1, \hdots, d_{i - 1}] \big] \\
                    &\leq e^{\theta^2} \quad \mbox{almost surely,}
                \end{align*}
                where the final inequality holds as $p^{(i)} \leq 1/2$ almost surely.
                
                Finally, again applying the tower property, we have that
                \begin{align*}
                    \mathbb{E}\bigg[&\exp\bigg(\theta \sum_{i=1}^n (d_i - \mathbb{E}[d_i]) \bigg)\bigg]
                    = \mathbb{E}\bigg[\exp\bigg(\theta \sum_{i=1}^n d_i\bigg)\bigg]
                    \\
                    &= \mathbb{E}\bigg[ \cdots \mathbb{E}\bigg[\mathbb{E}\bigg[\exp(\theta d_n) \biggm| d_{1}, \hdots, d_{n-1}\bigg] \exp(\theta d_{n-1}) \biggm| d_{1}, \hdots, d_{n-2} \bigg] \cdots \exp(\theta d_1) \bigg]
                    \leq e^{n\theta^2}.
                \end{align*}
                Hence, the sum $\sum_{i=1}^n d_i$ is $\mathrm{SG}(2n)$, which concludes the proof.
            \end{proof}

            \subsection{Private Sampling Methods for Continuous Non-interactive Procedure}
                The non-interactive testing procedure for continuous distributions developed in \Cref{sec5:UpperBoundNonInt} uses two different private sampling methods depending on the parity of $V$, the chosen truncation level of the series expansion of the corresponding density functions. We outline these sampling methods as follows:

                First, for $i \in [n_1]$, $i' \in [n_2]$ recall the quantities $\tilde{\vecbf{Z}}_i$, $\tilde{\vecbf{W}}_i$ constructed in \eqref{sec4:eq:nonintcoefviews}. Sample $T_{i}^{(X)}, T_{i'}^{(Y)} \overset{\mathrm{i.i.d.}}{\sim} \mathrm{Bernoulli}(\exp(\varepsilon)/\{\exp(\varepsilon) + 1\})$.

                \noindent
                \textbf{For odd $V$}: 
                Output independent
                \begin{equation} \label{app:eq:oddMsamplemech}
                    \begin{aligned}
                        \vecbf{Z}_i &\sim
                        \begin{cases}
                            \mathrm{Uniform}( \vecbf{a} \in \{-A, A\}^V \mid \vecbf{a}^T \tilde{\vecbf{Z}}_i \geq 0), \mbox{ if } T_i^{(X)} = 1,\\
                            \mathrm{Uniform}( \vecbf{a} \in \{-A, A\}^V \mid \vecbf{a}^T \tilde{\vecbf{Z}}_i \leq 0), \mbox{ if } T_i^{(X)} = 0,
                        \end{cases} \\
                        \vecbf{W}_{i'} &\sim
                        \begin{cases}
                            \mathrm{Uniform}( \vecbf{a} \in \{-A, A\}^V \mid \vecbf{a}^T \tilde{\vecbf{W}}_{i'} \geq 0), \mbox{ if } T_{i'}^{(Y)} = 1,\\
                            \mathrm{Uniform}( \vecbf{a} \in \{-A, A\}^V \mid \vecbf{a}^T \tilde{\vecbf{W}}_{i'} \leq 0), \mbox{ if } T_{i'}^{(Y)} = 0,
                        \end{cases}
                    \end{aligned}
                \end{equation}
                where
                \begin{equation*}
                    A = 2^{d/2} \bigg(\frac{\exp(\varepsilon) + 1}{\exp(\varepsilon) - 1} \bigg)\frac{2^{V-1} [\{(V-1)/2\}!]^2}{(V - 1)!}.
                \end{equation*}

                \noindent
                \textbf{For even $V$}:
                Sample independent
                \begin{align*}
                    \Breve{\vecbf{Z}}_i &\sim
                    \begin{cases}
                        \mathrm{Uniform}( \vecbf{a} \in \{-A', A'\}^V \mid \vecbf{a}^T \tilde{\vecbf{Z}}_i > 0, \mbox{ or } \vecbf{a}^T \tilde{\vecbf{Z}}_i = 0 \mbox{ with } a_1 = A'), \mbox{ if } T_i^{(X)} = 1,\\
                        \mathrm{Uniform}( \vecbf{a} \in \{-A', A'\}^V \mid \vecbf{a}^T \tilde{\vecbf{Z}}_i < 0, \mbox{ or } \vecbf{a}^T \tilde{\vecbf{Z}}_i = 0 \mbox{ with } a_1 = -A'), \mbox{ if } T_i^{(X)} = 0,
                    \end{cases} \\
                    \Breve{\vecbf{W}}_{i'} &\sim
                    \begin{cases}
                        \mathrm{Uniform}( \vecbf{a} \in \{-A', A'\}^V \mid \vecbf{a}^T \tilde{\vecbf{W}}_{i'} > 0, \mbox{ or } \vecbf{a}^T \tilde{\vecbf{W}}_{i'} = 0 \mbox{ with } a_1 = A'), \mbox{ if } T_{i'}^{(Y)} = 1,\\
                        \mathrm{Uniform}( \vecbf{a} \in \{-A', A'\}^V \mid \vecbf{a}^T \tilde{\vecbf{W}}_{i'} < 0, \mbox{ or } \vecbf{a}^T \tilde{\vecbf{W}}_{i'} = 0 \mbox{ with } a_1 = -A'), \mbox{ if } T_{i'}^{(Y)} = 0,
                    \end{cases}
                \end{align*}
                where
                \begin{equation*}
                    A' = 2^{1/2} \bigg( \frac{\exp(\varepsilon) + 1}{\exp(\varepsilon) - 1} \bigg) \frac{2^{V-1}(V/2 - 1)!(V/2)}{(V-2)!(V-2)}.
                \end{equation*}

                Output $\vecbf{Z}_i$, $\vecbf{W}_{i'}$ with $j$-th co-ordinates
                \begin{equation} \label{app:eq:evenMsamplemech}
                    \begin{aligned}
                        Z_{i, j} &=
                        \begin{cases}
                            \Breve{Z}_{i , j}(V-2)/\{2(V-1)\} &\mbox{ for } j = 1, \\
                            \Breve{Z}_{i, j} &\mbox{ otherwise},
                        \end{cases} \\
                        W_{i', j} &=
                        \begin{cases}
                            \Breve{W}_{i' , j}(V-2)/\{2(V-1)\} &\mbox{ for } j = 1, \\
                            \Breve{W}_{i', j} &\mbox{ otherwise}.
                        \end{cases}
                    \end{aligned}
                \end{equation}

            \subsection{Integral Bound}
            \begin{lemma}\label{app:lem:logintbound}
                For all $x \in (0,1]$, it holds that $\int_0^x \{\log(1 + 1/a)\}^{1/2} \diff{a} \leq x\{\log(4/x)\}^{1/2}$.
            \end{lemma}
            \begin{proof}
                By the Cauchy--Schwarz inequality, we have that
                \begin{equation*}
                    \int_0^x \{\log(1 + 1/a)\}^{1/2} \diff{a}
                    \leq x^{1/2} \bigg( \int_0^x \log(1 + 1/a) \diff{a} \bigg)^{1/2}.
                \end{equation*}
                Hence, it suffices to show $\int_0^x \log(1 + 1/a) \diff{a} \leq x\log(4/x)$. In particular, we have
                \begin{align*}
                    \int_0^x \log(1 + 1/a) \diff{a} - x\log(4/x)
                    &= x\log(1 + 1/x) + \log(x + 1) - x\log(4/x) \\
                    &= (x+1)\log(x+1) - x\log(4)
                    = h(x).
                \end{align*}
                Finally, we note $h(0) = h(1) = 0$ and $h''(x) = 1/(1+x) > 0$ for $x \in (0, 1]$, and thus $h(x) \leq 0$ for all $x \in (0, 1]$, which completes the proof.
            \end{proof}

            \subsection{Miscellaneous Results for Sub-Gaussian and Sub-exponential Random Variables}
                The concept of a sub-Gaussian random variable can be extended to vectors of random variables as follows.
                \begin{definition}[Sub-Gaussian Random Vectors]
                    A vector of random variables $\vecbf{X}$ taking values in $\mathbb{R}^d$ is termed \emph{sub-Gaussian} with variance proxy $\sigma^2$, denoted $\mathrm{SG}(\sigma^2)$, if, for any vector $\vecbf{v} \in \mathbb{R}^d$, the variable $\vecbf{v}^T \vecbf{X}$ is $\mathrm{SG}(\| \vecbf{v} \|_2^2 \sigma^2)$. That is,
                    \begin{equation*}
                        \mathbb{E}[\exp\{\theta \vecbf{v}^T (\vecbf{X} - \mathbb{E}[\vecbf{X}])\}] \leq \exp(\theta^2  \|\vecbf{v}\|_2^2 \sigma^2/2)
                    \end{equation*}
                    for all $\theta \in \mathbb{R}$.
                \end{definition}
                It is easy to verify the following.
                \begin{lemma} \label{app:lem:IndSGSum}
                    If the random vectors $\vecbf{X}$ and $\vecbf{Y}$ are independent and are $\mathrm{SG}(\sigma_X^2)$, $\mathrm{SG}(\sigma_Y^2)$ respectively, then the sum $\vecbf{X} + \vecbf{Y}$ is $\mathrm{SG}(\sigma_X^2 + \sigma_Y^2)$.
                \end{lemma}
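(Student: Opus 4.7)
The plan is to verify the defining moment generating function (MGF) inequality directly, using the fact that for sub-Gaussian vectors the defining condition is a one-dimensional statement applied to every linear combination $\vecbf{v}^T(\vecbf{X}+\vecbf{Y})$. Since $\vecbf{X}$ and $\vecbf{Y}$ are independent and the exponential of a sum factorises, this reduces to a product of two scalar MGF bounds, each of which is immediate from the assumed sub-Gaussianity of $\vecbf{X}$ and $\vecbf{Y}$.

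More concretely, I would fix an arbitrary $\vecbf{v} \in \mathbb{R}^d$ and $\theta \in \mathbb{R}$, and start from
\begin{equation*}
    \mathbb{E}\bigl[\exp\{\theta \vecbf{v}^T (\vecbf{X} + \vecbf{Y} - \mathbb{E}[\vecbf{X} + \vecbf{Y}])\}\bigr]
    = \mathbb{E}\bigl[\exp\{\theta \vecbf{v}^T (\vecbf{X} - \mathbb{E}[\vecbf{X}])\}\bigr] \cdot \mathbb{E}\bigl[\exp\{\theta \vecbf{v}^T (\vecbf{Y} - \mathbb{E}[\vecbf{Y}])\}\bigr],
\end{equation*}
where the equality is by linearity of expectation in the exponent and the independence of $\vecbf{X}$ and $\vecbf{Y}$. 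Applying the defining inequality of $\mathrm{SG}(\sigma_X^2)$ to $\vecbf{X}$ and $\mathrm{SG}(\sigma_Y^2)$ to $\vecbf{Y}$ bounds each factor by $\exp(\theta^2 \|\vecbf{v}\|_2^2 \sigma_X^2/2)$ and $\exp(\theta^2 \|\vecbf{v}\|_2^2 \sigma_Y^2/2)$ respectively, so the product is at most $\exp\{\theta^2 \|\vecbf{v}\|_2^2 (\sigma_X^2 + \sigma_Y^2)/2\}$. Since $\vecbf{v}$ and $\theta$ were arbitrary, this is exactly the $\mathrm{SG}(\sigma_X^2 + \sigma_Y^2)$ condition for $\vecbf{X} + \vecbf{Y}$.

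There is no real obstacle here: the result is a standard fact whose proof is a single short MGF calculation, and the only ingredients are (i) the definition of a sub-Gaussian random vector stated just before the lemma and (ii) the factorisation of MGFs of independent random variables. I would therefore keep the proof to a few lines without further embellishment.
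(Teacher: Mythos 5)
Your proof is correct and follows exactly the same route as the paper's: fix an arbitrary direction $\vecbf{v}$ and scalar $\theta$, factorize the moment generating function using independence, and apply the defining sub-Gaussian inequality to each factor. The only difference is cosmetic (the paper's displayed calculation happens to write $\vecbf{X}-\vecbf{Y}$ rather than $\vecbf{X}+\vecbf{Y}$, an apparent typo), so there is nothing to add.
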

                \begin{proof}
                    The proof follows from the assumed independence of $\vecbf{X}$, $\vecbf{Y}$. Indeed,
                    \begin{align*}
                        \mathbb{E}[\exp\{\theta \vecbf{v}^T (\vecbf{X} - \vecbf{Y} - \mathbb{E}[\vecbf{X} - \vecbf{Y}])\}] 
                        &= \mathbb{E}[\exp\{\theta \vecbf{v}^T (\vecbf{X} - \mathbb{E}[\vecbf{X}])\}]\mathbb{E}[\exp\{\theta \vecbf{v}^T (\vecbf{Y} - \mathbb{E}[\vecbf{Y}])\}] \\
                        &\leq \exp(\theta^2  \|\vecbf{v}\|_2^2 \sigma_X^2/2)\exp(\theta^2  \|\vecbf{v}\|_2^2 \sigma_Y^2/2) \\
                        &= \exp\{\theta^2  \|\vecbf{v}\|_2^2 (\sigma_X^2 + \sigma_Y^2)/2\}.
                    \end{align*}
                \end{proof}

                In particular, a random variable distributed according to the Laplace distribution is sub-exponential.
                \begin{lemma} \label{app:lem:LaplaceSE}
                    A random variable $X$ distributed according to a Laplace distribution is $\mathrm{SE}(2^{1/2})$.
                \end{lemma}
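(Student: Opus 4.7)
The plan is to verify the defining moment generating function inequality directly. The key observation is that the Laplace distribution has a closed-form MGF that is easy to compare against the Gaussian-type bound required for sub-exponentiality.

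First I would compute the MGF of the standard Laplace. Since $\mathbb{E}[X] = 0$ by symmetry, we have for $|\lambda| < 1$ that
\begin{equation*}
\mathbb{E}[\exp(\lambda X)] = \frac{1}{2}\int_{-\infty}^{\infty} e^{\lambda x - |x|}\,\mathrm{d}x = \frac{1}{1 - \lambda^2},
\end{equation*}
by splitting the integral at $0$ and summing two elementary exponential integrals.

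Next, to show $X$ is $\mathrm{SE}(2^{1/2})$, it suffices to verify that $(1-\lambda^2)^{-1} \leq \exp(2\lambda^2)$ for all $|\lambda| \leq 1/\sqrt{2}$. Equivalently, taking logarithms and writing $u = \lambda^2 \in [0, 1/2]$, one must show $-\log(1-u) \leq 2u$. This follows from the series expansion $-\log(1-u) = \sum_{k=1}^\infty u^k/k$, which can be bounded termwise by $\sum_{k=1}^\infty u^k = u/(1-u) \leq 2u$ for $u \leq 1/2$. Combining these yields the sub-exponential bound with variance proxy $\sigma = 2^{1/2}$, and the restriction $|\lambda| \leq 1/\sigma = 1/\sqrt{2}$ matches the definition given in the preliminaries.

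There is no real obstacle here; the computation is essentially a one-line MGF calculation followed by an elementary inequality. The only point that requires care is matching conventions: the paper's definition uses variance proxy $\sigma^2$ inside $\exp(\lambda^2 \sigma^2)$ rather than $\exp(\lambda^2 \sigma^2/2)$, and restricts $|\lambda| \leq 1/\sigma$, so with $\sigma = 2^{1/2}$ the relevant range is indeed $|\lambda| \leq 1/\sqrt{2}$ on which the Laplace MGF is finite.
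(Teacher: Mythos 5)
Your proof is correct and takes essentially the same approach as the paper: compute the Laplace MGF explicitly and compare it to the required sub-exponential bound on the interval $|\lambda| \leq 2^{-1/2}$. Worth noting: the paper's proof writes the MGF as $(1-\lambda)^{-1}$, which is the MGF of a standard exponential rather than the symmetric Laplace; the correct expression is $(1-\lambda^2)^{-1}$ as you have, and with that correction the paper's argument and yours coincide.
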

                \begin{proof}
                    The moment generating function satisfies
                    \begin{equation*}
                        \mathbb{E}[\exp(\lambda X)] = \frac{1}{1-\lambda} \leq \exp(2\lambda^2)
                    \end{equation*}
                    for $|\lambda| \leq 2^{-1/2}$.
                \end{proof}

                The presence of an absolute constant $C_1 \geq 1$ preceding the exponential term in the bound on the moment generating function results in a sub-exponential random variable with the same variance proxy up to scaling by some absolute constant. This is \beame{formalised}{formalized}{formalized} as follows.

                \begin{lemma} \label{app:lem:SEMGFprefactor}
                    If the random variable $X$ satisfies
                    \begin{equation*}
                        \mathbb{E}[\exp\{\theta (X - \mathbb{E}[X])\}] \leq C_1\exp(\theta^2 \sigma^2)
                    \end{equation*}
                    for all $\theta \in \mathbb{R}$ such that $|\theta| \leq 1/\sigma$ where $C_1 \geq 1$ is some absolute constant, then $X$ is $\mathrm{SE}(C_2\sigma)$ for some absolute constant $C_2 > 0$.
                \end{lemma}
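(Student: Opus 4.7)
My strategy will be to absorb the prefactor $C_1$ into the variance proxy by passing through a moment bound and then reassembling the moment generating function, crucially exploiting that $Y := X - \mathbb{E}[X]$ is mean-zero. A fully direct attempt to inflate $\sigma$ so that $C_1 \exp(\theta^2 \sigma^2) \leq \exp(\theta^2 K^2 \sigma^2)$ fails because it would require $\log C_1 \leq \theta^2 (K^2 - 1) \sigma^2$ even near $\theta = 0$, which is impossible; the hypothesized MGF bound is simply loose at the origin, and the moment detour is what lets us sidestep this.

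First I will derive a uniform moment bound. Combining the elementary inequality $|y|^p \leq (p/(e\theta))^p e^{\theta|y|}$ (obtained by maximizing $y^p e^{-\theta y}$ on $y \geq 0$) with the hypothesis at $\theta = \pm 1/\sigma$ will give $\mathbb{E}[e^{|Y|/\sigma}] \leq \mathbb{E}[e^{Y/\sigma}] + \mathbb{E}[e^{-Y/\sigma}] \leq 2 C_1 e$, and hence
\[
    \mathbb{E}[|Y|^p] \leq \bigl(p\sigma/e\bigr)^p \mathbb{E}[e^{|Y|/\sigma}] \leq 2 C_1 e \cdot p!\, \sigma^p, \quad p \geq 1,
\]
where the last step uses $p^p/e^p \leq p!$ from Stirling.

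Next I will expand the target MGF as a Taylor series and use $\mathbb{E}[Y] = 0$ to kill the linear term:
\[
    \mathbb{E}[e^{\lambda Y}] = 1 + \sum_{p \geq 2} \frac{\lambda^p \mathbb{E}[Y^p]}{p!} \leq 1 + 2 C_1 e \sum_{p \geq 2} (|\lambda|\sigma)^p.
\]
For $|\lambda|\sigma \leq 1/2$, the geometric series sums to at most $2(|\lambda|\sigma)^2$, producing $\mathbb{E}[e^{\lambda Y}] \leq 1 + 4 C_1 e (\lambda\sigma)^2 \leq \exp\{4 C_1 e (\lambda\sigma)^2\}$ via $1 + x \leq e^x$. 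Setting $C_2 := \max\{2, 2\sqrt{C_1 e}\}$ (an absolute constant because $C_1$ is) will then yield $\mathbb{E}[e^{\lambda Y}] \leq \exp\{\lambda^2 (C_2 \sigma)^2\}$ throughout $|\lambda| \leq 1/(C_2\sigma)$, which is exactly $X \in \mathrm{SE}(C_2 \sigma)$.

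The only real obstacle is the behavior near the origin: the hypothesis is strictly suboptimal at $\lambda = 0$ (where the bound equals $C_1 > 1$ but the true value is $1$), so any valid argument must use extra information to recover the correct $O(\lambda^2)$ behavior there. The mean-zero property of $Y$, channelled through the Taylor expansion, is precisely what supplies this; everything else is careful bookkeeping of absolute constants, none of which depend on $\sigma$.
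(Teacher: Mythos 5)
Your proof is correct and follows essentially the same route as the paper: the paper defers to the chain of implications in Proposition~2.7.1 of Vershynin (an MGF bound with a prefactor gives uniform moment control, which combined with $\mathbb{E}[Y]=0$ gives back a prefactor-free MGF bound after Taylor expansion), and you have simply written out this chain explicitly with concrete constants. The only bookkeeping detail worth flagging is the implicit use of Fubini/dominated convergence to interchange the sum and expectation in the Taylor expansion, which is justified here because your moment bound makes the majorizing series $\sum_p |\lambda|^p \mathbb{E}[|Y|^p]/p!$ convergent for $|\lambda|\sigma < 1$.
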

                \begin{proof}
                    Following the proof of Proposition~2.7.1 \citep{Vershynin:2018:HDPBook}, one can see that the inequality $\mathbb{E}[\exp\{\theta (X - \mathbb{E}[X])\}] \leq C_1\exp(\theta^2 \sigma^2/2)$ implies property (b) therein up to some constant depending on $C_1$, which in turn implies property (e) therein, which demonstrates that $X$ is $\mathrm{SE}(C_2\sigma)$ for some absolute constant $C_2 > 0$.
                \end{proof}

                If a random variable $X$ satisfies a sub-exponential tail up to some sufficiently large constant, then it is sub-exponential.
                \begin{lemma} \label{app:lem:SETailtoMGF}
                    If the random variable $X$ satisfies the tail bound
                    \begin{equation*}
                        \mathbb{P}(|X - \mathbb{E}[X]| \geq x) \leq  c\exp\{-x/\sigma\},
                    \end{equation*}
                    for $x \geq 0$ and $c \geq 2$ some absolute constant, then $X$ is $\mathrm{SE}(C\sigma)$ for $C > 0$ some absolute constant.
                \end{lemma}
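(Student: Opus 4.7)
The plan is to convert the assumed tail bound into moment bounds via the layer-cake representation, then plug these into the Taylor expansion of the moment generating function. Without loss of generality I may assume $\mathbb{E}[X] = 0$ by replacing $X$ with $X - \mathbb{E}[X]$, so the hypothesis becomes $\mathbb{P}(|X| \geq x) \leq c \exp(-x/\sigma)$ for all $x \geq 0$.

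First I would show the moment bound $\mathbb{E}[|X|^k] \leq c\, k!\, \sigma^k$ for every integer $k \geq 1$. This follows from the identity
\begin{equation*}
    \mathbb{E}[|X|^k] = \int_0^\infty k x^{k-1} \mathbb{P}(|X| \geq x)\diff{x} \leq c \int_0^\infty k x^{k-1} \exp(-x/\sigma)\diff{x} = c\, k!\, \sigma^k,
\end{equation*}
where the final equality is the standard Gamma-function evaluation. Since $|\mathbb{E}[X^k]| \leq \mathbb{E}[|X|^k]$, the same bound applies to the raw moments.

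Next, I would use the Taylor expansion of the MGF, in which the linear term vanishes because $\mathbb{E}[X] = 0$:
\begin{equation*}
    \mathbb{E}[\exp(\lambda X)] = 1 + \sum_{k \geq 2} \frac{\lambda^k \mathbb{E}[X^k]}{k!} \leq 1 + c \sum_{k \geq 2} (|\lambda|\sigma)^k.
\end{equation*}
For $|\lambda|\sigma \leq 1/2$, the geometric tail satisfies $\sum_{k \geq 2}(|\lambda|\sigma)^k = (|\lambda|\sigma)^2/(1 - |\lambda|\sigma) \leq 2(|\lambda|\sigma)^2$, which combined with the inequality $1 + u \leq e^u$ yields
\begin{equation*}
    \mathbb{E}[\exp(\lambda X)] \leq 1 + 2c\lambda^2\sigma^2 \leq \exp(2c\lambda^2\sigma^2).
\end{equation*}

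Finally, to match the sub-exponential definition, I set $C = (2c)^{1/2}$ and $\tilde\sigma = C\sigma$, so the bound reads $\mathbb{E}[\exp(\lambda X)] \leq \exp(\lambda^2 \tilde\sigma^2)$. The permitted range $|\lambda|\sigma \leq 1/2$ is implied by $|\lambda| \leq 1/\tilde\sigma$ precisely when $C \geq 2$, i.e.~when $c \geq 2$, which holds by hypothesis. Hence $X$ is $\mathrm{SE}(C\sigma)$ with $C = (2c)^{1/2}$. The argument is entirely routine; the only mild subtlety is keeping track of the constants so that the imposed restriction $c \geq 2$ is exactly what is needed to ensure the admissible $\lambda$-range for the resulting sub-exponential proxy lies inside the range where the geometric sum bound was valid.
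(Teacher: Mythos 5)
Your proof is correct and takes a genuinely more self-contained route than the paper. The paper first replaces the assumed tail bound by a tail bound with prefactor exactly $2$, at the price of loosening the rate parameter from $\sigma$ to $2\sigma\log(c)$ (one checks $2\exp\{-x/(2\sigma\log c)\} \geq c\exp(-x/\sigma)$ for $x \geq \sigma\log c$, both sides being at least $1$ for smaller $x$), and then invokes the equivalence of tail and MGF characterizations of sub-exponentiality from Vershynin, Proposition 2.7.1. You instead go directly from tails to moments via the layer-cake identity $\mathbb{E}[|X|^k] = \int_0^\infty k x^{k-1}\mathbb{P}(|X|\geq x)\,\mathrm{d}x \leq c\,k!\,\sigma^k$, feed these into the Taylor series of the MGF, and bound the geometric tail on $|\lambda|\sigma \leq 1/2$, obtaining $\mathbb{E}[e^{\lambda X}]\leq \exp(2c\lambda^2\sigma^2)$; your bookkeeping showing that $c\geq 2$ is precisely what is needed so that $|\lambda|\leq 1/((2c)^{1/2}\sigma)$ implies $|\lambda|\sigma\leq 1/2$ is the right observation. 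Your approach is entirely elementary — it avoids the black-box equivalence theorem — and it makes the dependence of the final constant on $c$ explicit ($C=(2c)^{1/2}$ versus the paper's implicit $\log(c)$ dependence through Vershynin's result). The only small point worth noting in the write-up is the interchange of expectation and infinite sum, which is justified by Tonelli applied to the absolutely convergent series $\sum_k |\lambda|^k\mathbb{E}[|X|^k]/k!$ for $|\lambda|\sigma<1$; this is routine but should be mentioned.
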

                \begin{proof}
                    We consider the quantity $2\exp\{-x/(2\sigma\log(c))\}$. For $0 \leq x \leq \sigma\log(c)$, we have
                    \begin{equation*}
                        \min\big\{2\exp\{-x/(2\sigma\log(c))\},\; c\exp\{-x/\sigma\}\big\} \geq 1,
                    \end{equation*}
                    and so both trivially satisfy the tail probability bound. For $x \geq \sigma\log(c)$, one can verify
                    \begin{equation*}
                        2\exp\{-x/(2\sigma\log(c))\} > c\exp\{-x/\sigma\}.
                    \end{equation*}
                    Hence, we have the tail bound $\mathbb{P}(|X - \mathbb{E}[X]| \geq x) \leq  2\exp\{-x/(2\sigma\log(c)\}$, and so we may appeal to Proposition~2.7.1 in \cite{Vershynin:2018:HDPBook} to obtain that $X$ is $\mathrm{SE}(C\sigma)$ for some absolute constant $C > 0$.
                \end{proof}

            \subsection{Binary Search Procedure} \label{app:sec:binarysearch}
                In this subsection we describe the binary search procedure we use in \Cref{sec6:disc} to empirically validate the minimax separation radii. First, set a desired tolerance $\delta \in [0,1]$ and the number of repetitions $r \in \mathbb{N}$. \beame{Initialise}{Initialize}{Initialize} $l_0 = 0$, $u_0 = 1$ and $J = \lceil 2\log_2(1/\delta) \rceil$. Then, for $j \in [J]$, set $\gamma_j = (u_{j-1} - l_{j-1})/2$ and for $i \in [r]$ implement the test of interest on simulated data drawn from the distributions $P_X$, $P_{Y, \gamma_j}^{(p)}$ as in \eqref{app:eq:discsimdistributions} with $p \in \{1,2\}$ depending on the setting. The simulated data are independent across the simulated tests. Denoting the outcome of each test as $\phi_j^{(i)}$ where we recall a $1$ corresponds to a rejection of the null hypothesis, calculate the estimated type-II error $\hat{\beta}_j = r^{-1}\sum_{i = 1}^r (1-\phi_j^{(i)})$. If $|\hat{\beta}_j - 1/2| \leq \delta$, then return $\gamma_j$ and terminate the procedure early. Otherwise, if $\hat{\beta}_j > 1/2$, set $l_j = l_{j-1}$, $u_j = (l_{j-1} + u_{j-1})/2$ or if $\hat{\beta}_j < 1/2$, set $l_j = (l_{j-1} + u_{j-1})/2$, $u_j = u_{j-1}$. Repeat until either the procedure terminates early or return $\gamma_J$ once complete.
\end{document}